\documentclass[12pt,a4paper]{amsart}
\usepackage{amsthm,amsfonts,amsmath,amssymb,latexsym}
\usepackage{epsfig,graphics,color}
\usepackage[dvipsnames]{xcolor}
\usepackage[all]{xy}
\usepackage[breaklinks=true]{hyperref}
\usepackage{mathrsfs}
\usepackage{stmaryrd}
\usepackage{verbatim}
\usepackage{bm}
\usepackage{mathabx}
\usepackage{enumitem}
\usepackage{pgf,amsmath,tikz,type1cm,fix-cm}
\usetikzlibrary{arrows.meta,bending,positioning}

\newlength{\XHeight}
\newlength{\XWidth}
\setlength{\parskip}{5pt} 
\setlist[itemize,1]{leftmargin=\dimexpr 26pt-.1in}


%
%
\newtheorem{PARA}{}[section]
\newtheorem{theorem}[PARA]{Theorem}
\newtheorem{corollary}[PARA]{Corollary}
\newtheorem{lemma}[PARA]{Lemma}
\newtheorem{proposition}[PARA]{Proposition}
\newtheorem{definition}[PARA]{Definition}
\newtheorem{definition-proposition}[PARA]{Definition-Proposition}

\theoremstyle{definition}
\newtheorem{remark}[PARA]{Remark}
\theoremstyle{theorem}
\newtheorem{example}[PARA]{Example}
\newcommand{\para}{\begin{PARA}\rm}
\newcommand{\arap}{\end{PARA}\rm}
\newcommand{\dfn}{\begin{definition}\rm}
\newcommand{\nfd}{\end{definition}\rm}
\newcommand{\rmk}{\begin{remark}\rm}
\newcommand{\kmr}{\end{remark}\rm}
\newcommand{\xmpl}{\begin{example}\rm}
\newcommand{\lpmx}{\end{example}\rm}
\newcommand{\cA}{\mathcal{A}}

\newcommand{\cC}{\mathcal{C}}
\newcommand{\cD}{\mathcal{D}}

\newcommand{\cH}{\mathcal{H}}

\newcommand{\cM}{\mathcal{M}}

\newcommand{\cP}{\mathcal{P}}

\renewcommand{\H}{{\mathbb{H}}}

\newcommand{\R}{{\mathbb{R}}}

\newcommand{\Z}{{\mathbb{Z}}}
\newcommand{\coker}{\mathrm{coker}\, }  
\newcommand{\colim}{\mathrm{ colim}\, }  
\newcommand{\im}{\mathrm{im}\,}        
\newcommand{\Ext}{\mathrm{ Ext }}  

\newcommand{\ind}{\mathrm{ind}}

\newcommand{\ev}{\mathrm{ev}}

\newcommand{\Hom}{\mathrm{Hom}}

\newcommand{\eps}{{\varepsilon}}
\newcommand{\om}{{\omega}}
\newcommand{\Om}{{\Omega}}

\newcommand{\CZ}{\mathrm{CZ}}
\def\NABLA#1{{\mathop{\nabla\kern-.5ex\lower1ex\hbox{$#1$}}}}
\def\Nabla#1{\nabla\kern-.5ex{}_{#1}}
\def\Tabla#1{\Tilde\nabla\kern-.5ex{}_{#1}}
\renewcommand{\Tilde}{\widetilde}

\newcommand{\p}{{\partial}}

\newcommand{\la}{\langle}
\newcommand{\ra}{\rangle}
\newcommand{\wh}{\widehat}
\newcommand{\ol}{\overline}


%

\newcommand*\circled[1]{\tikz[baseline=(char.base)]{
            \node[shape=circle,draw,inner sep=2pt] (char) {#1};}}

\newcommand{\boldmu}{{\boldsymbol{\mu}}}
\newcommand{\boldlambda}{{\boldsymbol{\lambda}}}
\newcommand{\boldbeta}{{\boldsymbol{\beta}}}
\newcommand{\boldeta}{{\boldsymbol{\eta}}}

\newcommand{\boldc}{{\boldsymbol{c}}}

\newcommand{\boldB}{{\boldsymbol{B}}}

\newcommand{\um}{{\underline{m}}}
\newcommand{\utau}{{\underline{\tau}}}
\newcommand{\usigma}{{\underline{\sigma}}}


\newcommand*\Hamlambdamuaxis[2]{\ensuremath{
       \settowidth{\XWidth}{$\lambda$}
       \setlength{\XHeight}{\XWidth}
       \tikz[baseline]{
         \draw[thick] (-\XWidth,2\XHeight) -- (0,2\XHeight) -- (2\XWidth,0) -- (3\XWidth,\XHeight); 
         \draw[arrows={[sep] - Stealth[length=3pt]}] (-\XWidth,-.5\XHeight) -- (-\XWidth,3\XHeight);
         \draw[arrows={[sep] - Stealth[length=3pt]}] (-1.5\XWidth,0) -- (3.5\XWidth,0);
         \node at (1.5\XWidth,1.5\XHeight)[scale=0.5] {#1};
         \node at (3.2\XWidth,.5\XHeight)[scale=0.5] {#2};
       }
     }
}


\newcommand*\minusHamlambdamuaxis[2]{\ensuremath{
       \settowidth{\XWidth}{$\lambda$}
       \setlength{\XHeight}{\XWidth}
       \tikz[baseline]{
         \draw[thick] (-\XWidth,-2\XHeight) -- (0,-2\XHeight) -- (2\XWidth,0) -- (3\XWidth,-\XHeight); 
         \draw[arrows={[sep] - Stealth[length=3pt]}] (-\XWidth,-3\XHeight) -- (-\XWidth,3\XHeight);
         \draw[arrows={[sep] - Stealth[length=3pt]}] (-1.5\XWidth,0) -- (3.5\XWidth,0);
         \node at (1.5\XWidth,-1.4\XHeight)[scale=0.5] {#1};
         \node at (3.2\XWidth,-.4\XHeight)[scale=0.5] {#2};
       }
     }
}


\newcommand*\Hammuaxis[1]{\ensuremath{
       \settowidth{\XWidth}{$\lambda$}
       \setlength{\XHeight}{\XWidth}
       \tikz[baseline]{
         \draw[thick] (-\XWidth,0) -- (2\XWidth,0) -- (3\XWidth,\XHeight); 
         \draw[arrows={[sep] - Stealth[length=3pt]}] (-\XWidth,-.5\XHeight) -- (-\XWidth,3.5\XHeight);
         \draw[arrows={[sep] - Stealth[length=3pt]}] (-1.5\XWidth,0) -- (3.5\XWidth,0);
         \node at (3.2\XWidth,.5\XHeight)[scale=0.5] {#1};
       }
     }
}


\newcommand*\minusHammuaxis[1]{\ensuremath{
       \settowidth{\XWidth}{$\lambda$}
       \setlength{\XHeight}{\XWidth}
       \tikz[baseline]{
         \draw[thick] (-\XWidth,0) -- (2\XWidth,0) -- (3\XWidth,-\XHeight); 
         \draw[arrows={[sep] - Stealth[length=3pt]}] (-\XWidth,-.5\XHeight) -- (-\XWidth,3.5\XHeight);
         \draw[arrows={[sep] - Stealth[length=3pt]}] (-1.5\XWidth,0) -- (3.5\XWidth,0);
         \node at (3.4\XWidth,-.5\XHeight)[scale=0.5] {#1};
       }
     }
}


\newcommand*\HamLlambda[1]{\ensuremath{
       \settowidth{\XWidth}{$\lambda$}
       \setlength{\XHeight}{\XWidth}
       \tikz[baseline]{
         \draw[thick] (-\XWidth,2\XHeight) -- (0,2\XHeight) -- (3\XWidth,-\XHeight);
         \draw[arrows={[sep] - Stealth[length=3pt]}] (-\XWidth,-.5\XHeight) -- (-\XWidth,3\XHeight);
         \draw[arrows={[sep] - Stealth[length=3pt]}] (-1.5\XWidth,0) -- (3.5\XWidth,0);
         \node at (1.5\XWidth,1.5\XHeight)[scale=0.5] {#1};
       }
     }
}


\newcommand*\minusHamLlambda[1]{\ensuremath{
       \settowidth{\XWidth}{$\lambda$}
       \setlength{\XHeight}{\XWidth}
       \tikz[baseline]{
         \draw[thick] (-\XWidth,-2\XHeight) -- (0,-2\XHeight) -- (3\XWidth,\XHeight);
         \draw[arrows={[sep] - Stealth[length=3pt]}] (-\XWidth,-3\XHeight) -- (-\XWidth,3\XHeight);
         \draw[arrows={[sep] - Stealth[length=3pt]}] (-1.5\XWidth,0) -- (3.5\XWidth,0);
         \node at (1.5\XWidth,-1.4\XHeight)[scale=0.5] {#1};
       }
     }
}


\newcommand*\homotopyHamminusmutomu[2]{\ensuremath{
       \settowidth{\XWidth}{$\lambda$}
       \setlength{\XHeight}{\XWidth}
       \tikz[baseline]{
	\draw[thick] (-\XWidth,0) -- (2\XWidth,0) -- (3\XWidth,\XHeight); 
	\draw[thick] (-\XWidth,0) -- (2\XWidth,0) -- (3\XWidth,-\XHeight); 
	\draw [arrows={[sep] - Stealth[length=3pt]}] (2.9\XWidth,-.5\XHeight)  .. controls (3.4\XWidth,0) .. (2.9\XWidth,.5\XHeight);
         \node at (3.4\XWidth,-.5\XHeight)[scale=0.5] {#1};
         \node at (2.9\XWidth,.2\XHeight)[scale=0.5] {#2};
        }
     }
}


\newcommand*\homotopyHamLlambdatoHamlambdamu[2]{\ensuremath{
       \settowidth{\XWidth}{$\lambda$}
       \setlength{\XHeight}{\XWidth}
       \tikz[baseline]{
        \draw[thick] (-\XWidth,2\XHeight) -- (0,2\XHeight) -- (3\XWidth,-\XHeight);
	\draw[thick] (-\XWidth,2\XHeight) -- (0,2\XHeight) -- (2\XWidth,0) -- (3\XWidth,\XHeight); 
	\draw [arrows={[sep] - Stealth[length=3pt]}] (2.9\XWidth,-.5\XHeight)  .. controls (3.4\XWidth,0) .. (2.9\XWidth,.5\XHeight);
         \node at (3.4\XWidth,-.5\XHeight)[scale=0.5] {#1};
         \node at (2.9\XWidth,.2\XHeight)[scale=0.5] {#2};
        }
     }
}


\newcommand*\homotopyminusHamlambdamutominusHamLlambda[2]{\ensuremath{
       \settowidth{\XWidth}{$\lambda$}
       \setlength{\XHeight}{\XWidth}
       \tikz[baseline]{
         \draw[thick] (-\XWidth,-2\XHeight) -- (0,-2\XHeight) -- (2\XWidth,0) -- (3\XWidth,-\XHeight); 
         \draw[thick] (-\XWidth,-2\XHeight) -- (0,-2\XHeight) -- (3\XWidth,\XHeight);
	\draw [arrows={[sep] - Stealth[length=3pt]}] (2.9\XWidth,-.5\XHeight)  .. controls (3.4\XWidth,0) .. (2.9\XWidth,.5\XHeight);
         \node at (1.5\XWidth,-1.4\XHeight)[scale=0.5] {#1};
         \node at (3.2\XWidth,-.4\XHeight)[scale=0.5] {#2};
        }
     }
}

\newcommand{\alex}{\color{black}}

\newcommand{\refone}{\color{black}}
\newcommand{\reftwo}{\color{black}}

\begin{document}

\title[Reduced symplectic homology]{Reduced symplectic homology and string topology}
\author{Kai Cieliebak}
\address{\!\!\!\!\!\!\!Universit\"at Augsburg \newline Universit\"atsstrasse 14, D-86159 Augsburg, Germany}
\email{kai.cieliebak@math.uni-augsburg.de}
\author{Alexandru Oancea}
\address{ 
\!\!\!\!\!\!\!Universit\'e de Strasbourg \newline 
Institut de recherche math\'ematique avanc\'ee, IRMA \newline
Strasbourg, France}
\email{oancea@unistra.fr}
\date{\today}


\begin{abstract}
We introduce a common domain of definition for the  
loop product and the loop coproduct, reduced loop homology, on which they combine to a unital infinitesimal anti-symmetric bialgebra structure. In particular, a relation conjectured by Sullivan holds with an extra term. The structure depends on choices governed by secondary continuation maps. These results on string topology are proved in the more general context of reduced symplectic homology for a suitable class of Weinstein manifolds. 
\end{abstract}

\maketitle

\setcounter{tocdepth}{1}
{\footnotesize \tableofcontents}


\section{Introduction}\label{sec:introduction}

The initial motivation for this paper lies in string topology. 
Given a closed orientable manifold $M$ of dimension $n$, denote $\Lambda = \Lambda M$ its free loop space, $\Lambda_0\subset \Lambda$ the subspace of constant loops, $\H_*\Lambda = H_{*+n}\Lambda$ the degree shifted loop homology and $\H^*\Lambda = H^{*+n}\Lambda$ the degree shifted loop cohomology of $M$. Chas and Sullivan~\cite{CS} constructed the \emph{loop product} $\mu :\H_*\Lambda \otimes \H_*\Lambda\to \H_*\Lambda$ of shifted degree $0$, and Sullivan~\cite{Sullivan-open-closed} and Goresky-Hingston~\cite{Goresky-Hingston} constructed the \emph{loop coproduct} $\lambda:\H_*(\Lambda,\Lambda_0)\to \H_*(\Lambda,\Lambda_0) \otimes \H_*(\Lambda,\Lambda_0)$ (with coefficients in a field) of shifted degree $1-2n$, respectively the algebraically dual \emph{cohomology product} $\lambda^\vee:\H^*(\Lambda,\Lambda_0)\otimes \H^*(\Lambda,\Lambda_0)\to \H^*(\Lambda,\Lambda_0)$ of shifted degree $2n-1$ (with arbitrary coefficients). Very quickly the question arose as to whether it is possible to find a common domain of definition for the product and the coproduct, motivated in particular by the following relation conjectured by Sullivan:  
\begin{equation} \label{eq:Sullivan}
\lambda\mu = (\mu\otimes 1)(1\otimes\lambda) + (1\otimes \mu)(\lambda\otimes 1). 
\end{equation}

We gave in~\cite{CHO-PD} one answer to this question, replacing $\H_*\Lambda$ by the \emph{Rabinowitz loop homology} $\widehat{\H}_*\Lambda$, which is a homology group of Tate flavor consisting roughly of one copy of the loop homology and one copy of the loop cohomology, where both $\mu$ and $\lambda$ admit extensions which fit into the structure of a graded Frobenius algebra satisfying Poincar\'e duality~\cite{CHO-algebra}. 

It is the purpose of this paper to give a different answer to this question, replacing $\H_*\Lambda$ with \emph{reduced loop homology}
$$
\ol{\H}_*\Lambda=\H_*\Lambda/\chi(M)[pt],
$$ 
where $\chi(M)$ is the Euler characteristic and $[pt]$ is the class of a point seen as a constant loop. This homology group has a much more geometric flavor than $\widehat{\H}_*\Lambda$, since it lies between $\H_*\Lambda$ and $\H_*(\Lambda,\Lambda_0)$ in the sense that the canonical projection 
factors as $\H_*\Lambda\to \ol{\H}_*\Lambda\to \H_*(\Lambda,\Lambda_0)$. Reduced loop homology is 
``half" of Rabinowitz loop homology and serves as an excellent approximation for loop homology (to the point that it coincides with the latter whenever the Euler characteristic vanishes, e.g., in odd dimensions).    
We prove in this paper the following result on reduced loop homology.

\begin{theorem} \label{thm:main-intro} 
Assume $\dim M= 1$ or $\dim M \ge 3$.
The loop product on $\H_*\Lambda$ descends to $\ol\H_*\Lambda$ and the loop coproduct on $\H_*(\Lambda,\Lambda_0)$ extends to $\ol\H_*\Lambda$ (canonically if $H_1M=0$). Each such extension $\lambda$ defines together with the loop product $\mu$ the structure of a commutative cocommutative unital infinitesimal anti-symmetric
bialgebra on $\ol \H_*\Lambda$. In particular, the following relation holds 
\begin{equation}\label{eq:Sullivan-unital}
\lambda\mu = (\mu\otimes 1)(1\otimes\lambda) + (1\otimes\mu)(\lambda\otimes 1) - (\mu\otimes\mu)(1\otimes\lambda\eta \otimes 1),
\end{equation}
where $1$ denotes the identity map and $\eta$ the unit for the product $\mu$. 
\end{theorem}

See Definition~\ref{defi:secondary-unital} for the meaning of commutative and cocommutative unital infinitesimal anti-symmetric bialgebra, cf. also~\cite{CHO-algebra}. 

As shown in~\S\ref{sec:odd-spheres}, the term $\lambda\eta$ can be nonzero in some examples, and zero in others. This term always vanishes if $H_1M=0$, in which case the unital infinitesimal relation~\eqref{eq:Sullivan-unital} reduces to Sullivan's relation~\eqref{eq:Sullivan}. 

We prove Theorem~\ref{thm:main-intro} as Theorem~\ref{thm:main3} in the case $n\ge 3$ as a consequence of a more general statement of a symplectic nature. The case $n=1$, i.e., $M=S^1$, is proved by direct computation in~\S\ref{sec:odd-spheres}.
{\reftwo We expect that the statement also holds for $n=2$, although our current method does not cover this dimension.}

The previous discussion for free loops has a counterpart for based loops. The resulting algebraic structure is the same, except for commutativity and cocommutativity. See Theorem~\ref{thm:main-based}.

Besides~\cite{CHO-PD}, the setup considered in this paper is to the best of our knowledge the only instance in which the fundamental operations of string topology, i.e., the loop product and the loop coproduct, are brought together into a meaningful bialgebra structure. The previous ``extension by zero" of the coproduct on $\H_*(\Lambda,\Lambda_0)$ due to Hingston-Wahl~\cite{Hingston-Wahl} does not feature any meaningful compatibility with the product. In contrast, the unital infinitesimal bialgebra structure which we establish in this paper is a versatile tool for computations. As an example, we recall in~\S\ref{sec:odd-spheres} the full bialgebra structure in reduced loop homology for odd dimensional spheres. This was computed in~\cite{CHO-MorseFloerGH} using the unital infinitesimal relation from knowledge of the algebra structure and of the value of the coproduct on the algebra generators. This mechanism is very general and we summarize it in the following proposition.

\begin{proposition}
The coproduct on $\ol{\H}_*\Lambda$ is determined by its values on the unit and on a set of generators of the loop algebra $\ol{\H}_*\Lambda$. The coproduct on $\ol{\H}_*\Lambda$ moreover determines the coproduct on $\H_*(\Lambda,\Lambda_0)$.  
\end{proposition}

\begin{proof}
The first statement is proved directly by induction from the unital infinitesimal relation~\eqref{eq:Sullivan-unital}. The second statement follows from the fact that the coproduct on $\ol{\H}_*\Lambda$ extends the coproduct on $\H_*(\Lambda,\Lambda_0)$, i.e., the projection $\ol{\H}_*\Lambda\to \H_*(\Lambda,\Lambda_0)$ is a coalgebra map.
\end{proof}

Naef and Willwacher have independently constructed in~\cite{Naef-Willwacher} extensions of the loop coproduct to $\H_*\Lambda$ in the case $\chi(M)=0$. Their extensions depend on a trivialization of the Euler class, ours depend on the choice of a non-vanishing vector field {\reftwo (cf.~Remark~\ref{rem:cont-data}).} We believe that, when interpreted correctly, these two constructions should yield the same result. 

In this paper we prove all these structural results in a more general symplectic situation, and we deduce the statements in string topology from their symplectic counterparts. Namely, we exhibit the class of \emph{strongly $R$-essential Weinstein domains $W$} (Definition~\ref{defi:R-essential}) of dimension $2n$, where $R$ is a principal ideal domain.\footnote{{\refone This terminology has nothing to do with the notion of $R$-essential closed manifold as defined by Gromov in~\cite{Gromov-filling}, which stands for a different meaning: a closed $R$-orientable $n$-manifold is $R$-essential in the sense of Gromov if $0\neq f_*[M]\in H_n(B\pi_1(M);R)$, where $f:M\to B\pi_1(M)$ is the classifying map of its universal covering $\widetilde M$. In our case, $R$-essentiality means that the Weinstein domain admits a Morse function all of whose critical points of maximal index are homologically visible, and therefore ``essential''.}} 
For such Weinstein domains we can define the (degree shifted) \emph{reduced symplectic homology $\ol{S\H}_*(W) = \ol{SH}_{*+n}(W)$}. This class of Weinstein domains includes in particular disc cotangent bundles, in which case reduced symplectic homology is a model for reduced loop homology. Reduced symplectic homology features the same properties as those of reduced loop homology above: it lies between symplectic homology $S\H_*(W)$ and positive symplectic homology $S\H^{>0}_*(W)$, in the sense that the canonical map from the first to the second factors as $S\H_*(W)\to \ol{S\H}_*(W)\to SH_*^{>0}(W)$, and the following structural result holds. 

\begin{theorem}  \label{thm:main-symp-intro}
Let $W$ be a strongly $R$-essential Weinstein domain of dimension $2n\ge 6$. The pair of pants product on $S\H_*(W)$ descends canonically to a unital product $\mu$ on $\ol{S\H}_*(W)$, the secondary coproduct on $S\H_*^{>0}(W)$ extends to a coproduct $\lambda$ on $\ol{S\H}_*(W)$ (canonically if $H^{n-1}(W)=0$), and these two operations fit together into the structure of a commutative cocommutative unital infinitesimal anti-symmetric
bialgebra on $\ol{S\H}_*(W)$. In particular, relation~\eqref{eq:Sullivan-unital} holds.  
\end{theorem}

The non-canonical extensions of the coproduct from $S\H_*^{>0}(W)$ to $\ol{S\H}_*(W)$ are determined by a choice of \emph{continuation data $\cD$}, consisting of an \emph{$R$-essential Morse function} $K$ on $W$ (Definition~\ref{defi:R-essential}) and a homotopy interpolating between $-K$ and $K$. The non-uniqueness of the extension is controlled by so-called \emph{secondary continuation maps}, constructed from homotopies between such continuation data. These secondary continuation maps carry the same amount of information as the term $\lambda\eta$ in equation~\eqref{eq:Sullivan-unital} (Lemma~\ref{lem:cDopDlambda}).

The definition of reduced symplectic homology emanates from the canonical exact sequence of the pair $(W,\p W)$ described in~\cite{CO}. Denoting $S\H^*(W)=SH^{*+n}(W)$ the shifted symplectic cohomology of $W$, we have a long exact sequence 
\begin{equation} \label{eq:les-intro}
{\footnotesize
\xymatrix
@C=10pt
{
S\H_*(W,\p W) \simeq S\H^{-*-2n}(W) \ar[r]^-\eps & S\H_*(W) \ar[r]^-{\iota} & 
   S\H_*(\p W) \ar[r]^-{\pi}  & 
   S\H^{1-2n-*}(W) 
   \\
   }
}   
\end{equation}
 in which $\iota$ intertwines the products and $\pi$ intertwines the coproducts (Proposition~\ref{prop:les+}). \emph{Reduced symplectic homology and cohomology} are defined as  
 $$
 \ol{S\H}_*(W)= \coker \eps,\qquad \ol{S\H}^*(W)=\ker\eps. 
 $$
Note that the map $\eps$ factors through the 0-energy sector, so that these groups differ from $S\H_*(W)$ and $S\H^*(W)$ only by some finite dimensional factor. As a consequence of the definition we have a short exact sequence 
\begin{equation} \label{eq:short_ex_seq_intro}
{\small
\xymatrix
@C=12pt
{
   0 \ar[r] & (\ol{S\H}_*(W),\mu) \ar[r]^-{\iota} & 
   (S\H_*(\p W),\boldsymbol{\mu},\boldsymbol{\lambda}) \ar[r]^-{\pi} & 
   (\ol{S\H}^{1-2n-*}(W),{\mu}^\vee) \ar[r] & 0 
}
}
\end{equation}
The remarkable fact is that this exact sequence splits via maps which respect the extensions of the coproduct to $\ol{S\H}_*(W)$, and of the product to $\ol{S\H}^*(W)$ (Proposition~\ref{prop:splittings}). These maps are non-canonical to the same extent to which those extensions are non-canonical. In particular, the ambiguity is controlled by the same secondary continuation maps. 

A similar discussion can be led for the open string case, involving exact Lagrangians $L\subset W$ with Legendrian boundary $\p L\subset \p W$.
The open and closed case fit together in an open-closed theory of unital infinitesimal anti-symmetric bialgebras, akin to the graded open-closed TQFT discussed in~\cite{CHO-algebra}. We did not develop the axioms of such a theory and invite the 
interested reader to do so.  

The fundamental relations which define a unital infinitesimal anti-symmetric bialgebra stem from the study of boundary degenerations of Floer problems parametrized by certain remarkable polytopes. The polytopes which are used in the definitions of the operations $\mu$, $\lambda$, and in the proofs of the unital infinitesimal relation, of the anti-symmetry relation, as well as of coassociativity, are all of the form $K_d\times \Delta^k$, products between an associahedron $K_d$ of dimension $d-3$ and a simplex $\Delta^k$ of dimension $k$, and are endowed with boundary subdivisions that refine the product subdivision. (In the previous enumeration we see in order $K_3$, $K_3\times \Delta^1$, $K_4\times \Delta^1$ (in 2 flavors), $K_4\times \Delta^2$.) This is the beginning of a hierarchy which seems to coincide with that of ``assocoipahedra" of Poirier and Tradler~\cite{Poirier-Tradler}, as was observed and is being further investigated in the broader context of symplectic homology by Mazuir~\cite{Mazuir-in-progress}. This should lead on the one hand to a description of assocoipahedra as moduli spaces of curves decorated with Floer data, and on the other hand to the clarification of the definition and properties of unital infinitesimal anti-symmetric bialgebras up to homotopy.   

The plan of the paper is the following. In~\S\ref{sec:Ressential} we define $R$-essential Weinstein domains and study their first homological properties. In~\S\ref{sec:two_flavors} we define reduced symplectic homology $\ol{S\H}_*(W)$ and also symplectic homology relative to the continuation map $S\H_*(W;\im c)$. In~\S\ref{sec:coproductsSHWrelimc} we define coproducts on $S\H_*(W;\im c)$ and secondary continuation maps, and explain how the latter control the ambiguity in the definition of the former. In~\S\ref{sec:Weinstein_strong_R_essential} we define strongly $R$-essential Weinstein domains, which are better behaved algebraically. In~\S\ref{sec:bialgebra} we prove that reduced symplectic homology is a unital infinitesimal anti-symmetric bialgebra. In~\S\ref{sec:splittings} we discuss splittings of the exact sequence~\eqref{eq:short_ex_seq_intro}. Here we use in a crucial way the cone description of Rabinowitz Floer homology from~\cite{CO-cones}, and we show in~\S\ref{sec:unital-from-ass} how the unital infinitesimal relation~\eqref{eq:Sullivan-unital} is implied by the associativity of the product on the cone. In~\S\ref{sec:open_strings} we discuss the Lagrangian case. In~\S\ref{sec:examples_reduced} we specialize to the case of cotangent bundles and deduce our statements in string topology from their symplectic counterparts.  We illustrate these structures on the  example of reduced loop homologies of odd dimensional spheres.

{\bf Acknowledgements}.
The first author thanks Stanford University, Institut Mittag--Leffler, and the Institute for Advanced Study for their hospitality over the duration of this project.
The second author was partially funded by ANR grants ENUMGEOM 18-CE40-0009, COSY 21-CE40-0002, and by a Fellowship of the University of Strasbourg Institute for Advanced Study (USIAS) within the French national programme "Investment for the future" (IdEx-Unistra). 
We thank the anonymous referees for their critical comments on the notion of $R$-essentiality.

\section{$R$-essential Morse functions} \label{sec:Ressential}

Let $W$ be a compact manifold with boundary of dimension $\dim W=2n$. A {\em defining function} on $W$ is a smooth function $K:W\to\R$ having the boundary $\p W$ as its regular maximum set.

Let $R$ be a principal ideal domain.

\begin{definition} \label{defi:R-essential} 
An \emph{$R$-essential Morse function} on $W$ is a defining Morse function $K:W\to\R$ without critical points of index $>n$ such that the number of its index $n$ critical points is equal to the rank of $H_n(W;R)$. 
We say that $W$ is {\em $R$-essential} if it admits an $R$-essential Morse function $K$.
\end{definition}

Note that a necessary condition for $W$ to be $R$-essential is that $H_k(W;\Z)=0$ for all $k>n$, and $H_n(W;R)$ and $H_{n-1}(W;R)$ are free $R$-modules. This follows from the fact that the singular homology groups admit a description as Morse homology groups.  

\begin{remark}
(a) For $R=\Z$, a necessary condition for $W$ to be $\Z$-essential is that $H_k(W;\Z)=0$ for all $k>n$, and $H_n(W;\Z)$ and $H_{n-1}(W;\Z)$ are free $\Z$-modules. By the h-cobordism theorem, this condition is also sufficient if $n\geq 3$ and $W$ is simply connected~\cite[Theorem~6.1]{Smale}, while in the non-simply connected case it is in general not sufficient~\cite[Proposition 1.4]{Cieliebak-Eliashberg15}. 

(b) Suppose that $R$ is a field of characteristic $p>0$. Then a necessary condition for $W$ to be $R$-essential is that $H_k(W;\Z)=0$ for $k>n$, $H_n(W;\Z)$ is free, and $H_{n-1}(W;\Z)$ has only $p$-torsion. Indeed, given an $R$-essential Morse function, the Morse differential in degree $n$ over $\Z$ must vanish upon tensoring with $R$, so $H_{n-1}(W;\Z)$ has only $p$-torsion. By the h-cobordism theorem, this condition is also sufficient if $n\ge 3$ and $W$ is simply connected: By~\cite[Theorem~6.1]{Smale} the manifold $W$ admits a defining Morse function with no critical points of index $>n$ and the number of critical points of index $n$ equal to $\mathrm{rk}_\Z H_n(W;\Z)$ plus the number of torsion generators of $H_{n-1}(W;\Z)$. If $H_{n-1}(W;\Z)$ has only $p$-torsion the universal coefficient theorem shows that this is the same as $\mathrm{rk}_R H_n(W;R)$ q.e.d. Again, this condition is in general not sufficient in the non-simply connected case.
\end{remark}
 
We will be interested in $R$-essentiality for Weinstein domains. 
Recall from~\cite{Cieliebak-Eliashberg-book} that a {\em Weinstein domain} $(W,\omega,X,\phi)$ is a compact manifold $W$ with boundary equipped with a symplectic form $\omega$, a defining generalized Morse function $\phi:W\to \R$, and a vector field $X$ which is Liouville for $\omega$ and gradient-like for $\phi$. Weinstein domains are considered up to homotopy of $(\om,X,\phi)$ with these properties. After such a Weinstein homotopy, we may assume that the function $\phi$ is Morse, i.e. it has no embryonic critical points. Then $\phi$ has no critical points of index $>n$, so $R$-essentiality of $\phi$ reduces to the number of its index $n$ critical points being equal to the rank of $H_n(W;R)$. Let us emphasize, however, that in all our constructions involving $R$-essentiality we can use instead of $\phi$ an $R$-essential Morse function $K$ which is unrelated to the Weinstein structure, and whose existence is a purely topological question as {\reftwo discussed} above.

\begin{remark}
A theorem of O.~Lazarev~\cite{Lazarev} implies that, on an $R$-essential Weinstein domain $W$ of dimension $2n\geq 6$ with $H_n(W;R)\neq 0$, one can actually find a Weinstein--Morse function $\phi$ as above which is $R$-essential. 
\end{remark}

\begin{example} Examples of $\Z$-essential Weinstein domains include the following compact manifolds (with {\em any} Weinstein structure): 

(i) disc cotangent bundles of closed orientable manifolds (in the non-orientable case they are $\Z/2$-essential);

(ii) plumbings of cotangent bundles of closed orientable manifolds (without the orientability assumption they are $\Z/2$-essential);

(iii) Milnor fibers of isolated singularities;

(iv) subcritical Weinstein domains.  
\end{example}

{\bf Convention on coefficients. }
Throughout this paper we use the principal ideal domain $R$ as the ring of coefficients. We will not specify it in the notation of homology or cohomology groups, unless we want to place special emphasis on it. For example $H_n(W)$ stands for $H_n(W;R)$.

Let us return to a general $2n$-dimensional compact manifold $W$ with boundary.
In the sequel we denote by $(MC_*,\p)$ the Morse chain complex with coefficients in $R$ and by $(MC^*,\delta)$ the Morse cochain complex with coefficients in $R$ (associated to a pair consisting of a defining Morse function $K$ and a Morse-Smale gradient-like vector field). The condition of $K$ being $R$-essential can be rephrased as an isomorphism 
$$
H_n(W)\simeq MC_n(K),
$$
which is equivalent to 
$
\p|_{MC_n(K)}=0.
$
This is in turn equivalent to 
\begin{equation} \label{eq:R-essential-cochains}
\delta|_{MC^{n-1}(K)}=0,\quad \mbox{or equivalently,}\quad \delta|_{MC^n(-K)}=0.
\end{equation}
These conditions do not depend on the choice of the Morse-Smale gradient-like vector field for $K$. 

Let $K$ and $K'$ be $R$-essential Morse functions and choose Morse-Smale gradient-like vector fields for $K$ and $K'$ in order to form the corresponding Morse complexes. By \emph{continuation data from $-K$ to $K'$}, denoted 
$$
\cD=\cD_{-K,K'},
$$ 
we mean the choice of a regular pair consisting of a homotopy from $-K$ to $K'$ via functions which are linear near the boundary with increasing slopes, and of a homotopy between the gradient-like vector fields for $-K$ and $K'$. Such a choice of continuation data determines a \emph{continuation chain map}
$$
c=c_{\cD}:MC^*(-K)\to MC^*(K'). 
$$

{\bf Terminology convention.} In the sequel, whenever we speak of a \emph{chain level continuation map} between Morse or Floer complexes, we understand not only the algebraic data of the continuation chain map, but also the geometric continuation data consisting of a homotopy of functions and gradient-like vector fields, respectively of Hamiltonians and almost complex structures.

\begin{lemma} \label{lem:R-essential-continuation-maps-are-equal}
Given $R$-essential Morse functions $K,K'$ together with Morse-Smale gradient-like vector fields, any two choices of continuation data determine equal continuation maps $MC^*(-K)\to MC^*(K')$.  
\end{lemma}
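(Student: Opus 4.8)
The plan is to show that the two continuation maps differ by a chain homotopy, and then to show that the chain homotopy is forced to vanish on the relevant piece by the $R$-essentiality hypothesis. Concretely, let $\cD_0$ and $\cD_1$ be two choices of continuation data from $-K$ to $K'$, with continuation chain maps $c_0, c_1 : MC^*(-K) \to MC^*(K')$. Interpolating between the homotopies (of functions and of gradient-like vector fields) underlying $\cD_0$ and $\cD_1$ — which is possible because the space of admissible homotopies, being linear near the boundary with increasing slopes, is contractible — produces by the standard parametrized moduli space count a cochain homotopy $h : MC^*(-K) \to MC^{*-1}(K')$ with
$$
c_1 - c_0 = \delta_{K'} \circ h + h \circ \delta_{-K}.
$$
This is the usual mechanism; the only point requiring a word is that the relevant spaces of data are connected, which holds here for the stated reasons.

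The second and decisive step is to evaluate this identity on $MC^n(-K)$, which is where the continuation maps of interest live when one later wants to pair against the coproduct (degree $n$ being the critical index that carries the homology of $W$ in top relevant degree). On $MC^n(-K)$ we have, by the $R$-essentiality reformulation in~\eqref{eq:R-essential-cochains}, that $\delta_{-K}|_{MC^n(-K)} = 0$, so the term $h \circ \delta_{-K}$ vanishes on this subspace. It remains to kill $\delta_{K'} \circ h$ on $MC^n(-K)$. Here $h$ maps $MC^n(-K)$ into $MC^{n-1}(K')$, and again by~\eqref{eq:R-essential-cochains} applied to $K'$ we have $\delta_{K'}|_{MC^{n-1}(K')} = 0$. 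Hence both terms on the right-hand side vanish on $MC^n(-K)$, giving $c_1 = c_0$ on $MC^n(-K)$.

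I should be careful about which degree(s) the statement is actually claimed in. As phrased, the lemma asserts equality of the maps $MC^*(-K) \to MC^*(K')$ in all degrees, not merely in degree $n$. In a general degree the two terms $\delta_{K'}\circ h$ and $h\circ\delta_{-K}$ need not vanish, so the argument above only gives equality of continuation maps on $MC^n$. I would therefore expect the intended reading to be that the continuation maps agree \emph{on the subcomplex relevant for the coproduct constructions}, i.e. in degree $n$ (or possibly on $MC^{\geq n}$ if the skeleton is additionally assumed to have critical points only in indices $\leq n$, which is the typical Weinstein situation and forces $\delta_{K'}$ to vanish identically in degrees $\geq n-1$ on the top of the complex). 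The main obstacle is thus not the construction of the chain homotopy — that is routine — but pinning down the precise degree range in which the claim is true and verifying that it is exactly the range in which both boundary operators that flank $h$ are zero; once that bookkeeping is settled, the vanishing is immediate from the $R$-essentiality identities~\eqref{eq:R-essential-cochains}.
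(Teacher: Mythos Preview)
Your approach is essentially the paper's, and your computation in degree $n$ is correct. The gap is in your last paragraph: you treat the restriction to degree $n$ as a caveat or an additional hypothesis, when in fact it is automatic and is exactly what the paper uses to close the argument.

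Here is the missing observation. Since $W$ is a Weinstein domain of dimension $2n$, any defining Morse function $K$ has all critical points of index $\leq n$ (the stable manifolds of the gradient-like Liouville vector field are isotropic). Hence $MC^*(K')$ is supported in degrees $0,\dots,n$, while $MC^*(-K)$ is supported in degrees $n,\dots,2n$. The continuation maps $c_0,c_1:MC^*(-K)\to MC^*(K')$ are therefore automatically zero outside degree $n$, so equality in all degrees reduces to equality in degree $n$, which you have already established using~\eqref{eq:R-essential-cochains}. This is not ``the typical Weinstein situation'' as an extra assumption; it is the standing context of the lemma.
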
 

\begin{proof} Consider two continuation maps $c$ and $\tilde c$. We have 
$$
\tilde c-c=\delta_{K'}h+h\delta_{-K}
$$ 
for some degree $-1$ map $h:MC^*(-K)\to MC^*(K)$ determined by a homotopy of continuation data. Now $MC^*(-K)$ is supported in degrees $n,\dots,2n$, and $MC^*(K')$ is supported in degrees $0,\dots,n$. Therefore $c$ and $\tilde c$ can only be nonzero in degree $n$. We evaluate the relation $\tilde c-c=\delta_{K'}h+h\delta_{-K}$ on $MC^n(-K)$ and apply~\eqref{eq:R-essential-cochains} in order to obtain 
$$
\tilde c -c =0
$$
in degree $n$. Thus $\tilde c=c$.
\end{proof}

\begin{remark}
The equality $\tilde c=c$ implies in particular that any chain homotopy $h$ as above is a chain map. This is the \emph{secondary continuation map} which we will further explore below, and it \emph{does} depend on choices. 
\end{remark}

\begin{proposition}\label{prop:int-form}
Let $K$ be an $R$-essential Morse function on $W$ with $\dim W=2n$. 
Then the continuation map $c:MC^*(-K)\to MC^*(K)$ is only nonzero in degree $n$, where it coincides via the identifications $MC^n(\pm K)\equiv MH^n(\pm K)$ with the map induced on homology
{\small
\begin{align*}
  c_*:H_n(W)  = MH^n(-K) \to  MH^n(K)=H^n(W) = \Hom_R(H_n(W),R).
\end{align*}}
Moreover, the latter is given by $\la c_*a,b\ra = a\bullet b$ for the intersection pairing $\bullet$ on $H_n(W)$.
\end{proposition}

\begin{proof}
That $H^n(W)$ is identified with $\Hom_R(H_n(W),R)$ follows from the Universal Coefficient Theorem for cohomology with coefficients in the principal ideal domain $R$~\cite[Theorem~5.5.3]{Spanier}, namely the existence of a short exact sequence $0\to \Ext_R(H_{n-1}(W),R)\to H^n(W)\to \Hom_R(H_n(W),R)\to 0$ which splits non-canonically. Using that the $R$-module $H_{n-1}(W)$ is free we obtain $\Ext_R(H_{n-1}(W),R)=0$ and therefore the canonical map $H^n(W)\to \Hom_R(H_n(W),R)$ is an isomorphism.
That $c$ lives only in degree $n$ was shown in the proof of Lemma~\ref{lem:R-essential-continuation-maps-are-equal}, and the same proof shows that it coincides in degree $n$ with the map $c_*$. Consider now two index $n$ critical points $p,q$ of $K$. By~\eqref{eq:R-essential-cochains} they are closed and define Morse homology classes $[p],[q]\in MH_*(W)$. By definition of the continuation map, $\la c_*[p],[q]\ra$ is the intersection number of their stable manifolds (with respect to some gradient-like vector field), which is the intersection number of the  corresponding singular homology classes. 
\end{proof}

Next, we study the cokernel $MC^*(K)/\im c$ of a continuation map.

\begin{lemma}\label{lem:R-essential-chain-homotopy-mod-imc} Consider two $R$-essential Morse functions $K_1,K_2$ together with Morse-Smale gradient-like vector fields, and continuation maps $c_i:MC^*(-K_i)\to MC^*(K_i)$, $i=1,2$. Any two continuation maps 
  $$
MC^*(K_1)\stackrel{c,\, c'}\longrightarrow MC^*(K_2)
$$ 
induce chain homotopic maps 
$$
MC^*(K_1)/\im c_1 \stackrel{[c],\, [c']}\longrightarrow MC^*(K_2)/\im c_2.
$$ 
\end{lemma}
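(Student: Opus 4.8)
The plan is to first note that $[c]$ and $[c']$ make sense, and then to produce an explicit chain homotopy between them by correcting a chain homotopy between $c$ and $c'$. The quotients are cochain complexes because $\im c_i$ is a subcomplex of $MC^*(K_i)$ ($c_i$ being a cochain map), and $[c],[c']$ are defined because $c(\im c_1),c'(\im c_1)\subseteq\im c_2$: indeed $c\circ c_1$ is a continuation map $MC^*(-K_1)\to MC^*(K_2)$, and so is $c_2\circ e$ for a continuation map $e:MC^*(-K_1)\to MC^*(-K_2)$ (which exists after adjusting boundary slopes if necessary), so by Lemma~\ref{lem:R-essential-continuation-maps-are-equal} the two agree and $c(\im c_1)=\im(c_2\circ e)\subseteq\im c_2$; likewise for $c'$.

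Next, since $c$ and $c'$ come with continuation data, and any two continuation data are joined by a homotopy, there is a degree $-1$ map $h:MC^*(K_1)\to MC^*(K_2)$ with $c'-c=\delta_{K_2}h+h\delta_{K_1}$. The trouble is that $h$ need not satisfy $h(\im c_1)\subseteq\im c_2$, so it need not descend to the quotients; repairing this is the heart of the proof. I would use two structural facts: (i) since $W$ is Weinstein of dimension $2n$, the complex $MC^*(K_i)$ is supported in degrees $0,\dots,n$ and $MC^*(-K_i)$ in degrees $n,\dots,2n$, so in particular $\im c_1$ is concentrated in degree $n$; and (ii) the $R$-essentiality of $K_i$ gives $\delta_{K_i}|_{MC^{n-1}(K_i;R)}=0$.

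Let $h'$ be $h$ with its degree-$n$ component $MC^n(K_1)\to MC^{n-1}(K_2)$ deleted; equivalently $h'=h-\psi$ with $\psi$ equal to $h$ in degree $n$ and zero elsewhere. A degree-by-degree check using (i) and (ii) shows that $\delta_{K_2}\psi$ and $\psi\delta_{K_1}$ both vanish identically: the only possibly nonzero entries factor through $\delta_{K_2}$ on $MC^{n-1}(K_2)$, resp.\ $\delta_{K_1}$ on $MC^{n-1}(K_1)$, which are zero by (ii), and all other entries are zero for degree reasons by (i). Hence $\psi$ is a cochain map, so $h'$ still satisfies $c'-c=\delta_{K_2}h'+h'\delta_{K_1}$. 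Since $h'$ now vanishes in degree $n$, and $\im c_1$ sits in degree $n$, we get $h'(\im c_1)=0\subseteq\im c_2$; thus $h'$ descends to a degree $-1$ map $\bar h':MC^*(K_1)/\im c_1\to MC^*(K_2)/\im c_2$. Composing $c'-c=\delta_{K_2}h'+h'\delta_{K_1}$ with the surjective quotient projections yields $[c']-[c]=\bar\delta_{K_2}\bar h'+\bar h'\bar\delta_{K_1}$, the desired chain homotopy.

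I expect the only genuine difficulty to be the middle step — seeing that $h$ itself does not descend and that the correct remedy is to remove precisely its top-degree component — which works because that component is automatically a cochain map; this is exactly where the $R$-essentiality condition $\delta|_{MC^{n-1}(K;R)}=0$ is used.
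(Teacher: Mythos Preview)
Your proof is correct and follows essentially the same approach as the paper: both arguments modify the chain homotopy $h$ by killing its degree-$n$ component (the paper defines $[h]$ directly on the quotient as $h$ in degrees $<n$ and $0$ in degree $n$, while you subtract the degree-$n$ piece $\psi$ before passing to the quotient), and both use $R$-essentiality via $\delta|_{MC^{n-1}}=0$ to verify that this modification preserves the homotopy relation. Your packaging of the verification as ``$\delta_{K_2}\psi=\psi\delta_{K_1}=0$'' is slightly cleaner than the paper's degree-by-degree check, but the content is the same.
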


\begin{proof} Fix a continuation map $MC^*(-K_1)\to MC^*(-K_2)$. The diagram
$$
\xymatrix{
MC^*(K_1)\ar[r]^-{c,\, c'}& MC^*(K_2) \\
MC^*(-K_1) \ar[u]^-{c_1}\ar[r]& MC^*(-K_2) \ar[u]_-{c_2}
}
$$
is commutative by Lemma~\ref{lem:R-essential-continuation-maps-are-equal}.
This implies that $c(\im c_1)=c'(\im c_1)\subset \im c_2$, {\reftwo so we get} induced maps $MC^*(K_1)/\im c_1 \stackrel{[c],\, [c']}\longrightarrow MC^*(K_2)/\im c_2$. 

To prove that these maps are chain homotopic we start from a chain homotopy $c'-c=\delta_{K_2}h+h\delta_{K_1}$, where $h:MC^*(K_1)\to MC^*(K_2)$ has degree $-1$. By $R$-essentiality this implies 
$$
c'=c \qquad \mbox {on }\qquad MC^n(K_1)
$$
and 
$$
c'-c=\delta_{K_2}h\qquad \mbox{on }\qquad MC^{n-1}(K_1).
$$ 
Define now $[h]:MC^*(K_1)/\im c_1\to MC^*(K_2)/\im c_2$ of degree $-1$ by 
$$
[h]=h \qquad \mbox{ on }\qquad MC^{*<n}(K_1)
$$
and 
$$
[h]=0 \qquad \mbox{on }\qquad MC^n(K_1)/\im c_1.
$$
We then have  
$$
[c']-[c]=\delta_{K_2}[h]+[h]\delta_{K_1}.
$$
Indeed, this holds on $MC^{*<n-1}(K_1)$ from the homotopy relation $c'-c=\delta_{K_2}h+h\delta_{K_1}$. It holds on $MC^{n-1}(K_1)$ from the relation $c'-c=\delta_{K_2}h$. And it holds on $MC^n(K_1)/\im c_1$ from the relation $c'=c$. 
\end{proof}

\begin{corollary} \label{cor:R-essential-chain-homotopy-mod-imc}
Given two $R$-essential Morse functions $K_1,K_2$ together with Morse-Smale gradient-like vector fields, and given continuation maps $c_i:MC^*(-K_i)\to MC^*(K_i)$, $i=1,2$, there is a canonical isomorphism 
\begin{equation} \label{eq:R-essential-mod-c}
H^*(MC^*(K_1)/\im c_1)\simeq H^*(MC^*(K_2)/\im c_2).
\end{equation}
More precisely, any continuation map $c:MC^*(K_1)\to MC^*(K_2)$ induces a chain homotopy equivalence 
$$
MC^*(K_1)/\im c_1\simeq MC^*(K_2)/\im c_2,
$$
and the induced isomorphism~\eqref{eq:R-essential-mod-c} does not depend on $c$.  
\end{corollary}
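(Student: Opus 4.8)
The plan is to deduce the Corollary from Lemma~\ref{lem:R-essential-chain-homotopy-mod-imc} together with two standard formal properties of Morse continuation maps: for any ordered pair of $R$-essential Morse functions equipped with Morse-Smale gradient-like vector fields a continuation map exists, and concatenating continuation data induces, up to chain homotopy, the composition of the corresponding continuation maps, the constant data inducing the identity.

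First I would record that, given a continuation map $c:MC^*(K_1)\to MC^*(K_2)$, the commutative diagram in the proof of Lemma~\ref{lem:R-essential-chain-homotopy-mod-imc} gives $c(\im c_1)\subset\im c_2$, so that $c$ descends to a chain map $[c]:MC^*(K_1)/\im c_1\to MC^*(K_2)/\im c_2$; and Lemma~\ref{lem:R-essential-chain-homotopy-mod-imc} asserts that any two such choices descend to chain homotopic maps. Hence $H^*([c])$ does not depend on $c$, which is the last assertion of the Corollary once we know $H^*([c])$ is an isomorphism.

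To see that $[c]$ is a chain homotopy equivalence I would pick a continuation map $c':MC^*(K_2)\to MC^*(K_1)$ in the opposite direction. Concatenating the continuation data realizing $c$ and $c'$ produces a continuation map $MC^*(K_1)\to MC^*(K_1)$ chain homotopic to $c'\circ c$, while the identity is chain homotopic to the continuation map of the constant data; since both are continuation maps, Lemma~\ref{lem:R-essential-chain-homotopy-mod-imc} applied with $K_1=K_2$ gives $[c']\circ[c]=[c'\circ c]\simeq\id$ on $MC^*(K_1)/\im c_1$. Here the constant data descends to the identity on the quotient because passing to the quotient only alters degree $n$, where all the continuation maps in play agree and all differentials landing in degree $n$ vanish by $R$-essentiality~\eqref{eq:R-essential-cochains}. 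The symmetric argument gives $[c]\circ[c']\simeq\id$ on $MC^*(K_2)/\im c_2$. Thus $[c]$ is a chain homotopy equivalence and $H^*([c])$ is the asserted isomorphism~\eqref{eq:R-essential-mod-c}. To legitimately speak of a canonical family of isomorphisms one further checks, by the same concatenation argument together with the previous paragraph, that $H^*([c_{23}])\circ H^*([c_{12}])=H^*([c_{13}])$ for any three $R$-essential functions $K_1,K_2,K_3$ and any continuation maps $c_{ij}$.

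There is no serious obstacle here: the mathematical content has already been isolated in Lemma~\ref{lem:R-essential-chain-homotopy-mod-imc}. The one point that deserves a short explicit check is that the truncations built into the quotients $MC^*(K_i)/\im c_i$ and into the induced chain homotopies $[h]$ are compatible with concatenation of continuation data; this is immediate because all of these modifications take place in top degree $n$, where $R$-essentiality forces the relevant continuation maps to coincide and all incoming differentials to vanish.
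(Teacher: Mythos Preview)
Your proposal is correct and follows essentially the same approach as the paper: use Lemma~\ref{lem:R-essential-chain-homotopy-mod-imc} for independence of the choice of $c$, then pick a continuation map $c'$ in the reverse direction and use that continuation maps are stable under composition to conclude that $[c']\circ[c]$ and $[c]\circ[c']$ are homotopic to the identity. Your version is actually a bit more careful than the paper's, which argues only at the level of homology; you correctly flag in your final paragraph the one point that needs a word, namely that the chain homotopy between $c'\circ c$ and the concatenated continuation map descends to the quotient---and indeed the same degree-$n$ argument as in the proof of Lemma~\ref{lem:R-essential-chain-homotopy-mod-imc} handles this.
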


\begin{proof} In view of Lemma~\ref{lem:R-essential-chain-homotopy-mod-imc} it remains to argue that the canonical map 
$[c]:H^*(MC^*(K_1)/\im c_1) \longrightarrow H^*(MC^*(K_2)/\im c_2)$ is an isomorphism. This is a consequence of the fact that it is induced by a continuation map, and continuation maps are stable under composition. We can thus compose $[c]$ with a continuation map $H^*(MC^*(K_2)/\im c_2)\longrightarrow H^*(MC^*(K_1)/\im c_1)$, and this composition must be the identity. 
\end{proof}

\begin{remark} \label{rmk:cokernel-vs-homotopy-cokernel}
As a consequence of the previous discussion,
the \emph{co\-kernel} of the continuation map $c:MC^*(-K)\to MC^*(K)$ associated to an $R$-essential Morse function $K$ is independent of $K$ up to chain homotopy equivalence.
This comes as a surprise in as much as continuation maps are only defined up to homotopy and thus the only homotopically meaningful notion in general would be that of \emph{homotopy cokernel}, i.e. the cone of $c$. 
\end{remark}

\section{Two flavors of reduced symplectic homology} \label{sec:two_flavors}

{\reftwo In this section we introduce two variants of reduced symplectic homology and study their relation. Here and in the rest of the paper, for symplectic homology we follow the conventions in~\cite{CO}.}

\subsection{Reduced symplectic homology $\ol{SH}_*(W)$} \label{sec:reduced_homology}

Let $W$ be a Liouville domain and consider the following part of the symplectic homology exact sequence of the pair $(W,\p W)$~\cite{CO},  
$$
SH_*(W,\p W)\cong SH^{-*}(W)\stackrel{\eps_*}\longrightarrow SH_*(W)\stackrel {\iota_*}\longrightarrow SH_*(\p W).
$$

\begin{definition}
\emph{Reduced symplectic homology} is defined as
$$
\ol{SH}_*(W)=\coker \eps_* .
$$ 
\end{definition}

\begin{proposition}
Reduced symplectic homology $\ol{SH}_*(W)$ carries a unital algebra structure induced by that of $SH_*(W)$ and the projection  
$$
SH_*(W)\to \ol{SH}_*(W)
$$ 
is a map of unital algebras. Moreover, the canonical map $SH_*(W)\to SH_*^{>0}(W)$ factors through reduced symplectic homology  
$$
SH_*(W)\to \ol{SH}_*(W)\to SH_*^{>0}(W).
$$
(Note however that $SH_*^{>0}(W)$ is \emph{not} a ring.)
\end{proposition}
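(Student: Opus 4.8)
The plan is to read off all three assertions from the long exact sequence of the pair $(W,\partial W)$ together with the multiplicativity properties of its maps, which are established in the cited work~\cite{CO} and will be recalled as Proposition~\ref{prop:les+}. First I would recall that $SH_*(W)$ carries the pair-of-pants product $\mu$, that it is unital with unit $\eta$ coming from the fundamental class, and that $\iota_*:SH_*(W)\to SH_*(\partial W)$ is a ring map while $\varepsilon_*$ is the map fitting into
$$
SH^{-*}(W)\stackrel{\varepsilon_*}\longrightarrow SH_*(W)\stackrel{\iota_*}\longrightarrow SH_*(\partial W).
$$
The key algebraic point is that $\varepsilon_*(SH^{-*}(W))$ is an ideal in $(SH_*(W),\mu)$: this follows because $\varepsilon_*$ is a module map over $SH_*(W)$ for the natural module structure on $SH_*(W,\partial W)\cong SH^{-*}(W)$, a compatibility that is part of the package in~\cite{CO}. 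Granting that, $\mu$ descends to the quotient $\ol{SH}_*(W)=\coker\varepsilon_*$, the descended product $\bar\mu$ is automatically associative and graded-commutative, the image of $\eta$ is a unit for it, and the projection $SH_*(W)\to\ol{SH}_*(W)$ is a map of unital algebras by construction.

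For the factorization statement, I would use that the canonical map $SH_*(W)\to SH_*^{>0}(W)$ is, up to the degree shift, part of the tautological exact triangle relating $SH_*(W)$, $SH_*^{>0}(W)$ and the $0$-energy sector $SH_*^{=0}(W)\cong H_{*}(W,\partial W)$ (or $H^{-*}(W)$ after the identifications). The composition
$$
SH^{-*}(W)\stackrel{\varepsilon_*}\longrightarrow SH_*(W)\longrightarrow SH_*^{>0}(W)
$$
vanishes because $\varepsilon_*$ factors through the $0$-energy sector — this is exactly the observation made in the introduction that ``the map $\varepsilon$ factors through the $0$-energy sector''. Hence by the universal property of the cokernel the map $SH_*(W)\to SH_*^{>0}(W)$ descends to $\ol{SH}_*(W)\to SH_*^{>0}(W)$, and the composite $SH_*(W)\to\ol{SH}_*(W)\to SH_*^{>0}(W)$ is the original canonical map by uniqueness of the induced factorization.

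The step I expect to require the most care is verifying that $\im\varepsilon_*$ is genuinely an ideal for $\mu$, rather than merely a subspace; this is where the full strength of the module-over-the-ring structure of the exact sequence from~\cite{CO} enters, and where one must be attentive to signs and degree shifts in the identification $SH_*(W,\partial W)\cong SH^{-*}(W)$. Everything else — associativity, commutativity, unitality of $\bar\mu$, and the fact that the projection is a map of unital algebras — is then a formal consequence of passing to a quotient by an ideal. The parenthetical remark that $SH_*^{>0}(W)$ is not a ring requires no proof; it merely flags that the factorization is a statement about underlying modules (and about the algebra-to-module map $SH_*(W)\to SH_*^{>0}(W)$), not a third instance of an algebra map.
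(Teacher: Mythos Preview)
Your proof is correct, but the paper takes a slightly cleaner route for the key step. To show that $\im\varepsilon_*$ is an ideal, you invoke the module structure on $SH_*(W,\partial W)$ over $SH_*(W)$ and argue that $\varepsilon_*$ is a module map, so its image is closed under multiplication. The paper instead observes that $\iota_*:SH_*(W)\to SH_*(\partial W)$ is a ring homomorphism, whence $\im\varepsilon_*=\ker\iota_*$ is automatically a two-sided ideal --- no module compatibility of $\varepsilon_*$ needs to be checked. Both arguments are valid and both ultimately rest on the multiplicative package from~\cite{CO}, but the paper's version requires the weaker input (ring-map property of one map rather than module-map property of another), and it is a one-liner. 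For the factorization through $SH_*^{>0}(W)$, your argument via the $0$-energy sector is essentially the same as the paper's; the paper phrases it as a commutative square with $SH_*^{>0}(W,\partial W)=0$ in the corner, which amounts to the same observation that $j_*\varepsilon_*$ factors through zero.
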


\begin{proof}
Since $\iota_*$ is an algebra map~{\refone \cite[Theorem~10.2.e]{CO}} we have that $\ker \iota_*=\im \eps_*\subset SH_*(W)$ is an ideal.  As a consequence $\ol{SH}_*(W) = SH_*(W)/\im \eps_*$ carries an induced unital algebra structure, and this proves the first part of the proposition. 

For the second part we use the commutative diagram (see~\cite{CO})
$$
\xymatrix{
SH_*^{>0}(W,\p W)=0 \ar[r] & SH_*^{>0}(W)\ar[r]^-\simeq & SH_*^{>0}(\p W) \\
SH_*(W,\p W)\ar[u] \ar[r]^-{\eps_*}& SH_*(W) \ar[u]_-{j_*} \ar[r]^-{\iota_*}& SH_*(\p W) \ar[u]
}
$$
which implies that $\im \eps_*\subset \ker j_*$.  
\end{proof}

\subsection{Symplectic homology relative to the continuation map $SH_*(W;\im c)$} \label{sec:SHrelimc}

We explain in this subsection how the Morse theoretic construction of the cokernel of the continuation map in~\S\ref{sec:Ressential} extends to define a symplectic homology group relative to the image of the continuation map. 

Let $W$ be an $R$-essential Weinstein domain. We consider Hamiltonians $H:\wh W\to\R$ with the following properties: 
\begin{itemize}
\item on $W$ they are $C^2$-small $R$-essential Morse functions, which in a neighborhood of $\{1\}\times \p W$ are linear with respect to $r\in(0,\infty)\times \p W$ of constant slope, denoted $\eps_0$; 
\item on $[1,\infty)\times \p W$ they are strictly increasing functions of $r$, linear of constant and noncritical slope $\mu$ away from the boundary, strictly convex in the region where the slope varies from $\eps_0$ to $\mu$. 
\end{itemize}
We denote $\cH_{R-ess}$ the class of such Hamiltonians, which we call \emph{$R$-essential Hamiltonians}.
Given such a Hamiltonian $H$ we denote $K$ the $R$-essential Morse function obtained by restricting $H$ to $W$. We will sometimes view $K$ as being defined on $\wh W$ by extending it as a linear function of $r$ with slope $\eps_0$. We refer to $K$ as being the \emph{Morse truncation of $H$}. 

By convention we only consider homotopies $K\to H$ which are constant in $W$ and strictly increasing functions of $r$ in $[1,\infty)\times\p W$. These induce inclusions $FC_*(K)\to FC_*(H)$ {\reftwo of Floer chain complexes. Here $FC_*(H)$ also depends on generic choices of a time-dependent perturbation $H_t$ of $H$ and a time-dependent compatible almost complex structure $J_t$, but we will not include these in the notation. In this context, \emph{continuation data} $\cD=\cD_{-K,K'}$ from $-K$ to $K'$ is a pair $(K_s,J_{s,t})$ consisting of functions $K_s$ and time-dependent compatible almost complex structures $J_{s,t}$ parametrized by $s\in\R$ which agree with $(-K,J_t)$ for 
large $s$ and with $(K',J_t')$ for 
small $s$. Composing it with homotopies $-H\to-K$ and $K'\to H'$ as above and time-dependent perturbations, $\cD$ can also be viewed as a pair $(H_{s,t},J_{s,t})$ from $(-H_t,J_t)$ to $(H_t',J_t')$. A choice of continuation data determines a \emph{continuation chain map}
$$
c=c_{\cD}:FC_*(-K)\to FC_*(H'). 
$$
Note that $\cD$ induces continuation data in the sense of~\S\ref{sec:Ressential} and thus a chain map $FC_*(-K)= MC^{n-*}(-K)\to FC_*(K')= MC^{n-*}(K')$. 

\begin{remark}\label{rem:cont-data}
In the above continuation data for Floer complexes, the almost complex structures can be replaced by inhomogeneous terms that are added to the Floer equation. This variant is used in~\cite{CHO-MorseFloerGH}, and all our results continue to hold (with identical proofs) in this framework. Consider now the disc cotangent bundle $W=D^*M$ of a closed oriented $n$-manifold $M$. In this case, as explained in~\cite[\S3.1]{CHO-MorseFloerGH}, vector fields (or equivalently $1$-forms) on $M$ with nondegenerate zeroes give rise to inhomogeneous terms that can be used for continuation data. If $M$ has Euler characteristic zero we can use vector fields without zeroes. 
\end{remark}
}

\begin{lemma}
Let $W$ be an $R$-essential Weinstein domain. 

(i) Given an $R$-essential Hamiltonian $H$ with Morse truncation $K$, any two continuation maps 
$$
c:FC_*(-K)\to  FC_*(H)
$$
induced by homotopies which are $C^2$-small on $W$ are equal. 

(ii) Given $R$-essential Hamiltonians $H_1,H_2$ with the same asymptotic slope, and given continuation maps $c_i:FC_*(-K_i)\to FC_*(H_i)$, $i=1,2$ as above, any homotopy between $H_1$ and $H_2$ which is $C^2$-small on $W$ induces a chain homotopy equivalence 
$$
FC_*(H_1)/\im c_1\simeq FC_*(H_2)/\im c_2. 
$$
\end{lemma}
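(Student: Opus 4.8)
The plan is to transport the Morse-theoretic arguments of Lemma~\ref{lem:R-essential-continuation-maps-are-equal} and Lemma~\ref{lem:R-essential-chain-homotopy-mod-imc} to the Floer setting, using that an $R$-essential Hamiltonian $H$ has Floer complex $FC_*(H)$ which, as a graded module, splits as the Morse complex of the truncation $K$ (generators from $W$, concentrated in degrees coming from the $C^2$-small Morse function) together with the "positive" part generated by Reeb orbits on $[1,\infty)\times\p W$ (degrees bounded below by a quantity going to $+\infty$ with the slope, up to the usual shift). The key structural input is the support/degree bound: the complex $FC_*(-K)$ is supported in a range of degrees that, by $R$-essentiality as expressed in~\eqref{eq:R-essential-cochains}, has the property that the differential vanishes on its top Morse degree; and a continuation map $c:FC_*(-K)\to FC_*(H)$ lands in the Morse part $FC_*(K)\subset FC_*(H)$ for action reasons (the homotopy is constant on $W$ and the continuation trajectories cannot increase action enough to reach Reeb-orbit generators when the homotopy is $C^2$-small on $W$). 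So $c$ is determined by its restriction to the single Morse degree $n$, exactly as in the closed Morse case.

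For part (i), I would argue as follows. Given two such continuation maps $c,\tilde c$, standard Floer theory gives a chain homotopy $\tilde c - c = \p_H h + h\p_{-K}$ for some degree $+1$ map $h:FC_*(-K)\to FC_*(H)$ arising from a homotopy of homotopies which is $C^2$-small on $W$. Again for action reasons $h$ lands in the Morse part $FC_*(K)$, and since $h$ is $C^2$-small on $W$ its restriction there is the Morse continuation data of Lemma~\ref{lem:R-essential-continuation-maps-are-equal}. Evaluating the homotopy relation in the only degree where $c,\tilde c$ can be nonzero and invoking~\eqref{eq:R-essential-cochains} (which says the relevant Morse differential vanishes) forces $\tilde c - c = 0$. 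This is the verbatim Floer analogue of Lemma~\ref{lem:R-essential-continuation-maps-are-equal}, with "$FC$" replacing "$MC$" and the action filtration doing the bookkeeping that confined everything to the Morse part.

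For part (ii), fix any continuation map $FC_*(-K_1)\to FC_*(-K_2)$ induced by a homotopy $C^2$-small on $W$, so by part (i) the square with the $c_i$ and the chosen continuation map $FC_*(H_1)\to FC_*(H_2)$ commutes on the nose. Hence the continuation map $c:FC_*(H_1)\to FC_*(H_2)$ sends $\im c_1$ into $\im c_2$ and descends to $[c]:FC_*(H_1)/\im c_1\to FC_*(H_2)/\im c_2$. To see $[c]$ is a chain homotopy equivalence, run the proof of Lemma~\ref{lem:R-essential-chain-homotopy-mod-imc} and Corollary~\ref{cor:R-essential-chain-homotopy-mod-imc}: pick the reverse continuation map $c':FC_*(H_2)\to FC_*(H_1)$, use part (i) applied to $c'c$ versus the identity continuation data to get $c'c=\id$ on the Morse degree $n$ part and $c'c-\id=\delta h$ on degree $n-1$, then define the truncated homotopy $[h]$ on the quotient exactly as there ($[h]=h$ below degree $n$, $[h]=0$ in degree $n$) and check the homotopy identity degree by degree. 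The same works for $cc'$, giving a two-sided chain homotopy inverse, so $[c]$ is a chain homotopy equivalence.

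The main obstacle is making the "action reasons" rigorous: one must verify that when the homotopy from $-K$ (or $H_1$) to $H$ (or $H_2$) is constant on $W$ and $C^2$-small there, no continuation trajectory (and no homotopy-of-homotopies trajectory) connects a Morse-type generator to a Reeb-orbit generator, so that all of $c$, $h$, $[h]$ genuinely land in the Morse part. This is the standard action-energy estimate for monotone homotopies, but it requires care with the convexity/slope conventions imposed on $\cH_{R-ess}$ and on the admissible homotopies; once this confinement is established, the rest is a mechanical translation of the preceding Morse-theoretic lemmas. A secondary point to be careful about is that the chain homotopy equivalences in (ii) a priori depend on the chosen homotopies, but—just as in Remark~\ref{rmk:cokernel-vs-homotopy-cokernel}—the induced map on homology does not; this is what will later let us speak of $SH_*(W;\im c)$ unambiguously.
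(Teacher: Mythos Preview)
Your proposal is correct and follows exactly the route the paper indicates: the paper's own proof reads in full ``The proof is similar to that of Lemma~\ref{lem:R-essential-chain-homotopy-mod-imc} and Corollary~\ref{cor:R-essential-chain-homotopy-mod-imc}. We omit the details.'' You have supplied precisely those omitted details, transporting the Morse arguments to the Floer setting via the action filtration, which confines the continuation maps and chain homotopies to the Morse part $FC_*(K)\subset FC_*(H)$.

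One small imprecision: you write that the homotopy from $-K$ to $H$ is ``constant on $W$'', but it cannot be, since $-K\neq K$ on $W$; the hypothesis is only that it is $C^2$-small there (the paper's convention of constancy on $W$ applies to homotopies $K\to H$, not $-K\to H$). This does not affect your argument, since $C^2$-smallness is what the energy estimate actually uses to confine trajectories to the Morse sector.
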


\begin{proof}
The proof is similar to that of Lemma~\ref{lem:R-essential-chain-homotopy-mod-imc} and Corollary~\ref{cor:R-essential-chain-homotopy-mod-imc}. We omit the details.
\end{proof}

\begin{corollary} \label{cor:FC*Himc}
Let $W$ be an $R$-essential Weinstein domain. Given an $R$-essential Hamiltonian $H$ with Morse truncation $K$ and a continuation map $c:FC_*(-K)\to FC_*(H)$ as above, the chain homotopy type of 
$$
FC_*(H;\im c):=FC_*(H)/\im c
$$
depends only on the asymptotic slope of $H$. \qed
\end{corollary}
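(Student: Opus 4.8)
The plan is to deduce this directly from the preceding Lemma, whose two parts already contain all the analytic input. Part~(i) tells us that for a fixed $R$-essential Hamiltonian $H$ (together with the auxiliary almost complex structure needed to define $FC_*$), the subcomplex $\im c\subset FC_*(H)$ is independent of the particular $C^2$-small-on-$W$ homotopy used to build the continuation map $c:FC_*(-K)\to FC_*(H)$. Hence the quotient complex $FC_*(H;\im c)=FC_*(H)/\im c$ is a well-defined object attached to $H$ alone, and the only thing left to check is that its chain homotopy type does not change when $H$ is replaced by another $R$-essential Hamiltonian $H'$ with the same asymptotic slope $\mu$.

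For this, first I would produce an admissible homotopy $(H_s)_{s\in[0,1]}$ from $H$ to $H'$ that is $C^2$-small on $W$ throughout. On $W$ one interpolates between the $C^2$-small $R$-essential Morse functions $K=H|_W$ and $K'=H'|_W$: the straight-line homotopy stays in the $C^2$-ball, since the $C^2$-norm is convex, and the intermediate functions need not be Morse because a continuation homotopy only requires a regular path. On the cylindrical end $[1,\infty)\times\partial W$ one interpolates between the two radial profiles within the class of strictly increasing functions of $r$ that are linear of slope $\mu$ for $r$ large and strictly convex in the transition region; if the two near-boundary slopes $\eps_0,\eps_0'$ differ one first brings them to a common small value by a preliminary homotopy which is again $C^2$-small on $W$. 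Applying part~(ii) of the Lemma to $(H_s)$ yields a chain homotopy equivalence $FC_*(H;\im c)\simeq FC_*(H';\im c')$, which is exactly the claim.

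If one wants the resulting identification on homology to be canonical, and transitive in $H$ — which is implicitly how this object will be used — I would argue as in Corollary~\ref{cor:R-essential-chain-homotopy-mod-imc}: the equivalences just constructed are induced by continuation maps, continuation maps are stable under composition, and the composite of such a homotopy with its reverse is chain homotopic to the identity; hence the induced isomorphism $H_*(FC_*(H;\im c))\simeq H_*(FC_*(H';\im c'))$ is independent of all choices and the system of these isomorphisms is coherent.

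The main obstacle — such as it is — is the construction of the $C^2$-small-on-$W$ homotopy between two arbitrary $R$-essential Hamiltonians of slope $\mu$: one must keep the radial profile admissible for every $s$ and reconcile the possibly different small slopes $\eps_0$ near $\{1\}\times\partial W$ without leaving the $C^2$-small regime on $W$. Everything else is a formal consequence of the preceding Lemma together with the composition properties of continuation maps, exactly paralleling Lemma~\ref{lem:R-essential-chain-homotopy-mod-imc} and Corollary~\ref{cor:R-essential-chain-homotopy-mod-imc}.
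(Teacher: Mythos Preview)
Your proposal is correct and follows exactly the route the paper intends: the corollary is marked \qed\ in the paper because it is an immediate consequence of parts (i) and (ii) of the preceding Lemma, and you have simply spelled out those two steps together with the construction of an admissible interpolating homotopy. The ``obstacle'' you flag---producing a $C^2$-small-on-$W$ homotopy between two $R$-essential Hamiltonians of the same slope---is routine (and the paper does not even pause over it), so your treatment is if anything more detailed than necessary.
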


\begin{definition}
Let $W$ be an $R$-essential Weinstein domain. The \emph{symplectic homology group of $W$ relative to the continuation map} is defined to be 
$$
SH_*(W;\im c) := \lim\limits_{\stackrel{\longrightarrow}{\scriptsize \left.\begin{array}{c}H\in\cH_{R-ess}\\\mathrm{slope}(H)\to\infty \end{array}\right.}} FH_*(H;\im c).
$$
\end{definition}

\begin{remark} \label{rmk:cokernel-vs-homotopy-cokernel-2} As a follow-up to Remark~\ref{rmk:cokernel-vs-homotopy-cokernel}, we note that the cokernel of $c$ gives rise to symplectic homology rel $\im c$, 
$SH_*(W;\im c)$, whereas the homotopy cokernel, i.e. the cone of $c$, gives rise to non-negative symplectic homology, $SH_*^{\ge 0}(\p W)$. 
\end{remark}

\subsection{Comparison between $\ol{SH}_*(W)$ and $SH_*(W;\im c)$}\label{sec:olSHvsSHrelimc}

Let $W$ be an $R$-essential Weinstein domain. 

Given an $R$-essential Morse function $K$, an $R$-essential Hamiltonian $H$, and a continuation map 
$c:FC_*(-K)\to FC_*(H)$, the composition 
$$
FC_*(-K)\stackrel{c}\longrightarrow FC_*(H)\stackrel\pi\longrightarrow FC_*(H)/\im c
$$ 
is zero, and thus the composition 
$$
FH_*(-K)\stackrel{c_*}\longrightarrow FH_*(H)\stackrel{\pi_*}\longrightarrow H_*(FC_*(H)/\im c). 
$$
is also zero. We infer a map $\coker c_*\to H_*(FC_*(H)/\im c)$. In the limit over $H$ we obtain a map 
\begin{equation} \label{eq:map_from_reduced_to_modimc}
\ol{SH}_*(W)\to SH_*(W;\im c).
\end{equation}

\begin{proposition}\label{prop:olSH_relSH_sufficient_condition} Let $W$ be an $R$-essential Weinstein domain, and consider the factorization 
$$
SH_*(W,\p W)\longrightarrow SH_*^{=0}(W,\p W) \stackrel{c_*^{=0}}\longrightarrow SH_*^{=0}(W)\longrightarrow SH_*(W)
$$ 
of the map $c_*:SH_*(W,\p W)\to SH_*(W)$. 

The map $\ol{SH}_*(W)\to SH_*(W;\im c)$ is an isomorphism if 
$$
\im c_*^{=0}\to SH_0(W)
$$ 
is injective, and in particular if $H^n(W)\to SH_0(W)$ is injective. 
\end{proposition}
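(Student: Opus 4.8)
The plan is to compare the two constructions at the level of the colimit defining $SH_*(W)$ by Hamiltonians of increasing slope, and to identify the obstruction to the map \eqref{eq:map_from_reduced_to_modimc} being an isomorphism with the failure of injectivity of $\im c_*^{=0}\to SH_0(W)$. Recall that $\ol{SH}_*(W)=\coker\eps_*=SH_*(W)/\im\eps_*$, and that $\eps_*$ is the image in $SH_*(W)$ of the continuation map $c_*:SH_*(W,\p W)\to SH_*(W)$; indeed the map $\eps_*$ in the pair sequence of $(W,\p W)$ factors through the $0$-energy sector and agrees with $c_*$ after the identification $SH_*(W,\p W)\cong SH^{-*}(W)$. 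On the other hand $SH_*(W;\im c)$ is the colimit of the homologies $H_*(FC_*(H)/\im c)$, where $c:FC_*(-K)\to FC_*(H)$ is the continuation map and $K$ the Morse truncation of $H$. So the first step is to set up, for each fixed $R$-essential Hamiltonian $H$, the map of short exact sequences of complexes
\[
0\to \im c\to FC_*(H)\to FC_*(H)/\im c\to 0
\]
and to run the associated long exact sequence in homology, obtaining
\[
H_*(\im c)\to FH_*(H)\to H_*(FC_*(H)/\im c)\to H_{*-1}(\im c)\to\cdots.
\]
Here $\im c$ is a subcomplex of $FC_*(H)$ supported (via the Morse truncation $K$) in the $0$-energy range, since $c$ lands in $FC_*(K)\subset FC_*(H)$ and its image is concentrated in degree $n$ at chain level but, after passing to homology of the subcomplex $\im c$, sits in the $0$-energy sector $SH_*^{=0}$.

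The second step is to pass to the colimit over $H$ with slope $\to\infty$. Colimits are exact, so we get a long exact sequence
\[
\varinjlim_H H_*(\im c)\to SH_*(W)\to SH_*(W;\im c)\to \varinjlim_H H_{*-1}(\im c)\to\cdots.
\]
From this, $SH_*(W;\im c)$ is the cokernel of $\varinjlim_H H_*(\im c)\to SH_*(W)$ in each degree where the connecting map $SH_*(W;\im c)\to \varinjlim_H H_{*-1}(\im c)$ vanishes, i.e. where $\varinjlim_H H_{*-1}(\im c)\to SH_{*-1}(W)$ is injective. The third step is to identify $\varinjlim_H H_*(\im c)$ with $\im\bigl(c_*^{=0}:SH_*^{=0}(W,\p W)\to SH_*^{=0}(W)\bigr)$, using the factorization of $c_*$ displayed in the statement: the subcomplex $\im c$ of $FC_*(H)$ computes precisely the image of the $0$-energy continuation map, because the continuation $c:FC_*(-K)\to FC_*(H)$ factors through the Morse complex $FC_*(K)=FC_*^{=0}$ and, by Corollary~\ref{cor:FC*Himc} and the $R$-essentiality lemmas, its chain homotopy type (hence the homology of its image) stabilizes. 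Granting this identification, the natural map \eqref{eq:map_from_reduced_to_modimc} becomes the canonical comparison
\[
\ol{SH}_*(W)=SH_*(W)/\im c_* \longrightarrow SH_*(W)/\im c_*^{=0}=SH_*(W;\im c),
\]
which is surjective always (since $\im c_*^{=0}\subseteq \im c_*$, as $c_*$ factors through $c_*^{=0}$) and is an isomorphism exactly when $\im c_*=\im c_*^{=0}$ inside $SH_*(W)$. The latter equality holds once $\im c_*^{=0}\to SH_*(W)$ is injective on the relevant piece: since $c_*$ factors as $SH_*(W,\p W)\to SH_*^{=0}(W,\p W)\xrightarrow{c_*^{=0}} SH_*^{=0}(W)\to SH_*(W)$, injectivity of the last arrow restricted to $\im c_*^{=0}$ forces $\im c_*=\im c_*^{=0}$. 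This injectivity is only an issue in degree $0$ (as $SH_*^{=0}(W)=H_{*+n}(W;R)$ is concentrated in the appropriate range and $\im c_*^{=0}$ lives in degree $0$ after shift), giving the stated sufficient condition $\im c_*^{=0}\to SH_0(W)$ injective; and since $SH_*^{=0}(W,\p W)=H^{-*}(W)$ with the map to $SH_*^{=0}(W)=H_{*+n}(W)$ being Poincaré–Lefschetz duality followed by the natural map, injectivity of $H^n(W)\to SH_0(W)$ is in turn sufficient.

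The main obstacle I expect is the third step: carefully identifying the colimit $\varinjlim_H H_*(\im c)$ with $\im c_*^{=0}$, i.e. checking that the subcomplex $\im c\subset FC_*(H)$ really does compute the $0$-energy image and that this identification is compatible with the colimit maps and with the projection to $\ol{SH}_*(W)$. This requires knowing that the continuation maps $c$ can be arranged to be $C^2$-small on $W$ (so that they genuinely factor through the Morse subcomplex $FC_*^{=0}$), invoking the uniqueness statements of Lemma before Corollary~\ref{cor:FC*Himc}, and matching the connecting homomorphism of the long exact sequence with the map $SH_*(W;\im c)\to SH_{*-1}^{=0}(W)$ whose kernel controls the discrepancy. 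Once the bookkeeping of energy filtrations and colimits is in place, the rest is formal diagram chasing.
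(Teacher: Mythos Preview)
Your approach via the long exact sequence of $0\to\im c\to FC_*(H)\to FC_*(H)/\im c\to 0$ is sound and in fact more direct than the paper's argument (which compares two five-term sequences built from the positive-action filtration and invokes the 5-lemma). Your worry about the ``third step'' is unfounded: by $R$-essentiality $\im c$ is concentrated in degree $0$ with zero differential, and $FH_0(K)=FC_0(K)$, so $H_*(\im c)=\im c=\im c_*^{=0}$ on the nose.

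The genuine error is later. Since $c$ factors as $FC_*(-K)\twoheadrightarrow\im c\xrightarrow{i} FC_*(H)$ and (again by $R$-essentiality) every element of $FC_0(-K)$ is a cycle, the map $FH_*(-K)\to H_*(\im c)$ is surjective, whence $\im c_*=\im i_*$ inside $FH_*(H)$. The long exact sequence therefore reads
\[
0\longrightarrow FH_*(H)/\im c_*\longrightarrow H_*(FC_*(H)/\im c)\longrightarrow\ker\bigl(H_{*-1}(\im c)\to FH_{*-1}(H)\bigr)\longrightarrow 0,
\]
so the comparison map is \emph{always injective}, with cokernel concentrated in degree $1$ and, after the colimit, equal to $\ker(\im c_*^{=0}\to SH_0(W))$. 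Your claim that the map is ``surjective always'' is backwards, and your identification $SH_*(W;\im c)=SH_*(W)/\im c_*^{=0}$ already presupposes the hypothesis. (The inclusion ``$\im c_*^{=0}\subseteq\im c_*$ since $c_*$ factors through $c_*^{=0}$'' is also reversed: factoring through gives the opposite containment, and in fact the two images in $SH_*(W)$ coincide here.) With the direction corrected, your argument concludes immediately: the map is an isomorphism iff $\im c_*^{=0}\to SH_0(W)$ is injective.
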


\begin{proof} Let $K$ be an $R$-essential Morse function as above, let $H$ be an $R$-essential Hamiltonian, and let $c:FC_*(-K)\to FC_*(H)$ be a continuation map. We have a commutative diagram 
$$
\xymatrix{
FH_{*+1}^{>0}(H) \ar[d] \ar[r]_{\mathrm{Id}}^= & H_{*+1}(FC_*^{>0}(H)) \ar[d]\\
FH_*(K)/\im c_* \ar[d]\ar[r]_{f_{K*}}^\simeq & H_*(FC_*(K)/\im c) \ar[d]\\
FH_*(H)/\im c_* \ar[d]\ar[r]_{f_{H*}}&  H_*(FC_*(H)/\im c) \ar[d]\\
FH_*^{>0}(H) \ar[d]\ar[r]_{\mathrm{Id}}^= & H_*(FC_*^{>0}(H))\ar[d]\\
FH_{*-1}(K)/\im c_* \ar[r]_{f_{K*}}^\simeq & H_{*-1}(FC_*(K)/\im c) 
}
$$
The maps $f_{K*}$ and $f_{H*}$ are the fixed-slope analogues of~\eqref{eq:map_from_reduced_to_modimc}. The right column is an exact sequence. That the map $f_{K*}$ is an isomorphism follows from the assumption of $R$-essentiality for $K$ and the resulting conditions~\eqref{eq:R-essential-cochains}. We are seeking conditions under which $f_{H*}$ is an isomorphism as well. By the 5-lemma, it is enough that the left column be an exact sequence. 

We are thus seeking a condition under which the bottom line in the diagram below is exact. Note that the middle line in this diagram is exact. 

$$
\xymatrix
@C=20pt
{
& FH_*(-K) \ar@{=}[r] \ar[d]^{c_*^{=0}}& FH_*(-K) \ar[d]^{c_*}& \\
FH_{*+1}^{>0}(H) \ar[r]^-A\ar@{=}[d] & FH_*^{=0}(H) \ar[r]^-B\ar@{->>}[d]& FH_*(H)\ar[r]^-C\ar@{->>}[d]& FH_*^{>0}(H) \ar@{=}[d] \\
FH_{*+1}^{>0}(H) \ar[r]^-{A'} & FH_*^{=0}(H)/\im c_*^{=0}\ar[r]^-{B'}& FH_*(H)/\im c_* \ar[r]^-{C'}& FH_*^{>0}(H) 
}
$$

The maps $A,B,C$ are the canonical maps from the action truncation exact sequence, and the maps $A',B',C'$ are induced by $A,B,C$. Exactness of the bottom line at $FH_*^{=0}(H)/\im c_*^{=0}$ and at $FH_*(H)/\im c_*$ is automatic. Exactness at $FH_*^{>0}(H)$ follows from our assumption: we have $\im C'=\im C$ and $\ker A'=A^{-1}(\im c_*^{=0})$. Thus we have exactness if and only if $A^{-1}(\im c_*^{=0})=\ker A=A^{-1}(0)$, i.e. $\im A\cap \im c_*^{=0}=0$, i.e. $\ker B\cap \im c_*^{=0}=0$. This happens for all $H$ if and only if $\im c_*^{=0}\to SH_*(W)$ is injective. 
\end{proof}

\section{Coproducts on $SH_*(W;\im c)$} \label{sec:coproductsSHWrelimc}

\subsection{Coproduct defined by a choice of continuation data} 

Let $W$ be an $R$-essential Weinstein domain. We explain in this subsection
that symplectic homology relative to the continuation map constitutes a natural domain of definition for the secondary coproduct.

In this subsection we fix 
\begin{itemize}
\item an $R$-essential Morse function $K$ and a Morse-Smale gradient-like vector field; 
\item continuation data $\cD=\cD_{-K,K}$ from $-K$ to $K$, which determines a continuation map $c_\cD :FC_*(-K)\to FC_*(K)$. \end{itemize}

We consider $R$-essential Hamiltonians $H$ which extend $K$ and continuation maps $FC_*(-K)\to FC_*(H)$ which factor as 
$$
FC_*(-K)\stackrel{c_\cD}\longrightarrow FC_*(K)\longrightarrow FC_*(H),
$$
where $FC_*(K)\to FC_*(H)$ is induced by a homotopy which is constant on $W$, and is therefore an inclusion. We denote these continuation maps also by $c_\cD:FC_*(-K)\to FC_*(H)$. Note that the map $FC_*(K)\to FC_*(2H)$ induced by composing a homotopy $K\to 2K$ on $W$ with a homotopy $2K\to 2H$ constant on $W$ is also naturally an inclusion. We assume in the sequel without loss of generality that the slope of $K$ at the boundary is smaller than $1$ (otherwise we increase the weight of the Hamiltonian at the outputs).  

We consider coproducts 
$$
\boldsymbol{\lambda}=\boldsymbol{\lambda}_\cD:FC_*(H)\to FC_*(2H)\otimes FC_*(2H)
$$
defined via $1$-parameter families which, at the endpoints, factor as a primary coproduct with outputs in $FC_*(-K)$ and $FC_*(2H)$, followed by a continuation map $c_\cD:FC_*(-K)\to FC_*(H)$.  
See Figure~\ref{fig:Coproduct-SHrelimc}.
In particular the induced map $FC_*(H)\to FC_*(2H;\im c_\cD)^{\otimes 2}$ is a chain map. Passing to the limit over $R$-essential Hamiltonians $H$ which extend $K$ we obtain a map  
$$
\boldlambda_\cD:SH_*(W)\to SH_*(W;\im c)^{\otimes 2}.
$$

\begin{figure}
\begin{center}
\includegraphics[width=.8\textwidth]{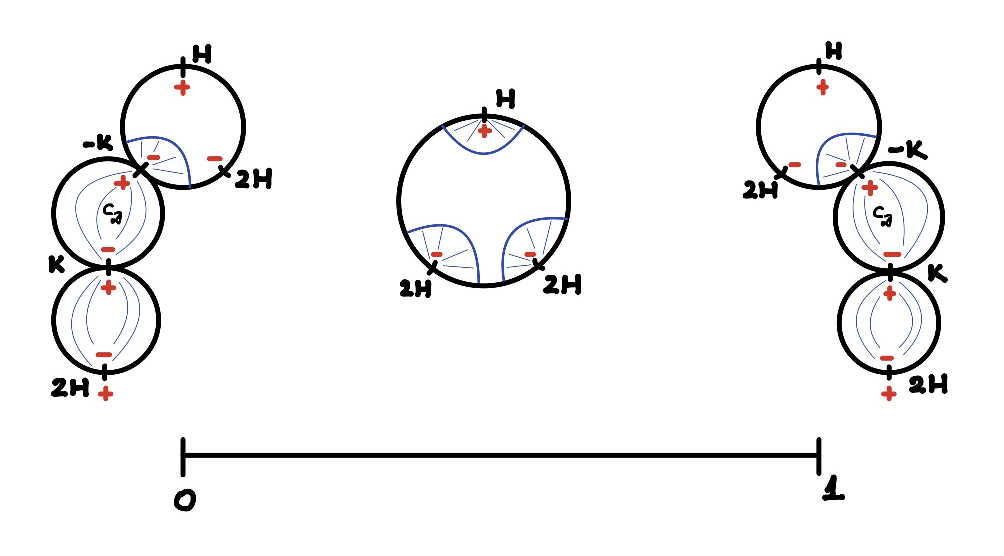}
\caption{Definition of the coproduct on $SH_*(W;\im c)$.}
\label{fig:Coproduct-SHrelimc}
\end{center}
\end{figure}

We will now investigate conditions under which this map descends to $SH_*(W;\im c)$ or $\ol{SH}_*(W)$.

\begin{lemma} \label{lem:lambda-imc}
  Assume that the dimension of the $R$-essential Weinstein domain is $2n\ge 6$.
  Then the chain level map $\boldlambda_\cD$ satisfies 
$$
\boldlambda_\cD(\im c_\cD)\subset \im c_\cD\otimes FC_*(2H) + FC_*(2H)\otimes \im c_\cD,
$$ 
i.e. the pair $(\im c_\cD,\im c_\cD)$ is a coideal pair for the map $\boldlambda_\cD$. 
\end{lemma}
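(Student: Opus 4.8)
The plan is to reduce the statement to the Morse-theoretic fact that the secondary continuation map from $\cD$ to $\cD$ (i.e.\ the chain homotopy $h$ appearing in the Remark after Lemma~\ref{lem:R-essential-continuation-maps-are-equal}) vanishes once $2n\ge 6$, and then to exploit the standard ``TQFT'' relation expressing the behaviour of the coproduct on the image of a continuation map. Concretely, I would argue as follows. An element of $\im c_\cD\subset FC_*(H)$ is of the form $c_\cD(x)$ with $x\in FC_*(-K)$, and by construction $c_\cD$ factors through $FC_*(K)\hookrightarrow FC_*(H)$ via the continuation map $c_\cD\colon FC_*(-K)\to FC_*(K)$ determined by the fixed continuation data. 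So it suffices to understand $\boldlambda_\cD(c_\cD(x))$ for $x$ a generator of $FC_*(-K)$. Since $FC_*(-K)$ is supported in degrees $n,\dots,2n$ and $FC_*(2H)$ in nonnegative degrees, and since the coproduct $\boldlambda$ has degree $1-2n$ (with the standard conventions), the a~priori output of $\boldlambda_\cD\circ c_\cD$ on $FC_*(-K)$ lives in total degree $\ge n+1-2n = 1-n$, hence each tensor factor has degree in a constrained range; this is where the hypothesis $2n\ge 6$, i.e.\ $n\ge 3$, enters to kill the obstructing low-degree terms.

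The key step is a gluing/cobordism argument. By the very definition of $\boldlambda_\cD$ (Figure~\ref{fig:Coproduct-SHrelimc}), at the endpoints of the defining $1$-parameter family the operation factors as a primary coproduct landing in $FC_*(-K)\otimes FC_*(2H)$ (or $FC_*(2H)\otimes FC_*(-K)$), post-composed with $c_\cD$ on the relevant factor. Pre-composing with $c_\cD$ on the input, one gets a continuation-of-continuation configuration on the input leg. I would set up the moduli space of such configurations with a floating interior input constraint by the image of $c_\cD$ and count its boundary: the boundary contributions are precisely $\boldlambda_\cD(c_\cD(x))$ on one end, and on the other end a ``broken'' configuration which, by the definition of the coproduct on the pair of pants and the factorization of $c_\cD$ through $FC_*(K)$, produces terms lying in $c_\cD\bigl(FC_*(K)\bigr)\otimes FC_*(2H)$ and $FC_*(2H)\otimes c_\cD\bigl(FC_*(K)\bigr)$, i.e.\ in $\im c_\cD\otimes FC_*(2H) + FC_*(2H)\otimes\im c_\cD$. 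The remaining degenerations are index-$1$ breakings at the interior input; these are controlled by $\delta c_\cD + c_\cD \delta = 0$, i.e.\ by the fact that $c_\cD$ is a chain map, together with the vanishing of the secondary continuation map $h_{\cD,\cD}$, which holds for degree reasons exactly when $n\ge 3$ (this is the content extracted from Lemma~\ref{lem:R-essential-continuation-maps-are-equal} and its Remark, applied now at Floer level rather than Morse level, using that the relevant $h$ raises degree out of the admissible window when $2n\ge 6$).

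Assembling these pieces: modulo terms in $\im c_\cD\otimes FC_*(2H) + FC_*(2H)\otimes\im c_\cD$, the map $\boldlambda_\cD$ applied to $\im c_\cD$ gives a contribution which is forced to vanish because the only remaining configurations carry a secondary continuation factor of the wrong degree. Passing to the limit over $R$-essential Hamiltonians $H$ extending $K$ (compatibly with all the continuation maps and inclusions $FC_*(K)\hookrightarrow FC_*(H)$) preserves the coideal property, giving the stated inclusion and hence that $(\im c_\cD,\im c_\cD)$ is a coideal pair.

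\textbf{Main obstacle.} The delicate point is the transversality and gluing bookkeeping for the moduli space with the interior input constrained by $\im c_\cD$: one must check that all codimension-$1$ degenerations are accounted for, that no configuration with two interior constraint-breakings or with a closed-orbit bubble off the neck contributes, and—crucially—that the ``diagonal'' secondary-continuation term is the \emph{only} potential term not manifestly in the coideal, and that it vanishes precisely under $2n\ge 6$ by the degree argument. In other words, the heart of the proof is the careful dimension count showing that in dimension $\ge 6$ the secondary continuation contribution to $\boldlambda_\cD|_{\im c_\cD}$ sits in a degree range incompatible with the support of $FC_*(-K)$ and $FC_*(2H)$, while in lower dimensions this term genuinely obstructs the coideal property.
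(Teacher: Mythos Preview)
Your overall strategy---build a parametrized moduli space whose boundary relates $\boldlambda_\cD\circ c_\cD$ to terms in the coideal, then kill the remaining obstruction by a dimension count---is correct in spirit, but you have misidentified the obstruction operation, and this is a genuine gap.

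The operation that interpolates between $\boldlambda_\cD c_\cD$ and coideal terms is \emph{not} a secondary continuation map. Recall that $\boldlambda_\cD$ is already defined by a $1$-parameter family of pairs of pants; pre-gluing the continuation data $\cD$ at the input adds a gluing parameter, so the relevant homotopy $\boldbeta_\cD$ is parametrized by a \emph{$2$-simplex} and is a degree $-2$ operation
\[
\boldbeta_\cD:FC_*(-K)\longrightarrow FC_*(K)\otimes FC_*(K)
\]
(landing in the zero-energy sector for action reasons), counting $2$-parameter families of Morse $Y$-graphs with one input in $-K$ and two outputs in $K$. The boundary of this $2$-dimensional moduli space gives $\boldlambda_\cD c_\cD = [\p,\boldbeta_\cD]$ modulo the coideal, so the lemma follows once $\boldbeta_\cD=0$.

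The index argument is then concrete and different from the one you sketch: for $0$-dimensional moduli spaces one needs
\[
\ind_K(a)+\ind_K(b) \;=\; -2 + 2n + \ind_{-K}(x).
\]
Since critical points of $K$ have index $\le n$ the left side is $\le 2n$, while $\ind_{-K}(x)\ge n$ forces the right side to be $\ge 3n-2$. These are incompatible exactly when $n\ge 3$, i.e.\ $2n\ge 6$, giving $\boldbeta_\cD=0$.

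By contrast, the ``secondary continuation map $h_{\cD,\cD}$'' you invoke is a degree $\pm 1$ map $FC_*(-K)\to FC_*(K)$ coming from a $1$-parameter homotopy from $\cD$ to itself. Such a map can always be taken to be zero by using the constant homotopy, so its vanishing is vacuous and cannot be where the hypothesis $2n\ge 6$ enters. More generally, secondary continuation maps between two continuation data need not vanish for $n\ge 3$ (indeed the paper later shows they control the dependence of $\boldlambda_\cD$ on $\cD$), so a support argument for them does not go through. The precise obstruction lives one categorical level higher than you placed it.
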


\begin{proof}
For action reasons $\boldlambda_\cD(\im c_\cD)\subset FC_*^{=0}(2H)\otimes FC_*^{=0}(2H)$, and we have natural identifications $FC_*^{=0}(2H)=FC_*(2K)\simeq FC_*(K)=MC^{n-*}(K)$. {\refone (See~\cite[Proposition~2.10]{CO} for the last identification.)} Note that $\boldsymbol{\lambda}_\cD c_\cD$ appears as a term in the boundary of an operation $\boldbeta=\boldbeta_\cD$ of degree $-2$ (with respect to the Conley-Zehnder grading), and the other terms in the boundary lie in $\im c_\cD\otimes FC_*(2H) + FC_*(2H)\otimes \im c_\cD$ (cf.~\cite{CO-cones}). See Figure~\ref{fig:Beta-coproduct}, {\reftwo in which the arrows go upwards in order to signify that we represent a Morse {\em cochain} complex}.
\begin{figure}
\begin{center}
\includegraphics[width=.8\textwidth]{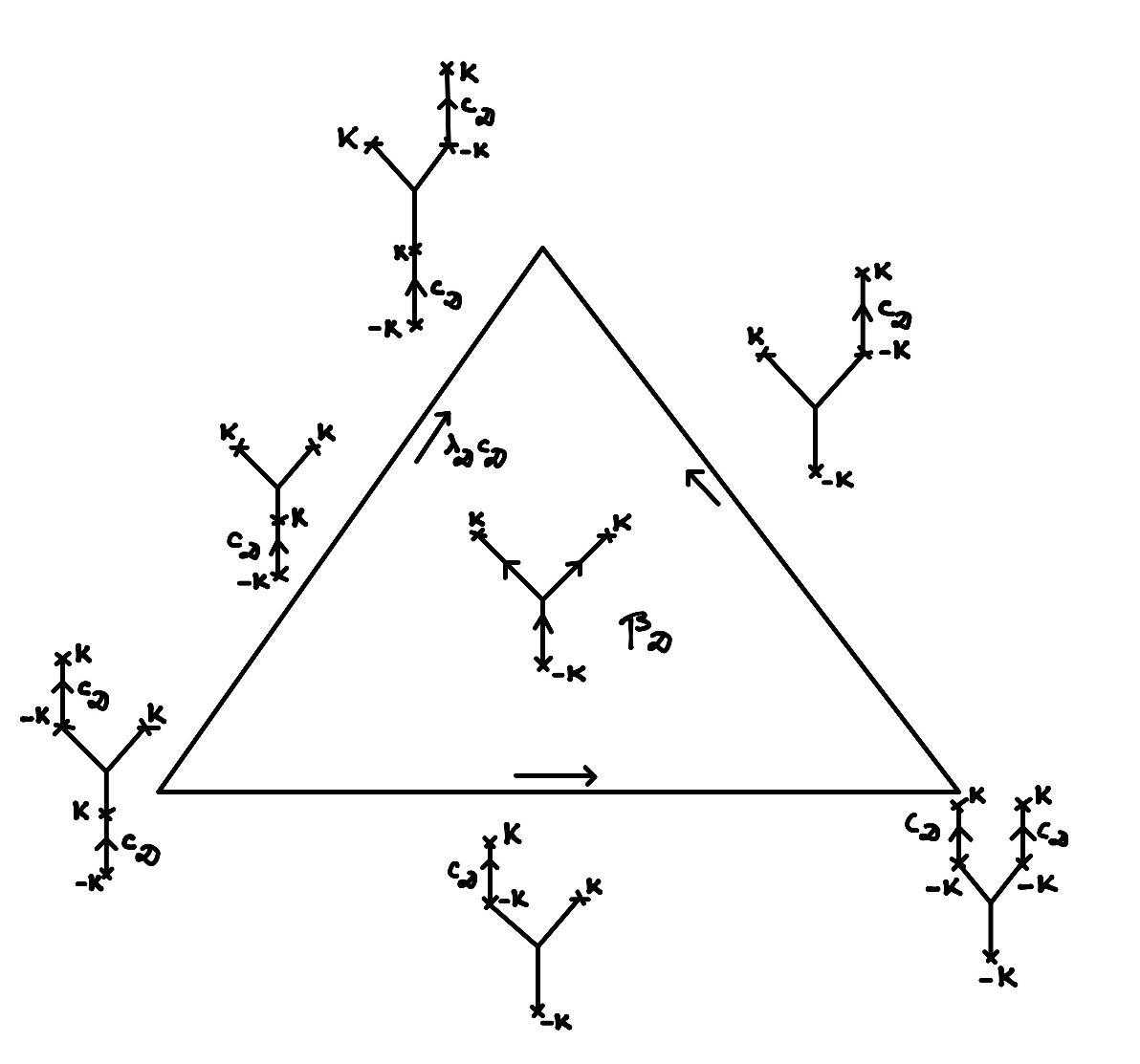}
\caption{The operation $\boldbeta_\cD$.}
\label{fig:Beta-coproduct}
\end{center}
\end{figure}

The operation $\boldsymbol{\beta}$ counts $2$-parameter families of cohomological $Y$-graphs with one input $x$ in $MC^*(-K)$ and two outputs $a,b$ in $MC^*(K)$. The condition for $0$-dimensional moduli spaces is 
$\ind_K a + \ind_K b +(2n - \ind_{-K}x) - 4n +2 =0$, which is equivalent to 
$$
\ind_K a + \ind_K b = -2 + 2n + \ind_{-K}x.
$$
The left hand side of this equality is $\leq 2n$, while the right hand side is $\geq -2+3n$. The equality is therefore never satisfied if $n\ge 3$, which implies $\boldsymbol{\beta}=0$ and thus the lemma.   
\end{proof}

\begin{proposition} \label{prop:coproduct_on_SH_rel_imc} Assume that the dimension of the $R$-essential Weinstein domain is $2n\ge 6$. A choice of $R$-essential Morse function $K$, a choice of Morse-Smale gradient-like vector field, and a choice of continuation data $\cD$ from $-K$ to $K$, induce a degree $-n+1$ coproduct 
$$
\boldsymbol{\lambda}_\cD:SH_*(W;\im c)\to SH_*(W;\im c)\otimes SH_*(W;\im c).
$$  
\end{proposition}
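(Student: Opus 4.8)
The plan is to promote the chain-level map $\boldlambda_\cD$ to the quotient complexes $FC_*(\,\cdot\,;\im c_\cD)$, using the coideal property of Lemma~\ref{lem:lambda-imc}, and then to pass to homology and to the direct limit. For the descent, observe that the submodule $\im c_\cD\otimes FC_*(2H)+FC_*(2H)\otimes\im c_\cD$ is exactly the kernel of the natural surjection $FC_*(2H)^{\otimes 2}\twoheadrightarrow \bigl(FC_*(2H)/\im c_\cD\bigr)^{\otimes 2}=FC_*(2H;\im c_\cD)^{\otimes 2}$. Hence Lemma~\ref{lem:lambda-imc} says precisely that the composite chain map $FC_*(H)\xrightarrow{\boldlambda_\cD}FC_*(2H)^{\otimes 2}\to FC_*(2H;\im c_\cD)^{\otimes 2}$ vanishes on the subcomplex $\im c_\cD\subset FC_*(H)$, so that it factors as a chain map
$$
\boldlambda_\cD\colon FC_*(H;\im c_\cD)=FC_*(H)/\im c_\cD\longrightarrow FC_*(2H;\im c_\cD)^{\otimes 2},
$$
which induces on homology a map $FH_*(H;\im c)\to FH_*(2H;\im c)^{\otimes 2}$ through which the previously constructed $\boldlambda_\cD$ factors along the projection $FH_*(H)\to FH_*(H;\im c)$.

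Next I would check compatibility with the continuation morphisms organising $SH_*(W;\im c)$ as a direct limit. For $R$-essential Hamiltonians $H_1\le H_2$ extending $K$, with continuation maps factoring through $FC_*(K)$ as in the present setup, a monotone homotopy $H_1\to H_2$ carries $\im c_\cD$ into $\im c_\cD$ --- this is built into the construction of $SH_*(W;\im c)$, cf.\ the lemma preceding Corollary~\ref{cor:FC*Himc} and the mod-$\im c$ statements of \S\ref{sec:Ressential} --- hence descends to a chain map $FC_*(H_1;\im c_\cD)\to FC_*(H_2;\im c_\cD)$; and the usual naturality of Floer-theoretic coproducts under continuation furnishes a chain homotopy making the resulting square commute, which, like $\boldlambda_\cD$ itself, lands in $\im c_\cD\otimes FC_*(2H_2)+FC_*(2H_2)\otimes\im c_\cD$ for action and index reasons identical to those in the proof of Lemma~\ref{lem:lambda-imc}, and therefore descends to the quotients. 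Passing to homology and then to the direct limit over $R$-essential Hamiltonians of slope tending to $\infty$ --- using that the weight-$2$ Hamiltonians $2H$ are cofinal, and that $\otimes$ commutes with filtered colimits --- yields the asserted map $\boldlambda_\cD\colon SH_*(W;\im c)\to SH_*(W;\im c)\otimes SH_*(W;\im c)$. Its degree is $-n+1$, inherited unchanged from the secondary pair-of-pants coproduct (equivalently, it is the degree dictated by the index formula for one-input/two-output pairs of pants carrying a marked point, as in the count in the proof of Lemma~\ref{lem:lambda-imc}).

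The step I expect to be the main obstacle is this second one: making sure that \emph{all} the auxiliary choices entering the secondary coproduct --- the $1$-parameter families defining $\boldlambda_\cD$ at fixed $H$ and the chain homotopies witnessing its naturality as $H$ varies --- produce correction terms confined to $\im c_\cD\otimes FC_*(2H)+FC_*(2H)\otimes\im c_\cD$, so that the whole construction descends to the quotients by $\im c_\cD$. This is precisely what the hypothesis $2n\ge 6$ buys: it forces the relevant index computations (of the type in Lemma~\ref{lem:lambda-imc}, where the correction operation $\boldbeta$ vanishes outright, and in the lemmas of \S\ref{sec:Ressential}) to make the correction operations vanish or land in the coideal, and hence lets the construction pass to $SH_*(W;\im c)$.
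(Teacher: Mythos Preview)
Your proposal is correct and follows essentially the same approach as the paper: use Lemma~\ref{lem:lambda-imc} to see that the composite $FC_*(H)\to FC_*(2H;\im c_\cD)^{\otimes 2}$ kills $\im c_\cD$ and hence descends to the quotient, then pass to the direct limit over $R$-essential Hamiltonians extending $K$. The paper's own proof is a single sentence to this effect; your additional discussion of compatibility with continuation maps and of the role of the dimension hypothesis is reasonable elaboration, though the paper treats these as routine since all Hamiltonians in the direct system extend the fixed $K$ and the continuation maps are inclusions on the Morse part.
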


\begin{proof}
This follows from Lemma~\ref{lem:lambda-imc} by passing to the limit over $R$-essential Hamiltonians which extend $K$.  
\end{proof}

The dimensional assumption in the previous Proposition can be discarded if one wishes to only reduce the domain to $\ol{SH}_*(W)$. 

\begin{proposition} \label{prop:lambda_olSH_SHimc} A choice of $R$-essential Morse function $K$, a choice of Morse-Smale gradient-like vector field, and a choice of continuation data $\cD$ from $-K$ to $K$, induce a degree $-n+1$ linear map 
$$
\boldsymbol{\lambda}_\cD:\ol{SH}_*(W)\to SH_*(W;\im c)\otimes SH_*(W;\im c).
$$  
\end{proposition}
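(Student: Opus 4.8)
The plan is the following. The discussion preceding the statement already produces a linear map $\boldlambda_\cD\colon SH_*(W)\to SH_*(W;\im c)\otimes SH_*(W;\im c)$, so it only remains to check that this map annihilates $\im\eps_*\subseteq SH_*(W)$, whence it factors through $\ol{SH}_*(W)=SH_*(W)/\im\eps_*$ with unchanged degree $-n+1$. Here I would use the identification, built into the definition of $\ol{SH}_*(W)$ in \S\ref{sec:reduced_homology} and into the comparison map~\eqref{eq:map_from_reduced_to_modimc} (cokernels commute with filtered colimits), of $\im\eps_*$ with the image of the continuation maps $c_\cD\colon FC_*(-K)\to FC_*(H)$ in the direct limit over $R$-essential Hamiltonians $H$ extending $K$. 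Thus any class in $\im\eps_*$ is represented, at some finite stage $H$, by $c_\cD(y)$ with $y\in FC_*(-K)$ a cycle, and it suffices to show that for each such $H$ the composite chain map
$$
\pi^{\otimes 2}\circ\boldlambda_\cD\circ c_\cD\colon FC_*(-K)\longrightarrow \bigl(FC_*(2H)/\im c_\cD\bigr)^{\otimes 2}
$$
induces the zero map on homology; here $\pi\colon FC_*(2H)\to FC_*(2H)/\im c_\cD$ is the projection, which is a chain map since $\im c_\cD$ is a subcomplex of $FC_*(2H)$, and $\pi^{\otimes 2}\circ\boldlambda_\cD$ is the chain map recorded in the discussion above.

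The key step is to exhibit a null-homotopy of this composite, and for this I would reuse the operation $\boldbeta=\boldbeta_\cD$ from the proof of Lemma~\ref{lem:lambda-imc} (Figure~\ref{fig:Beta-coproduct}). Analyzing the boundary of the one-dimensional moduli spaces underlying $\boldbeta_\cD$ gives a relation of the shape $\boldlambda_\cD c_\cD=\p\boldbeta_\cD+\boldbeta_\cD\p+\rho$, where $\rho$ has image in the coideal $\im c_\cD\otimes FC_*(2H)+FC_*(2H)\otimes\im c_\cD$, arising from the configurations in which an output factors through $c_\cD$. Applying $\pi^{\otimes 2}$ kills $\rho$, and since $\pi^{\otimes 2}$ is a chain map and hence commutes with the differentials one is left with $\pi^{\otimes 2}\boldlambda_\cD c_\cD=\p(\pi^{\otimes 2}\boldbeta_\cD)+(\pi^{\otimes 2}\boldbeta_\cD)\p$, that is, a null-homotopy; so the induced map on homology vanishes. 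Passing to homology and then to the limit over $H$ gives $\boldlambda_\cD\circ\eps_*=0$, hence the desired factorization through $\ol{SH}_*(W)$. This is also the place to record why no dimensional hypothesis is needed: when $2n\ge 6$ the index count in Lemma~\ref{lem:lambda-imc} forces $\boldbeta_\cD=0$, so $\boldlambda_\cD c_\cD$ lands in the coideal already at chain level and $\boldlambda_\cD$ descends all the way to $FC_*(H)/\im c_\cD$ (Proposition~\ref{prop:coproduct_on_SH_rel_imc}), whereas in low dimensions $\boldbeta_\cD$ need not vanish, so one cannot reduce the target but can still use $\pi^{\otimes 2}\boldbeta_\cD$ as the homotopy that reduces the source to $\ol{SH}_*(W)$.

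The step I expect to demand the most care is the precise bookkeeping behind the relation $\boldlambda_\cD c_\cD=\p\boldbeta_\cD+\boldbeta_\cD\p+\rho$: identifying which broken configurations in the compactification of the $\boldbeta_\cD$-moduli spaces contribute $\boldlambda_\cD c_\cD$, which contribute the coideal term $\rho$, and which contribute the chain-homotopy terms, keeping track of signs, and confirming that $\boldbeta_\cD$ is transversally cut out and Gromov-compact also in the low-dimensional cases where its vanishing is not forced by index reasons. By contrast, the identification of $\im\eps_*$ with the image of $c_{\cD*}$ and the colimit/Künneth bookkeeping needed to pass to the direct limit are routine given the material of \S\ref{sec:two_flavors}.
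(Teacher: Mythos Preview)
Your proposal is correct and follows essentially the same approach as the paper: both arguments use the operation $\boldbeta_\cD$ from Lemma~\ref{lem:lambda-imc} to produce the relation $\boldlambda_\cD c_\cD = [\p,\boldbeta_\cD]$ mod $I_\cD$, then evaluate on a cycle to exhibit $\boldlambda_\cD c_\cD(x)$ as a boundary in $FC_*(2H;\im c_\cD)^{\otimes 2}$, and pass to the limit. Your framing of $\pi^{\otimes 2}\boldbeta_\cD$ as a chain homotopy is equivalent to the paper's direct evaluation on cycles, and your explicit remark contrasting the $2n\ge 6$ case (where $\boldbeta_\cD=0$) with the general case is a helpful gloss that the paper leaves implicit.
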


\begin{proof}
Consider the chain map
$$
\boldsymbol{\lambda}_\cD:FC_*(H)\to FC_*(2H;\im c_\cD)^{\otimes 2}.
$$
In the proof of Lemma~\ref{lem:lambda-imc} we needed to prove that this map vanishes on $\im c_\cD$. We now need to prove something weaker, namely that the map $\boldsymbol{\lambda}_{\cD\, *}$ induced in homology vanishes on $\im c_*$. Equivalently, it is enough to evaluate $\boldsymbol{\lambda}_\cD\circ c_\cD$ on an element $x\in FC_*(-K)$ which is a \emph{cycle} and show that it is a \emph{boundary} in $FC_*(2H;\im c_\cD)^{\otimes 2}$.

Denote 
$$
I_\cD= \im c_\cD\otimes FC_*(2H)+ FC_*(2H)\otimes \im c_\cD.
$$
The analysis of the moduli space defining the operation $\boldsymbol{\beta}=\boldsymbol{\beta}_\cD$ from the proof of Lemma~\ref{lem:lambda-imc} shows that we have 
\begin{equation}\label{eq:lambdared_beta}
\boldsymbol{\lambda}_\cD\circ c_\cD = [\p,\boldsymbol{\beta}_\cD]\, \mbox{ mod }\, I_\cD.
\end{equation}
Evaluating this on a cycle $x\in FC_*(-K)$ we obtain  
\begin{align*}
\boldsymbol{\lambda}_\cD\circ c_\cD(x) & = \p \boldsymbol{\beta}_\cD(x) \, \mbox{ mod }\, I_\cD \\
& = \p\big( \boldsymbol{\beta}_\cD(x) \, \mbox{ mod }\, I_\cD \big),
\end{align*}
which is a boundary in $FC_*(2H;\im c_\cD)^{\otimes 2}$. 

In the limit over $R$-essential Hamiltonians $H$ we obtain the map 
$$
\boldlambda_\cD:\ol{SH}_*(W)\to SH_*(W;\im c)^{\otimes 2}.
$$
\end{proof}
 
\begin{remark}
In the definition of $\boldsymbol{\lambda}_\cD$ we used the \emph{same} continuation data $\cD$ for the continuation map $c_\cD:FC_*(-K)\to FC_*(K)$ in the factorization at both ends of the $1$-parameter Floer problem. The previous results would still hold for a more general definition using \emph{different} continuation data at the two ends. We will however not discuss it.  
\end{remark}

The coproducts $\boldsymbol{\lambda}_\cD$ depend on the choice of $\cD$. Next, we will explain how this dependence can be expressed in terms of \emph{secondary continuation maps}. 

\subsection{Secondary continuation map}  \label{sec:secondary_cont}

This map is defined via a para\-me\-trized Floer problem with parameter space $[0,1]$. For readability, we lead the discussion in two steps: we first consider a simplified setup in which the Morse function and gradient-like vector field at the endpoints are assumed to be the same, then we discuss the general case in which we allow them to be different. 

In the sequel $W$ is an $R$-essential Weinstein domain.
 
\noindent {\bf (1) Simplified setup: equal Morse data at the endpoints.} Recall that, given an $R$-essential Morse function $K:W\to \R$ with Morse-Smale gradient-like vector field $\xi$, a choice of continuation data $\cD$ from $(-K,-\xi)$ to $(K,\xi)$ induces a continuation map $c_\cD:FC_*(-K)\to FC_*(K)$, and the map $c_\cD$ does not depend on $\cD$ (Lemma~\ref{lem:R-essential-continuation-maps-are-equal}). 	

\begin{definition}[secondary continuation map] Let $\cD_0,\cD_1$ be choices of continuation data from $(-K,-\xi)$ to $(K,\xi)$. The \emph{secondary continuation map} 
$$
\vec\boldc_{\cD_0,\cD_1}:FC_*(-K)\to FC_{*+1}(K)
$$
is the degree $1$ map induced by a homotopy between $\cD_0$ and $\cD_1$. 

The \emph{secondary continuation element}, or \emph{copairing} 
$$
\boldc_{\cD_0,\cD_1}\in \Big( FC_0(K)\otimes FC_1(K) \Big) \oplus \Big( FC_1(K)\otimes FC_0(K)\Big)
$$
is defined from the secondary continuation map by viewing the input in $FC_*(-K)\equiv FC_*(K)^\vee$ as the first output in $FC_*(K)$, i.e. $\vec\boldc_{\cD_0,\cD_1}= (\ev\otimes 1)(1\otimes \boldc_{\cD_0,\cD_1})$, with $\ev:FC_*(K)^\vee\otimes FC_*(K)\to R$ the evaluation {\refone (which is a degree $0$ chain map)}.
\end{definition}

That the range of bi-degrees for the components of $\boldc_{\cD_0,\cD_1}$ is restricted to $(0,1)$ and $(1,0)$ is a consequence of the fact that the Floer complex of an $R$-essential Hamiltonian is supported in degrees $0,\dots,n$. 

\begin{lemma}  \label{lem:sec_c_well_defined}
(i) The secondary continuation map is a chain map which is well-defined up to chain homotopy. 

(ii) The secondary continuation element is a cycle which is well-defined up to chain homotopy. 
\end{lemma}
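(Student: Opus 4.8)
The plan is to reduce both claims (i) and (ii) to standard facts about parametrized moduli spaces, combined with the $R$-essentiality constraint on degrees which, as in the proof of Lemma~\ref{lem:R-essential-continuation-maps-are-equal}, forces the naively expected correction terms to vanish. First I would set up the relevant moduli space: $\vec\boldc_{\cD_0,\cD_1}$ counts rigid elements of a one-parameter family of Floer (or Morse) continuation trajectories, where the parameter $s\in[0,1]$ interpolates between the continuation data $\cD_0$ and $\cD_1$ from $(-K,-\xi)$ to $(K,\xi)$. The boundary of the compactified one-dimensional moduli space consists of: (a) breaking off a Floer differential at the negative end $-K$, (b) breaking off a Floer differential at the positive end $K$, and (c) the two endpoints $s=0$ and $s=1$ of the parameter interval, contributing $c_{\cD_0}$ and $c_{\cD_1}$. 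The resulting algebraic identity is
$$
\delta_K \vec\boldc_{\cD_0,\cD_1} + \vec\boldc_{\cD_0,\cD_1}\,\delta_{-K} = c_{\cD_1} - c_{\cD_0}.
$$
But $c_{\cD_0}=c_{\cD_1}$ by Lemma~\ref{lem:R-essential-continuation-maps-are-equal}, so the right-hand side vanishes and $\vec\boldc_{\cD_0,\cD_1}$ is a chain map. This proves the first part of (i). For well-definedness up to chain homotopy, a choice of homotopy of homotopies between $\cD_0$ and $\cD_1$ gives a two-parameter family whose boundary analysis yields a degree $2$ map $h$ with $\delta_K h - h\delta_{-K}=\vec\boldc'_{\cD_0,\cD_1}-\vec\boldc_{\cD_0,\cD_1}$ (the other potential boundary contributions, coming from the two families at $s=0,1$, are chain homotopies relating $c_{\cD_0}$ to itself and $c_{\cD_1}$ to itself, hence again do not contribute by the degree argument); this exhibits the chain homotopy.

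For part (ii), the statement is simply the image of (i) under the identification $FC_*(-K)\equiv FC_*(K)^\vee$ and the partial evaluation $\vec\boldc_{\cD_0,\cD_1}=(\ev\otimes 1)(1\otimes\boldc_{\cD_0,\cD_1})$. Under this identification the chain map condition $\delta_K\vec\boldc_{\cD_0,\cD_1}+\vec\boldc_{\cD_0,\cD_1}\delta_{-K}=0$ translates into the cycle condition $(\delta_K\otimes 1 + 1\otimes\delta_K)\boldc_{\cD_0,\cD_1}=0$, using that $\delta_{-K}$ is, up to sign, the dual of $\delta_K$; and the chain homotopy between two choices of $\vec\boldc$ translates into a primitive of $\boldc'_{\cD_0,\cD_1}-\boldc_{\cD_0,\cD_1}$ in the tensor product complex. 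The restriction of the bidegree to $(0,1)$ and $(1,0)$, already recorded before the statement, is what makes the element live in the asserted direct sum; no further work is needed there.

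The main obstacle I anticipate is purely bookkeeping: making the boundary analysis of the one- and two-parameter moduli spaces rigorous requires the usual transversality and gluing/compactness package for parametrized continuation problems with Hamiltonians that are linear at infinity of varying slope, and one must check that no sphere or index-$0$ sphere bubbling occurs and that the families can be chosen regular. Since all of this is entirely standard in the Floer-theoretic literature and is used repeatedly elsewhere in the paper (e.g. in the construction of $\boldlambda_\cD$ and of $\boldbeta_\cD$), I would not reproduce it. The only genuinely substantive point — and it is a short one — is the appeal to Lemma~\ref{lem:R-essential-continuation-maps-are-equal} to kill the $s=0,1$ boundary terms, which is exactly what makes $\vec\boldc_{\cD_0,\cD_1}$ a chain map rather than merely a chain homotopy between $c_{\cD_0}$ and $c_{\cD_1}$, and correspondingly makes $\boldc_{\cD_0,\cD_1}$ a genuine cycle.
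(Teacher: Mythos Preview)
Your proposal is correct and follows essentially the same approach as the paper's proof: the paper simply observes that $\vec\boldc_{\cD_0,\cD_1}$ is by construction a chain homotopy between $c_{\cD_0}$ and $c_{\cD_1}$, invokes Lemma~\ref{lem:R-essential-continuation-maps-are-equal} to conclude these are equal so that the chain homotopy is a chain map, dismisses well-definedness as a ``standard interpolation argument'', and derives (ii) from (i) by dualization. Your account is more explicit but structurally identical; the only minor imprecision is that the vanishing of the $s=0,1$ boundary contributions in the two-parameter family is most cleanly argued by choosing the homotopy of homotopies to be constant in the $\tau$-direction at those ends (so the associated one-parameter problems have empty rigid moduli spaces), rather than by a degree argument as such.
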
 

\begin{proof} (i) By definition $\vec\boldc_{\cD_0,\cD_1}$ is a chain homotopy between $c_{\cD_0}$ and $c_{\cD_1}$. Since $c_{\cD_1}=c_{\cD_0}$, we infer that $\vec\boldc_{\cD_0,\cD_1}$ is a chain map. 

That the map induced by $\vec\boldc_{\cD_0,\cD_1}$ in homology depends only on $\cD_0$ and $\cD_1$, and not on the choice of homotopy between the two, follows by a standard interpolation argument. 

(ii) This is equivalent to (i) since the continuation element is obtained from the continuation map by dualization. 
\end{proof}

Given an $R$-essential Hamiltonian $H$ with Morse truncation $K$, we consider the composition $FC_*(-H)\to FC_*(-K)\to FC_{*+1}(K)\to FC_{*+1}(H)$, where the middle map is $\vec\boldc_{\cD_0,\cD_1}$ and the extremal maps are the projection onto the quotient complex $FC_*(-K)$ and the inclusion of the subcomplex $FC_*(K)$. We call this composition also ``secondary continuation map" and denote it $\vec\boldc_{\cD_0,\cD_1}:FC_*(-H)\to FC_{*+1}(H)$. After passing to homology, we obtain in the limit a map 
$$
\vec\boldc_{\cD_0,\cD_1} : SH^{-*}(W)\to SH_{*+1}(W).
$$
Equivalently, viewing the input as an output we obtain a degree $1$ ``secondary continuation element"  
$$
\boldc_{\cD_0,\cD_1} \in \big(SH_0(W)\otimes SH_1(W)\Big) \oplus \Big(SH_1(W)\otimes SH_0(W)\Big)\subset SH_*(W)^{\otimes 2}.  
$$ 

For the next lemma we introduce the following notation. The \emph{reduced continuation element}  
$$
\bar\boldc_{\cD_0,\cD_1} \in SH_*(W;\im c)^{\otimes 2}
$$
is the projection of $\boldc_{\cD_0,\cD_1}$ under the map $SH_*(W)\to SH_*(W;\im c)$. Given a choice of continuation data $\cD$ from $(-K,-\xi)$ to $(K,\xi)$ consisting of a homotopy $(H_{s,t}, J_{s,t})$ of Hamiltonians and almost complex structures, the \emph{reverse continuation data} $\cD^{op}$ is given by the homotopy $(-H_{-s,1-t},J_{-s,1-t})$. Note that minus signs appear both in front of $H$ and the continnuation parameter $s$, so $\cD^{op}$ is again continuation data from $(-K,-\xi)$ to $(K,\xi)$. 

Recall also the coproduct $\boldlambda_\cD:SH_*(W)\to SH_*(W;\im c)^{\otimes 2}$ and the unit $\boldeta$ for the product on $SH_*(W)$. 
 
\begin{lemma} \label{lem:cDopDlambda} We have 
$$
\bar \boldc_{\cD^{op},\cD} = \boldlambda_{\cD} \boldeta.
$$
\end{lemma}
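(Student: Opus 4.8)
The plan is to produce a geometric cobordism realizing the two sides of the identity as its ends, all modulo the subcomplex $I_\cD=\im c_\cD\otimes FC_*(2H)+FC_*(2H)\otimes\im c_\cD$. Fix an $R$-essential Hamiltonian $H$ extending $K$ and let $e\in FC_n(K)\subset FC_n(H)$ be the standard PSS cocycle representing the unit $\boldeta$; it is supported in the $0$-energy sector and for $H=K$ it is the Morse cochain $1\in MC^0(K)=FC_n(K)$. Since $\boldlambda_\cD\colon FC_*(H)\to FC_*(2H;\im c_\cD)^{\otimes 2}$ is a chain map, $\boldlambda_\cD(e)$ is a cycle whose class in $SH_*(W;\im c)^{\otimes 2}$, in the limit over $H$, is $\boldlambda_\cD\boldeta$. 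On the other side, $\boldc_{\cD^{op},\cD}$ is represented by the cycle obtained from the secondary continuation map $\vec\boldc_{\cD^{op},\cD}\colon FC_*(-K)\to FC_{*+1}(K)$ by regarding its input as an output and pushing the result into $FC_*(2H)^{\otimes 2}$; the goal is to show that this cycle is homologous to $\boldlambda_\cD(e)$ modulo $I_\cD$.

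To build the cobordism I would glue the PSS cap onto the input puncture of the $Y$-graph Floer problem that defines $\boldlambda_\cD$. By the standard gluing principle $\boldlambda_\cD(e)$ is then computed by the resulting moduli space of configurations with no input and two outputs, carrying the $1$-parameter family that enters the definition of $\boldlambda_\cD$; the underlying surface is a twice-punctured sphere, i.e.\ a cylinder. At each of the two ends of this $1$-parameter family the $Y$-graph degenerates, by construction, into a primary coproduct with outputs in $FC_*(-K)$ and $FC_*(2H)$ followed by $c_\cD$ on the $FC_*(-K)$ output; after capping, the primary-coproduct part becomes a coevaluation cylinder, so the two ends compute the copairings (matrix bivectors) of the continuation maps $c_\cD$ and $c_{\cD^{op}}$ respectively --- the reversal $\cD^{op}$ appears because interchanging the two outputs is implemented geometrically by reflecting the cylinder, which turns the homotopy $(H_{s,t},J_{s,t})$ into $(-H_{-s,1-t},J_{-s,1-t})$, i.e.\ exactly into $\cD^{op}$. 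Consequently the $1$-parameter family of $\boldlambda_\cD$, read through the cap, becomes a homotopy of continuation data from $\cD^{op}$ to $\cD$, and the capped input becomes the first of the two outputs; this is precisely the copairing description of $\boldc_{\cD^{op},\cD}$. Interpolating, through admissible Floer data, between the glued $\boldlambda_\cD$-configuration and the configuration that defines $\boldc_{\cD^{op},\cD}$ --- the relevant space of data, together with $1$-parameter subfamilies, being contractible --- produces a $2$-dimensional moduli space whose compactification is the sought cobordism.

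The boundary analysis then goes as follows. Two codimension-one faces of the compactified $2$-parameter moduli space are the slices over the endpoints of the interpolation parameter, contributing $\boldlambda_\cD(e)$ and a chain representative of $\boldc_{\cD^{op},\cD}$. The interior Floer breakings at the two output ends assemble, as usual, into $[\partial,\,\cdot\,]$ applied to the counting chain, exhibiting the two cycles as homologous. The breakings involving the PSS cap reproduce $\boldlambda_\cD(\partial e)=0$ and drop out. Finally, the faces lying over the endpoints of the $\boldlambda_\cD$-parameter interval and over the degeneration of the continuation-data region are of the type analysed for the operation $\boldbeta_\cD$ and relation~\eqref{eq:lambdared_beta}: along them a continuation cylinder carrying $c_\cD$ splits off, so the resulting configurations have an output lying in $\im c_\cD$, hence in $I_\cD$, and vanish in $FC_*(2H;\im c_\cD)^{\otimes 2}$. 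Passing to the limit over $R$-essential Hamiltonians $H$ extending $K$ gives $\boldlambda_\cD\boldeta=\bar\boldc_{\cD^{op},\cD}$.

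The main obstacle is the gluing and compactification bookkeeping of the previous paragraph: verifying that the glued $\boldlambda_\cD$-configuration and the copairing of $\boldc_{\cD^{op},\cD}$ are joined by an admissible path of data, getting the $\cD\leftrightarrow\cD^{op}$ reflection exactly right (including signs and the choice of which output is first), and checking that every spurious boundary face genuinely carries a factor in $\im c_\cD$ so that it dies in the quotient. A technically lighter alternative, which I would develop in parallel, is to first reduce to the $0$-energy sector: an action estimate shows that, modulo $I_\cD$, both $\boldlambda_\cD(e)$ and a chain representative of $\boldc_{\cD^{op},\cD}$ are supported on low-action generators, so the identity reduces to a statement in parametrized Morse theory of $\pm K$, where the same cap-and-reflect argument can be carried out with honest parametrized gradient trees and the combinatorics --- in particular the appearance of $-K$ and of $\cD^{op}$ --- is transparent.
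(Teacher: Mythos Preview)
Your proposal is correct and follows essentially the same approach as the paper: glue the unit cap (the genus $0$ curve with one negative puncture defining $\boldeta$) onto the input of the $1$-parameter $Y$-graph family defining $\boldlambda_\cD$, obtaining a $1$-parameter family of cylinders that realizes $\boldc_{\cD^{op},\cD}$. The paper's own proof is a single sentence invoking this gluing argument together with a figure; you have simply spelled out the boundary analysis and the reflection $\cD\leftrightarrow\cD^{op}$ in detail.
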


\begin{proof}
This follows by a gluing argument from the fact that $\boldeta$ is given by a count of genus $0$ curves with one negative puncture. See Figure~\ref{fig:csec-lambdaeta}. {\refone At the left end of the interval of interpolation, the middle continuation cylinder with continuation data $\cD^{op}$ can be interpreted, by reversing the direction of the coordinates $(s,t)$ and exchanging the signs of the punctures, as a continuation cylinder with continuation data $\cD$. The outcome is a configuration of Floer cylinders that represents a homological count equivalent to that of $\boldlambda_{\cD}\boldeta$.} 
\end{proof}

\begin{figure}
\begin{center}
\includegraphics[width=1\textwidth]{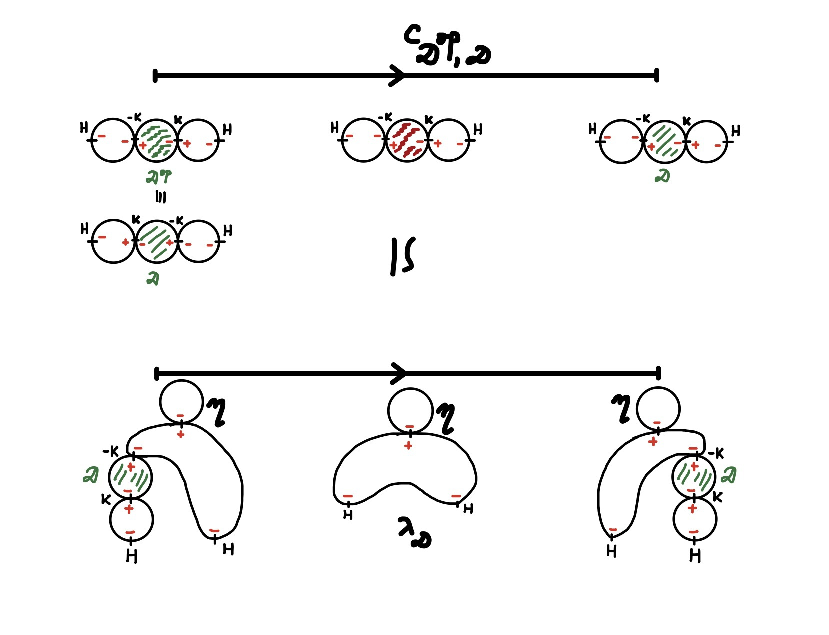}
\caption{Secondary continuation map from coproduct.}
\label{fig:csec-lambdaeta}
\end{center}
\end{figure}

\noindent {\bf (2) General setup: different Morse data at the endpoints.}
Let $(K,\xi,\cD)$ and $(K',\xi',\cD')$ be two choices of $R$-essential Morse functions, Morse-Smale gradient-like vector fields and continuation data $\cD$ from $(-K,-\xi)$ to $(K,\xi)$ and $\cD'$ from $(-K',-\xi')$ to $(K',\xi')$. 

In addition, we choose continuation data $\cC$ from $(K,\xi)$ to $(K',\xi')$. This determines also continuation data from $(-K,-\xi)$ to $(-K',-\xi')$, denoted $-\cC$. This choice of continuation data $\cC$ is necessary, as it will reflect the dependence of the coproduct on the choice of Morse function. 

\begin{definition}
The \emph{secondary continuation map}
$$
\vec\boldc_{\cD,\cD',\cC}:FC_*(-K)\to FC_{*+1}(K')
$$ 
is the degree $1$ map defined in three steps as follows: 

(i) we form continuation data $\tilde \cD=\cD\#\cC$ from $-K$ to $K'$ by appending $\cC$ to $\cD$ at the output; 

(ii) we form continuation data ${\tilde \cD}'=-\cC\#\cD'$ from $-K$ to $K'$ by appending $\cD'$ to $-\cC$ at the output; 

(iii) we choose a homotopy from $\tilde\cD$ to ${\tilde\cD}'$ and define $\vec\boldc_{\cD,\cD',\cC}=\vec\boldc_{\tilde\cD,{\tilde\cD}'}$ to be the induced map $FC_*(-K)\to FC_{*+1}(K')$. 
\end{definition}

\begin{definition}
The \emph{secondary continuation element}, or \emph{copairing} 
$$
\boldc_{\cD,\cD',\cC}\in \Big( FC_0(K) \otimes FC_1(K') \Big) \oplus \Big( FC_1(K)\otimes FC_0(K')\Big)
$$
is defined from the secondary continuation map by viewing the input in $FC_*(-K)\equiv FC_*(K)^\vee$ as the first output in $FC_*(K)$. 
\end{definition}

The following is the analogue of Lemma~\ref{lem:sec_c_well_defined}.

\begin{lemma} 
(i) The secondary continuation map is a chain map which is well-defined up to chain homotopy. 

(ii) The secondary continuation element is a cycle which is well-defined up to chain homotopy. \qed
\end{lemma}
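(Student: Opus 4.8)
The plan is to mimic, essentially word for word, the proof of Lemma~\ref{lem:sec_c_well_defined}; the auxiliary continuation data $\cC$ is introduced precisely so that the two concatenations $\tilde\cD=\cD\#\cC$ and ${\tilde\cD}'=-\cC\#\cD'$ produced in steps~(i)--(ii) of the definition are continuation data connecting \emph{the same} pair of $R$-essential Morse functions, namely $(-K,-\xi)$ and $(K',\xi')$, which is what makes Lemma~\ref{lem:R-essential-continuation-maps-are-equal} applicable.

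For part~(i) I would first record that $\tilde\cD$ and ${\tilde\cD}'$ are admissible continuation data from $(-K,-\xi)$ to $(K',\xi')$ in the sense of \S\ref{sec:Ressential}: a concatenation of homotopies of functions linear near the boundary with non-critical, non-decreasing slopes, together with the corresponding concatenation of homotopies of gradient-like vector fields, is again of this form (take the small boundary slopes of $K$ and $K'$ equal, so that both concatenated families have non-decreasing boundary slope). Lemma~\ref{lem:R-essential-continuation-maps-are-equal}, applied to $K$ and $K'$, then gives $c_{\tilde\cD}=c_{{\tilde\cD}'}\colon FC_*(-K)\to FC_*(K')$. Since by construction $\vec\boldc_{\cD,\cD',\cC}=\vec\boldc_{\tilde\cD,{\tilde\cD}'}$ is the degree $1$ map induced by a homotopy from $\tilde\cD$ to ${\tilde\cD}'$, it satisfies $c_{{\tilde\cD}'}-c_{\tilde\cD}=\p\,\vec\boldc_{\cD,\cD',\cC}+\vec\boldc_{\cD,\cD',\cC}\,\p$; the left-hand side being zero, $\vec\boldc_{\cD,\cD',\cC}$ is a chain map. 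Independence of its chain homotopy class from the homotopy chosen in step~(iii) (and from the implicit gluing parameters in steps~(i)--(ii)) follows from the standard interpolation argument: two homotopies from $\tilde\cD$ to ${\tilde\cD}'$ are joined by a homotopy of homotopies rel endpoints, and the count of the associated $3$-parameter family furnishes a degree $2$ chain homotopy between the two resulting maps, the endpoint slices contributing nothing since the data is constant there.

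Part~(ii) is then formal, exactly as in Lemma~\ref{lem:sec_c_well_defined}(ii): the secondary continuation element $\boldc_{\cD,\cD',\cC}$ is obtained from $\vec\boldc_{\cD,\cD',\cC}$ by transposing the input in $FC_*(-K)\equiv FC_*(K)^\vee$ into the first output in $FC_*(K)$, under which ``chain map'' corresponds to ``cycle'' and ``well-defined up to chain homotopy'' is preserved verbatim; so (ii) follows from (i). The only step requiring genuine care is the one nontrivial input, drawn from \S\ref{sec:Ressential}, namely that both concatenations $\tilde\cD,{\tilde\cD}'$ fall within the class of continuation data to which Lemma~\ref{lem:R-essential-continuation-maps-are-equal} applies; everything downstream is a transcription of the simplified case.
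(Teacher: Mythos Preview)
Your proposal is correct and follows exactly the approach the paper intends: the paper states this lemma with a \qed\ and the remark ``The following is the analogue of Lemma~\ref{lem:sec_c_well_defined}'', so the implied proof is precisely the reduction to that simplified case via Lemma~\ref{lem:R-essential-continuation-maps-are-equal} applied to $\tilde\cD$ and ${\tilde\cD}'$, which is what you wrote out. Your care in checking that the concatenations $\tilde\cD=\cD\#\cC$ and ${\tilde\cD}'=-\cC\#\cD'$ are admissible continuation data from $-K$ to $K'$ (in particular the observation about equal boundary slopes) is appropriate and is the only point requiring attention.
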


Given $R$-essential Hamiltonians $H$, $H'$ with Morse truncations $K$, $K'$, we consider the composition 
$$
FC_*(-H)\to FC_*(-K)\to FC_{*+1}(K')\to FC_{*+1}(H'),
$$ 
where the middle map is $\vec\boldc_{\cD,\cD',\cC}$ and the extremal maps are the projection onto the quotient complex $FC_*(-K)$ and the inclusion of the subcomplex $FC_*(K')$. We call this composition also ``secondary continuation map" and denote it $\vec\boldc_{\cD,\cD',\cC}:FC_*(-H)\to FC_{*+1}(H')$. After passing to homology, we obtain in the limit a map 
$$
\vec\boldc_{\cD,\cD',\cC} : SH^{-*}(W)\to SH_{*+1}(W).
$$
Equivalently, viewing the input as an output we obtain a degree $1$ ``secondary continuation element" 
$$
\boldc_{\cD,\cD',\cC} \in \big(SH_0(W)\otimes SH_1(W)\Big) \oplus \Big(SH_1(W)\otimes SH_0(W)\Big)\subset SH_*(W)^{\otimes 2}.
$$
Projecting via the map $SH_*(W)\to SH_*(W;\im c)$ we get 
$$
\bar \boldc_{\cD,\cD',\cC} \in SH_*(W;\im c)^{\otimes 2}.
$$

\subsection{Dependence of the coproduct on the choice of continuation data} \label{sec:dependence_on_D}

Let $(K,\xi,\cD)$ and $(K',\xi',\cD')$ be two choices of $R$-essential Morse functions, Morse-Smale gradient-like vector fields and continuation data from $(-K,-\xi)$ to $(K,\xi)$ and from $(-K',-\xi')$ to $(K',\xi')$. Choose also continuation data $\cC$ from $(K,\xi)$ to $(K',\xi')$, which determines also continuation data $-\cC$ from $(-K,-\xi)$ to $(-K',-\xi')$.

Recall the coproducts $\boldlambda_\cD, \boldlambda_{\cD'}:\ol{SH}_*(W)\to SH_*(W;\im c)^{\otimes 2}$. 
Let {\refone $\tau$ be the graded twist on tensor products given by $\tau(a\otimes b)=(-1)^{|a||b|}b\otimes a$.} 
We will use the continuation element $\boldc_{\cD,\cD',\cC}$ 
defined in the previous subsection.

\begin{proposition}\label{prop:lambdaDprimeDC}
We have 
$$
\boldlambda_{\cD'} = \boldlambda_\cD + (\boldmu\otimes 1)(1\otimes \boldc_{\cD,\cD',\cC}) - (1\otimes\boldmu)
({\alex \tau\boldc_{\cD,\cD',\cC}}\otimes 1)
$$
as maps $\ol{SH}_*(W)\to SH_*(W;\im c)^{\otimes 2}$.
\end{proposition}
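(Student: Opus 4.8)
The plan is to deduce the identity from the boundary relation of a $2$-parameter moduli space interpolating between the configurations defining $\boldlambda_\cD$ and $\boldlambda_{\cD'}$, and then to pass to the homology limit. \emph{First I would reduce to the chain level.} Fix $R$-essential Hamiltonians $H\supset K$ and $H'\supset K'$ of equal asymptotic slope, together with the continuation chain maps $c_\cC$ induced by $\cC$ on $FC_*(H)$ and on $FC_*(2H)$. By Corollary~\ref{cor:FC*Himc} the complexes $FC_*(2H;\im c_\cD)$ and $FC_*(2H';\im c_{\cD'})$ are canonically chain homotopy equivalent, and under this identification the chain maps of Proposition~\ref{prop:lambda_olSH_SHimc}, applied to $(K,\xi,\cD)$ and to $(K',\xi',\cD')$, represent $\boldlambda_\cD$ and $\boldlambda_{\cD'}$ as maps $\ol{SH}_*(W)\to SH_*(W;\im c)^{\otimes 2}$. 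Writing $I_{\cD'}=\im c_{\cD'}\otimes FC_*(2H')+FC_*(2H')\otimes\im c_{\cD'}$, it then suffices to produce a chain-level identity
\[
\boldlambda_{\cD'}\circ c_\cC\;-\;(c_\cC\otimes c_\cC)\circ\boldlambda_\cD\;=\;(\boldmu\otimes 1)(1\otimes\boldc_{\cD,\cD',\cC})\;-\;(1\otimes\boldmu)(\boldc_{\cD^{op},{\cD'}^{op},\cC^{op}}\otimes 1)\;+\;[\p,\boldsymbol{h}]\pmod{I_{\cD'}}
\]
for a suitable degree $-n+2$ operator $\boldsymbol{h}$; passing to homology and then to the direct limit over $R$-essential Hamiltonians, exactly as in Propositions~\ref{prop:lambda_olSH_SHimc} and~\ref{prop:coproduct_on_SH_rel_imc}, then gives the statement.

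\emph{Next I would build the interpolating family.} Recall that $\boldlambda_\cD$ is itself a $1$-parameter operation with parameter $s\in[0,1]$ --- a Floer cylinder with a moving slit --- degenerating at $s=0$, resp.\ $s=1$, to the primary coproduct with outputs in $FC_*(-K)$ and $FC_*(2H)$, with $c_\cD\colon FC_*(-K)\to FC_*(H)\hookrightarrow FC_*(2H)$ plugged into the first, resp.\ second, output; this is precisely why $\boldlambda_\cD$ has image in $I_\cD$ at those endpoints and hence descends to the quotient. I would then consider the homotopy of operations $FC_*(H)\to FC_*(2H')^{\otimes 2}$ over the square $[0,1]_s\times[0,1]_t$ whose slice $t=0$ computes $(c_\cC\otimes c_\cC)\circ\boldlambda_\cD$, whose slice $t=1$ computes $\boldlambda_{\cD'}\circ c_\cC$, and whose parameter $t$ slides the $\cC$-continuation cylinders from the outputs across the coproduct to the input and, in doing so, carries the slit-end continuation data from $\cD\#\cC$ to $-\cC\#\cD'$ --- that is, along the homotopy used in the definition of $\vec\boldc_{\cD,\cD',\cC}$, read off at the slit end. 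For generic data and fixed asymptotics this moduli space is a compact $1$-manifold with boundary, and its signed boundary count is the identity of the first step.

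\emph{Then I would identify the boundary strata.} Besides the edges $t=0$ and $t=1$, which contribute $(c_\cC\otimes c_\cC)\circ\boldlambda_\cD$ and $\boldlambda_{\cD'}\circ c_\cC$, the codimension-one boundary consists of: the edge $s=0$, where the configuration is the primary coproduct with the homotopy defining $\vec\boldc_{\cD,\cD',\cC}$ plugged into its first, negative-slope, output --- which, after the neck-stretching that trades this negative-slope leg of the primary coproduct for the product pair of pants, is identified with $(\boldmu\otimes 1)(1\otimes\boldc_{\cD,\cD',\cC})$; the edge $s=1$, treated identically for the other output with the opposite pair of pants and the reversed data $\cC^{op},\cD^{op},{\cD'}^{op}$, which contributes $-(1\otimes\boldmu)(\boldc_{\cD^{op},{\cD'}^{op},\cC^{op}}\otimes 1)$; Floer breaking at the input or at an output, which gives $[\p,\boldsymbol{h}]$; and breaking into $\im c_{\cD'}$ on one output factor, which, as in the proofs of Lemma~\ref{lem:lambda-imc} and Proposition~\ref{prop:lambda_olSH_SHimc} (via the operation $\boldbeta$ and relation~\eqref{eq:lambdared_beta}), lies in $I_{\cD'}$ and is discarded. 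Summing signed boundary points yields the chain-level identity.

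\emph{The hard part} is the identification of the $s=0$ and $s=1$ strata, namely establishing --- with the correct signs and orientations --- that the primary coproduct with a secondary continuation map inserted on one leg reorganizes, after neck-stretching, into $\boldmu$ applied to the secondary continuation copairing. This is the family version of the gluing argument behind Lemma~\ref{lem:cDopDlambda}, and the distinguished role of the low-action (``$-K$'') region makes the orientation bookkeeping delicate. As a consistency check I would specialize to $K=K'$, $\cC$ trivial and $\cD'=\cD^{op}$ and verify, using Lemma~\ref{lem:cDopDlambda} ($\bar\boldc_{\cD^{op},\cD}=\boldlambda_\cD\boldeta$), that the resulting formula reduces to Corollary~\ref{cor:lambdaDprimeD}. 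Once the chain-level identity is established, passage to the direct limit over $R$-essential Hamiltonians extending $K$, resp.\ $K'$, turns it into the asserted equality of maps $\ol{SH}_*(W)\to SH_*(W;\im c)^{\otimes 2}$.
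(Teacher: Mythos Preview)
Your approach---set up a square $[0,1]_s\times[0,1]_t$ with $\boldlambda_\cD$ and $\boldlambda_{\cD'}$ on the $t=0,1$ edges and read the secondary continuation terms on the $s=0,1$ edges---is exactly the strategy the paper uses, and it works. Two points deserve correction, though.

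First, what you call the ``hard part'' is not a gluing or neck-stretching argument at all. The paper's key observation is purely a relabeling of the same moduli problem: a genus-zero curve with one positive puncture and two negative punctures, one of which carries weight $-1$ (and is capped by a continuation cylinder), is \emph{the same curve} as a genus-zero curve with two positive punctures and one negative puncture, with a continuation-bivector curve attached at the new positive puncture. No limiting process is involved; you are just reading the sign of one puncture differently. Invoking Lemma~\ref{lem:cDopDlambda} here is misleading---that lemma is about something else (capping the input by the unit).

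Second, you assert that the $s=1$ edge produces ``the reversed data $\cC^{op},\cD^{op},{\cD'}^{op}$'' without justification, and you have the two edges' contributions interchanged. The paper explains this: at the starting point of the interval the reinterpretation naturally produces the \emph{twisted} bivector $\tau\boldc_{\cD',\cD,\cC}$ (because dualizing the \emph{first} output reverses the cyclic order of the remaining punctures), and one then uses the identity
\[
\tau\boldc_{\cD',\cD,\cC}= -\boldc_{\cD^{op},{\cD'}^{op},\cC^{op}}.
\]
This identity---a direct consequence of how dualization interacts with reversing a homotopy---is the actual content behind the appearance of the ``op'' data, and it is the step your write-up is missing.
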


The terms on the right hand side have to be interpreted as follows: the multiplication involving $\boldmu$ is performed in $\ol{SH}_*(W)$, and the result is further reduced to $SH_*(W;\im c)$. 

{\refone 
\begin{remark}
Given continuation data $\cD=(H_{s,t},J_{s,t})$, recall the notation $\cD^{op}=(-H_{-s,1-t},J_{-s,1-t})$ from the previous subsection. Given continuation data $\cC=(K_s,J_{s,t})$ from $K$ to $K'$, let $-\cC=(-K_s,J_{s,t})$ be continuation data from $-K$ to $-K'$. Then $\cC^{op}$ is continuation data from $-K'$ to $-K$ and $-\cC^{op}$ is continuation data from $K'$ to $K$. Moreover  
$$
\tau\boldc_{\cD,\cD',\cC}= -\boldc_{{\cD'}^{op},\cD^{op},-\cC^{op}}.
$$
\end{remark}
}

{\refone 
\begin{proof}[Proof of Proposition~\ref{prop:lambdaDprimeDC}]
We set up a Floer problem parametrized by a rectangle as in Figure~\ref{fig:LambdaDDprime-small}. (See also Proposition~\ref{prop:cocomm-coprod} for a similar reasoning.) Along the horizontal segments we consider Floer data that corresponds to $\boldlambda_\cD$ and $\boldlambda_{\cD'}$. (In Figure~\ref{fig:LambdaDDprime-small} we split each horizontal segment in three parts in order to emphasize that the endpoints have to be understood as splittings at $-K'$ for the top horizontal segment, and at $-K$ for the bottom horizontal segment.) Along the vertical segments we consider Floer data that corresponds to the secondary continuation map $\boldc_{\cD,\cD',\cC}$, by interpreting 
the configuration consisting of a curve with 
\begin{itemize}
\item one input, 
\item two outputs, and 
\item a continuation-map-curve attached at one of the outputs, 
\end{itemize}
as a configuration consisting of 
\begin{itemize}
\item a curve with two inputs, 
\item one output, and 
\item a continuation-element-curve attached at one of the inputs.
\end{itemize} 
Finally, we extend this choice of Floer data to the interior of the rectangle. The count of $0$-dimensional moduli spaces of solutions of this parametrized Floer problem provides a chain homotopy between $0$ and $-\boldlambda_{\cD'} +\boldlambda_\cD + (\boldmu\otimes 1)(1\otimes \boldc_{\cD,\cD',\cC}) - (1\otimes\boldmu)
(\tau\boldc_{\cD,\cD',\cC}\otimes 1)$. The twist in the last term arises from our convention that, when interpreting an operation $\vec\boldc$ with 1 input and 1 output into an operation $\boldc$ with 2 outputs, we convert the input of $\vec\boldc$ into the 1st output of $\boldc$. 
\end{proof}
}

\begin{figure}
\begin{center}
\includegraphics[width=1\textwidth]{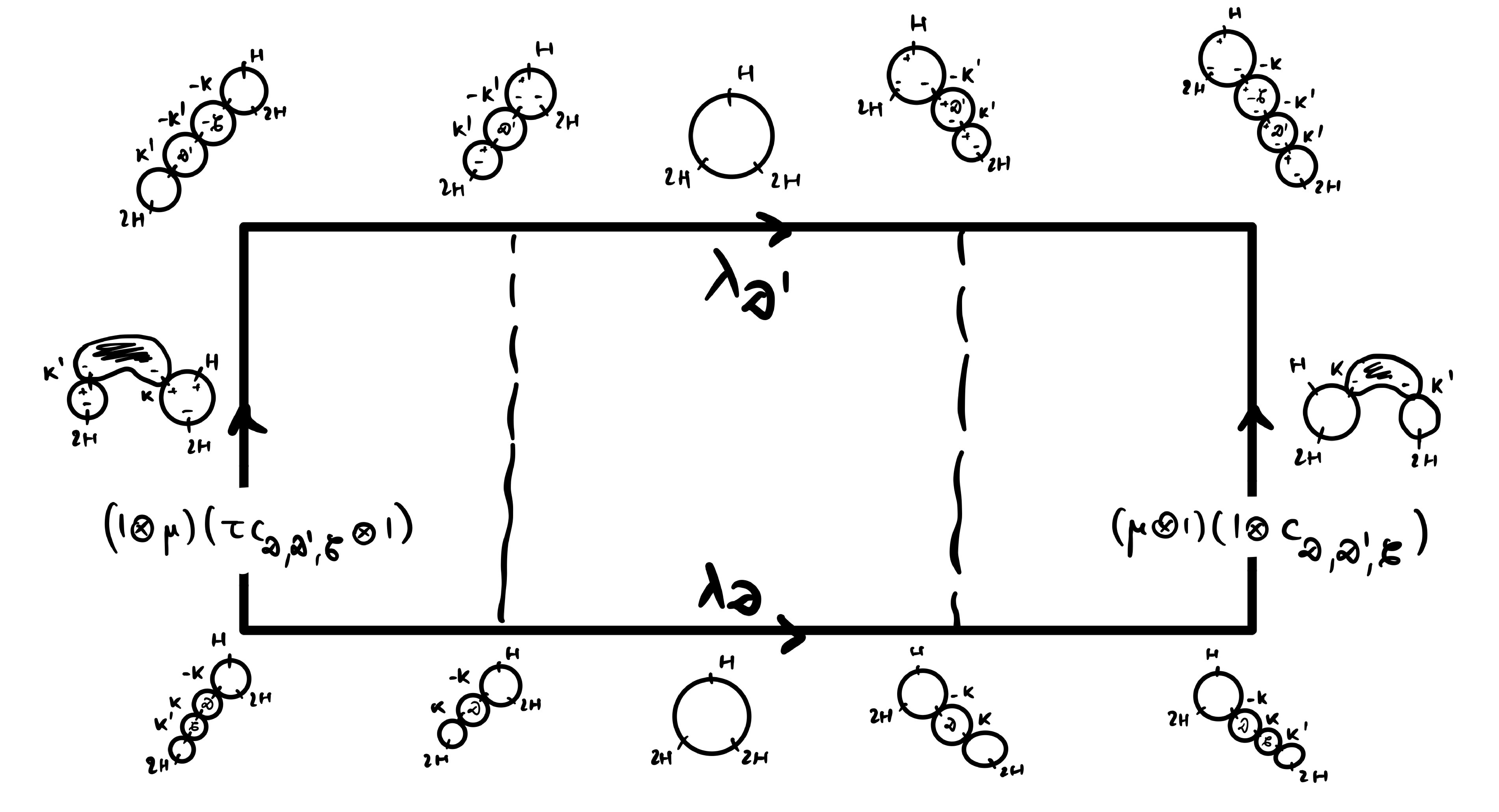}
\caption{The relation $\boldlambda_{\cD'} = \boldlambda_\cD + (\boldmu\otimes 1)(1\otimes \boldc_{\cD,\cD',\cC})-(1\otimes\boldmu)(\tau\boldc_{\cD,\cD',\cC}\otimes 1)$.}
\label{fig:LambdaDDprime-small} 
\end{center}
\end{figure}

In the special case $\cD'=\cD^{op}$ and with $\cC$ the constant continuation data (which we drop from the notation) we obtain the following. 

\begin{corollary}\label{cor:lambdaDprimeD} We have 
$$
\boldlambda_{\cD^{op}} = \boldlambda_\cD + (\boldmu\otimes 1)(1\otimes \boldc_{\cD,\cD^{op}}) + (1\otimes\boldmu)(\boldc_{\cD,\cD^{op}}\otimes 1)
$$
as maps $\ol{SH}_*(W)\to SH_*(W;\im c)^{\otimes 2}$.
\end{corollary}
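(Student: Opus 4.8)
The plan is to deduce Corollary~\ref{cor:lambdaDprimeD} as the degenerate case of Proposition~\ref{prop:lambdaDprimeDC} in which $(K',\xi',\cD')=(K,\xi,\cD^{op})$ and $\cC$ is the constant continuation data from $(K,\xi)$ to itself, which we suppress from the notation; then $\cC^{op}$ and $-\cC$ are again constant, reversal of continuation data being an involution we have $(\cD^{op})^{op}=\cD$, and $c_\cD=c_{\cD^{op}}$ as maps $FC_*(-K)\to FC_*(K)$ by Lemma~\ref{lem:R-essential-continuation-maps-are-equal}. Thus every ingredient entering Proposition~\ref{prop:lambdaDprimeDC} is defined in this configuration, and substituting yields
$$
\boldlambda_{\cD^{op}} = \boldlambda_\cD + (\boldmu\otimes 1)(1\otimes \boldc_{\cD,\cD^{op}}) - (1\otimes\boldmu)(\boldc_{\cD^{op},\cD}\otimes 1),
$$
with the right-hand side interpreted as in Proposition~\ref{prop:lambdaDprimeDC} (the product is formed in $\ol{SH}_*(W)$ and then reduced to $SH_*(W;\im c)$).

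It remains to reconcile the last term with the one in the corollary, i.e. to prove the antisymmetry $\boldc_{\cD^{op},\cD}=-\boldc_{\cD,\cD^{op}}$. I would in fact establish the general statement $\boldc_{\cD_1,\cD_0}=-\boldc_{\cD_0,\cD_1}$ for any two choices of continuation data $\cD_0,\cD_1$ from $(-K,-\xi)$ to $(K,\xi)$. By $R$-essentiality the secondary continuation maps $\vec\boldc_{\cD_0,\cD_1}$ and $\vec\boldc_{\cD_1,\cD_0}$ are genuine chain maps (Lemma~\ref{lem:sec_c_well_defined}(i)); the maps they induce on homology are additive under concatenation of homotopies of continuation data; and the concatenation of a homotopy from $\cD_0$ to $\cD_1$ with its reverse is a homotopy from $\cD_0$ to itself, which is homotopic rel endpoints to the constant one and hence induces the zero secondary continuation map. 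Therefore $\vec\boldc_{\cD_0,\cD_1}+\vec\boldc_{\cD_1,\cD_0}=0$ on homology, equivalently, for the associated continuation elements, $\boldc_{\cD_0,\cD_1}+\boldc_{\cD_1,\cD_0}=0$. Taking $(\cD_0,\cD_1)=(\cD,\cD^{op})$ and feeding this into the displayed identity turns $-(1\otimes\boldmu)(\boldc_{\cD^{op},\cD}\otimes 1)$ into $+(1\otimes\boldmu)(\boldc_{\cD,\cD^{op}}\otimes 1)$, which together with the first correction term (already in agreement) is precisely the statement of Corollary~\ref{cor:lambdaDprimeD}.

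The substantive content is entirely carried by Proposition~\ref{prop:lambdaDprimeDC}, which we are free to invoke, so beyond routine substitution the only point demanding care is the sign in the antisymmetry $\boldc_{\cD^{op},\cD}=-\boldc_{\cD,\cD^{op}}$: one must check that passing from a homotopy of continuation data to its reverse --- together with the reparametrization $t\mapsto 1-t$ built into the definition of $\cD^{op}$ --- acts on the relevant rigid moduli spaces purely by reversal of orientation, with no leftover transposition of the two outputs. As a consistency check, the identity $\tau\boldc_{\cD',\cD,\cC}=-\boldc_{\cD^{op},{\cD'}^{op},\cC^{op}}$ used in the proof of Proposition~\ref{prop:lambdaDprimeDC} specializes (to $\cD'=\cD^{op}$ and $\cC$ constant) to $\tau\boldc_{\cD^{op},\cD}=-\boldc_{\cD^{op},\cD}$, which is compatible with the above.
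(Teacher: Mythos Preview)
Your proof is correct and follows essentially the same route as the paper: specialize Proposition~\ref{prop:lambdaDprimeDC} to $\cD'=\cD^{op}$ with $\cC$ constant, then use the antisymmetry $\boldc_{\cD^{op},\cD}=-\boldc_{\cD,\cD^{op}}$. The paper treats this antisymmetry as a direct consequence of the definition (reversing the orientation of the parametrizing interval), whereas you give a slightly more elaborate concatenation argument; both are fine, though your last paragraph mildly conflates the reversal of the homotopy between $\cD$ and $\cD^{op}$ with the reparametrization defining $\cD^{op}$ itself---only the former is relevant for the general identity $\boldc_{\cD_1,\cD_0}=-\boldc_{\cD_0,\cD_1}$.
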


\begin{proof}
Using ${\alex \tau \boldc_{\cD,\cD^{op}}}=-\boldc_{\cD,\cD^{op}}$ and Proposition~\ref{prop:lambdaDprimeDC} we find
\begin{align*}
\boldlambda_{\cD^{op}} & = \boldlambda_\cD + (\boldmu\otimes 1)(1\otimes \boldc_{\cD,\cD^{op}}) - (1\otimes\boldmu)({\alex \tau \boldc_{\cD,\cD^{op}}}\otimes 1)\\
& = \boldlambda_\cD + (\boldmu\otimes 1)(1\otimes \boldc_{\cD,\cD^{op}}) + (1\otimes\boldmu)(\boldc_{\cD,\cD^{op}}\otimes 1).
\end{align*}
\end{proof}

\begin{corollary} \label{cor:independence} For an $R$-essential Weinstein domain $W$ such that $H^{n-1}(W)=0$, the coproduct $\boldlambda_\cD$ is independent of the choice of continuation data $\cD$. 
\end{corollary}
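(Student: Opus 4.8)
The plan is to deduce this from Proposition~\ref{prop:lambdaDprimeDC}. Given two choices of continuation data $\cD,\cD'$ and auxiliary continuation data $\cC$ between the underlying $R$-essential Morse functions, that proposition writes $\boldlambda_{\cD'}-\boldlambda_\cD$ as a sum of two terms, each obtained by composing the product $\boldmu$ with one of the secondary continuation elements $\boldc_{\cD,\cD',\cC}$ and $\boldc_{{\cD}^{op},{\cD'}^{op},\cC^{op}}$. Since $\cD^{op},{\cD'}^{op},\cC^{op}$ are again continuation data between $R$-essential Morse functions on $W$, it is enough to prove: if $H^{n-1}(W;R)=0$, then every secondary continuation element $\boldc_{\cD,\cD',\cC}$ vanishes in $SH_*(W)^{\otimes 2}$. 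This reduces the corollary to a purely homological vanishing statement, the geometric content already being packaged into Proposition~\ref{prop:lambdaDprimeDC}.

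I would prove the vanishing at the chain level. For suitable $R$-essential Hamiltonians, $\boldc_{\cD,\cD',\cC}$ is the image in the limit of the chain-level secondary continuation element, which by construction is a cycle $\boldc=\boldc^{(1,0)}+\boldc^{(0,1)}$ in $FC_*(K)\otimes FC_*(K')$ with $\boldc^{(1,0)}\in FC_1(K)\otimes FC_0(K')$ and $\boldc^{(0,1)}\in FC_0(K)\otimes FC_1(K')$. Recall $FC_j(K)=MC^{n-j}(K;R)$, so $FC_{-1}(K)=FC_{-1}(K')=0$, and $R$-essentiality of $K$ is exactly the statement that the Floer differential vanishes on $FC_1(K)=MC^{n-1}(K;R)$ (relation~\eqref{eq:R-essential-cochains}). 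Hence $H_1(FC_*(K))=\coker\!\big(\p\colon FC_2(K)\to FC_1(K)\big)=H^{n-1}(W;R)$, and likewise for $K'$. Thus the hypothesis $H^{n-1}(W;R)=0$ says precisely that $\p\colon FC_2(K)\to FC_1(K)$ is onto; being a surjection onto the free $R$-module $MC^{n-1}(K;R)$, it admits a section $\sigma$, and similarly one has a section $\sigma'$ for $K'$.

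It then remains to compute, using $FC_{-1}(K')=0$, that $\p\big((\sigma\otimes 1)\boldc^{(1,0)}\big)=\pm\boldc^{(1,0)}$ and, using $FC_{-1}(K)=0$, that $\p\big((1\otimes\sigma')\boldc^{(0,1)}\big)=\pm\boldc^{(0,1)}$, so that $\boldc$ is a boundary in $FC_*(K)\otimes FC_*(K')$; passing to homology and to the limit gives $\boldc_{\cD,\cD',\cC}=0$ in $SH_*(W)^{\otimes 2}$, and Proposition~\ref{prop:lambdaDprimeDC} then yields $\boldlambda_{\cD'}=\boldlambda_\cD$. In fact the same two identities show that all of $FC_1(K)\otimes FC_0(K')+FC_0(K)\otimes FC_1(K')$ consists of boundaries, i.e.\ $H_1\big(FC_*(K)\otimes FC_*(K')\big)=0$ under the hypothesis. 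The only point needing real care is the bookkeeping behind the identification $FC_*(-K)\equiv FC_*(K)^\vee$ and the passage from the secondary continuation map to the secondary continuation element, i.e.\ making sure the chain representative is genuinely a degree-$1$ cycle with vanishing components outside bidegrees $(1,0)$ and $(0,1)$ and that the relevant section arguments are compatible with the limit over $R$-essential Hamiltonians; once the degree count is pinned down, the rest is soft.
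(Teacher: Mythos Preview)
Your argument is correct and follows the same overall strategy as the paper: reduce via Proposition~\ref{prop:lambdaDprimeDC} to the vanishing of the secondary continuation elements, then establish that vanishing from the hypothesis $H^{n-1}(W;R)=0$.

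The difference lies only in how the vanishing is carried out. The paper works with the secondary continuation \emph{map} $\vec\boldc:FH_*(-K)\to FH_{*+1}(K')$, which factors through $H_{n+*}(W;R)\to H^{n-*-1}(W;R)$; in degree $*=0$ the target $H^{n-1}$ vanishes by hypothesis, while in degree $*=-1$ one must argue that the source $H_{n-1}(W;R)$ vanishes, which the paper does via the universal coefficient theorem combined with freeness of $H_{n-1}$ (the latter coming from $R$-essentiality). Your approach works instead with the \emph{element} at chain level and exhibits it directly as a boundary using sections of the surjections $FC_2\twoheadrightarrow FC_1$. This is slightly more economical: it uses only the surjectivity of $\delta:MC^{n-2}\to MC^{n-1}$ (equivalently $H^{n-1}=0$ together with $\delta|_{MC^{n-1}}=0$) and sidesteps the UCT step entirely. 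The two arguments are dual to one another---indeed, your observation that $FH_1(K)=H^{n-1}(W;R)=0$ already forces the homology-level element in $(FH_1\otimes FH_0)\oplus(FH_0\otimes FH_1)$ to vanish, which is the element-side counterpart of the paper's target-vanishing step.
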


\begin{proof} Given another choice of continuation data $\cD'$, we claim that the continuation map $\vec\boldc_{\cD,\cD',\cC}$ 
vanishes. This implies $\boldlambda_{\cD'}=\boldlambda_\cD$ by Proposition~\ref{prop:lambdaDprimeDC}. 

We now prove the claim. The continuation map acts as $SH^{-*}(W)\to SH_{*+1}(W)$ and it factors through the energy zero part $H_{n+*}(W)\to H^{n-*-1}(W)$. Since the homology and cohomology of $W$ are supported in degrees $0,\dots,n$, this map can only be nonzero in degrees $*=-1,0$. We are thus left to consider the two maps 
$$
H_{n-1}(W)\to H^n(W)\qquad \mbox{and}\qquad H_n(W)\to H^{n-1}(W). 
$$
The condition $H^{n-1}(W)=0$ ensures the vanishing of the second map. This condition also implies that $\Hom_R(H_{n-1}(W),R)$ vanishes. Indeed, this group is a surjective image of $H^{n-1}(W)$ by the universal coefficient theorem (this uses that $R$ is a principal ideal domain). On the other hand, by the condition of $R$-essentiality the Morse differential $\p:MC_n(W)\to MC_{n-1}(W)$ is zero, hence $H_{n-1}(W)$ is free. The vanishing of $\Hom_R(H_{n-1}(W),R)$ is therefore equivalent to the vanishing of $H_{n-1}(W)$, and this ensures the vanishing of the first map. 
\end{proof}
 
\begin{corollary} \label{cor:lambdaeta=0}
For an $R$-essential Weinstein domain $W$ such that $H^{n-1}(W)=0$, the coproduct $\boldlambda=\boldlambda_\cD$ satisfies 
$$
\boldlambda\boldeta=0,
$$
where $\boldeta$ is the unit for the product $\boldmu$. 
\end{corollary}

\begin{proof}
We have seen in the proof of Corollary~\ref{cor:independence} that, under the assumption $H^{n-1}(W)=0$, all the continuation maps vanish. In particular $\boldc_{\cD^{\mathrm{op}},\cD}=0$, and we conclude using the equality $\boldc_{\cD^{\mathrm{op}},\cD}=\boldlambda_\cD\boldeta$ from Lemma~\ref{lem:cDopDlambda}. 
\end{proof}

\begin{remark}
For the cotangent bundle $W=D^*M$ of an $R$-orientable manifold $M$, this recovers the corresponding result from~\cite[\S4]{CHO-MorseFloerGH}. Indeed, we have $H^{n-1}(W)\simeq H^{n-1}(M)\simeq H_1(M)$ by Poincar\'e duality.
\end{remark}

\section{Strongly $R$-essential Weinstein domains}   \label{sec:Weinstein_strong_R_essential}

In this section we introduce a more restrictive class of $R$-essential Weinstein domains which have the following nice features:
\begin{itemize}
\item the dimension condition $2n\ge 6$ from the definition of the coproducts $\boldsymbol{\lambda}=\boldsymbol{\lambda}_\cD$ can be dropped. 
\item reduced symplectic homology and symplectic homology relative to the continuation map are equal, so that they provide a common domain of definition for the pair of pants product and for the pair of pants secondary coproducts. 
\end{itemize}
 
\begin{definition}
A \emph{strongly $R$-essential Weinstein domain} is an $R$-essential Weinstein domain $W$
such that the canonical map 
$$
\ol{SH}_*(W)\to SH_*(W;\im c)
$$
is an isomorphism. 
\end{definition}

This terminology is motivated by Proposition~\ref{prop:olSH_relSH_sufficient_condition}. Given an $R$-essential Weinstein domain $W$, a sufficient condition which ensures that it is strongly $R$-essential is injectivity of the map $H^n(W;R)\to SH_0(W;R)$. This means that the top dimensional cells of the skeleton are homologically essential not only in the Morse (zero energy) sector, but also inside the full symplectic homology group.\footnote{In view of Example~\ref{ex:strongly_R_essential}.(i), such Weinstein domains were called ``cotangent-like" in a previous version of this paper. We opted for a different terminology because, from the arboreal perspective, all Weinstein domains are ``cotangent-like".}

\begin{example} \label{ex:strongly_R_essential}
Examples of strongly $\Z$-essential Weinstein domains include the following (with their canonical Weinstein structure). 

(i) Disc cotangent bundles of closed orientable manifolds (in the non-orientable case they are strongly $\Z/2$-essential). 

(ii) Subcritical Weinstein domains. 

(iii) $\Z$-essential Weinstein domains $W$ which have vanishing first Chern class and which admit a Liouville form whose closed Reeb orbits on the boundary $\p W$ which are contractible in the interior $W$ have Conley-Zehnder index $\neq 0,1$ (this ensures that nonconstant orbits cannot kill any generators of $H^n(W)$). For example: 
\begin{itemize}
\item Plumbings of cotangent bundles of simply-connected closed manifolds of dimension $\ge 3$. Indeed, as shown in~\cite[Theorem~54]{Ekholm-Lekili}, their boundaries admit contact forms whose closed Reeb orbits are nondegenerate and have Conley-Zehnder index $\ge 2$.  
\item Milnor fibers of isolated singularities which admit contact forms on the boundary as above. 
Specific examples are the Milnor fillings of Brieskorn manifolds $\Sigma(\ell,2,\dots,2)$ of dimension $2n-1\ge 5$. The indices have been computed explicitly by Ustilovsky~\cite{Ustilovsky} and van Koert~\cite{vanKoert-Brieskorn}, see also Uebele~\cite{Uebele-Brieskorn} and Fauck~\cite{Fauck-thesis}. That all indices are $\ge 2$ in this case can be seen using the formula from~\cite[Proposition~103]{Fauck-thesis}. 
Many other Brieskorn manifolds satisfy the assumption on the indices, although it is unclear whether they can be characterized in a simple manner. 
\end{itemize}
\end{example}

The following example shows that, in contrast to $R$-essentiality, the notion of strong $R$-essentiality depends on the symplectic structure. 

\begin{example}
In this example we use as ring of coefficients $R=\Z$.
Consider a Weinstein domain $W$ with $SH_*(W)=0$. For instance, this is the case if $W$ is {\em flexible}~\cite{Cieliebak-Eliashberg-book}, or more generally {\em subflexible}~\cite{Murphy-Siegel}. Then $W$ is strongly $\Z$-essential if and only if the intersection form on $H_n(W)$ is trivial. 
To see this, note first that $\ol{SH}_*(W)=SH_*(W)/\im c_*=0$. So $W$ is strongly $\Z$-essential iff $SH_*(W;\im c)=0$, or equivalently (by the long exact sequence of the pair $\im c\subset SC_*(W)$), $H_*(\im c)=0$. Since the chain complex $\im c$ is supported in a single degree, this holds iff $c=0$ on chain level. By~\eqref{eq:R-essential-cochains}, this is equivalent to $c_*=0$ on homology, which in turn is equivalent to vanishing of the intersection form by Proposition~\ref{prop:int-form}. 

For example, let $W=D^*M$ be the disc cotangent bundle of a closed oriented manifold $M$. With its canonical Weinstein structure this is strongly $\Z$-essential by Example~\ref{ex:strongly_R_essential}(i). On the other hand, by the preceding discussion, $D^*M$ with its unique flexible Weinstein structure (see~\cite{Cieliebak-Eliashberg-book}) is strongly $\Z$-essential if and only if the intersection form on $H_n(D^*M)=\Z$ is trivial, i.e., if and only if the Euler characteristic of $M$ vanishes. 
\end{example}

\begin{proposition} \label{prop:coproduct_reduced_bar}
Let $W$ be a strongly $R$-essential Weinstein domain. The choice of an $R$-essential Morse function $K$, 
of a Morse-Smale gradient-like vector field, and of continuation data $\cD$ from $-K$ to $K$, determines a coproduct
\footnote{The coproduct depends a priori on all these choices, though we use only the shorthand notation $\boldsymbol{\lambda}=\boldsymbol{\lambda}_\cD$.}
$$
\boldsymbol{\lambda}=\boldsymbol{\lambda}_\cD:\ol{SH}_*(W)\to \ol{SH}_*(W)^{\otimes 2}.
$$  
\end{proposition}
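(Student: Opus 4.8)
The plan is to deduce Proposition~\ref{prop:coproduct_reduced_bar} by combining the two foundational inputs already established: Proposition~\ref{prop:lambda_olSH_SHimc}, which produces a linear map $\boldsymbol{\lambda}_\cD:\ol{SH}_*(W)\to SH_*(W;\im c)^{\otimes 2}$ from the data $(K,\xi,\cD)$ \emph{without any dimensional restriction}, and the defining property of a strongly $R$-essential skeleton, namely that the canonical map $\ol{SH}_*(W)\to SH_*(W;\im c)$ is an isomorphism. First I would invoke Proposition~\ref{prop:lambda_olSH_SHimc} to get the map landing in $SH_*(W;\im c)^{\otimes 2}$. Then I would take the inverse of the isomorphism $\ol{SH}_*(W)\xrightarrow{\ \sim\ } SH_*(W;\im c)$ from the definition, tensor it with itself to obtain an isomorphism $SH_*(W;\im c)^{\otimes 2}\xrightarrow{\ \sim\ }\ol{SH}_*(W)^{\otimes 2}$, and post-compose. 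This gives the desired $\boldsymbol{\lambda}_\cD:\ol{SH}_*(W)\to \ol{SH}_*(W)^{\otimes 2}$ of degree $-n+1$, since both Proposition~\ref{prop:lambda_olSH_SHimc} and the isomorphism preserve degrees.

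The one point requiring a word of justification is \emph{coassociativity} (or at least that the map deserves to be called a coproduct), and here I would argue as follows. The iterated coproduct and the coassociativity relation are naturally formulated on $SH_*(W;\im c)$, where the operation $\boldsymbol{\lambda}_\cD$ is built from genuine chain-level Floer data (cf.\ the moduli spaces in the proof of Lemma~\ref{lem:lambda-imc} and~\cite{CO-cones}); the usual TQFT-type breaking argument for trees of pairs of pants with continuation maps attached at the outputs shows that $\boldsymbol{\lambda}_\cD$ is coassociative up to the ambient homotopies. Since the comparison map $\ol{SH}_*(W)\to SH_*(W;\im c)$ is an isomorphism of the underlying graded modules that is moreover induced by a chain-level map compatible with the product (it is the map~\eqref{eq:map_from_reduced_to_modimc}, through which $SH_*(W)\to SH^{>0}_*(W)$ factors), transporting $\boldsymbol{\lambda}_\cD$ across this isomorphism preserves whatever coalgebra identities hold. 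In other words, one simply checks that the square expressing coassociativity on $SH_*(W;\im c)$ pulls back along the isomorphism to the corresponding square on $\ol{SH}_*(W)$.

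The main obstacle, such as it is, is not the logical deduction but keeping the bookkeeping of the target honest: one must be sure that the operation from Proposition~\ref{prop:lambda_olSH_SHimc} really only uses the fact that $\boldsymbol{\lambda}_\cD\circ c_\cD$ is a boundary mod $I_\cD$ (which was proved there with no dimensional hypothesis), so that nothing in the passage to $\ol{SH}_*(W)^{\otimes 2}$ secretly reintroduces the $2n\ge 6$ assumption. I would therefore phrase the proof as: "By Proposition~\ref{prop:lambda_olSH_SHimc} we have $\boldsymbol{\lambda}_\cD:\ol{SH}_*(W)\to SH_*(W;\im c)^{\otimes 2}$, and by the definition of strongly $R$-essential skeleton the map $\ol{SH}_*(W)\to SH_*(W;\im c)$ is an isomorphism; composing with the inverse of its square yields $\boldsymbol{\lambda}_\cD:\ol{SH}_*(W)\to\ol{SH}_*(W)^{\otimes 2}$, which is coassociative because the corresponding identity holds on $SH_*(W;\im c)$ and is transported along the isomorphism." A remark could be added that all the dependence-on-$\cD$ statements of~\S\ref{sec:dependence_on_D} (Proposition~\ref{prop:lambdaDprimeDC}, Corollary~\ref{cor:lambdaDprimeD}, Corollary~\ref{cor:independence}) transfer verbatim under this isomorphism, so that in particular $\boldsymbol{\lambda}_\cD$ on $\ol{SH}_*(W)$ is canonical when $H^{n-1}(W;R)=0$.
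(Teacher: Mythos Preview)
Your core argument is correct and essentially identical to the paper's: invoke Proposition~\ref{prop:lambda_olSH_SHimc} to obtain $\boldsymbol{\lambda}_\cD:\ol{SH}_*(W)\to SH_*(W;\im c)^{\otimes 2}$, then use the defining isomorphism $\ol{SH}_*(W)\stackrel{\sim}{\to} SH_*(W;\im c)$ of strong $R$-essentiality to retarget into $\ol{SH}_*(W)^{\otimes 2}$. The paper's proof is exactly this, in two lines.

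Your second paragraph, however, is both unnecessary and misleading. The word ``coproduct'' in the statement refers only to a degree $-n+1$ linear map $\ol{SH}_*(W)\to\ol{SH}_*(W)^{\otimes 2}$; coassociativity is \emph{not} part of this proposition and is established separately in Proposition~\ref{prop:coass_reduced}, under the dimensional hypothesis $2n\ge 4$. Your sketch (``the usual TQFT-type breaking argument'') glosses over the real content of that proof: the boundary of the relevant $3$-dimensional moduli space contributes extra terms involving the operation $\boldsymbol{B}_\cD$ (the dualization of $\boldsymbol{\beta}_\cD$), and showing these vanish on $\im c_\cD$ requires an index count that fails when $n=1$. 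So if you had needed coassociativity here, your argument would have a gap. Drop the second paragraph and the coassociativity clause from the third, and the proof is complete.
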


\begin{proof}
The map $\boldlambda_\cD:\ol{SH}_*(W)\to SH_*(W;\im c)^{\otimes 2}$ from Proposition~\ref{prop:lambda_olSH_SHimc} lifts to 
$$
\boldlambda_\cD:\ol{SH}_*(W)\to \ol{SH}_*(W)^{\otimes 2}
$$
by the strong $R$-essentiality assumption.
\end{proof}

\begin{remark}
It is instructive to ponder the role played by strong $R$-essentiality in the previous proof, allowing to remove the dimensionality assumption $2n\ge 6$ from Proposition~\ref{prop:coproduct_on_SH_rel_imc}. 
In the sequel we will use strong $R$-essentiality as a standing assumption and prove robust algebraic properties of the coproduct in that setup. All the sequel results have counterparts for $R$-essential Weinstein domains, which however require additional dimensional assumptions.   
\end{remark}

\section{Bialgebra structure on reduced symplectic homology} \label{sec:bialgebra}

\subsection{Unital infinitesimal anti-symmetric bialgebras}

Let $A$ be a graded module over a unital commutative ring $R$. We assume that $A$ is of \emph{finite type}, meaning that it is free and finite dimensional in each degree. We recall from~\cite{CHO-algebra} the following definition. 
 
\begin{definition} \label{defi:secondary-unital} 
A \emph{unital infinitesimal anti-symmetric bialgebra}\footnote{In~\cite{CHO-algebra} we describe a variation of  this algebraic structure for which the coproduct lands in a completed tensor product. While that structure is relevant for Rabinowitz Floer homology, it is not needed for reduced symplectic homology, whose definition involves only a direct limit over the action filtration and no inverse limit.} 
is a graded module $A$ endowed with a product $\boldmu:A\otimes A\to A$, a coproduct $\boldlambda:A\to A\otimes A$ and an element $\boldeta\in A$ which satisfy the following relations:
\begin{itemize}
\item {\sc (unit)} the element $\boldeta$ is the unit for the product $\boldmu$.
\item {\sc (associativity)} the product $\boldmu$ is associative. 
\item {\sc (coassociativity)} the coproduct $\boldlambda$ is coassociative. 
\item {\sc (unital infinitesimal relation)} 
\begin{align*}
\boldlambda\boldmu = (-1)^{|\boldlambda||\boldmu|} \Big((1\otimes\boldmu)&(\boldlambda\otimes 1) + (\boldmu\otimes 1)(1\otimes\boldlambda)\Big) \\
& - (-1)^{|\boldmu|}(\boldmu\otimes \boldmu)(1\otimes \boldlambda\boldeta\otimes 1).
\end{align*}
\item {\sc (unital anti-symmetry)} 
\begin{align*}
&(-1)^{|\boldmu| (|\boldlambda|+1)} (1\otimes\boldmu) (\tau\boldlambda\otimes 1)  
+ (-1)^{|\boldlambda| (|\boldmu|+1)}(\boldmu\tau\otimes 1)(1\otimes\boldlambda) \\
& \hspace{5cm}- (-1)^{|\boldlambda|+|\boldmu|}(\boldmu\tau\otimes \boldmu)(1\otimes\boldlambda\boldeta\otimes 1) \\
&= (-1)^{|\boldlambda| |\boldmu|} \tau(1\otimes\boldmu\tau)(\boldlambda\otimes 1) 
- (-1)^{(|\boldlambda|+1)(|\boldmu|+1)} \tau (\boldmu\otimes 1)(1\otimes\tau\boldlambda) \\ 
& \hspace{5cm} - (-1)^{|\boldmu|}\tau(\boldmu\otimes \boldmu\tau)(1\otimes\boldlambda\boldeta\otimes 1).
\end{align*}
\end{itemize}
\end{definition}

{\refone We recall that graded associativity for $\boldmu$ is expressed by $\boldmu(\boldmu\otimes 1)=(-1)^{|\boldmu|}\boldmu(1\otimes\boldmu)$, and graded coassociativity for $\boldlambda$ is expressed by $(\boldlambda\otimes 1)\boldlambda=(-1)^{|\boldlambda|}(1\otimes\boldlambda)\boldlambda$.} 

Evaluating the {\sc (unital anti-symmetry)} relation on $\boldeta\otimes \boldeta$ we obtain in particular
$$
\tau \boldlambda\boldeta = (-1)^{|\boldlambda|}\boldlambda\boldeta. 
$$

The notion of a unital infinitesimal anti-symmetric bialgebra is invariant under shift. 

It is useful to give a graphical interpretation of the {\sc (unital infinitesimal relation)} and of {\sc (unital anti-symmetry)}. Let us represent $\boldmu$ and $\boldlambda$ in the form of {\sf Y}-shaped graphs, 
with the inputs depicted in clockwise order with respect to the output for $\boldmu$, and the outputs depicted in counterclockwise order with respect to the input for $\boldlambda$. See Figure~\ref{fig:mu-and-lambda}.
\begin{figure}
\begin{center}
\includegraphics[width=.7\textwidth]{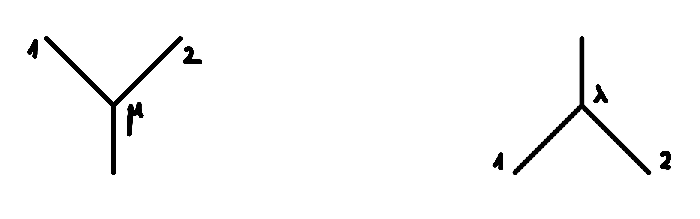}
\caption{The product $\boldmu$ and the coproduct $\boldlambda$.}
\label{fig:mu-and-lambda} 
\end{center}
\end{figure}
Then the {\sc (unital infinitesimal relation)} takes the form depicted in Figure~\ref{fig:infinitesimal-schematic}, and {\sc (unital anti-symmetry)} takes the form depicted in Figure~\ref{fig:4-term-new-schematic}.

\begin{figure}
\begin{center}
\includegraphics[width=\textwidth]{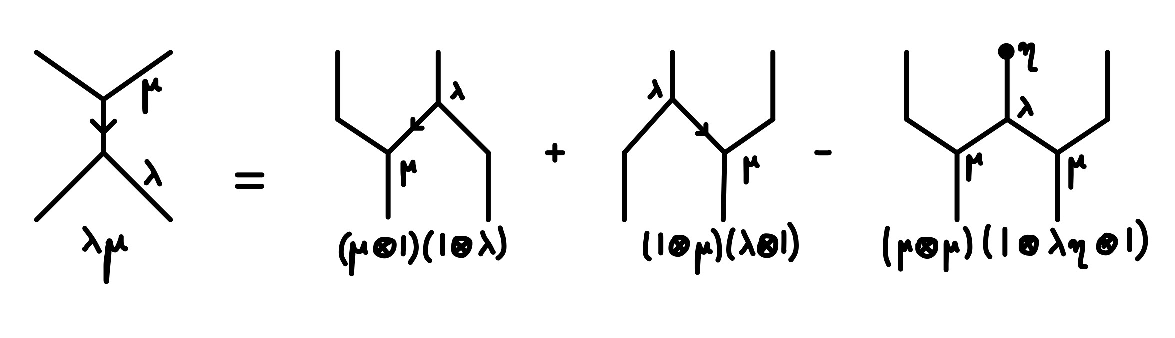}
\caption{The unital infinitesimal relation for $|\boldmu|=0$.}
\label{fig:infinitesimal-schematic} 
\end{center}
\end{figure}

\begin{figure}
\begin{center}
\includegraphics[width=\textwidth]{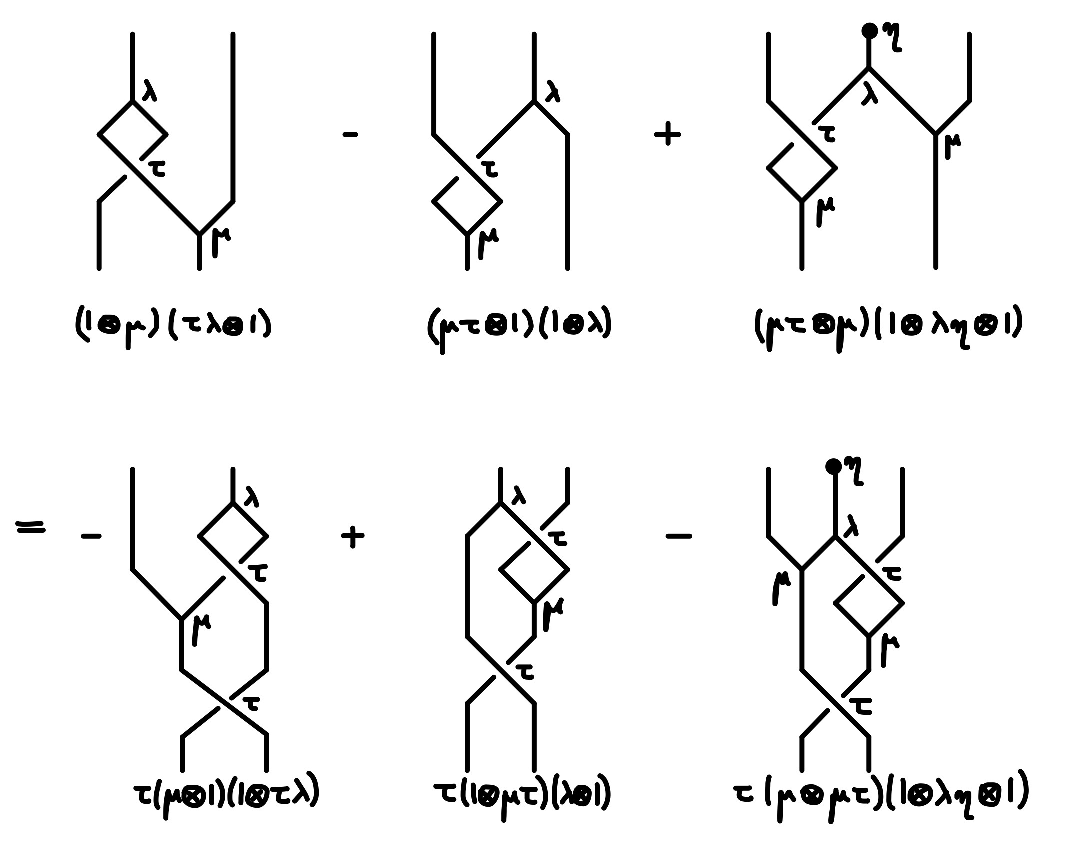}
\caption{Unital anti-symmetry for $|\boldmu|=0$ and $|\boldlambda|$ odd.}
\label{fig:4-term-new-schematic} 
\end{center}
\end{figure}

\begin{remark}[The commutative and cocommutative case]\label{rem:ccIAB}
If $\boldmu$ is commutative and $\boldlambda$ is cocommutative,
$$
   \boldmu\tau = (-1)^{|\boldmu|}\boldmu\quad\text{and}\quad
   \tau\boldlambda = (-1)^{|\boldlambda|}\boldlambda,
$$ 
then {\sc (unital anti-symmetry)} is a consequence of the {\sc (unital infinitesimal relation)}. To see this, simply observe that the unital infinitesimal relation transforms the left hand side of the unital anti-symmetry relation to $(-1)^{|\boldmu|+|\boldlambda|}\boldlambda\boldmu$ and the right hand side to $(-1)^{|\boldmu|}\tau\boldlambda\boldmu$, so the two sides are equal. 
\end{remark}

\begin{remark}[Involutivity, cf.~{\cite{CHO-algebra}}] 
If $2\neq 0$ in the ring $R$, and if $\boldmu$ and $\boldlambda$ are
commutative resp. cocommutative of opposite parity, then the following {\sc (involutivity)} relation holds:
$$
\boldmu\boldlambda =0.
$$
Indeed, we have 
$$
\boldmu\boldlambda = (-1)^{|\boldlambda|}\boldmu(\tau \boldlambda) = (-1)^{|\boldlambda|}(\boldmu\tau)\boldlambda=(-1)^{|\boldlambda|+|\boldmu|}\boldmu\boldlambda=-\boldmu\boldlambda.
$$
\end{remark}

\subsection{Bialgebra structure on reduced symplectic homology}

Let $W$ be a strongly $R$-essential Weinstein domain of dimension $2n$. We fix an $R$-essential Morse function $K$, a Morse-Smale gradient-like vector field for $K$, continuation data from $-K$ to $K$, and denote this entire set of data by $\cD$. \emph{Shifted reduced symplectic homology} 
$$
\ol{S\H}_*(W)=\ol{SH}_{*+n}(W)
$$
carries the pair of pants product $\boldmu$ of degree $0$ with unit $\boldeta$, and it also carries the coproduct $\boldlambda_\cD$ of odd degree $1-2n$ defined above. 

\begin{theorem} \label{thm:uias_reduced} Let $W$ be a strongly $R$-essential Weinstein domain of dimension $2n\ge 6$. Shifted reduced symplectic homology 
$$
(\ol{S\H}_*(W),\boldmu,\boldlambda_\cD,\boldeta)
$$ 
is a unital infinitesimal anti-symmetric bialgebra which is commutative and cocommutative.  
\end{theorem}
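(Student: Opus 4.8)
The plan is to verify the five defining relations of a unital infinitesimal anti-symmetric bialgebra (Definition~\ref{defi:secondary-unital}), working at chain level and then passing to homology. The {\sc (unit)}, {\sc (associativity)} and commutativity of $\boldmu$ are standard facts about the pair-of-pants product on symplectic homology, which descend to $\ol{S\H}_*(W)$ because the projection $S\H_*(W)\to\ol{S\H}_*(W)$ is a map of unital algebras (Proposition in \S\ref{sec:reduced_homology}); on shifted homology the product has degree $0$. Cocommutativity and {\sc (coassociativity)} of $\boldlambda_\cD$ are inherited from the corresponding chain-level symmetries of the secondary pair-of-pants coproduct on positive symplectic homology, lifted to $\ol{SH}_*(W)$ via the strong $R$-essentiality isomorphism $\ol{SH}_*(W)\xrightarrow{\ \simeq\ }SH_*(W;\im c)$ of Proposition~\ref{prop:coproduct_reduced_bar}; here one checks that the standard chain homotopies realizing these symmetries respect the coideal $\im c_\cD$, which again follows from the action/index estimates behind Lemma~\ref{lem:lambda-imc} together with the $R$-essentiality conditions~\eqref{eq:R-essential-cochains} in the zero-energy sector. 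Once cocommutativity is known, Remark~\ref{rem:ccIAB} reduces {\sc (unital anti-symmetry)} to the {\sc (unital infinitesimal relation)}, so the whole theorem comes down to the single relation
$$
\boldlambda_\cD\boldmu = (-1)^{|\boldlambda|}\big((1\otimes\boldmu)(\boldlambda_\cD\otimes 1)+(\boldmu\otimes 1)(1\otimes\boldlambda_\cD)\big)-(\boldmu\otimes\boldmu)(1\otimes\boldlambda_\cD\boldeta\otimes 1),
$$
since $|\boldmu|=0$.

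For the unital infinitesimal relation the natural approach is the one flagged in the introduction and in \S\ref{sec:unital-from-ass}: deduce it from associativity of the product on the cone of the continuation map, i.e. from the algebra structure on Rabinowitz Floer homology via the cone description of~\cite{CO-cones}. Concretely, I would consider the genus-zero moduli space of pairs-of-pants with two inputs and two outputs, degenerating its one-dimensional part: the boundary produces exactly the four terms of the claimed relation together with the correction term $(\boldmu\otimes\boldmu)(1\otimes\boldlambda_\cD\boldeta\otimes 1)$, the latter appearing because a neck-stretching/breaking can produce a sphere with a single negative puncture carrying the unit $\boldeta$, as in the gluing argument of Lemma~\ref{lem:cDopDlambda}. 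The continuation data $\cD$ enters through the factorization of the coproduct-defining configurations at the endpoints of the parametrizing interval (Figure~\ref{fig:Coproduct-SHrelimc}), and one must track that every configuration occurring in the compactification still factors through $c_\cD$, so that all terms descend to $\ol{S\H}_*(W)$; this is where one reuses the coideal statement of Lemma~\ref{lem:lambda-imc} (valid since $2n\ge4$ suffices once we have the strong $R$-essentiality lift, cf.\ the Remark following Proposition~\ref{prop:coproduct_reduced_bar}) and the identification $\boldlambda_\cD\boldeta=\bar\boldc_{\cD^{op},\cD}$ from Lemma~\ref{lem:cDopDlambda} to recognize the correction term.

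The main obstacle is the transversality and compactness bookkeeping for the two-parameter moduli space underlying the unital infinitesimal relation: one has to set up the parametrized Floer problem for the composition $\boldlambda_\cD\boldmu$ and for the iterated compositions on the right-hand side with compatible choices of continuation data $\cD$ at all the relevant ends, identify every broken configuration in the boundary of the compactified one-dimensional moduli space, and verify that the only ``new'' breaking relative to the non-unital Sullivan relation~\eqref{eq:Sullivan} is the unit-sphere term — with the correct sign $-(-1)^{|\boldmu|}=-1$. Signs are the delicate point: I would fix orientations of all moduli spaces once and for all following the conventions of~\cite{CHO-algebra,CO-cones} and check the signs against the already-established relation~\eqref{eq:lambdaDprimeD-intro} and Lemma~\ref{lem:cDopDlambda} as consistency tests. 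Having assembled the chain-level identity modulo $I_\cD=\im c_\cD\otimes FC_*(2H)+FC_*(2H)\otimes\im c_\cD$ and exact terms, passing to the limit over $R$-essential Hamiltonians and then to homology via the strong $R$-essentiality isomorphism yields the relation on $(\ol{S\H}_*(W),\boldmu,\boldlambda_\cD,\boldeta)$, and with Remark~\ref{rem:ccIAB} this completes the proof.
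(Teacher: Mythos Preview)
Your overall plan—verify the axioms one by one, reduce {\sc (unital anti-symmetry)} to the {\sc (unital infinitesimal relation)} via Remark~\ref{rem:ccIAB}, and prove the latter by a parametrized moduli problem—matches the paper's structure. But two of your steps have genuine gaps.

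\textbf{Coassociativity is not just ``inherited''.} You treat coassociativity as if the chain homotopy realizing $(\boldlambda\otimes 1)\boldlambda\simeq(1\otimes\boldlambda)\boldlambda$ on $S\H_*^{>0}$ simply descends through the coideal $\im c_\cD$. This is where the real content lies, and it is precisely where the dimension hypothesis $2n\ge 4$ enters. Because $\boldlambda_\cD$ is itself defined by a $1$-parameter family, the coassociator homotopy lives over a $3$-dimensional polytope (the paper's ``House'', Proposition~\ref{prop:coass_reduced}); its boundary has, beyond the two coassociator faces and the faces that die modulo $I_\cD$, two triangular faces contributing correction terms $(1\otimes 1\otimes\boldmu)(\boldB_\cD\otimes 1)$ and $(\boldmu\otimes 1\otimes 1)(1\otimes\boldB_\cD)$, where $\boldB_\cD$ is the cubic vector obtained by dualizing the input of $\boldbeta_\cD$. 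Showing that these corrections are null-homotopic modulo $I_\cD$ requires an index computation (namely, that $\tau_R(c_\cD\otimes 1)$ vanishes for degree reasons) which fails exactly when $n=1$. None of this is captured by ``the standard chain homotopies respect the coideal'', and the reference to Lemma~\ref{lem:lambda-imc} does not help: that lemma controls the $2$-output operation $\boldbeta_\cD$, whereas here the relevant index bound is genuinely different.

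\textbf{The correction term in the unital infinitesimal relation.} Your second (direct) approach is the one the paper carries out in Proposition~\ref{prop:infinitesimal_reduced}, but your description of the mechanism producing $(\boldmu\otimes\boldmu)(1\otimes\boldlambda_\cD\boldeta\otimes 1)$ is not what happens. No sphere with a single negative puncture ``carrying $\boldeta$'' breaks off. Instead, one side of the hexagonal polytope $\cP$ (the bottom horizontal side in Figure~\ref{fig:Sullivan-RFH}) parametrizes an interpolation between the continuation data $\cD$ and $\cD^{op}$ on an unstable cylinder sandwiched between two product components; its contribution is $(\boldmu\otimes\boldmu)(1\otimes\boldc_{\cD,\cD^{op}}\otimes 1)$, and it is only afterwards, via Lemma~\ref{lem:cDopDlambda} and $\boldc_{\cD,\cD^{op}}=-\boldc_{\cD^{op},\cD}=-\boldlambda_\cD\boldeta$, that the unit appears. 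Your alternative suggestion to deduce the relation from associativity on the cone (as in \S\ref{sec:unital-from-ass}) is valid but uses Theorem~\ref{thm:m-components-RFH} and the splitting of Proposition~\ref{prop:splittings}, both of which require $2n\ge 6$; so that route would not yield the theorem in dimension $4$. (A minor point: with $|\boldmu|=0$ the prefactor in the unital infinitesimal relation is $(-1)^{|\boldlambda||\boldmu|}=1$, not $(-1)^{|\boldlambda|}$.)
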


The proof of Theorem~\ref{thm:uias_reduced} is spread over the next subsections. Coassociativity of $\boldlambda_\cD$ is proved in~\S\ref{sec:coass}. The unital infinitesimal relation is proved in~\S\ref{sec:infinitesimal}. The unital anti-symmetry relation is proved in~\S\ref{sec:unital_anti_symmetry}. Although this is implied in the closed string case by the unital infinitesimal relation in conjunction with commutativity and cocommutativity (Remark~\ref{rem:ccIAB}), we give a proof relying on an explicit analysis of moduli spaces which applies verbatim in the open string case, see~\S\ref{sec:open_strings}

\begin{remark}
We expect that the statement also holds in dimension $2$. The current restriction on the dimension arises from our method of proof of coassociativity.  
\end{remark}

\begin{remark}
In the case $H^{n-1}(W)=0$, it follows from Corollary~\ref{cor:lambdaeta=0} that the unital infinitesimal relation reduces to the infinitesimal-, or Sullivan's relation 
$$
\boldlambda\boldmu = (1\otimes\boldmu)(\boldlambda\otimes 1) + (\boldmu\otimes 1)(1\otimes\boldlambda).
$$
In this case the structure at hand is that of a (commutative and cocommutative) infinitesimal bialgebra in the sense of Aguiar~\cite{Aguiar} (see~\cite{CHO-algebra} for a comprehensive discussion of the origin of this notion). 
\end{remark}

\begin{remark} \label{rmk:dual} One can define a dual notion of \emph{reduced symplectic cohomology} 
$\ol{SH}^{\alex -*}(W)=\ker \, \eps_*$, with $\eps_*$ defined in~\S\ref{sec:reduced_homology}. It carries a cocommutative counital coproduct $\boldmu^\vee$ dual to $\boldmu$, a commutative product $\boldlambda_\cD^\vee$ dual to $\boldlambda_\cD$, and the resulting structure is that of a counital infinitesimal anti-symmetric bialgebra in the sense of~\cite{CHO-algebra}. 
\end{remark}

\subsection{Coassociativity} \label{sec:coass} 

\begin{proposition} \label{prop:coass_reduced}
The coproduct $\boldlambda_\cD$ is coassociative if $W$ has dimension $2n\ge 6$. 
\end{proposition}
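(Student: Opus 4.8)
The strategy is the standard cobordism/moduli-space argument for coassociativity of a pair-of-pants coproduct, adapted to the relative setting $\ol{SH}_*(W)\cong SH_*(W;\im c)$. At chain level we consider, for an $R$-essential Hamiltonian $H$ extending $K$, the two iterated coproducts $(\boldsymbol{\lambda}_\cD\otimes 1)\boldsymbol{\lambda}_\cD$ and $(1\otimes\boldsymbol{\lambda}_\cD)\boldsymbol{\lambda}_\cD$ as maps $FC_*(H)\to FC_*(3H;\im c_\cD)^{\otimes 3}$ (after the usual rescaling of weights, so that the inner coproduct lands in $FC_*(2H)$ and the outer one in $FC_*(3H)$, and every output is reduced mod $\im c_\cD$). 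Each of these is defined by a parametrized moduli space of genus-zero configurations with one input and three outputs, where the parameter space records the gluing parameter of the two $Y$-graphs together with, at the degenerate ends, a continuation-map-curve $c_\cD:FC_*(-K)\to FC_*(K)$ inserted at one of the outputs. The plan is to exhibit a single two-dimensional parametrized moduli space $\cM$ whose boundary computes the difference $(\boldsymbol{\lambda}_\cD\otimes 1)\boldsymbol{\lambda}_\cD - (1\otimes\boldsymbol{\lambda}_\cD)\boldsymbol{\lambda}_\cD$ together with a $[\partial,\text{(primitive)}]$ term, modulo the ideal $I_\cD=\im c_\cD\otimes FC_*(3H)^{\otimes 2}+\cdots$ generated by $\im c_\cD$ in each of the three factors.

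Concretely, first I would set up the abstract parameter space: a suitable compactification of the space of genus-zero surfaces with one positive and three negative punctures carrying a fixed neck-stretching/weight datum, which is an interval (or a hexagon if one tracks all the $SFT$-type degenerations). Its boundary consists of (a) the two ``fully broken'' configurations giving the two iterated coproducts, and (b) codimension-one strata where a continuation cylinder or a secondary-continuation curve bubbles off at one of the three outputs or at the input. The key algebraic point, exactly as in the proof of Lemma~\ref{lem:lambda-imc} and Proposition~\ref{prop:lambda_olSH_SHimc}, is that every stratum of type (b) contributes a term lying in $I_\cD$: either it factors through a continuation map whose image is $\im c_\cD$ in one of the output factors, or — when the broken curve sits at the \emph{input} — it factors as $(\text{coproduct})\circ c_\cD$ which by \eqref{eq:lambdared_beta} is $[\partial,\boldsymbol{\beta}_\cD]$ modulo $I_\cD$, hence vanishes in homology on cycles. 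I would then read off the resulting chain-homotopy identity, pass to homology, and finally pass to the direct limit over $R$-essential Hamiltonians $H$ extending $K$; strong $R$-essentiality $\ol{SH}_*(W)\cong SH_*(W;\im c)$ is what lets the target $SH_*(W;\im c)^{\otimes 3}$ be identified with $\ol{S\H}_*(W)^{\otimes 3}$, so no residual dimension hypothesis beyond $2n\ge 4$ is needed.

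The dimension bound $2n\ge 4$ should enter through an index count analogous to the one in Lemma~\ref{lem:lambda-imc}: the potentially problematic extra degenerations (for instance a curve with input in $MC^*(-K)$ and two or three outputs in $MC^*(K)$ appearing at a corner of the two-dimensional parameter space, or index-$-1$ configurations that would a priori obstruct transversality) have virtual dimension forcing an index equality whose two sides are incompatible once $n\ge 2$ — precisely as in the computation $\ind_K a+\ind_K b = -2+2n+\ind_{-K}x$ there, which has no solution for $n\ge 3$, and one checks the borderline $n=2$ case by the same bookkeeping. I expect the \emph{main obstacle} to be precisely this corner analysis: identifying \emph{all} codimension-one and codimension-two degenerations of the two-parameter family (continuation bubbles at outputs, at the input, breaking of the intermediate Hamiltonian orbit, and the interaction of these with the two gluing degenerations of the underlying $Y\#Y$ graphs) and verifying that each one either reproduces the wanted terms, lies in $I_\cD$, or is excluded by the index inequality. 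Once the stratification is under control the algebra is routine: collect the $I_\cD$-terms, invoke \eqref{eq:lambdared_beta} to kill the input-bubble term on cycles, and take the limit.
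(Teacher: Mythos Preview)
Your overall cobordism strategy is the right one, but there are two genuine gaps that would prevent the proof from going through as written.

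\textbf{The parameter space is three-dimensional, not two.} Each copy of $\boldlambda_\cD$ is already a $1$-parameter operation, so the two iterated coproducts $(\boldlambda_\cD\otimes 1)\boldlambda_\cD$ and $(1\otimes\boldlambda_\cD)\boldlambda_\cD$ are each parametrized by a square, and to interpolate between them you need a further parameter. The paper organizes this as a $3$-dimensional polytope (``The House'', essentially $K_4\times\Delta^1\times\Delta^1$ with a refined boundary subdivision) with seven codimension-$1$ faces: two quadrilaterals giving the iterated coproducts, three ``longitudinal'' faces that land in $I_\cD$ as you expect, but also two \emph{triangular} faces.

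\textbf{The triangular faces produce terms you have not accounted for.} These faces contribute operations $(1\otimes 1\otimes\boldmu)(\boldB_\cD\otimes 1)$ and $(\boldmu\otimes 1\otimes 1)(1\otimes\boldB_\cD)$, where $\boldB_\cD\in FC_*(K)^{\otimes 3}$ is the dualization of $\boldbeta_\cD$. These terms are \emph{not} in $I_\cD$ in general, and they do not arise as ``corner'' phenomena of a $2$-dimensional space---they are honest codimension-$1$ boundary contributions. Your claim that ``one checks the borderline $n=2$ case by the same bookkeeping'' as in Lemma~\ref{lem:lambda-imc} is precisely where the argument breaks: that index computation gives $\boldbeta_\cD=0$ only for $n\ge 3$; for $n=2$ the element $\boldB_\cD$ can be nonzero (all three tensor factors sit in top Morse index $n$).

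\textbf{Where $2n\ge 4$ actually enters.} The paper kills the $\boldB_\cD$ terms by a more delicate argument: one uses the relation $\boldmu(1\otimes c_\cD)=c_\cD m_L+[\p,\tau_R]$ (with $\tau_R$ the partial dualization of $\boldlambda_\cD$) together with $\p\boldB_\cD=\sum_{\text{cyc}}(c_\cD\otimes 1\otimes 1)\sigma_\cD$ to reduce to showing that $\tau_R(c_\cD\otimes 1)=0$ at energy zero. This last vanishing is an index count for $1$-parameter families of gradient $\mathsf{Y}$-graphs: the condition reads $\ind_K(x)+\ind_K(y)+\ind_K(z)=4n-1$, which for indices in $\{0,\dots,n\}$ has solutions only when $n=1$. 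So the dimension hypothesis enters through $\tau_R$, not through $\boldbeta_\cD$ directly, and not as a transversality issue at corners.
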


\begin{proof} Let $H$ be an $R$-essential Hamiltonian with truncation $K$. 

In Figure~\ref{fig:Coassociativity} we depict a family of Floer problems parametrized by a $3$-dimensional polytope which we call {\sf The House}. The source Riemann surface for this Floer problem is a genus zero curve with four punctures constrained to lie on a circle, one positive with weight $1$, and three negative with respective weights equal to $3$. This Riemann surface degenerates to nodal curves along the boundary of the polytope as indicated in Figure~\ref{fig:Coassociativity}. {\sf The House} has 7 codimension 1 faces. We refer to each of the two triangles together with their adjacent quadrilaterals as {\sf The Back and Front Walls}. We refer to the 2 pentagonal faces and to the hexagonal face as {\sf The Longitudinal Walls}. Each of the faces of {\sf The House} is labeled by a possibly nodal Riemann surface, and parametrizes a Floer problem with that nodal Riemann surface at the source. We indicate in the figure the  Hamiltonians at the intermediate nodes (either $-K$, or multiples of $H$ indicated by numbers). 

\begin{figure}
\begin{center}
\includegraphics[width=.9\textwidth]{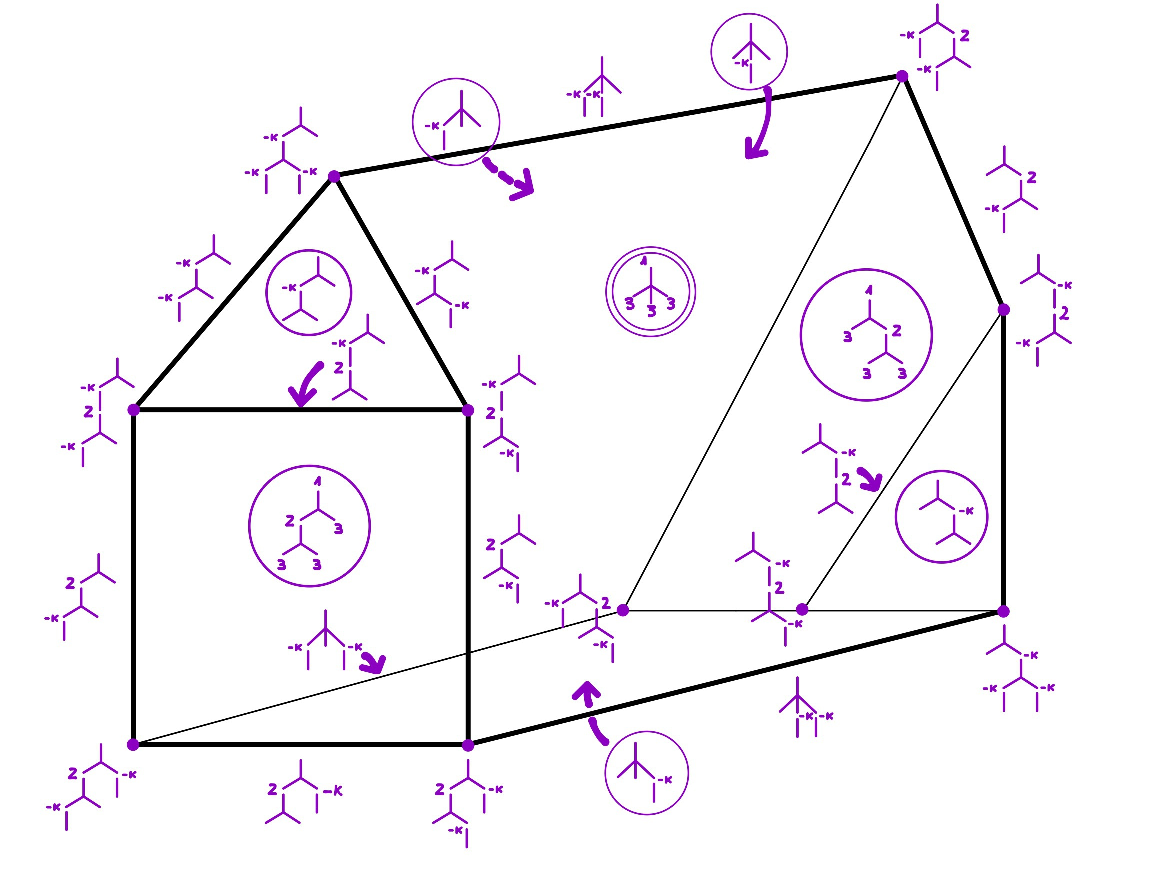}
\caption{{\sf The House}.}
\label{fig:Coassociativity}
\end{center}
\end{figure}

The Floer degeneration data is chosen such that the $0$-dimensional solutions of the Floer problems parametrized by the quadrilateral faces define the compositions $(\boldlambda_\cD\otimes 1)\boldlambda_\cD$ and $(1\otimes \boldlambda_\cD)\boldlambda_\cD$. 

Denote $\boldB_\cD$ the operation with three outputs obtained by dualizing the input of the operation $\boldbeta_\cD$ from the proof of Lemma~\ref{lem:lambda-imc}. The Floer problems parametrized by the triangular faces can then be interpreted as $(1\otimes 1\otimes \boldmu)(\boldB_\cD\otimes 1)$ and $(\boldmu\otimes1\otimes 1)(1\otimes \boldB_\cD)$. 

The $0$-dimensional solutions of the Floer problems parametrized by {\sf The Longitudinal Walls} define operations which, when they are applied to $FC_*(H)$, land in $I_\cD=\im c_\cD \, \otimes FC_*(3H)\otimes FC_*(3H) + FC_*(3H)\otimes \, \im c_\cD \, \otimes FC_*(3H) + FC_*(3H)\otimes FC_*(3H)\otimes \, \im c_\cD\, $. These operations vanish in the quotient $\left( FC_*(3H)/\im c_\cD\right)^{\otimes 3}$.

The count of elements of $0$-dimensional moduli spaces of solutions of the Floer problem parametrized by {\sf The House} defines an operation 
$$
\Theta : FC_*(H)\to FC_*(3H; \im c_\cD)^{\otimes 3}
$$
such that 
\begin{align}
[\p,\Theta]=(\boldlambda_\cD\otimes & 1)\boldlambda_\cD + (1\otimes \boldlambda_\cD)\boldlambda_\cD \label{eq:coass_first_line}\\
& + (1\otimes 1\otimes \boldmu)(\boldB_\cD\otimes 1)-(\boldmu\otimes1\otimes 1)(1\otimes \boldB_\cD). \nonumber
\end{align}
{\refone The minus sign in front of the term $(\boldmu\otimes1\otimes 1)(1\otimes \boldB_\cD)$ arises by taking into account the boundary orientation.} 

We now show that, under the assumption $2n\ge 6$, the map $\boldB_\cD$ vanishes. We first note that, since $\boldB_\cD$ has no inputs, all the outputs have energy close to zero and the map can be equivalently phrased in terms of a count of gradient $\mbox{\sf Y}$-graphs with three outputs $a,b,x\in MC^*(K)$ in a 2-parametric family. Referring to the proof of Lemma~\ref{lem:lambda-imc}, the condition for 0-dimensional moduli spaces is $\ind_K a + \ind_K b + \ind_K x -4n+2=0$, or $\ind_K a + \ind_K b + \ind_K x=4n-2$. By $R$-essentiality the left hand side of this equality is $\le 3n$, which is strictly less than $4n-2$ as soon as $n\ge 3$. This shows that there are no such rigid configurations and therefore $\boldB_\cD$ vanishes.

As a consequence, the chain level relation~\eqref{eq:coass_first_line} reduces to $[\p,\Theta]=(\boldlambda_\cD\otimes  1)\boldlambda_\cD + (1\otimes \boldlambda_\cD)\boldlambda_\cD$, which proves coassociativity. 
\end{proof}

\subsection{The unital infinitesimal relation} \label{sec:infinitesimal}
 
\begin{proposition} \label{prop:infinitesimal_reduced}
The unital infinitesimal relation holds: 
\begin{equation}\label{eq:unital_infinitesimal_relation_reduced}
\boldlambda_\cD\boldmu= (1\otimes\boldmu)(\boldlambda_\cD\otimes 1) + (\boldmu\otimes 1)(1\otimes \boldlambda_\cD) - (\boldmu\otimes \boldmu)(1\otimes \boldlambda_\cD\boldeta\otimes 1).
\end{equation} 
\end{proposition}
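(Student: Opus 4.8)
The strategy parallels the proof of coassociativity in Proposition~\ref{prop:coass_reduced}: build a family of Floer problems over a suitable $2$-dimensional polytope $P$ whose boundary faces recover, on one hand, $\boldlambda_\cD\boldmu$ and, on the other, the three terms on the right-hand side of~\eqref{eq:unital_infinitesimal_relation_reduced}, together with auxiliary faces whose contributions land in the coideal $I_\cD=\im c_\cD\otimes FC_*(2H)+FC_*(2H)\otimes\im c_\cD$ and hence vanish in the quotient $FC_*(2H;\im c_\cD)^{\otimes 2}$. Concretely, I would start from the genus zero curve with two positive punctures (weights $1$) and two negative punctures (weights $2$) and let the four punctures move on a circle; the various degenerations of this configuration produce the pair-of-pants composed with coproduct on one side, and the two ``mixed'' associativity-type terms $(1\otimes\boldmu)(\boldlambda_\cD\otimes 1)$ and $(\boldmu\otimes 1)(1\otimes\boldlambda_\cD)$ on the other. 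The extra term $-(\boldmu\otimes\boldmu)(1\otimes\boldlambda_\cD\boldeta\otimes 1)$ is the genuinely ``unital'' contribution: it arises from a codimension-one face on which the input circle degenerates so that a sphere with a single negative puncture (computing $\boldeta$) bubbles off, the copy of $\boldlambda_\cD$ lands on it, and the remaining two outputs get multiplied back in by two copies of $\boldmu$. This is exactly the mechanism already recorded in Lemma~\ref{lem:cDopDlambda} relating $\bar\boldc_{\cD^{op},\cD}$ to $\boldlambda_\cD\boldeta$, so I would reuse that gluing picture.

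After setting up the polytope, the argument proceeds as in Proposition~\ref{prop:coass_reduced}: the count of $0$-dimensional moduli spaces over $P$ defines a chain homotopy operator $\Theta\colon FC_*(H)\otimes FC_*(H)\to FC_*(2H;\im c_\cD)^{\otimes 2}$ whose boundary $[\p,\Theta]$ equals the difference of the two sides of~\eqref{eq:unital_infinitesimal_relation_reduced} modulo $I_\cD$, plus correction terms coming from the ``bad'' faces (those on which a piece of the curve is forced to carry a continuation-map factor $c_\cD$ at an output, so the output lies in $\im c_\cD$ and dies in the quotient). One then has to check, exactly as in the coassociativity proof, that when $\Theta$ and the correction terms are evaluated on a pair $c_\cD(x)\otimes(\text{cycle})$ — using that $\ol{SH}_*(W)=SH_*(W;\im c)$ by strong $R$-essentiality, so it suffices to control the map on $\im c_\cD$ — the surviving terms are boundaries in $FC_*(2H;\im c_\cD)^{\otimes 2}$. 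This requires the same index count: the zero-energy pieces of the offending operations are gradient-flow graphs whose index equation cannot be solved once $2n\ge 4$, because the Morse indices of an $R$-essential $K$ lie in $\{0,\dots,n\}$ while the index constraint forces their sum to be too large. Passing to the limit over $R$-essential Hamiltonians $H$ extending $K$ then yields the relation on $\ol{S\H}_*(W)$.

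The main obstacle I expect is bookkeeping of signs and of the combinatorics of the polytope $P$: one must correctly enumerate all codimension-one faces, identify precisely which of them give the three ``good'' terms, which give $\boldlambda_\cD\boldmu$, and which are ``bad'' (valued in $I_\cD$), and verify that the broken configurations appearing on the unital face really assemble into $(\boldmu\otimes\boldmu)(1\otimes\boldlambda_\cD\boldeta\otimes 1)$ with the correct sign and not, say, its transpose. A secondary technical point is the analogue of the vanishing argument for the auxiliary operator (the role played by $\tau_R$ and $\boldB_\cD$ in Proposition~\ref{prop:coass_reduced}): one needs that the chain-level corrections, after being pushed into the quotient and evaluated on $\im c_\cD$, are genuinely exact and not merely closed, which again reduces to an index inequality that is sharp at $2n=2$ and comfortable for $2n\ge 4$. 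Commutativity and cocommutativity of the resulting structure, needed for the statement of Theorem~\ref{thm:uias_reduced} but not for this proposition, follow from the symmetry of the pair-of-pants moduli spaces and are not part of this proof.
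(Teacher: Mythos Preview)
Your overall approach is right and matches the paper: a $2$-dimensional hexagon $\cP$ fibered over the compactified moduli space $\ol\cM_{0,4}^\circ$ of genus~$0$ curves with two positive and two negative punctures on a circle, whose boundary contributes $-\boldlambda_\cD\boldmu$ on one side, $(1\otimes\boldmu)(\boldlambda_\cD\otimes 1)+(\boldmu\otimes 1)(1\otimes\boldlambda_\cD)$ on two others, and has two ``bad'' sides landing in $I_\cD$. But two points in your plan diverge from what actually happens.

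\emph{First}, the unital term does not arise from a sphere bubble computing $\boldeta$. The remaining face of the hexagon (the bottom horizontal side in the paper's figure) parametrizes nodal curves with two stable $3$-punctured components separated by an \emph{unstable cylinder} carrying a $1$-parameter family of homotopies interpolating between the broken homotopy $-2H\to -K\stackrel{\cD}\to K\to 2H$ and the broken homotopy $-2H\to -K\stackrel{\cD^{op}}\to K\to 2H$. This face therefore contributes $(\boldmu\otimes\boldmu)(1\otimes\boldc_{\cD,\cD^{op}}\otimes 1)$, the secondary continuation bivector sitting in the middle. Only \emph{afterward} does one invoke $\boldc_{\cD,\cD^{op}}=-\boldc_{\cD^{op},\cD}=-\boldlambda_\cD\boldeta$ from Lemma~\ref{lem:cDopDlambda} to convert this into the unital form. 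Your description conflates the geometric face with the algebraic rewriting.

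\emph{Second}, and more substantively, the entire second paragraph of your plan is unnecessary. There is no need to evaluate on $c_\cD(x)\otimes(\text{cycle})$, no $\boldbeta_\cD$-type correction, and no index inequality. The chain-level identity
\[
[\p,\Gamma]=-\boldlambda_\cD\boldmu+(1\otimes\boldmu)(\boldlambda_\cD\otimes 1)+(\boldmu\otimes 1)(1\otimes\boldlambda_\cD)+(\boldmu\otimes\boldmu)(1\otimes\boldc_{\cD,\cD^{op}}\otimes 1)
\]
holds directly as a map $FC_*(H)^{\otimes 2}\to FC_*(3H;\im c_\cD)^{\otimes 2}$, and passes to $\ol{F\H}_*(H)^{\otimes 2}\to F\H_*(3H;\im c)^{\otimes 2}$ automatically: each term descends to the quotient on the source because $\boldmu$ does (as $\im\eps_*$ is an ideal), not because of any index constraint. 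The reason coassociativity required the extra ``evaluate on $\im c_\cD$'' step is that there $\boldlambda_\cD$ is iterated, so one must check that $\boldlambda_\cD$ itself vanishes on $\im c_\cD$ at the homology level; here the source is reduced through $\boldmu$, which is unproblematic. So the proof is in fact simpler than the one you anticipate.
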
 

\begin{proof}
Let $\cM_{0,4}^\circ$ be the moduli space of genus $0$ Riemann surfaces $\Sigma_{2;2}$ with 2 positive punctures and 2 negative punctures constrained to lie on a circle and ordered such that the negative punctures do not separate the positive punctures. This moduli space is diffeomorphic to an open interval and its compactification $\ol \cM_{0,4}^\circ$ is diffeomorphic to a closed interval whose ends correspond to nodal curves with two irreducible components and matching asymptotic markers at their common node, each containing two punctures: at one end the two punctures on each irreducible component have the same sign, and at the other end they have opposite signs. We choose a family of cylindrical ends over $\ol \cM_{0,4}^\circ$ which is compatible with splittings at the boundary.

\begin{figure}
\begin{center}
\includegraphics[width=1\textwidth]{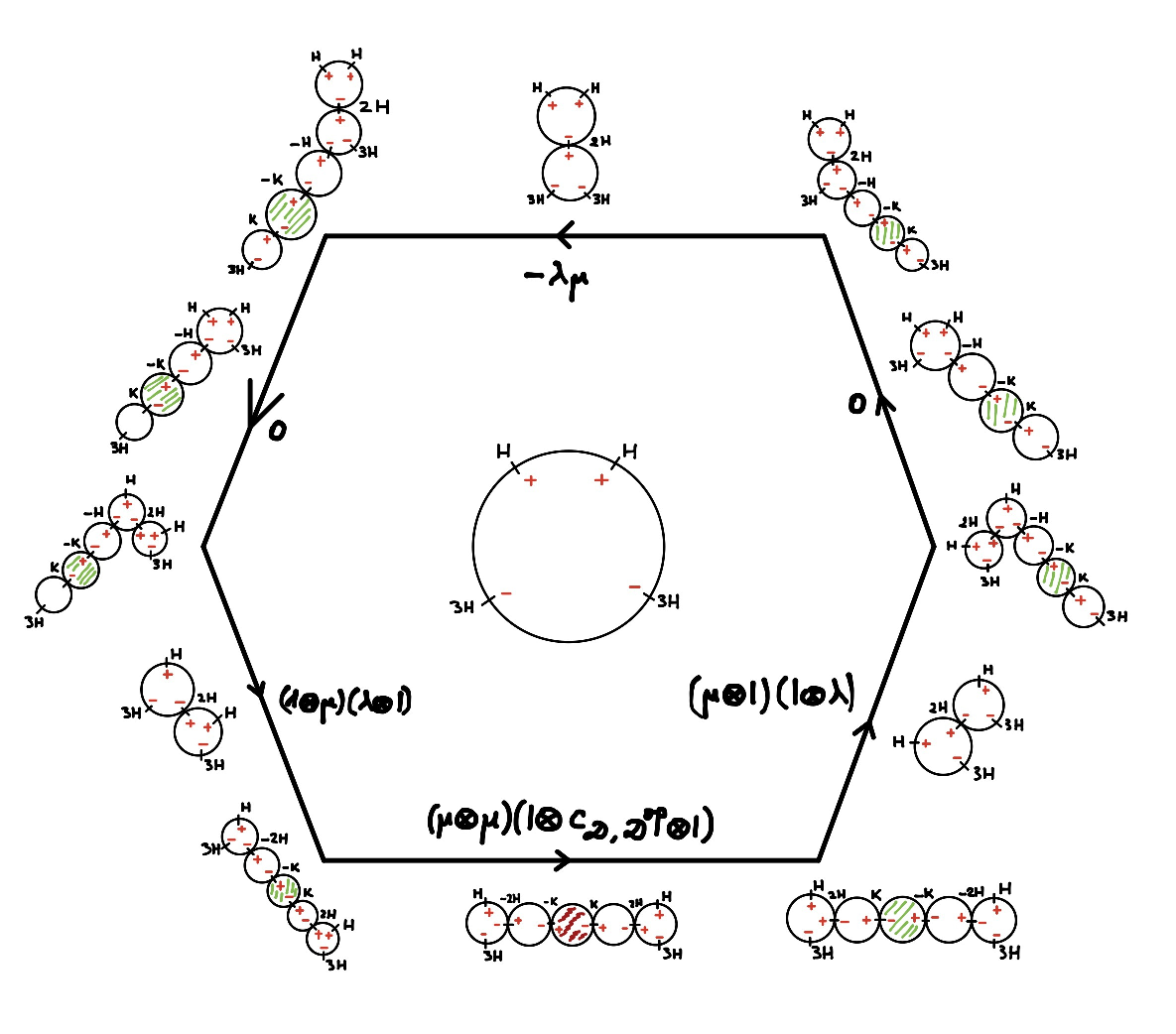}
\caption{Proof of the unital infinitesimal relation.}
\label{fig:Sullivan-RFH}
\end{center}
\end{figure}

We now consider an oriented $2$-dimensional hexagon $\cP$ which is fibered over $\ol \cM_{0,4}^\circ$ and which parametrizes Floer data $(H_\tau,\beta_\tau)$, $\tau\in\cP$ as described below. The projection map $\cP\to \ol \cM_{0,4}^\circ$ forgets the Floer data and contracts the unstable components of the underlying Riemann surfaces. We fix an $R$-essential Hamiltonian $H$ and refer to Figure~\ref{fig:Sullivan-RFH} for a pictorial description of the hexagon $\cP$. {\refone The appearance of this hexagon is a manifestation of the fact that moduli spaces of Riemann surfaces endowed with Floer data carry strictly more information than the ``plain'' moduli spaces of Riemann surfaces.\footnote{\refone This phenomenon manifests itself at an operadic level in the fact that the Deligne-Mumford-Knudsen operad is equivalent to the framed little 2-discs operad with trivialized $S^1$-action. This was conjectured by Kontsevich and proved by Drummond-Cole~\cite{Drummond-Cole}, see also~\cite{Oancea-Vaintrob} and~\cite{Deshmukh} for higher-genus generalizations, and~\cite{Oancea-Vaintrob} for a historical account of this problem.}}	
\begin{itemize}
\item
{\reftwo The Riemann surface that underlies 
the top horizontal side $\cP^{\text{top}}$ of $\cP$ is the endpoint of $\ol \cM_{0,4}^\circ$ for which the punctures on each irreducible component have the same sign.} The Floer data on the irreducible component that contains the two positive punctures is fixed and equal to $(H,\beta)$, with $\beta$ a closed $1$-form that has weights $1$ at the positive punctures and weight $2$ at the node. The Floer data on the irreducible component that contains the two negative punctures is the Floer data that defines the secondary coproduct $-\boldsymbol{\lambda}_\cD$. In particular, at the endpoints of $\cP^{\text{top}}$ the Floer data is defined on a Riemann surface with two unstable components and is non-zero on these, so that the solutions of the Floer equation are stable maps.  
\item The projection map $\cP^{\text{top, vert}}\to \ol \cM_{0,4}^\circ$ defined on each of the two top vertical sides of $\cP$ is a smooth diffeomorphism. The underlying Riemann surface for the Floer data has one stable component with 2 positive punctures, 1 negative puncture and one node, and 2 unstable components. The second negative puncture is located on the unstable component which is not adjacent to the stable component. The Floer data on the unstable components consists of homotopies from $-H$ to $-K$, from $-K$ to $K$ given by $\cD$, and from $K$ to $3H$. The Floer data on the stable component is $(H,\beta_\tau)$, where $\beta_\tau$ interpolates within the space of closed $1$-forms with weights $(1,1;3,-1)$ (resp. $(1,1;-1,3)$), between a split $1$-form with weights $(1,1;2)$ and $(2;3,-1)$ (resp. $(2;-1,3)$), and a split $1$-form with weights $(1;2,-1)$ and $(1,2;3)$ (resp. $(1;-1,2)$ and $(2,1;3)$). 
\item {\reftwo The Riemann surface that underlies 
the bottom vertical sides\break $\cP^{\text{bottom,vert}}$ and the bottom horizontal side $\cP^{\text{bottom}}$ is the endpoint of $\ol \cM_{0,4}^\circ$ for which the punctures on each irreducible component have different signs. }

Along the two sides $\cP^{\text{bottom,vert}}$, the Floer data is as follows: 
\begin{itemize}
\item on the irreducible component for which the node is labeled as positive, the Floer data is constant equal to $(H,\beta)$, with $\beta$ a closed $1$-form that has weights $(2,1;3)$ (resp. $(1,2;3)$).
\item on the irreducible component for which the node is labeled as negative, the Floer data defines the secondary coproduct $\boldsymbol{\lambda}_\cD$. In particular, at the endpoints of $\cP^{\text{bottom,vert}}$ the underlying nodal Riemann surface has two unstable components, on which the Floer data consists of homotopies $-H\to -K\stackrel{\cD}\longrightarrow K\to 3H$ or $-2H\to -K\stackrel{\cD}\longrightarrow K \to 3H$.
\end{itemize}

Along the bottom horizontal side $\cP^{\text{bottom}}$ the underlying Riemann surface has two stable components with 2 punctures and 1 node each, separated by an unstable component. At the endpoints of $\cP^{\text{bottom}}$ the unstable component is replaced by two unstable components. The Floer data is constant on the stable components along $\cP^{\text{bottom}}$, whereas on the unstable component it is given by a family of homotopies $H_{\tau,s}$ from $-2H$ to $2H$, interpolating between the broken homotopy $-2H\to -K\stackrel{\cD}\longrightarrow K\to 2H$ and the broken homotopy $-2H\to -K\stackrel{\cD^{op}}\longrightarrow K \to 2H$.  (In order to write this, one needs to reinterpret the Floer data at the endpoint of $\cP^{\text{bottom}}$ so that it fits with the Floer data at the starting point of the following bottom vertical side.)
\end{itemize}

Given orbits $x_0,x_1\in\cP(H)$ and $y_0,y_1\in\cP(3H)$, we define 
the moduli space $\cM^\cP(x_0,x_1;y_0,y_1)$ consisting of pairs $(\tau,u)$, $\tau\in\cP$, $u:\Sigma_\tau\to \wh W$ such that
\begin{equation}\label{eq:FloereqnP}
(du-X_{H_\tau}\otimes \beta_\tau)^{0,1}=0
\end{equation} 
and $u$ is asymptotic to $x_0,x_1$, resp. $y_0,y_1$ at the positive, resp. negative punctures. The anti-holomorphic part of $du-X_{H_\tau}\otimes \beta_\tau$ is considered with respect to a $\cP$-family of compatible almost complex structures $(J_\tau)$ which are cylindrical in the symplectization end $[1,\infty)\times \p W$. For a generic such choice the moduli space $\cM^\cP(x_0,x_1;y_0,y_1)$ is a smooth manifold with boundary, of dimension 
$$
\dim\cM^\cP(x_0,x_1;y_0,y_1) = \CZ(x_0)+\CZ(x_1)-\CZ(y_0)-\CZ(y_1)-2n+2. 
$$

The signed count of elements in $0$-dimensional such moduli spaces defines a degree $-2n+2$ map 
$$
\Gamma:FC_*(H)^{\otimes 2} \to FC_*(3H)^{\otimes 2}. 
$$
The $1$-dimensional moduli spaces $\cM^\cP(x_0,x_1;y_0,y_1)$, which correspond to $\CZ(x_0)+\CZ(x_1)-\CZ(y_0)-\CZ(y_1)-2n+1=0$, are compact up to Floer breaking. By examining their oriented boundary we obtain the equation
$$
[\p, \Gamma] = \Gamma_{\p\cP},
$$
where $\Gamma_{\p\cP}:FC_*(H)^{\otimes 2}\to FC_*(3H)^{\otimes 2}$ is the degree $-2n+1$ map obtained by counting solutions of the parametrized Floer equation~\eqref{eq:FloereqnP} parametrized by the oriented boundary of $\cP$.

We now project onto $FC_*(3H;\im c)^{\otimes 2}$ and obtain the equation 
\begin{equation*} 
\Gamma_{\p\cP}=-\boldlambda_\cD\boldsymbol{\mu} + (1\otimes\boldsymbol{\mu})(\boldlambda_\cD\otimes 1)+(\boldsymbol{\mu}\otimes 1)(1\otimes\boldlambda_\cD)+ (\boldmu\otimes \boldmu)(1\otimes \boldc_{\cD,\cD^{\mathrm{op}}}\otimes 1).
\end{equation*}
Indeed, each of the sides of $\cP$ contributes as follows to $\Gamma_{\p\cP}$: 
\begin{itemize}
\item The contribution of the top horizontal side of $\cP$ is $-\boldlambda_\cD\boldsymbol{\mu}$. 
\item The contribution of the bottom vertical sides of $\cP$ is $(1\otimes\boldsymbol{\mu})(\boldlambda_\cD\otimes 1)+(\boldsymbol{\mu}\otimes 1)(1\otimes\boldlambda_\cD)$. 
\item The contribution of the top vertical sides of $\cP$ is zero in the quotient complex. 
\item The contribution of the bottom horizontal side of $\cP$ is $(\boldmu\otimes \boldmu)(1\otimes \boldc_{\cD,\cD^{\mathrm{op}}}\otimes 1)$. 
\end{itemize}
This implies the equality
$$
[\p, \Gamma] =-\boldlambda_\cD\boldsymbol{\mu} + (1\otimes\boldsymbol{\mu})(\boldlambda_\cD\otimes 1)+(\boldsymbol{\mu}\otimes 1)(1\otimes\boldlambda_\cD)+ (\boldmu\otimes \boldmu)(1\otimes \boldc_{\cD,\cD^{\mathrm{op}}}\otimes 1)
$$
as maps $FC_*(H)^{\otimes 2}\to FC_*(3H;\im c)^{\otimes 2}$. As before this implies the relation 
$$
\boldlambda_\cD\boldmu = (1\otimes\boldsymbol{\mu})(\boldlambda_\cD\otimes 1)+(\boldsymbol{\mu}\otimes 1)(1\otimes\boldlambda_\cD)+ (\boldmu\otimes \boldmu)(1\otimes \boldc_{\cD,\cD^{\mathrm{op}}}\otimes 1)
$$
as maps $\ol{F\H}_*(H)^{\otimes 2}\to F\H_*(3H;\im c)^{\otimes 2}$. By passing to the limit over $H$ we find the same relation as maps $\ol{S\H}_*(W)^{\otimes 2}\to S\H_*(W;\im c)^{\otimes 2}$, and further as maps $\ol{S\H}_*(W)^{\otimes 2}\to \ol{S\H}_*(W)^{\otimes 2}$ in view of strong $R$-essentiality. We conclude using that 
$$
\boldc_{\cD,\cD^{\mathrm{op}}}=-\boldc_{\cD^{\mathrm{op}},\cD}=-\boldlambda_\cD\boldeta,
$$
where the first equality is a direct consequence of the definition and the second equality is the content of Lemma~\ref{lem:cDopDlambda}. 
\end{proof}

\subsection{The unital anti-symmetry relation}  \label{sec:unital_anti_symmetry}

\begin{proposition} \label{prop:anti-symmetry_reduced}
The unital anti-symmetry relation holds: 
\begin{align*}
&(1\otimes\boldmu) (\tau\boldlambda_\cD\otimes 1)  
- (\boldmu\tau\otimes 1)(1\otimes\boldlambda_\cD) \\
& \hspace{5cm}+ (\boldmu\tau\otimes \boldmu)(1\otimes\boldlambda_\cD\boldeta\otimes 1) \\
&= \tau(1\otimes\boldmu\tau)(\boldlambda_\cD\otimes 1) 
- \tau (\boldmu\otimes 1)(1\otimes\tau\boldlambda_\cD) \\ 
& \hspace{5cm} - \tau(\boldmu\otimes \boldmu\tau)(1\otimes\boldlambda_\cD\boldeta\otimes 1).
\end{align*}
\end{proposition}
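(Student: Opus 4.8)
The plan is to realize the six terms of the identity as the oriented-boundary contributions of a $2$-dimensional polytope of Floer data, in close parallel with the proof of Proposition~\ref{prop:infinitesimal_reduced}. In the closed string case the relation is in any case a formal consequence of the unital infinitesimal relation (Proposition~\ref{prop:infinitesimal_reduced}) together with commutativity of $\boldmu$ and cocommutativity of $\boldlambda_\cD$, via Remark~\ref{rem:ccIAB} --- this serves as an independent check --- but the argument below is the one that carries over verbatim to the Lagrangian setting of~\S\ref{sec:open_strings}, where no such shortcut is available.

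The first step is to fix the underlying moduli space. Let $\ol\cM_{0,4}^{\mathrm{alt}}$ be the compactified moduli space of genus $0$ Riemann surfaces with two positive and two negative punctures constrained to lie on a circle, now taken in the cyclic order in which the two \emph{negative} punctures separate the two positive ones --- the combinatorial type complementary to the one used for $\ol\cM_{0,4}^\circ$ in Proposition~\ref{prop:infinitesimal_reduced}. As before $\ol\cM_{0,4}^{\mathrm{alt}}$ is a closed interval whose two endpoints are nodal curves with two irreducible components, each carrying one positive and one negative puncture, the node lying on one of the two essentially distinct arcs of the circle. I would then build a hexagon $\cP$ fibered over $\ol\cM_{0,4}^{\mathrm{alt}}$, parametrizing Floer data $(H_\tau,\beta_\tau,J_\tau)$ for a fixed $R$-essential Hamiltonian $H$ with the same conventions on weights and cylindrical ends as in the proof of Proposition~\ref{prop:infinitesimal_reduced}, so that the signed count of $0$-dimensional solutions of the parametrized Floer equation~\eqref{eq:FloereqnP} defines a degree $-2n+2$ operation $\Gamma\colon FC_*(H)^{\otimes 2}\to FC_*(3H)^{\otimes 2}$ satisfying $[\p,\Gamma]=\Gamma_{\p\cP}$, where $\Gamma_{\p\cP}$ counts solutions over the oriented boundary of $\cP$.

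The heart of the proof is the identification, modulo $I_\cD=\im c_\cD\otimes FC_*(3H)+FC_*(3H)\otimes \im c_\cD$, of the six codimension-$1$ faces of $\cP$ with the six terms of the statement. The four vertical sides of $\cP$ project diffeomorphically onto $\ol\cM_{0,4}^{\mathrm{alt}}$; on each of them the underlying nodal curve has one stable component carrying a $\boldmu$-type or a $\boldlambda_\cD$-type operation (with a factorization $-H\to -K\stackrel{\cD}{\longrightarrow}K\to 3H$ inserted on the adjacent unstable components as in Proposition~\ref{prop:infinitesimal_reduced}), glued along the node to the complementary operation. Because the four external punctures now appear in the \emph{alternating} cyclic order, gluing the two pieces back together interchanges the orders of the relevant inputs and outputs, and this is exactly the mechanism producing the transpositions $\tau$ in the four mixed $\boldmu$--$\boldlambda_\cD$ terms. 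The two horizontal sides lie over the two endpoints of $\ol\cM_{0,4}^{\mathrm{alt}}$; there the curve breaks into two $\boldmu$-type components separated by an unstable component carrying a secondary continuation element, $\boldc_{\cD,\cD^{\mathrm{op}}}$ at one endpoint and $\boldc_{\cD^{\mathrm{op}},\cD}$ at the other, so that --- exactly as for the bottom horizontal side in Proposition~\ref{prop:infinitesimal_reduced} --- these two contributions become the two $\boldlambda_\cD\boldeta$-terms of the statement after using $\boldc_{\cD,\cD^{\mathrm{op}}}=-\boldc_{\cD^{\mathrm{op}},\cD}=-\boldlambda_\cD\boldeta$, the first equality being immediate from the definition and the second being Lemma~\ref{lem:cDopDlambda}. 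Reading $\Gamma_{\p\cP}$ off the six faces with their boundary orientations, projecting to $FC_*(3H;\im c)^{\otimes 2}$, passing to the limit over $R$-essential Hamiltonians $H$, and invoking strong $R$-essentiality to land in $\ol{S\H}_*(W)^{\otimes 2}$ then produces the asserted relation.

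The main obstacle I anticipate is the combined sign, orientation and combinatorial bookkeeping: one has to verify that it is \emph{precisely} these six twisted compositions, with \emph{these} signs, that arise as the oriented boundary faces of $\cP$ --- in particular that the $\tau$'s appear in the positions dictated by the alternating cyclic order --- and that the homotopies of Floer data on the four vertical sides and at the codimension-$\ge 2$ corners can be chosen to match up consistently, with the corner degenerations that bubble off a genus-$0$ curve with a single negative puncture indeed contributing the unit $\boldeta$. Once $\cP$ is correctly set up, the remaining steps --- deriving the chain-level relation, reducing modulo $I_\cD$, and passing to the direct limit --- are identical to the corresponding steps in the proof of Proposition~\ref{prop:infinitesimal_reduced}.
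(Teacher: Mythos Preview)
Your overall strategy is the same as the paper's: build a $2$-dimensional polytope of Floer data fibered over the compactified moduli space $\ol\cM$ of genus~$0$ curves with $2$ positive and $2$ negative punctures in \emph{alternating} cyclic order, read $\Gamma_{\p\cP}$ off its oriented boundary modulo $I_\cD$, identify the secondary continuation elements with $\boldlambda_\cD\boldeta$ via Lemma~\ref{lem:cDopDlambda}, and pass to the limit. That is exactly what the paper does.

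The discrepancy is in the shape of $\cP$ and in which faces lie where. The paper uses an \emph{octogon}, not a hexagon. Two opposite sides (marked ``$0$'') project onto the interior of $\ol\cM$ and are the faces whose contributions vanish modulo $\im c_\cD$ --- the analogue of the two ``top vertical'' sides in the hexagon for Proposition~\ref{prop:infinitesimal_reduced}. The remaining six sides split as three over each endpoint of $\ol\cM$: at each endpoint one reads two of the mixed $\boldmu$--$\boldlambda_\cD$ terms and one $(\boldmu\otimes\boldmu)(\cdots\boldc_{\cD^{op},\cD}\cdots)$ term. Your hexagon, with ``four vertical sides projecting diffeomorphically onto $\ol\cM_{0,4}^{\mathrm{alt}}$'' and two horizontal sides over the endpoints, does not leave room for the $I_\cD$-vanishing faces, and in any case four sides of a hexagon cannot simultaneously project diffeomorphically onto a closed interval in a fibration. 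The mixed $\boldmu$--$\boldlambda_\cD$ compositions, being read on \emph{nodal} curves, must sit over the endpoints of $\ol\cM$, not over its interior. So your assignment of terms to faces is inverted: once you enlarge to an octogon and place three faces (two mixed, one continuation) over each endpoint and two $I_\cD$-killed faces over the interior, the rest of your argument goes through exactly as written.
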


\begin{proof}[Proof of Proposition~\ref{prop:anti-symmetry_reduced}]
The proof resembles that of Proposition~\ref{prop:infinitesimal_reduced}. The relevant moduli space $\cM$ is that of genus 0 Riemann surfaces with 2 positive punctures and 2 negative punctures, constrained to lie on a circle and ordered now such that the negative punctures and the positive punctures alternate. This moduli space is diffeomorphic to an open interval and its compactification $\ol \cM$ is diffeomorphic to a closed interval whose ends correspond to nodal curves with two irreducible components and matching asymptotic markers at their common node, each containing 2 punctures, one negative and one positive. We choose a family of cylindrical ends over $\ol\cM$ which is compatible with splittings at the boundary. We denote $\circled{1},\circled{2}$ the positive punctures (inputs) and $1,2$ the negative punctures (outputs). 

\begin{figure}
\begin{center}
\includegraphics[width=.9\textwidth]{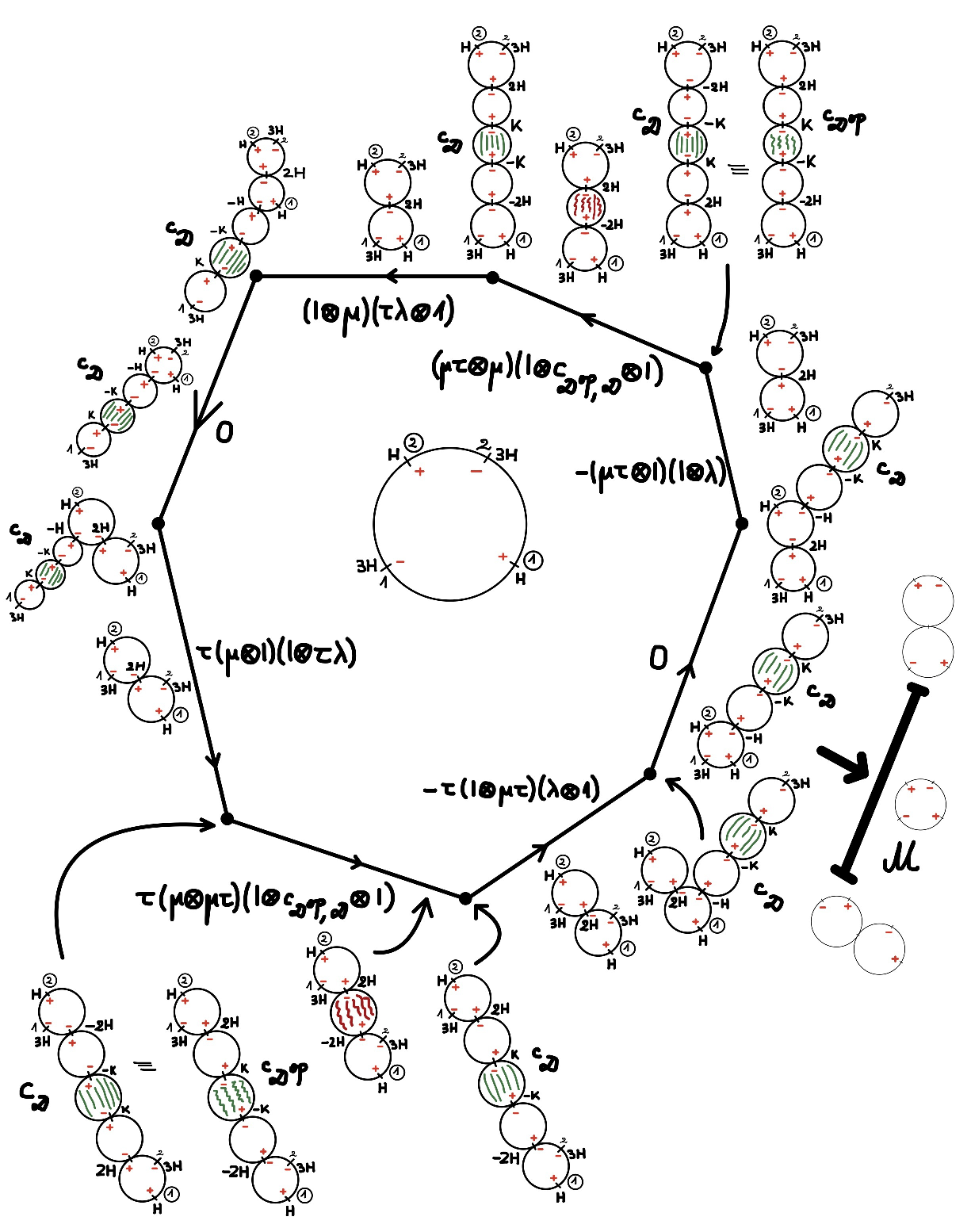}
\caption{Proof of {\sc (unital anti-symmetry)}.}
\label{fig:4-term-relation}
\end{center}
\end{figure}

We further consider an oriented octogon $\cP$ which parametrizes Floer data $(H_\tau,\beta_\tau)$, $\tau\in\cP$
on suitable nodal Riemann surfaces and which is fibered over $\ol\cM$ by the map which forgets the Floer data and contracts the unstable components of the underlying Riemann surfaces. The fibers of the projection $\cP\to\ol\cM$ are described as 
follows (Figure~\ref{fig:4-term-relation}):
\begin{itemize}
\item  The three sides above the ones marked ``0" project onto the endpoint of $\ol\cM$ for which the two irreducible components contain the punctures $\circled{2},2$ and respectively $\circled{1},1$.
\item The union of the interiors of $\cP$ and of the sides marked ``0" is fibered over $\cM$ with closed interval fibers.
\item The three sides below the ones marked ``0"  project onto the endpoint of $\ol\cM$ for which the two irreducible components contain the punctures $1,\circled{2}$ and respectively $2,\circled{1}$. 
\end{itemize}
The underlying Riemann surfaces for the points $\tau\in\cP$ are depicted in Figure~\ref{fig:4-term-relation}: in the interior of $\cP$ these are simply elements of $\cM$. On the boundary of $\cP$ these are nodal genus 0 Riemann surfaces with punctures and matching cylindrical ends at the punctures, and which may possibly have unstable components. These unstable components carry nontrivial Floer data and are stable if interpreted as Riemann surfaces with Floer data. 

The Floer data $(H_\tau,\beta_\tau)$, $\tau\in\cP$ consists of a constant (in $\tau$) $R$-essential Hamiltonian $H_\tau=H$ and of $1$-forms $\beta_\tau$, $\tau\in\cP$ which satisfy $d\beta_\tau\le 0$ and which have weights at the punctures as indicated in {\bf bold} in Figure~\ref{fig:4-term-relation}. E.g., on the top horizontal side of $\cP$, on the irreducible component which contains the punctures $\circled{2},2$ the weight is 1 at the positive puncture, 3 at the negative puncture, and 2 at the node. This field of $1$-forms is chosen to be compatible with splittings of Riemann surfaces over $\cP$. Moreover, each node is {\em directed}, meaning that it is labeled as a positive puncture (input) for one of the irreducible components which contain it, and as a negative puncture (output) for the other component. 

From this point on, the proof goes exactly as for Proposition~\ref{prop:infinitesimal_reduced}. The count of elements in $0$-dimensional moduli spaces of solutions to the Floer problem parametrized by $\cP$ defines a map $\Gamma:FC_*(H)^{\otimes 2}\to FC_*(3H;\im c_\cD)^{\otimes 2}$ of degree $-2n+2$. The count of elements in $0$-dimensional moduli spaces of solutions to the Floer problem parame\-trized by the oriented boundary of $\cP$ defines a map $\Gamma_{\p\cP}$ of degree $-2n+1$ with the same source and target, such that  
$$
[\p,\Gamma]=\Gamma_{\p\cP}. 
$$
Inspection of the boundary shows that, upon quotienting the target by $\im c_\cD$, we have  
\begin{align*}
\Gamma_{\p\cP} = &(1\otimes\boldmu) (\tau\boldlambda_\cD\otimes 1)  
- (\boldmu\tau\otimes 1)(1\otimes\boldlambda_\cD) \\
& \hspace{5cm}+ (\boldmu\tau\otimes \boldmu)(1\otimes \boldc_{\cD^{op},\cD}\otimes 1) \\
&- \tau(1\otimes\boldmu\tau)(\boldlambda_\cD\otimes 1) 
+ \tau (\boldmu\otimes 1)(1\otimes\tau\boldlambda_\cD) \\ 
& \hspace{5cm} + \tau(\boldmu\otimes \boldmu\tau)(1\otimes \boldc_{\cD^{op},\cD} \otimes 1).
\end{align*}
As in the previous proofs this implies the relation 
\begin{align*}
&(1\otimes\boldmu) (\tau\boldlambda_\cD\otimes 1)  
- (\boldmu\tau\otimes 1)(1\otimes\boldlambda_\cD) \\
& \hspace{5cm}+ (\boldmu\tau\otimes \boldmu)(1\otimes\boldc_{\cD^{op},\cD}\otimes 1) \\
&= \tau(1\otimes\boldmu\tau)(\boldlambda_\cD\otimes 1) 
- \tau (\boldmu\otimes 1)(1\otimes\tau\boldlambda_\cD) \\ 
& \hspace{5cm} - \tau(\boldmu\otimes \boldmu\tau)(1\otimes\boldc_{\cD^{op},\cD}\otimes 1)
\end{align*}
as maps $\ol{F\H}_*(H)^{\otimes 2}\to F\H_*(3H;\im c)^{\otimes 2}$. By passing to the limit over $H$ we find the same relation as maps $\ol{S\H}_*(W)^{\otimes 2}\to S\H_*(W;\im c)^{\otimes 2}$, and further as maps $\ol{S\H}_*(W)^{\otimes 2}\to \ol{S\H}_*(W)^{\otimes 2}$ in view of strong $R$-essentiality. We conclude using that $\boldc_{\cD^{\mathrm{op}},\cD}=\boldlambda_\cD\boldeta$ (Lemma~\ref{lem:cDopDlambda}).  
\end{proof}

\subsection{Cocommutativity} \label{sec:cocomm}

Graded commutativity of the product $\boldsymbol{\mu}$ is standard. We address cocommutativity of the coproduct $\boldsymbol{\lambda}_\cD$. 

\begin{proposition} \label{prop:cocomm-coprod}
The coproduct $\boldlambda_\cD$ is cocommutative on $\ol{S\H}_*(\p W)$, meaning that 
$$
\boldlambda_\cD + \tau\boldlambda_\cD = 0. 
$$
\end{proposition}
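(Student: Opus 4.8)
```latex
\begin{proof}[Proof proposal]
The plan is to model cocommutativity on the same Floer-theoretic pattern used in the proofs of Propositions~\ref{prop:coass_reduced}, \ref{prop:infinitesimal_reduced} and \ref{prop:anti-symmetry_reduced}: construct a one-parameter family of Floer data interpolating between the configuration defining $\boldlambda_\cD$ and the configuration defining $\tau\boldlambda_\cD$, analyze the boundary of the associated $1$-dimensional moduli spaces, and identify all the contributions. The underlying moduli space of domains is again $\ol\cM_{0,3}^\circ$ of genus $0$ curves with one positive and two negative punctures on a circle; the relevant one-parameter family rotates the two negative punctures around the positive one, exchanging their cyclic order. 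At the two endpoints of the interval the Floer data defines, respectively, the configurations computing $\boldlambda_\cD$ and $\tau\boldlambda_\cD$, with the continuation map $c_\cD$ factored in at the input exactly as in the definition of $\boldlambda_\cD$ (Proposition~\ref{prop:lambda_olSH_SHimc}).

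First I would set up, for a fixed $R$-essential Hamiltonian $H$ with Morse truncation $K$, the parametrized Floer problem over the closed interval, with input factored through $FC_*(-K)\stackrel{c_\cD}\longrightarrow FC_*(K)\to FC_*(H)$ and outputs in $FC_*(2H)$, quotiented by $\im c_\cD$. This yields a degree $-2n+2$ chain homotopy $\Theta:FC_*(H)\to FC_*(2H;\im c_\cD)^{\otimes 2}$ with $[\p,\Theta]=\boldlambda_\cD+\tau\boldlambda_\cD$ modulo contributions from the boundary of the family that are not of these two types. The one-parameter family is genuinely an interval (not a loop), so one must check that no extra interior degeneration of the domain occurs: since $\ol\cM_{0,3}^\circ$ is a point, the only breakings are Floer breakings at the ends, which are absorbed into $[\p,\Theta]$, and breakings of the continuation homotopy $\cD$ at the input. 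The latter, as in Lemma~\ref{lem:lambda-imc} and the proof of Proposition~\ref{prop:lambda_olSH_SHimc}, produce terms landing in $I_\cD=\im c_\cD\otimes FC_*(2H)+FC_*(2H)\otimes\im c_\cD$, hence vanish in the quotient. One then evaluates on $c_\cD(x)$ for a cycle $x\in FC_*(-K)$ and argues, exactly as in the cited proofs, that $(\boldlambda_\cD+\tau\boldlambda_\cD)(c_\cD x)$ is a boundary in $FC_*(2H;\im c_\cD)^{\otimes 2}$; passing to the limit over $H$ and using strong $R$-essentiality to identify $SH_*(W;\im c)$ with $\ol{S\H}_*(W)$ gives $\boldlambda_\cD+\tau\boldlambda_\cD=0$ on $\ol{S\H}_*(W)$.

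The subtle point, and the step I expect to be the main obstacle, is the sign and orientation bookkeeping: $\tau$ itself carries a Koszul sign, the two cylindrical ends are exchanged along the family, and the induced orientation on the $1$-dimensional moduli spaces must be tracked so that the endpoint contributions combine with the correct relative sign to give $\boldlambda_\cD+\tau\boldlambda_\cD$ rather than $\boldlambda_\cD-\tau\boldlambda_\cD$. Concretely one must verify that rotating the negative punctures past each other reverses the orientation of the parameter interval in a way that converts the naive difference $\boldlambda_\cD-\tau\boldlambda_\cD$ into a sum, consistent with $|\boldlambda_\cD|=1-2n$ being odd (so that $\tau\boldlambda_\cD=-\boldlambda_\cD$ is the cocommutativity we want, matching the evaluation of {\sc (unital anti-symmetry)} on $\boldeta\otimes\boldeta$ noted after Definition~\ref{defi:secondary-unital}). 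Once the orientations are fixed, the remaining arguments are verbatim repetitions of the moduli-space analysis already carried out, so no new analytic input is needed.
\end{proof}
```
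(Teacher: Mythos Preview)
Your core geometric idea --- rotate the two negative punctures past each other via a half-Dehn twist so that the two ends of the family produce $\boldlambda_\cD$ and $\tau\boldlambda_\cD$ --- is exactly the paper's approach. However, several details in your execution are off and would keep the argument from going through as written.

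First, the parameter space is two-dimensional, not one-dimensional. The coproduct $\boldlambda_\cD$ is \emph{itself} defined by a $1$-parameter family (Figure~\ref{fig:Coproduct-SHrelimc}); a homotopy between $\boldlambda_\cD$ and $\tau\boldlambda_\cD$ therefore requires an additional parameter. The paper uses the square $[0,1]\times[0,1]$: one direction is the coproduct parameter, the other is the half-Dehn twist. The two vertical sides read $\boldlambda_\cD$ and $-\tau\boldlambda_\cD$; the two horizontal sides are the endpoints of the coproduct family and contribute terms landing in $I_\cD=\im c_\cD\otimes FC_*(2H)+FC_*(2H)\otimes\im c_\cD$, hence vanish in the quotient. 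Your degree computation ($-2n+2$ for $\Theta$) is consistent with a $2$-parameter family, but your text repeatedly says ``one-parameter'' and analyzes boundary contributions as if the space were an interval.

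Second, and more seriously, the continuation map $c_\cD$ appears at the \emph{outputs}, not the input. The input of $\boldlambda_\cD$ is a class in $FC_*(H)$ with no factoring through $c_\cD$; it is at the endpoints of the coproduct parameter that one of the two negative punctures factors through $FC_*(-K)\stackrel{c_\cD}\to FC_*(K)\hookrightarrow FC_*(2H)$. This is precisely why the horizontal sides of the square land in $I_\cD$. Your sentence ``input factored through $FC_*(-K)\stackrel{c_\cD}\longrightarrow FC_*(K)\to FC_*(H)$'' and ``breakings of the continuation homotopy $\cD$ at the input'' have this backwards.

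Third, the step where you ``evaluate on $c_\cD(x)$ for a cycle $x\in FC_*(-K)$'' is not needed here. That maneuver (used e.g.\ in the coassociativity proof) is for showing an operation \emph{descends} to $\ol{S\H}_*(W)$. Here $\boldlambda_\cD$ and $\tau\boldlambda_\cD$ already descend; once $[\p,\Gamma]=-\boldlambda_\cD-\tau\boldlambda_\cD$ holds as maps $FC_*(H)\to FC_*(2H;\im c_\cD)^{\otimes 2}$, passing to homology and the limit gives $\boldlambda_\cD+\tau\boldlambda_\cD=0$ directly on $\ol{S\H}_*(W)\to S\H_*(W;\im c)^{\otimes 2}$, and strong $R$-essentiality finishes.
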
 

\begin{proof}
Let $H$ be an $R$-essential Hamiltonian and let $\cD$ be continuation data. We set up a Floer problem parametrized by the square $[0,1]\times [0,1]$ as follows (see Figure~\ref{fig:cocommutativity-SH}). The underlying Riemann surface is a genus 0 curve with 3 punctures, 1 positive and 2 negative, together with cylindrical ends which depend on $(\sigma,\tau)\in[0,1]\times [0,1]$. The cylindrical end at the positive puncture is fixed with respect to some given parametrization of the Riemann surface, and this uniquely identifies its complement with the disc $D^2$. The two negative punctures are fixed along each of the vertical sides, and they move along a half-Dehn twist as we traverse the square along segments $[0,1]\times \{\tau\}$. Thus the two negative punctures get exchanged as we move from one vertical side to the other. One important point in the construction is that the negative punctures are ordered, i.e. labeled by $1$ and $2$. When traversing from the left vertical side to the right vertical side, this labeling is switched. We also choose cylindrical ends at the negative punctures over $[0,1]\times [0,1]$ in such a way that over the two vertical sides the cylindrical ends are switched from one puncture to the other.

\begin{figure}
\begin{center}
\includegraphics[width=.5\textwidth]{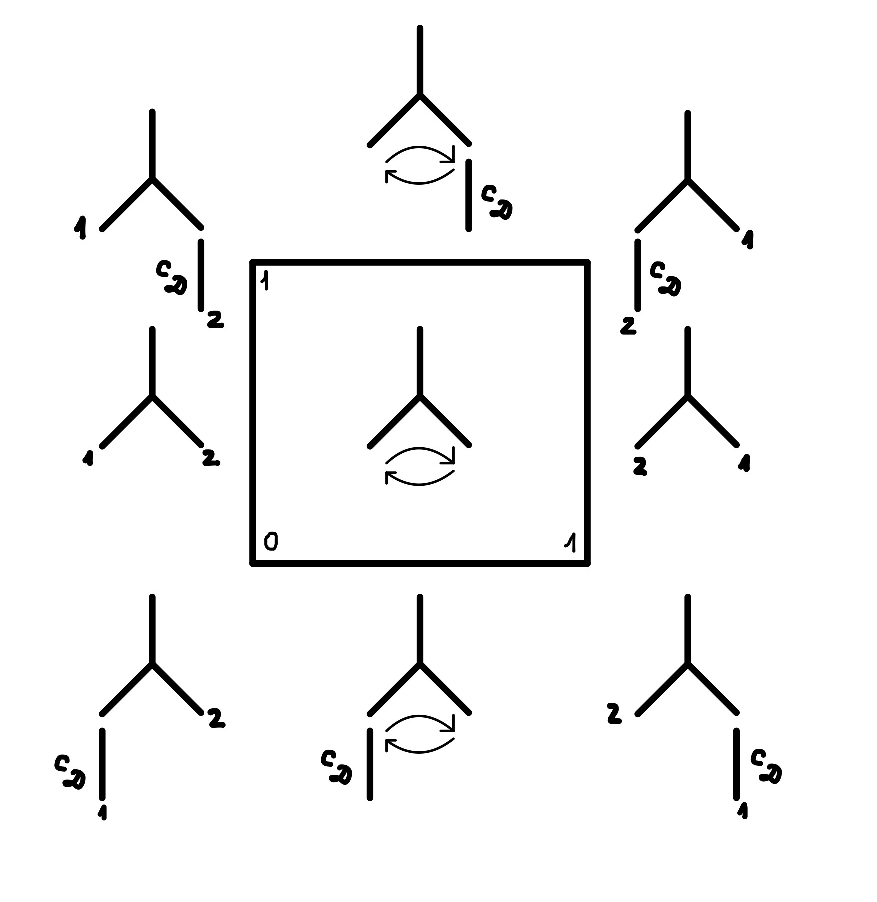}
\caption{Cocommutativity of $\boldlambda_\cD$.}
\label{fig:cocommutativity-SH}
\end{center}
\end{figure}

On the first vertical side the Floer data corresponds to the one defining the coproduct $\boldlambda_\cD$. On the second vertical side the Floer data corresponds to the one defining the composition $-\tau\boldlambda_\cD$ between the tensor product twist and the coproduct. 

Denoting $\Gamma, \Gamma_\p:FC_*(H)\to FC_*(2H)\otimes FC_*(2H)$ the operations defined by the count of the elements of 0-dimensional moduli spaces of the Floer problem parametrized by $[0,1]\times [0,1]$, respectively by $\p([0,1]\times [0,1])$, we obtain a relation 
$$
[\p,\Gamma] = \Gamma_\p.
$$

We then have 
$$
\Gamma_\p= - \tau\boldlambda_\cD -\boldlambda_\cD
$$
as maps $FC_*(H)\to FC_*(2H;\im c_\cD)^{\otimes 2}$, because the operations that are read along the two horizontal sides have image contained inside $\im c_\cD \otimes FC_*(2H) + FC_*(2H)\otimes \im c_\cD$. Passing to homology and in the limit over $H$ we obtain the equality $\boldlambda_\cD+ \tau\boldlambda_\cD=0$ as maps $\ol{S\H}_*(W)\to S\H_*(W;\im c_\cD)^{\otimes 2}$. The conclusion follows by $R$-essentiality. 
\end{proof}

\section{Splittings of Rabinowitz Floer homology}\label{sec:splittings}

The goal of this section is to exhibit splittings of the Rabinowitz Floer homology group which involve reduced homology and cohomology and are compatible with the product and coproduct. 

In~\S\ref{sec:RFH-cone} we recall from~\cite{CO-cones} the cone description of the product, {\alex and we explain the definition of the coproduct}, on Rabinowitz Floer homology of a Liouville domain. In~\S\ref{sec:RFH-colimit} we give an invariant definition of Rabinowitz Floer homology as a colimit in groupoids, which allows us to describe the various identifications involved in the proof of invariance for the cone description. In~\S\ref{sec:RFH-les} we discuss the properties of the long exact sequence of the pair $(V,\p V)$ for a Liouville domain $V$ from the perspective of product and coproduct structures. In~\S\ref{subsec:splittings} we specialize to strongly $R$-essential Weinstein domains and show that this long exact sequence, rephrased as a short exact sequence in terms of reduced homology, admits non-canonical splittings. In~\S\ref{sec:splittings_are_not_canonical} we discuss the failure of canonicity of the splitting maps. We conclude with~\S\ref{sec:unital-from-ass} where we show how the unital infinitesimal relation in reduced homology is implied by associativity of the product on the cone.

\subsection{Cone description of Rabinowitz Floer homology} \label{sec:RFH-cone}

Let $V$ be a Liouville domain of dimension $2n$ and denote $S\H_*(\p V)=SH_{*+n}(\p V)$ the shifted Rabinowitz Floer homology of $\p V$. We proved in~\cite{CHO-PD} that this is a biunital coFrobenius bialgebra with degree $0$ product $\boldmu$ and degree $1-2n$ coproduct $\boldlambda$. 

It was observed in~\cite{CO} that Rabinowitz Floer homology can be alternatively described as a cone, and Venkatesh~\cite{Venkatesh} went a step further by adopting the cone perspective as definition. This proved effective in contexts where action filtration arguments were not readily available. 

In~\cite{CO-cones} we described the product structure on Rabinowitz Floer homology from the cone perspective. Dualizing that construction yields a description of the coproduct, and we now summarize the two. 

Let $H$ be a Hamiltonian which is admissible for symplectic homology and denote $\cA=FC_{*+n}(H)$, $\cM=\cA^\vee[2n]=FC_{*+n}(-H)$, $\cA'=FC_{*+n}(2H)$, $\cM'=\cA^{\prime\vee}[2n]=FC_{*+n}(-2H)$. Given a choice of continuation data $\cD$ from (a Morse truncation of) $-H$ to $H$, we denote  
$$
c=c_\cD:\cM\to \cA
$$
the corresponding continuation map, and we use the same notation for the continuation maps $\cM\to \cA'$, $\cM'\to \cA$, and $\cM'\to \cA'$. Note that $c^\vee$ is canonically identified with $c_{\cD^{op}}$, the continuation map determined by the reverse continuation data $\cD^{op}$.

\subsubsection{Product on the cone} In~\cite{CO-cones} we constructed maps $\mu:\cA\otimes \cA\to \cA'$ (product), $m_R:\cM'\otimes \cA\to \cM$ and $m_L:\cA\otimes \cM'\to \cM$ (module structure) obtained by dualizing $\mu$ at the output and at one of the two inputs, $\sigma:\cM'\otimes\cM'\to \cM$ (secondary product), $\tau_L:\cM'\otimes \cA\to \cA'$ and $\tau_R:\cA\otimes \cM'\to \cA'$ (secondary module structure), and finally $\beta:\cM'\otimes\cM'\to \cA'$. We showed in~\cite{CO-cones} that, up to considering suitable action windows, these maps fit together into a linear map 
$$
{\alex \boldmu_{\cD}:Cone(c_\cD)\otimes Cone(c_\cD)\to Cone (c_\cD)}
$$
which induces in homology and in the limit a product $\boldmu_{{\alex \cD}}$ isomorphic to the product $\boldmu$ on $S\H_*(\p V)$. 
These maps are an action truncated version of the notion of $A_2$-structure studied in~\cite{CO-cones}. 

For action reasons the image of the map $\beta$ is contained in the 0-energy sector $FC_*^{=0}(2H)$. In case $V$ is an $R$-essential Weinstein domain of dimension $2n\ge 6$, 
the map $\beta$ vanishes identically. 
See the proof of Lemma~\ref{lem:lambda-imc}.

\subsubsection{Secondary continuation map and $A_2^+$-structures} \label{sec:A2+structure}
The formalism of $A_2^+$-structures studied in~\cite[\S7]{CO-cones} is designed to infer the previous operations from the following  more elementary ones. Given the continuation data $\cD$ we define the following maps:  
\[
\tag{\mbox{\sc continuation bivector}} c_0\in \cA\otimes \cA.
\]
This is the bivector dual to the continuation map ${\alex c_\cD}$. Note that $\tau c_0$ is then the bivector dual to the continuation map ${\alex c^\vee=c_{\cD^{op}}}$. 
\[
\tag{\mbox{\sc secondary continuation bivector}}Q_0\in \cA\otimes \cA.
\]
This is the bivector dual to the secondary continuation map $\vec\boldc$ and satisfies $\tau c_0-c_0=[\p,Q_0]$.  
\[
\tag{\mbox{\sc product}} \mu:\cA\otimes \cA\to \cA'
\]
of degree $0$. This is the usual pair of pants product in Floer theory. 
\[
\tag{\mbox{\sc coproduct}} \lambda{\alex =\lambda_\cD}:\cA\to \cA'\otimes \cA'
\]
of degree $1-2n$. This is defined from a parametrized Floer problem for pairs of pants with parameter space the 1-simplex by splitting off the continuation map $c=c_\cD$ at the negative punctures at the boundary of the simplex. 
\[
\tag{\mbox{\sc cubic vector}} B:R\to \cA'\otimes \cA'\otimes \cA'
\]
of degree $2-4n$ and cyclically symmetric. This is defined from a parametrized Floer problem for pairs of pants with parameter space the 2-simplex by splitting off the continuation map $c=c_\cD$ at the negative punctures at the boundary of the simplex.

{\alex In the rest of this subsection we will omit the distinction between $\cA$ and $\cA'$, and refer to~\cite{CO-cones} for a detailed discussion.}

This data determines operations $m_L$, $m_R$, $\sigma$, $\tau_L$, $\tau_R$, $\beta$ as in~\cite[\S7.2]{CO-cones}. The key idea is that, upon dualizing $\mu$, $\lambda$ or $B$ at some of their inputs or outputs, one needs to further interpolate between the continuation map ${\alex c^\vee=c_{\cD^{op}}}$ arising from the dualization and the continuation map ${\alex c=c_\cD}$ involved in the cone. This interpolation makes use of the secondary continuation bivector $Q_0$. The outcome is the product $\boldmu_\cD$ on $Cone({\alex c_\cD})$.

{\alex

\subsubsection{Poincaré duality on the cone} The above maps can be dualized in order to obtain a cone description of the coproduct on Rabinowitz Floer homology. As a preliminary step, we need to describe Poincaré duality for Rabinowitz Floer homology at the cone level. 

Following~\cite[\S7.4]{CO-cones} we have canonical chain isomorphisms 
$$
  Cone(c)\stackrel{\varphi}\longrightarrow Cone(c^\vee)\stackrel{\psi}\longrightarrow Cone(c)^\vee[-1].
$$
The first chain isomorphism is 
$$
  \varphi :  
  \left(\cA\oplus\cA^\vee[-1],\begin{pmatrix}\p & c \\ 0 & -\p^\vee\end{pmatrix}\right) 
  \stackrel{\tiny \begin{pmatrix}1 & -\vec\boldc \\ 0 & 1\end{pmatrix}}{\longrightarrow}
  \left(\cA\oplus\cA^\vee[-1],\begin{pmatrix}\p & c^\vee \\ 0 & -\p^\vee\end{pmatrix}\right)\!,
$$
and the second one simply interchanges the two direct summands,
$$
\psi:\left(\cA\oplus\cA^\vee[-1],\begin{pmatrix}\p & c^\vee \\ 0 & -\p^\vee\end{pmatrix}\right) 
  \stackrel{\tiny \begin{pmatrix}0 & 1 \\ 1 & 0\end{pmatrix}}{\longrightarrow}
  \left(\cA^\vee[-1]\oplus\cA,\begin{pmatrix}-\p^\vee & 0 \\ c^\vee & \p\end{pmatrix}\right).
$$

\begin{proposition} \label{prop:PD-cone}
The composition 
$$
I=\psi\circ\varphi=\tiny \begin{pmatrix} 0 & 1 \\ 1 & -\vec \boldc \end{pmatrix}:Cone(c)\to Cone(c)^\vee[-1]
$$ 
induces in the limit in homology the Poincaré duality isomorphism.
\end{proposition}

\begin{remark}
In~\cite[Theorem~7.14]{CO-cones} we gave an algebraic statement of duality in this framework. However, we did not explicitly prove in \emph{loc.\,cit.} that the algebraic duality statement was equivalent to the Poincaré duality theorem from~\cite{CHO-PD}. Proposition~\ref{prop:PD-cone} fills that gap. 
\end{remark}

\begin{proof} The proof relies on~\cite[\S5.1]{CO-cones}. Let $H_\vee=H_{-\mu,\mu}$ be a {\sc V}-shaped Hamiltonian as in~\cite[\S5.1]{CO-cones} of slopes $-\mu<0<\mu$, and $L=L_{-\mu}$ the linear Hamiltonian of slope $-\mu<0$. (For an illustration, see the second row of the diagram at the end of this proof.) The Poincaré duality isomorphism is proved in {\it loc.\,cit.} to be equivalent to an isomorphism of rings $S\H_*(\cH_\vee)\simeq S\H_*(\cH_\wedge)$, where $S\H_*(\cH_\vee)$ is the limit of homologies of the cones of continuation maps $c_{L,H_\vee}:L\to H_\vee$, and $S\H_*(\cH_\wedge)$ is the limit of homologies of the cones of continuation maps $c_{-H_\vee,-L}:-H_\vee\to -L$. Moreover, this isomorphism arises as a limit of chain homotopy equivalences between the cones $Cone(c_{L,H_\vee})\simeq Cone(c_{-H_\vee,-L})$ which are induced by continuation maps $c_{L,-H_\vee}:L\to -H_\vee$, $c_{H_\vee,-L}:H_\vee\to -L$, and a homotopy of homotopies between $c_{H_\vee,-L}\circ c_{L,H_\vee}$ and $c_{-H_\vee,-L}\circ c_{L,-H_\vee}$~\cite[Theorem~5.1]{CO-cones}. On the other hand, we have isomorphisms $Cone(c_{L,H_\vee})\simeq Cone (c_\cD)$ and $Cone(c_{-H_\vee,-L})\simeq Cone(c_{\cD^{op}})$, again by continuation maps and homotopies. The following three easy algebraic facts, also proved in~\cite[\S4.2]{CO}, will allow us to conclude:

Let $A$, $B$, $A'$, $B'$ be chain complexes. Given two chain maps $c:A\to B$ and $c':A'\to B'$, a morphism between them is a triple $(f,g,h)$ consisting of chain maps $f:A\to A'$, $g:B\to B'$ and a chain homotopy $h:A\to B'$ of degree 1 satisfying $[\p,h]=gc-c'f$. 
$$
\xymatrix{A \ar[r]^c \ar[d]_f \ar@{.>}[dr]^-h & B \ar[d]^g \\
A' \ar[r]_{c'} & B'
}
$$
The triple $(f,g,h)$ induces a chain map 
$
{\tiny \begin{pmatrix} g & h \\ 0 & f\end{pmatrix}} :Cone(c)\to Cone(c'). 
$
\begin{enumerate}
\item If $h$ is replaced by another homotopy $\tilde h$ such that $\tilde h-h$ is a boundary in $\Hom(A,B')$, then the induced morphisms between the cones are chain homotopic.
\item If $f':A\to A'$ is another chain map that is chain homotopic to $f$, and $k:A\to A'$ is a chain homotopy between them such that $f-f'=[\p,k]$, then the triple $(f',g,h+c'k)$ induces a map $Cone(c)\to Cone(c')$ that is chain homotopic to the one induced by $(f,g,h)$. Specifically, the homotopy is $\tiny \left(\begin{array}{cc} 0 & 0 \\ 0 & k \end{array}\right)$. 
$$
\xymatrix{A\ar@{=}[r] \ar[d]_{f'} \ar@{.>}[dr]^-k & A \ar[r]^c \ar[d]_f \ar@{.>}[dr]^-h & B \ar[d]^g \\
A \ar@{=}[r] & A' \ar[r]_{c'} & B'
}
$$
An analogous statement holds for $g$ replaced by a chain homotopic map, with a specified chain homotopy between the two.
\item Morphisms compose. Let $\tilde A$, $\tilde B$ be chain complexes, let $\tilde c:\tilde A\to \tilde B$ be a chain map, and consider a morphism $(\tilde f,\tilde g,\tilde h)$ from $\tilde c$ to $c$. The composition $(f,g,h)\circ (\tilde f,\tilde g,\tilde h)$ is defined as $(f\tilde f,g\tilde g,g\tilde h+h\tilde f)$.
$$
\xymatrix{\tilde A \ar[r]^{\tilde c} \ar[d]_{\tilde f} \ar@{.>}[dr]^-{\tilde h} & \tilde B \ar[d]^{\tilde g} \\
A \ar[r]^c \ar[d]_f \ar@{.>}[dr]^-h & B \ar[d]^g \\
A' \ar[r]_{c'} & B'
}
$$
\end{enumerate}

To finish the proof, we apply the previous three algebraic facts to the following diagram, where $H_\vee$ and $L$ are as in the previous paragraph, and $H=H_\mu$ is a Hamiltonian of slope $\mu$ that is admissible for symplectic homology as described at the beginning of this subsection. For a lighter notation in the diagram, the Hamiltonians stand for the corresponding Floer complexes. Also, in the same diagram we have illustrated the shapes of the various Hamiltonians and homotopies that give rise to continuation maps. The assumption from (1) that the difference between two homotopies between continuation maps obtained from homotopies of homotopies of Hamiltonians is a boundary is a general fact in Floer theory~\cite[Lemma~4.7]{CO}.
$$
\xymatrix
@C=20pt
@R=20pt
{\minusHammuaxis{$-\mu$} & -H \ar[rrrrr]^{c_\cD}_{\tiny \homotopyHamminusmutomu{}{}} \ar[d] \ar@{.>}[drrrrr] &&&&& H \ar[d] & \Hammuaxis{$\mu$} \\
\HamLlambda{$-\mu$} & L \ar[rrrrr]^{c_{L,H_\vee}}_{\tiny \homotopyHamLlambdatoHamlambdamu{}{}} \ar[d]^{c_{L,-H_\vee}} \ar@{.>}[drrrrr] &&&&& H_\vee \ar[d]_{c_{H_\vee,-L}} & \Hamlambdamuaxis{$-\mu$}{$\mu$} \\
\minusHamlambdamuaxis{$\mu$}{$-\mu$} & -H_\vee \ar[rrrrr]^{c_{-H_\vee,-L}}_{\tiny \homotopyminusHamlambdamutominusHamLlambda{}{}} \ar[d] \ar@{.>}[drrrrr] &&&&& -L \ar[d] & \minusHamLlambda{$\mu$} \\
\minusHammuaxis{$-\mu$} & -H \ar[rrrrr]^{c_{\cD^{op}}}_{\tiny \homotopyHamminusmutomu{}{}} &&&&& H & \Hammuaxis{$\mu$}
}
$$
This concludes the proof. 
\end{proof}

\subsubsection{Coproduct on the cone} \label{sec:coproduct-on-cone}

The chain level product $\boldmu_\cD$ on $Cone(c_\cD)$ is algebraically dual to a chain level
coproduct $\tau \boldmu_\cD^\vee$ on $Cone(c_\cD)^\vee$, which induces a shifted coproduct on $Cone(c_\cD)^\vee[-1]$. Under the chain isomorphism $I$ from Proposition~\ref{prop:PD-cone}, this corresponds to a coproduct $\boldlambda_\cD$ on $Cone(c_\cD)$. This coproduct is in turn algebraically dual to a product $\boldlambda_\cD^\vee\tau$ on $Cone(c_\cD)^\vee$, which induces a shifted product on $Cone(c_\cD)^\vee[-1]$. Under the chain isomorphism $I$, this corresponds to the original product $\boldmu_\cD$ on $Cone(c_\cD)$. 

The family of maps $\boldlambda_\cD$ on $Cone(c_\cD)$ induces in the limit on homology a coproduct that we also denote $\boldlambda_\cD$. 

\begin{proposition} \label{prop:boldlambdaD-restricts-to-lambdaD}
The coproduct $\boldlambda_\cD$ is isomorphic to the coproduct $\boldlambda$ on $S\H_*(\p V)$. 
\end{proposition}

\begin{proof}
Since $\boldlambda$ is also obtained by dualizing $\boldmu$ and transporting it back via the Poincaré duality map, the statement follows from Proposition~\ref{prop:PD-cone}, which states that the chain isomorphism $I$ induces in the limit on homology the Poincaré duality isomorphism. 
\end{proof}

By construction, the components of the product $\boldmu_\cD$ and the coproduct $\boldlambda_\cD$ on the cone are obtained by dualizing the operations $\mu$, $\lambda=\lambda_\cD$ of the $A_2^+$-structure at some of their inputs and outputs, together with contributions from the secondary copairing $Q_0$. The following two components will be relevant:
\begin{itemize}
\item the component $\cA^\vee[-1]\otimes \cA^\vee[-1]\to \cA^\vee[-1]$ of the product $\boldmu_\cD$ is $\lambda_{\cD^{op}}^\vee\tau$. This follows by comparing the formulas from~\cite[\S7.2]{CO-cones} with Corollary~\ref{cor:lambdaDprimeD}, and is also expressed by Theorem~\ref{thm:m-components-RFH}(2) below. 
\item the component $\cA\to\cA\otimes\cA$ of the coproduct $\boldlambda_\cD$ is $\lambda_\cD$. This is seen by tracing directly through the isomorphism $I$ above, and we now explain this computation. 
We seek to compute the component $\boldlambda^+_{++}$ of the coproduct $\boldlambda_\cD$ given by the following diagram 
$$
\xymatrix
@C=80pt
{
\cA \ar[r]^-{\boldlambda^+_{++}} \ar[d]^{incl} \ar@/_40pt/[dd]_{incl} & \cA\otimes \cA \\
\cA\oplus \cA^\vee[-1] \ar[r]^-{\boldlambda_\cD} \ar[d]^I
& 
(\cA\oplus \cA^\vee[-1])\otimes (\cA\oplus \cA^\vee[-1]) \ar[u]_{proj\otimes proj} \\
\cA^\vee[-1]\oplus \cA \ar[r]^-{\tau\boldmu^\vee} & (\cA^\vee[-1]\oplus \cA) \otimes (\cA^\vee[-1]\oplus \cA)
\ar[u]_{I^{-1}\otimes I^{-1}}
}
$$
We now refer to~\cite[\S7]{CO-cones}. 
Let $s:\cA^\vee[-1]\to\cA^\vee$ be the shift and let $Q:\cA^\vee\to \cA$ be obtained by turning into an input the first output of the secondary continuation bivector $Q_0$ from~\S\ref{sec:A2+structure}. We then have $\vec \boldc=Qs$,  
$I=\tiny \begin{pmatrix} 0 & 1 \\ 1 & -Qs \end{pmatrix}$ and $I^{-1}= \tiny \begin{pmatrix} Qs & 1 \\ 1 & 0 \end{pmatrix}$. (In~\cite[\S7]{CO-cones} we used the shift implicitly and wrote $Q$ instead of the current $Qs$.) 
 The $A_2^+$-structure consisting of $c_0$, $Q_0$, $\mu$, and $\lambda=\lambda_\cD$ determines operations $m_L:\cA\otimes\cA^\vee\to \cA^\vee$, $m_R:\cA^\vee\otimes\cA\to \cA^\vee$, $\sigma:\cA^\vee\otimes\cA^\vee\to \cA^\vee$ (and also $\tau_L$, $\tau_R$ which are irrelevant for this discussion). The operation $\sigma$ is dual to $-\tau\lambda_{\cD^{op}}=-\tau\big(\lambda + (\mu\otimes 1)(1\otimes Q_0) - (1\otimes\mu)(\tau Q_0\otimes1)\big)$, and the operations
$m_L$ and $m_R$ are obtained by dualizing $\mu$ at one of the inputs and at the output as in the next diagram. (In this diagram, inputs or outputs in $\cA$ are represented by downward pointing arrows, and inputs or outputs in $\cA^\vee$ are represented by upward pointing arrows. The labels ``1'' and ``2'' stand for the order of the inputs, which corresponds to reading the leaves of each tree clockwise starting from the output.) 
 
\smallskip 
 
\begin{center}

\begin{tikzpicture}[scale=.25]

\draw (0,0) -- (1,-1) -- (2,0);
\draw (1,-1) -- (1,-2.5);
\node at (1,-5.5) {\tiny $\cA \otimes \cA\to \cA$};
\node at (-1,-2)[scale=0.75] {\tiny $\mu$};
\draw [arrows={[sep] - Stealth[length=3pt]}]        (0,1) -- (0,.2);
\draw [arrows={[sep] - Stealth[length=3pt]}]        (2,1)   -- (2,.2);
\draw [arrows={[sep] - Stealth[length=3pt]}]        (1,-2.7) -- (1,-3.5);
\node at (2.5,0)[scale=0.5] {\tiny $2$};
\node at (-0.5,0)[scale=0.5] {\tiny $1$};

\pgftransformxshift{12cm}
\draw (0,0) -- (1,-1) -- (2,0);
\draw (1,-1) -- (1,-2.5);
\node at (1,-5.5) {\tiny $\cA \otimes \cA^\vee\to \cA^\vee$};
\node at (-1,-2)[scale=0.75] {\tiny $m_L$};
\draw [arrows={[sep] - Stealth[length=3pt]}]        (0,.2) -- (0,1);
\draw [arrows={[sep] - Stealth[length=3pt]}]        (2,1)   -- (2,.2);
\draw [arrows={[sep] - Stealth[length=3pt]}]        (1,-3.5) -- (1,-2.7);
\node at (2.5,0)[scale=0.5] {\tiny $1$};
\node at (1.5,-2.5)[scale=0.5] {\tiny $2$};

\pgftransformxshift{12cm}
\draw (0,0) -- (1,-1) -- (2,0);
\draw (1,-1) -- (1,-2.5);
\node at (1,-5.5) {\tiny $\cA^\vee \otimes \cA\to \cA^\vee$};
\node at (-1,-2)[scale=0.75] {\tiny $m_R$};
\draw [arrows={[sep] - Stealth[length=3pt]}]        (0,1) -- (0,.2);
\draw [arrows={[sep] - Stealth[length=3pt]}]        (2,.2)   -- (2,1);
\draw [arrows={[sep] - Stealth[length=3pt]}]        (1,-3.5) -- (1,-2.7);
\node at (2.5,0)[scale=0.5] {\tiny $1$};
\node at (1.5,-2.5)[scale=0.5] {\tiny $2$};

\end{tikzpicture}
\end{center}
 
\smallskip 

Consider the shifts 
$$
\xymatrix
@C=50pt 
{
\cA[1] \ar@<-.5ex>[d]_\omega & \cA^\vee \ar@<-.5ex>[d]_\omega \\
\cA \ar@<-.5ex>[u]_s & \cA^\vee[-1] \ar@<-.5ex>[u]_s
}
$$
of degrees $|\omega|=1$, $|s|=-1$. The components of the product $\boldmu_\cD$ that will appear in the computation of $\boldlambda^+_{++}$ are the following:
\begin{align*}
\cA\otimes \cA^\vee[-1]\to \cA^\vee[-1] \, : & \quad \omega m_L(1\otimes s),\\
\cA^\vee[-1]\otimes \cA \to \cA^\vee[-1] \, : & \quad \omega m_R (s\otimes 1),\\
\cA^\vee[-1]\otimes \cA^\vee[-1]\to \cA^\vee[-1]\, : & \quad -\omega\sigma(s\otimes s).
\end{align*}
(The minus sign in the last formula is~\cite[Equation~(12)]{CO-cones}.) The shift of the dual operation $\boldmu_\cD^\vee$ has components 
\begin{align*}
\cA\to \cA^\vee[-1]\otimes \cA \, : & \quad (\omega\otimes\omega)(\omega m_L(1\otimes s))^\vee s = -(\omega\otimes 1)m_L^\vee,\\
\cA\to \cA\otimes\cA^\vee[-1] \, : & \quad (\omega\otimes\omega)(\omega m_R (s\otimes 1))^\vee s = (1\otimes \omega)m_R^\vee,\\
\cA\to\cA\otimes \cA\, : & \quad (\omega \otimes\omega) (-\omega \sigma (s\otimes s ))^\vee s = -\sigma^\vee,
\end{align*}
and the 
corresponding coproduct on $Cone(c)^\vee[-1]$, given by the shift of $\tau\boldmu_\cD^\vee$, has components 
\begin{align*}
\cA\to \cA\otimes\cA^\vee[-1] \, : & \ -\tau(\omega\otimes 1)m_L^\vee = -(1\otimes\omega)\tau m_L^\vee,\\
\cA\to \cA^\vee[-1]\otimes \cA \, : & \ \tau(1\otimes \omega)m_R^\vee = (\omega\otimes 1)\tau m_R^\vee,\\
\cA\to\cA\otimes\cA\, : & \ -\tau\sigma^\vee \!=\! \lambda_{\cD^{op}}\!=\!\lambda + (\mu\otimes 1)(1\otimes Q_0) \\
& \qquad \qquad \qquad \qquad - (1\otimes\mu)(\tau Q_0\otimes1).
\end{align*}
The component $\boldlambda^+_{++}$ of $\boldlambda_\cD$ is then given by
\begin{align*}
\boldlambda^+_{++} & = \lambda + (\mu\otimes 1)(1\otimes Q_0) - (1\otimes\mu)(\tau Q_0\otimes1) \\
& \qquad \qquad - (1\otimes Qs)(1\otimes \omega)\tau m_L^\vee + (Qs\otimes 1)(\omega\otimes 1)\tau m_R^\vee \\
& = \lambda + (\mu\otimes 1)(1\otimes Q_0) - (1\otimes\mu)(\tau Q_0\otimes1) \\
& \qquad \qquad - (1\otimes Q)\tau m_L^\vee + (Q\otimes 1)\tau m_R^\vee \\
& = \lambda + (\mu\otimes 1)(1\otimes Q_0) - (1\otimes\mu)(\tau Q_0\otimes1) \\
& \qquad \qquad -(\mu\otimes 1)(1\otimes Q_0) + (1\otimes\mu)(\tau Q_0\otimes 1) \\
& = \lambda.
\end{align*}
The third equality is a consequence of $(1\otimes Q)\tau m_L^\vee=(\mu\otimes 1)(1\otimes Q_0)$ and $(Q\otimes 1)\tau m_R^\vee=(1\otimes\mu)(\tau Q_0\otimes1)$. The equality $(1\otimes Q)\tau m_L^\vee=(\mu\otimes 1)(1\otimes Q_0)$ is explained pictorially in the following diagram. (For each tree, the labels ``1'' and ``2'' stand for the order of the inputs or of the outputs.)

\begin{center}

\begin{tikzpicture}[scale=.25]

\draw (0,0) -- (1,-1) -- (2,0);
\draw (1,-1) -- (1,-2.5);
\node at (1,-5.5) {\tiny $\cA \otimes \cA^\vee\to \cA^\vee$};
\node at (-1,-2)[scale=0.75] {\tiny $m_L$};
\draw [arrows={[sep] - Stealth[length=3pt]}]        (0,.2) -- (0,1);
\draw [arrows={[sep] - Stealth[length=3pt]}]        (2,1)   -- (2,.2);
\draw [arrows={[sep] - Stealth[length=3pt]}]        (1,-3.5) -- (1,-2.7);
\node at (2.5,0)[scale=0.5] {\tiny $1$};
\node at (1.5,-2.5)[scale=0.5] {\tiny $2$};

\pgftransformxshift{9cm}
\draw (0,0) -- (1,-1) -- (2,0);
\draw (1,-1) -- (1,-2.5);
\node at (1,-5.5) {\tiny $\cA\to \cA^\vee\otimes\cA$};
\node at (-1,-2)[scale=0.75] {\tiny $m_L^\vee$};
\draw [arrows={[sep] - Stealth[length=3pt]}]        (0,1)   -- (0,.2);
\draw [arrows={[sep] - Stealth[length=3pt]}]        (2,.2) -- (2,1);
\draw [arrows={[sep] - Stealth[length=3pt]}]        (1,-2.7) -- (1,-3.5); 
\node at (2.5,0)[scale=0.5] {\tiny $1$};
\node at (1.5,-2.5)[scale=0.5] {\tiny $2$};

\pgftransformxshift{9cm}
\draw (0,0) -- (1,-1) -- (2,0);
\draw (1,-1) -- (1,-2.5);
\node at (1,-5.5) {\tiny $\cA\to \cA\otimes\cA^\vee$};
\node at (-1,-2)[scale=0.75] {\tiny $\tau m_L^\vee$};
\draw [arrows={[sep] - Stealth[length=3pt]}]        (0,1)   -- (0,.2);
\draw [arrows={[sep] - Stealth[length=3pt]}]        (2,.2) -- (2,1);
\draw [arrows={[sep] - Stealth[length=3pt]}]        (1,-2.7) -- (1,-3.5); 
\node at (2.5,0)[scale=0.5] {\tiny $2$};
\node at (1.5,-2.5)[scale=0.5] {\tiny $1$};

\pgftransformxshift{10cm}
\draw (0,0) -- (1,-1);
\draw [arrows={- Stealth[length=3pt]}] (1,-1) -- (2,0);
\draw (1,-1) -- (1,-2.5);
\draw [arrows={Stealth[length=3pt,reversed] - Stealth[length=3pt]}] (2.1,.2)  .. controls (3.1,1.5) .. (4.1,.2);
\node at (1,-5.5) {\tiny $\cA\to \cA\otimes\cA$};
\node at (-2,-2)[scale=0.75] {\tiny $(1\otimes Q)\tau m_L^\vee$};
\draw [arrows={[sep] - Stealth[length=3pt]}]        (0,1)   -- (0,.2);
\draw [arrows={[sep] - Stealth[length=3pt]}]        (1,-2.7) -- (1,-3.5); 
\node at (4.4,0)[scale=0.5] {\tiny $2$};
\node at (1.5,-2.5)[scale=0.5] {\tiny $1$};
\node at (3.1,1.7)[scale=.65] {\tiny $Q$};

\pgftransformxshift{11cm}
\draw (0,0) -- (1,-1);
\draw [arrows={- Stealth[length=3pt,reversed]}] (1,-1) -- (2,0);
\draw (1,-1) -- (1,-2.5);
\draw [arrows={Stealth[length=3pt] - Stealth[length=3pt]}] (2.1,.2)  .. controls (3.1,1.5) .. (4.1,.2);
\node at (1,-5.5) {\tiny $\cA\to \cA\otimes\cA$};
\node at (-2.5,-2)[scale=0.75] {\tiny $(\mu\otimes 1)(1\otimes Q_0)$};
\draw [arrows={[sep] - Stealth[length=3pt]}]        (0,1)   -- (0,.2);
\draw [arrows={[sep] - Stealth[length=3pt]}]        (1,-2.7) -- (1,-3.5); 
\node at (4.4,0.5)[scale=0.5] {\tiny $2$};
\node at (1.5,-2.5)[scale=0.5] {\tiny $1$};
\node at (3.1,1.7)[scale=.65] {\tiny $Q_0$};
\node at (1.5,-1.3)[scale=.65] {\tiny $\mu$};

\end{tikzpicture}
\end{center}
The equality $(Q\otimes 1)\tau m_R^\vee=(1\otimes\mu)(\tau Q_0\otimes1)$ is explained pictorially in the following diagram.

\begin{center}

\begin{tikzpicture}[scale=.25]

\draw (0,0) -- (1,-1) -- (2,0);
\draw (1,-1) -- (1,-2.5);
\node at (1,-5.5) {\tiny $\cA^\vee \otimes \cA\to \cA^\vee$};
\node at (-1,-2)[scale=0.75] {\tiny $m_R$};
\draw [arrows={[sep] - Stealth[length=3pt]}]        (0,1) -- (0,.2);
\draw [arrows={[sep] - Stealth[length=3pt]}]        (2,.2)   -- (2,1);
\draw [arrows={[sep] - Stealth[length=3pt]}]        (1,-3.5) -- (1,-2.7);
\node at (2.5,0)[scale=0.5] {\tiny $1$};
\node at (1.5,-2.5)[scale=0.5] {\tiny $2$};

\pgftransformxshift{9cm}
\draw (0,0) -- (1,-1) -- (2,0);
\draw (1,-1) -- (1,-2.5);
\node at (1,-5.5) {\tiny $\cA\to \cA\otimes\cA^\vee$};
\node at (-1,-2)[scale=0.75] {\tiny $m_R^\vee$};
\draw [arrows={[sep] - Stealth[length=3pt]}]        (0,.2)   -- (0,1);
\draw [arrows={[sep] - Stealth[length=3pt]}]        (2,1) -- (2,.2);
\draw [arrows={[sep] - Stealth[length=3pt]}]        (1,-3.5) -- (1,-2.7); 
\node at (-0.5,0)[scale=0.5] {\tiny $2$};
\node at (1.5,-2.5)[scale=0.5] {\tiny $1$};

\pgftransformxshift{9cm}
\draw (0,0) -- (1,-1) -- (2,0);
\draw (1,-1) -- (1,-2.5);
\node at (1,-5.5) {\tiny $\cA\to \cA^\vee\otimes\cA$};
\node at (-1,-2)[scale=0.75] {\tiny $\tau m_R^\vee$};
\draw [arrows={[sep] - Stealth[length=3pt]}]        (0,.2)   -- (0,1);
\draw [arrows={[sep] - Stealth[length=3pt]}]        (2,1) -- (2,.2);
\draw [arrows={[sep] - Stealth[length=3pt]}]        (1,-3.5) -- (1,-2.7); 
\node at (-0.5,0)[scale=0.5] {\tiny $1$};
\node at (1.5,-2.5)[scale=0.5] {\tiny $2$};

\pgftransformxshift{10cm}
\draw [arrows={Stealth[length=3pt] - }] (0,0) -- (1,-1);
\draw (1,-1) -- (2,0);
\draw (1,-1) -- (1,-2.5);
\draw [arrows={Stealth[length=3pt,reversed] - Stealth[length=3pt]}] (-0.1,.2)  .. controls (-1.1,1.5) .. (-2.1,.2);
\node at (1,-5.5) {\tiny $\cA\to \cA\otimes\cA$};
\node at (-2,-2)[scale=0.75] {\tiny $(Q\otimes 1)\tau m_R^\vee$};
\draw [arrows={[sep] - Stealth[length=3pt]}]        (2,1) -- (2,.2);
\draw [arrows={[sep] - Stealth[length=3pt]}]        (1,-2.7) -- (1,-3.5); 
\node at (-2.4,0)[scale=0.5] {\tiny $1$};
\node at (1.5,-2.5)[scale=0.5] {\tiny $2$};
\node at (-1.1,1.7)[scale=.65] {\tiny $Q$};

\pgftransformxshift{11cm}
\draw [arrows={Stealth[length=3pt,reversed] - }] (0,0) -- (1,-1);
\draw (1,-1) -- (2,0);
\draw (1,-1) -- (1,-2.5);
\draw [arrows={Stealth[length=3pt] - Stealth[length=3pt]}] (-0.1,.2)  .. controls (-1.1,1.5) .. (-2.1,.2);
\node at (1,-5.5) {\tiny $\cA\to \cA\otimes\cA$};
\node at (-2.8,-2)[scale=0.75] {\tiny $(1\otimes\mu)(\tau Q_0\otimes 1)$};
\draw [arrows={[sep] - Stealth[length=3pt]}]        (2,1) -- (2,.2);
\draw [arrows={[sep] - Stealth[length=3pt]}]        (1,-2.7) -- (1,-3.5); 
\node at (-2.4,0)[scale=0.5] {\tiny $1$};
\node at (1.5,-2.5)[scale=0.5] {\tiny $2$};
\node at (-1.1,1.7)[scale=.65] {\tiny $Q_0$};
\node at (1.5,-1.3)[scale=.65] {\tiny $\mu$};

\end{tikzpicture}
\end{center}
This proves that the component $\cA\to\cA\otimes\cA$ of the coproduct $\boldlambda_\cD$ is $\lambda_\cD$. \qed

\end{itemize}

}

\subsection{Rabinowitz Floer homology as a colimit in groupoids} \label{sec:RFH-colimit}

While the chain level components of the cone product and coproduct depend a priori on the choice of continuation data determining the continuation map $c$, the induced maps in homology do not depend on these choices. We now explain this invariance property. 

Recall that a set of \emph{continuation data} $\cD$ consists of a Morse function $K$ on $V$ and a homotopy $-K\to K$. This determines a continuation map $c_\cD:FC_*(-K)\to FC_*(K)$. We further choose a coherent set of Hamiltonian homotopies $K\to H_b$ and $H_{-b}\to -K$, where $H_b$, $b>0$ is an admissible Hamiltonian of slope $b$ extending $K$ and $H_{-b}=-H_b$. We define the \emph{cone Rabinowitz Floer homology group for the continuation datum $\cD$} to be
$$
S\H_{*,\cD}(\p V)=\lim\limits_{\stackrel{\longrightarrow}{b\to\infty}} 
\lim\limits_{\stackrel{\longleftarrow}{a\to-\infty}}
H_*(Cone(c_\cD:FC_{*+n}(H_a)\to FC_{*+n}(H_b))),
$$
where we denote $c_\cD:FC_*(H_a)\to FC_*(H_b)$ the continuation map determined by the concatenation of the Hamiltonian homotopies $H_a\to -K\to K\to H_b$. 
This group carries a product $\boldmu_{\cD}$ and a coproduct $\boldlambda_\cD$ as explained above. 
We omit from the notation the data of the Hamiltonian homotopies $H_a\to -K$ and $K\to H_b$ since the resulting structures do not depend on these at homology level.

Any two continuation data $\cD$, $\cD'$ can be connected by a homotopy, and such a homotopy induces a \emph{transition isomorphism} 
$$
\Phi_{\cD,\cD'}:S\H_{*,\cD}(\p V)\stackrel\simeq\longrightarrow S\H_{*,\cD'}(\p V)
$$
which intertwines the products 
$$
\Phi_{\cD,\cD'}\boldmu_{\cD} = \boldmu_{\cD'}  (\Phi_{\cD,\cD'}\otimes \Phi_{\cD,\cD'})
$$
and the coproducts 
$$
\boldsymbol{\lambda}_{\cD'} \Phi_{\cD,\cD'} = (\Phi_{\cD,\cD'}\otimes \Phi_{\cD,\cD'}) \boldsymbol{\lambda}_\cD.
$$
The isomorphism $\Phi_{\cD,\cD'}$ is independent of the choice of homotopy from $\cD$ to $\cD'$, and it satisfies the cocycle conditions $\Phi_{\cD',\cD''}\circ \Phi_{\cD,\cD'} = \Phi_{\cD,\cD''}$ and $\Phi_{\cD,\cD}=\mathrm{Id}$. In other words, the collection of objects 
$$
\{S\H_{*,\cD}(\p V)\}_\cD,
$$
together with the collection of isomorphisms 
$$
\{\Phi_{\cD,\cD'}\}_{\cD,\cD'}
$$
forms a groupoid. We define the \emph{cone Rabinowitz-Floer homology} as the colimit of this groupoid,
$$
S\H_*^{cone}(\p V)=\colim_\cD \, S\H_{*,\cD}(\p V).
$$
In very explicit terms $S\H_*^{cone}(\p V)$ can be described as the quotient of the direct sum $\oplus _\cD S\H_{*,\cD}(\p V)$ by the submodule generated by the relations $a_\cD-\Phi_{\cD,\cD'}a_\cD$ for all $\cD,\cD'$ and $a_\cD\in S\H_{*,\cD}(\p V)$. See also~\cite{stacks-project-colimit}. The colimit inherits a canonical product 
$$
\boldmu:S\H_*^{cone}(\p V)\otimes S\H_*^{cone}(\p V)\to S\H_*^{cone}(\p V)
$$
and coproduct 
$$
\boldsymbol{\lambda}:S\H_*^{cone}(\p V)\to S\H_*^{cone}(\p V)\otimes S\H_*^{cone}(\p V).
$$ 

The canonical maps $S\H_{*,\cD}(\p V)\to S\H_*^{cone}(\p V)$ fit into commutative diagrams 
$$
\xymatrix
@R=10pt 
{
S\H_{*,\cD}(\p V) \ar[drr] \ar[dd]_{\Phi_{\cD,\cD'}} & & \\
& & S\H_*^{cone}(\p V) \\
S\H_{*,\cD'}(\p V) \ar[urr] & & 
}
$$
We proved in~\cite{CO-cones} that we have a canonical isomorphism 
$$
S\H_*^{cone}(\p V)\stackrel\cong\longrightarrow S\H_*(\p V),
$$ 
and this further gives rise to the diagram 
$$
\xymatrix{
S\H_{*,\cD}(\p V) \ar[drr] \ar[dd]_{\Phi_{\cD,\cD'}} \ar@/^10pt/[drrr] & & & \\
& & S\H_*^{cone}(\p V) \ar[r]^\cong & S\H_*(\p V) \\
S\H_{*,\cD'}(\p V) \ar[urr] \ar@/_10pt/[urrr] & & &
}
$$

Now the point is that, even in cases where different choices of continuation data $\cD$ and $\cD'$ define \emph{equal} chain level continuation maps $c_\cD=c_{\cD'}$ and hence \emph{equal} homology groups $S\H_{*,\cD}(\p V)=S\H_{*,\cD'}(\p V)$, the transition isomorphism $\Phi_{\cD,\cD'}$ may be nontrivial. To see this explicitly, let $W$ be an $R$-essential Weinstein domain and $K$ a fixed $R$-essential Morse function. We proved in Lemma~\ref{lem:R-essential-continuation-maps-are-equal} that, given continuation data $\cD$, $\cD'$ with Morse function $K$, the continuation maps $c_\cD$ and $c_{\cD'}$ are equal. A choice of homotopy between $\cD$ and $\cD'$ determines a degree $1$ secondary continuation map $\vec\boldc_{\cD,\cD'}:FC_*(-K)\to FC_{*+1}(K)$, well-defined up to chain homotopy. 
We choose the homotopies $K\to H_b$ for $b>0$ such that the resulting map $FC_*(K)\to FC_*(H_b)$ is an inclusion. 
The isomorphism $\Phi_{\cD,\cD'}$ is then induced at chain level by the matrix 
$$
\begin{pmatrix}
1 & {\alex -}\vec\boldc_{\cD,\cD'} \\ 0 & 1
\end{pmatrix}
$$
with respect to the splitting $Cone(c_\cD) = FC_*(H_b)\oplus FC_{*-1}(-H_b)$.
If the map $\vec\boldc_{\cD,\cD'}$ is nontrivial in homology when restricted to $\ker c$, then the isomorphism $\Phi_{\cD,\cD'}$ is also nontrivial.

\begin{example} \label{exple:continuation} 
The previous phenomenon concretely occurs already for $W=T^*S^1$. Given a perfect Morse function on $S^1$, extend it to a Morse function $K$ on $T^*S^1=\R\times S^1$ by adding a positive quadratic function in the $\R$-variable, so that $K$ has a single minimum and a single critical point of index $1$. Choose continuation data in the form of a small nowhere vanishing $1$-form $\eta$ on $S^1$ {\reftwo (cf.~Remark~\ref{rem:cont-data})}, extended to $\R\times S^1$ as a vector field $\hat \eta$ which is constant along the $\R$-coordinate. We know that the resulting continuation map $c_{\hat \eta}$ is zero because the Euler characteristic of $S^1$ is zero. We can see this explicitly as follows. Denote $\varphi_{\hat\eta}$ the time one flow of $\hat \eta$. The map $c_{\hat \eta}:MC^*(-K)\to MC^*(K)$ counts pairs $(\gamma_1,\gamma_2)$ with $\gamma_{1,2}:[0,\infty)\to T^*S^1$, $\dot \gamma_{1,2}=\nabla K(\gamma_{1,2})$, $\gamma_{1,2}(+\infty)\in\mathrm{Crit}(K)$, and $\gamma_1(0)=\varphi_{\hat \eta}^{-1}(\gamma_2(0))$. By our choice of function $K$ we necessarily have $\gamma_{1,2}(0)\in S^1$, so that $\gamma_1(0)$ and $\gamma_2(0)$ must be equal and coincide with a zero of $\hat \eta$. Since $\hat \eta$ has no zeroes (which reflects the vanishing of the Euler characteristic of $S^1$), we conclude that $c_{\hat\eta}$ is zero. 

Consider now the continuation maps induced by $\hat\eta$ and $-\hat\eta$. While they both vanish, an interpolation between $\hat \eta$ and $-\hat\eta$ determines a chain map $\vec\boldc:MC^*(-K)\to MC^{*-1}(K)$ which acts nontrivially in homology. More precisely, if we denote $p,q$ the two critical points of $K$, which are also critical points of $-K$, an explicit computation shows that the map $\vec\boldc$ sends $p$ to $\pm q$ and $q$ to $\pm p$. The induced automorphism 
$$
\begin{pmatrix} 1 & {\alex -}\vec\boldc \\ 0 & 1 \end{pmatrix}
$$
of  $Cone(c_{\hat\eta})=Cone(c_{-\hat\eta})$ is therefore nontrivial in homology.  
\end{example}

\subsection{The long exact sequence of the pair $(V,\p V)$} \label{sec:RFH-les}

We have already used in the definition of reduced homology the following long exact sequence of the pair $(V,\p V)$ from~\cite{CO}:
$$
{\scriptsize 
\xymatrix
@C=20pt
{
S\H_*(V,\p V) \simeq S\H^{-*-2n}(V) \ar[r]^-\eps & S\H_*(V) \ar[r]^-{\iota} & 
   S\H_*(\p V) \ar[r]^-{\pi}  & 
   S\H^{1-2n-*}(V) \ar[r]^-\eps & \\
   }
}   
$$
Our next result is a refinement of this long exact sequence to take into account products and coproducts.

\begin{proposition} \label{prop:les+}
Let $V$ be a Liouville domain. There exists a commuting diagram with exact row
\begin{equation}\label{eq:les+}
\xymatrix
@C=17pt
{
   & & & (S\H^{1-2n-*}_{>0}(V),\lambda^\vee{\alex \tau}) \ar[dl]_-{i} \ar[d]^{j} & \\
   \ar[r]^-\eps & (S\H_*(V),\mu) \ar[r]^-{\iota} \ar[d]^-q & 
   (S\H_*(\p V),\boldsymbol{\mu},\boldsymbol{\lambda}) \ar[r]^-{\pi} \ar[dl]_-{p} & 
   (S\H^{1-2n-*}(V),{\alex \tau}\mu^\vee) \ar[r]^-\eps & \\
   & (S\H_*^{>0}(V),\lambda) &
}
\end{equation}
in which
\vspace{-5pt}
\begin{itemize} 
\item the maps $\iota$ and $i$ intertwine the products $\mu$,
  $\boldsymbol{\mu}$ and $\lambda^\vee{\alex \tau}$;
\item the maps $p$ and $\pi$ intertwine the coproducts $\lambda$,
  $\boldsymbol{\lambda}$ and ${\alex \tau}\mu^\vee$.
\end{itemize} 
Dualizing the diagram, replacing degree $*$ by $1-2n-*$ and applying
Poincar\'e duality $S\H_*(\p V)\cong S\H^{1-2n-*}(\p V)$ reproduces
the same diagram reflected at its center. 
\end{proposition}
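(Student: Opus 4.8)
The plan is to realise the entire diagram~\eqref{eq:les+} inside the mapping-cone model for Rabinowitz Floer homology recalled in~\S\ref{sec:RFH-cone}: there all five maps of the exact row, together with the auxiliary maps $q,p,i,j$, become canonical inclusions and projections of sub/quotient complexes of a cone, the commutativity of the triangles is automatic, and the product/coproduct compatibilities reduce to reading off which components of the cone $A_2^+$-operations survive after restriction or projection.

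\emph{The chain model.} Fix an admissible Hamiltonian $H$ and continuation data $\cD$, and form $Cone(c)=\cA\oplus\cM[1]$ with $\cA=FC_{*+n}(H)$, $\cM=\cA^\vee[2n]=FC_{*+n}(-H)$, $c=c_\cD:\cM\to\cA$, carrying the product $\boldmu_\cD$ and coproduct $\boldlambda_\cD$ of~\S\ref{sec:RFH-cone}. The elementary fact used already throughout~\S\ref{sec:two_flavors}---it is why $\eps=c_*$ factors through the $0$-energy sector---is that after a homotopy $c$ maps $\cM$ into the zero-action subcomplex $\cA^{=0}\subset\cA$ and annihilates the positive-action subcomplex $\cM^{>0}=\ker(\cM\to\cM^{=0})\subset\cM$. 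Hence $\cA^{=0}\oplus\cM[1]$ and $\cM^{>0}[1]$ are both subcomplexes of $Cone(c)$. Take the canonical chain maps $\iota:\cA\hookrightarrow Cone(c)$ and $\pi:Cone(c)\twoheadrightarrow\cM[1]$ (these produce the exact row, by~\cite{CO}), and set $q:\cA\twoheadrightarrow\cA/\cA^{=0}$, $p:Cone(c)\twoheadrightarrow Cone(c)/(\cA^{=0}\oplus\cM[1])\cong\cA/\cA^{=0}$, $i:\cM^{>0}[1]\hookrightarrow Cone(c)$, $j:\cM^{>0}[1]\hookrightarrow\cM[1]$. Passing to homology and to the limit over $H$ identifies $H_*(\cA/\cA^{=0})=S\H^{>0}_*(V)$, $H_*(\cM[1])=S\H^{1-2n-*}(V)$, $H_*(\cM^{>0}[1])=S\H^{1-2n-*}_{>0}(V)$, and turns these chain maps into the maps of~\eqref{eq:les+}. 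The relations $p\circ\iota=q$, $\pi\circ i=j$, $p\circ i=0$ hold already at chain level, so the diagram commutes; exactness of the row is~\cite{CO}.

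\emph{Compatibility with products and coproducts.} Among the components of $\boldmu_\cD$, only $\mu:\cA\otimes\cA\to\cA'$ has both inputs in $\cA$, so $\boldmu_\cD(\iota x,\iota y)=\iota(\mu(x,y))$ and $\iota$ is a ring map. The components of $\boldmu_\cD$ with both inputs in $\cM[1]$ are the secondary product $\sigma$ and the term $\beta$: by construction $\sigma$ is dual to the secondary coproduct $\lambda$ (up to $0$-energy corrections), hence preserves $\cM^{>0}$ and restricts there to $\lambda^\vee$ because $\lambda$ is defined on positive symplectic homology, while $\beta$---built from operations that split off continuation maps, which factor through the $0$-energy sector---vanishes on $\cM^{>0}$-inputs; therefore $\boldmu_\cD(ix,iy)=i(\lambda^\vee(x,y))$, i.e.\ $i$ intertwines $\lambda^\vee$ and $\boldmu$. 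Dually, among the components $\sigma^\vee,\tau_L^\vee,\tau_R^\vee,\mu^\vee,m_L^\vee,m_R^\vee,\beta^\vee$ of $\boldlambda_\cD$, only $\mu^\vee:\cM\to\cM\otimes\cM$ lands in $\cM[1]\otimes\cM[1]$, so $\pi$ intertwines $\boldlambda$ and $\mu^\vee$; and under $p\otimes p$ every component with an $\cM$-factor dies, the $\cA^{=0}$-source part of $\sigma^\vee$ lands in $\cA^{=0}\otimes\cA'+\cA'\otimes\cA^{=0}$ and dies, and $\beta^\vee$ dies because (dually to $\beta$) its image lies in the $0$-energy sector of $\cA'\otimes\cA'$, leaving only the descent of $\sigma^\vee=\lambda$ to $\cA/\cA^{=0}$, so $p$ intertwines $\boldlambda$ and $\lambda$. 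Since the induced homology maps are $\cD$-independent (\S\ref{sec:RFH-colimit}), this proves the two bullets for $\boldmu,\boldlambda$.

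\emph{Self-duality, and the main obstacle.} Every ingredient---the cone of $c$, the product $\mu$ and secondary coproduct $\lambda$, the $0$-energy subcomplex, and the passages $\mu\leftrightarrow\mu^\vee$, $\lambda\leftrightarrow\lambda^\vee$, $\sigma\leftrightarrow\sigma^\vee$, $\beta\leftrightarrow\beta^\vee$---is interchanged with its dual by $\Hom_R(-,R)$, which sends $c$ to $c^\vee$ (canonically $c_{\cD^{op}}$). Applying this functor to~\eqref{eq:les+} and regrading $*\mapsto 1-2n-*$ yields the analogous diagram built from $c^\vee$; composing with the Poincar\'e duality isomorphism $S\H_*(\p V)\cong S\H^{1-2n-*}(\p V)$, which by~\cite{CHO-PD} is an (anti)isomorphism of biunital coFrobenius bialgebras exchanging $\boldmu$ and $\boldlambda$, identifies it with~\eqref{eq:les+} reflected at its centre. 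I expect the real work---and the only genuine obstacle---to lie in the middle paragraph: being precise about the source and target spaces of every $A_2^+$-component of~\cite{CO-cones}, about the vanishing of the $\beta$-, $\beta^\vee$- and $Q_0$-corrections on the positive-energy sub/quotient complexes, and about the fact that the product and coproduct live on the \emph{same} cone $Cone(c)$ only after the secondary-continuation interpolation of~\S\ref{sec:RFH-cone}; the remaining steps are routine action-filtration bookkeeping.
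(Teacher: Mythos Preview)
Your approach is essentially the same as the paper's: realise everything in the cone model, where $\iota,\pi$ are the canonical inclusion and projection, $p$ factors through $\cA/\cA^{=0}$, $i$ comes from the subcomplex of $\cM$ on which the continuation map vanishes, and the product/coproduct compatibilities are read off from the $A_2^+$-components of~\cite{CO-cones}. Your arguments for $\iota$, $\pi$ and $p$ match the paper's almost verbatim.

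The one place where you diverge is the map $i$. You insist on working entirely inside $Cone(c)$, whereas the paper defines $i$ as the inclusion $\cM^{<0}\hookrightarrow\ker c^\vee\hookrightarrow Cone(c^\vee)$ and explicitly notes (in the Remark following the proof) that doing this in $Cone(c)$ instead would require composing with the automorphism $\Phi_{\cD^{op},\cD}$ built from the secondary continuation map. Your justification that $\beta$ ``vanishes on $\cM^{>0}$-inputs'' because it is ``built from operations that split off continuation maps'' is not quite right: that describes only the boundary of the 2-parameter family defining $\beta$, not $\beta$ itself. The actual reason the inclusion into $Cone(c)$ still works is that the secondary continuation map $\vec\boldc_{\cD^{op},\cD}$ factors through the zero-energy sector and hence vanishes on $\cM^{<0}$, so $\Phi_{\cD^{op},\cD}$ restricts to the identity there and your inclusion coincides with the paper's. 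You correctly flag this as ``the only genuine obstacle'' in your final paragraph, but your in-text argument does not actually resolve it; the paper sidesteps it by switching cones. (A minor notational point: what you call $\cM^{>0}$ is the negative-action subcomplex of $FC_*(-H)$, which the paper denotes $\cM^{<0}$.)
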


\begin{proof}
We use the cone description of Rabinowitz Floer homology and its product structures given in the previous section. 

The long exact sequence of the pair is an instance of {\reftwo the} homology exact sequence of a cone, with $\iota$ induced by the inclusions $\cA\hookrightarrow Cone(c)$ and $\pi$ induced by the projections $Cone(c)\to \cM[-1]$. Thus $\iota$ intertwines the products $\mu$ and $\boldmu$, whereas $\pi$ intertwines the coproducts $\boldlambda$ and $\mu^\vee$.  

Denote $\cA^{=0}$ the 0-energy subcomplex of $\cA$, and $\cA^{>0}=\cA/\cA^{=0}$. The map $\lambda:\cA\to \cA'\otimes \cA'$ is a chain map modulo $\im c$ and, because $\im c\subset \cA^{=0}$, it induces a coproduct $\lambda$ on $S\H_*^{>0}(V)$. The map $p$ is induced in the limit by the composition of projections $Cone(c)\to \cA/\im c\to \cA^{>0}$, and it follows from the description of the coproduct on the cone that $p$ intertwines the coproducts $\boldlambda$ and $\lambda$. 

Similarly, denote $\cM^{<0}$ the negative energy subcomplex of $\cM$, which corresponds under algebraic duality to the positive energy subcomplex defining $SH^*_{>0}(V)$. The map $c^\vee$ vanishes on $\cM^{<0}$ and therefore $\lambda^\vee:\cM'\otimes \cM'\to \cM$ defines a chain map $\cM^{\prime,<0}\otimes \cM^{\prime,<0} \to \cM^{<0}$ which induces a product $\lambda^\vee$ on $S\H^*_{>0}(V)$. The map $i$ is induced by the composition of inclusions $\cM^{<0}\to \ker c^\vee\to Cone(c^\vee)$, and it follows from the description of the product on the cone that $i$ intertwines the products. {\alex There is no dependency on the continuation data for the cohomology product $\lambda^\vee\tau$, respectively for the homology coproduct $\lambda$, because we restrict them to positive action.}  

The final claim is a consequence of the proof of Poincar\'e duality for cones from~\cite{CO-cones}. 
\end{proof}

\begin{remark}
In Proposition~\ref{prop:les+} we have $pi=0$ because the map $pi$ factors as the composition
$$
  S\H^{1-2n-*}_{>0}(V) \cong S\H_*^{<0}(\p V) \to S\H_*(\p V) \to S\H_*^{\geq 0}(\p V)\to S\H_*^{>0}(V) 
$$
where the three terms in the middle are part of the action truncation exact sequence of $\p V$. However, we do not necessarily have {\reftwo $\ker p=\im i$.} 
\end{remark}

The next Corollary is straightforward. For the statement, recall that reduced homology is $\ol{S\H}_*(V)=\coker \eps$ and carries an induced product $\bar\mu$, whereas reduced cohomology is $\ol{SH}^*(V)=\ker \eps$ and carries an induced coproduct ${\bar\mu}^\vee$. 

\begin{corollary} \label{cor:les+} Let $V$ be a Liouville domain. 
There exists a commuting diagram with short exact row
\begin{equation}\label{eq:les+reduced}
\xymatrix
@C=14pt
{
   & & & (S\H^{1-2n-*}_{>0}(V),\lambda^\vee{\alex \tau}) \ar[dl]_-{i} \ar[d]^{\bar j} & \\
   0 \ar[r] & (\ol{S\H}_*(V),\bar \mu) \ar[r]^-{\iota} \ar[d]^-{\bar q} & 
   (S\H_*(\p V),\boldsymbol{\mu},\boldsymbol{\lambda}) \ar[r]^-{\pi} \ar[dl]_-{p} & 
   (\ol{S\H}^{1-2n-*}(V),{\alex \tau}{\bar \mu}^\vee) \ar[r] & 0 \\
   & (S\H_*^{>0}(V),\lambda) &
}
\end{equation}
in which
\vspace{-5pt}
\begin{itemize} 
\item the maps $\iota$ and $i$ intertwine the products $\bar\mu$,
  $\boldsymbol{\mu}$ and $\lambda^\vee{\alex \tau}$;
\item the maps $p$ and $\pi$ intertwine the coproducts $\lambda$,
  $\boldsymbol{\lambda}$ and ${\alex \tau}{\bar \mu}^\vee$.
\end{itemize} 
Dualizing the diagram, replacing degree $*$ by $1-2n-*$ and applying
Poincar\'e duality $S\H_*(\p V)\cong S\H^{1-2n-*}(\p V)$ reproduces
the same diagram reflected at its center.  \qed
\end{corollary}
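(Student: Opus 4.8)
The plan is to read off diagram~\eqref{eq:les+reduced} from Proposition~\ref{prop:les+} by truncating its exact row at the two copies of $\eps$. By definition $\ol{S\H}_*(V)=\coker\eps=S\H_*(V)/\ker\iota$, so exactness at $S\H_*(V)$ shows that $\iota$ induces an injection $\iota\colon\ol{S\H}_*(V)\hookrightarrow S\H_*(\p V)$, and exactness at $S\H_*(\p V)$ identifies its image with $\ker\pi$. On the other end, exactness at $S\H^{1-2n-*}(V)$ identifies $\im\pi$ with $\ker\bigl(\eps\colon S\H^{1-2n-*}(V)\to S\H_{*-1}(V)\bigr)$, which by definition is $\ol{S\H}^{1-2n-*}(V)$ (Remark~\ref{rmk:dual}); hence $\pi$ induces a surjection $\pi\colon S\H_*(\p V)\twoheadrightarrow\ol{S\H}^{1-2n-*}(V)$. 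Together these give the short exact row $0\to\ol{S\H}_*(V)\xrightarrow{\iota}S\H_*(\p V)\xrightarrow{\pi}\ol{S\H}^{1-2n-*}(V)\to 0$.

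Next I would verify that the remaining maps and intertwining relations descend. The product $\bar\mu$ is, by construction, the quotient-algebra structure on $\coker\eps=S\H_*(V)/\ker\iota$, and ${\bar\mu}^\vee$ the sub-coalgebra structure on $\ker\eps\subset S\H^{1-2n-*}(V)$; since in~\eqref{eq:les+} the map $\iota$ is an algebra map and $\pi$ intertwines $\boldlambda$ with $\mu^\vee$, the induced map $\ol{S\H}_*(V)\to S\H_*(\p V)$ is an algebra map and the corestriction $S\H_*(\p V)\to\ol{S\H}^{1-2n-*}(V)$ intertwines $\boldlambda$ with ${\bar\mu}^\vee$, while $i$ and $p$ are literally the same maps as before. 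For the outer vertical arrows I would use the triangle identities $q=p\iota$ and $j=\pi i$ of~\eqref{eq:les+}: since $\iota\eps=0$ and $\eps\pi=0$ by exactness, one gets $q\eps=p\iota\eps=0$, so $q$ factors through $\bar q\colon\ol{S\H}_*(V)\to S\H_*^{>0}(V)$, and $\eps j=\eps\pi i=0$, so $j$ corestricts to $\bar j\colon S\H^{1-2n-*}_{>0}(V)\to\ol{S\H}^{1-2n-*}(V)$. Commutativity of every triangle and square in~\eqref{eq:les+reduced} is then inherited from~\eqref{eq:les+}.

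Finally, the self-duality clause is transported from the corresponding clause of Proposition~\ref{prop:les+}: dualizing, relabeling $*\mapsto 1-2n-*$, and applying Poincar\'e duality $S\H_*(\p V)\cong S\H^{1-2n-*}(\p V)$ exchanges the two occurrences of $\eps$, and since the dual of a cokernel is the kernel of the dual map, this operation exchanges $\ol{S\H}_*(V)$ with $\ol{S\H}^{1-2n-*}(V)$ and $\bar\mu$ with ${\bar\mu}^\vee$, reproducing~\eqref{eq:les+reduced} reflected at its center. Since the Corollary is explicitly routine, I do not expect a real obstacle; the only points worth a sentence in the written proof are the well-definedness of $\bar q$ and $\bar j$ just indicated, and the compatibility of dualization with passage to $\coker\eps$ and $\ker\eps$ in the duality statement.
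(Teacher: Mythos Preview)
Your proposal is correct and is precisely the straightforward deduction from Proposition~\ref{prop:les+} that the paper intends; the paper gives no proof beyond the \qed, so there is nothing to compare against. Your observations that $q=p\iota$ and $j=\pi i$ force the factorizations $\bar q$, $\bar j$ are the only points that might deserve a word, and you have handled them correctly.
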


\subsection{Splittings} \label{subsec:splittings} 

The results of the previous section were valid for arbitrary Liouville domains. In this section we specialize to strongly $R$-essential Weinstein domains, in which case we show that the short exact sequence from Corollary~\ref{cor:les+} is split, albeit not canonically. 

\begin{proposition} \label{prop:splittings}
Let $W$ be a strongly $R$-essential Weinstein domain of dimension $2n\ge 6$. {\alex Continuation data $\cD$ induce coproducts $\bar\lambda_\cD$ and $\bar\lambda_{\cD^{op}}$ on $\ol{S\H}_*(W)$ (see~\S\ref{sec:RFH-colimit}) and a splitting of the short exact sequence~\eqref{eq:les+reduced} }
\begin{equation}\label{eq:les+reducedsplit}
{\footnotesize 
\xymatrix
@C=12pt
{
   & & & (S\H^{1-2n-*}_{>0}(W),\lambda^\vee{\alex \tau}) \ar[dl]_-{i} \ar[d]^{\bar j} & \\
   0 \ar[r] & (\ol{S\H}_*(W),\bar \mu,\bar\lambda_{\alex \cD}) \ar[r]_-{\iota} \ar[d]^-{\bar q} & \ar@/_/[l]_-{\bar p}
   (S\H_*(\p W),\boldsymbol{\mu},\boldsymbol{\lambda}) \ar[r]_-{\pi} \ar[dl]^-{p} & \ar@/_/[l]_-{\bar i}
   (\ol{S\H}^{1-2n-*}(W),{\bar \lambda}^\vee_{\alex \cD^{op}}{\alex \tau}, {\alex \tau}{\bar \mu}^\vee) \ar[r] & 0 \\
   & (S\H_*^{>0}(W),\lambda) &
}
}
\end{equation}
via maps $\bar p,\bar i$ satisfying the following conditions:
\begin{itemize}
\item $\bar p\iota=1$, $\pi \bar i=1$, $\bar i \bar j = i$, and $\bar q \bar p = p$. 
\item $\im\bar i=\ker\bar p$.
\item $\bar i$ is a ring map with respect to the product ${\bar \lambda}^\vee_{\alex \cD^{op}}{\alex\tau}=\pi \boldmu (\bar i \otimes \bar i)$  
on $\ol{S\H}^*(W)$. 
\item $\bar p$ intertwines the coproduct on $S\H_*(\p W)$ with the coproduct $\bar \lambda_{\alex \cD}=(\bar p\otimes \bar p) \boldsymbol{\lambda} \iota$ on $\ol{S\H}_*(W)$. 
\end{itemize} 
{\alex
With respect to the splitting $S\H_*(\p W)=\ol{S\H}_*(W)\oplus \ol{S\H}^{1-2n-*}(W)$ and the algebraically dual splitting $S\H^{1-2n-*}(\p W)=\ol{S\H}^{1-2n-*}(W)\oplus \ol{S\H}_*(W)$, the Poincaré duality isomorphism is given in matrix form by
$$
I=\begin{pmatrix} 0 & 1 \\ 1 & -\vec \boldc_{\cD,\cD^{op}} \end{pmatrix},
$$
with $\vec \boldc_{\cD,\cD^{op}}$ the secondary continuation map from~\S\ref{sec:secondary_cont}.
}
\end{proposition}

Note that, if $\bar i$ is a ring map with respect to \emph{some} product on $\ol{S\H}^*(W)$, that product is necessarily given by the formula $\pi \boldmu (\bar i \otimes \bar i)$.
So this assertion is equivalent to $\im\bar i$ being a subalgebra. 
A similar observation pertains to the map $\bar p$.

\begin{proof}
We will describe in detail only the splitting $\bar p$, the discussion of $\bar i$ being analogous. 

We expand the left half of the diagram~\eqref{eq:les+reducedsplit} as follows 
$$
{\scriptsize
\xymatrix
@C=12pt
{
(S\H_*(\p W),\boldmu,\boldlambda) \ar[rr]^-{coalg.}_-{\mathrm{proj}} \ar@/^{20pt}/[rrrr]^-{p} \ar@/_{10pt}/@{-->}[ddrrrr]_-{\bar p} 
&& (S\H_*^{\ge 0}(\p W),\lambda^{\ge 0}) \ar[rr]^-{coalg.} \ar[dr]_-{P_\cD}^-{coalg.} & & (S\H_*^{>0}(W),\lambda) \\
& & & (S\H_*(W;\im c),\lambda_\cD) \ar[ur]^-{coalg.} & \\ 
(S\H_*(W),\mu) \ar[uu]^-{alg.} \ar[rrrr]^-{alg.}_-{\mathrm{proj}} &&&& (\ol{S\H}_*(W),\bar \mu)  \ar[uu]_-{\bar q}  \ar[ul]_\simeq
}
}
$$
The arrows labeled ``coalg." are coalgebra maps, and those labeled ``alg." are algebra maps. All the groups except $S\H_*(W;\im c)$ are defined for arbitrary Liouville domains. The group $S\H_*(W;\im c)$ is defined for $R$-essential Weinstein domains, and the map $\ol{S\H}_*(W)\to S\H_*(W;\im c)$ is an isomorphism under the assumption of strong $R$-essentiality. The coproduct $\lambda_\cD$ depends a priori on the choice of continuation data $\cD$, see~\S\ref{sec:dependence_on_D}. We will now describe the map $P_\cD$, and we define $\bar p$ to be the composition of the maps $S\H_*(\p W)\to S\H_*^{\ge 0}(\p W)\to S\H_*(W;\im c)\stackrel\simeq\longleftarrow \ol{S\H}_*(W)$, as in the diagram.

As part of the continuation data $\cD$ we have an $R$-essential Morse function $K$, and a homotopy from $-K$ to $K$ which determines a continuation map $c_\cD:FC_*(-K)\to FC_*(K)$. Denote $H_b$ an $R$-essential Hamiltonian of non-critical slope $b>0$ which extends $K$, and denote still by $c_\cD$ the continuation map $FC_*(-K)\to FC_*(H_b)$.
{\reftwo The cone description of Rabinowitz Floer homology in~\S\ref{sec:RFH-colimit} yields
$$
S{\refone \H}_*^{\ge 0}(\p W)= \lim\limits_{\stackrel{\longrightarrow}{b\to\infty}} H_*(Cone(c_\cD:FC_{\refone *+n}(-K)\to FC_{\refone *+n}(H_b))).
$$
(Here the nonnegative action part is picked out by using in the domain the fixed Morse function $-K$ rather than Hamiltonians $H_a$ of arbitrarily negative slope.)}
Recall also that  
$$
S{\refone \H}_*(W;\im c) = \lim\limits_{\stackrel{\longrightarrow}{b\to\infty}} H_{\refone *+n}(FC_*(H_b)/\im c_\cD). 
$$
The projection from the homotopy cokernel to the cokernel
$$
\xymatrix{
Cone(c_\cD:FC_{\refone *+n}(-K)\to FC_{\refone *+n}(H_b))\ar[r]^-{P_{\cD,b}} \ar@{=}[d] & FC_{\refone *+n}(H_b)/\im c_\cD \\
FC_{\refone *+n}(H_b)\oplus FC_{\refone *+n-1}(-K) & 
}
$$
is a chain map, and it acts by $(a,\bar x)\mapsto a \ \mathrm{mod} \ \im c_\cD$. It induces in the limit $b\to\infty$ the map 
$P_\cD:S{\refone \H}_*^{\ge 0}(\p W)\to S{\refone \H}_*(W;\im c)$. 

We now prove that $P_\cD$ is a coalgebra map. Fix a Hamiltonian $H=H_b$ as in the definition of the map $P_{\cD,b}$, which we denote for simplicity $P_\cD$. Denote 
$[{\alex \lambda_\cD}]:FC_*(H)/\im c_\cD\to FC_*(2H)/\im c_\cD \otimes FC_*(2H)/\im c_\cD$ 
the reduction of ${\alex \lambda_\cD}$ modulo the coideal pair $(\im c_\cD,\im c_\cD)$ (Lemma~\ref{lem:lambda-imc}). {\alex Let $\boldlambda_\cD$ be the coproduct on $Cone(c_\cD)$ as discussed in~\S\ref{sec:coproduct-on-cone}.} Using the definition of $P_\cD$ and the vanishing of the operation $\beta$ involved in the cone description of the coproduct for dimensions $2n\ge 6$ {\refone (proof of Lemma~\ref{lem:lambda-imc})}, we obtain for $a\in FC_*(H)$ and $\bar x\in FC_{*-1}(-K)$:
\begin{align*}
(P_\cD\otimes P_\cD)\boldlambda_{\alex \cD} & (a,\bar x) \\
& = (P_\cD\otimes P_\cD) {\alex \lambda_\cD}(a) + (P_\cD\otimes P_\cD)\beta(\bar x) \\
& = (P_\cD\otimes P_\cD) {\alex \lambda_\cD}(a) \\
& = {\alex \lambda_\cD}(a) \ \mathrm{mod} \ \im c_\cD\otimes FC_*(2H)+FC_*(2H)\otimes \im c_\cD \\
& = [{\alex \lambda_\cD}] (a \ \mathrm{mod} \ \im c_\cD) \\
& = [{\alex \lambda_\cD}] P_\cD(a,\bar x).
\end{align*}
{\alex The first equality uses the fact that the component $\cA\to\cA\otimes\cA$ of the cone coproduct $\boldlambda_\cD$ is equal to $\lambda_\cD$ (Proposition~\ref{prop:boldlambdaD-restricts-to-lambdaD}).}
By passing to the limit over $b\to\infty$ we obtain 
\begin{equation*} 
(P_\cD\otimes P_\cD)\circ \boldsymbol{\lambda}_{\alex \cD}= [{\alex \lambda_\cD}]\circ P_\cD
\end{equation*}
where $\boldsymbol{\lambda}_{\alex \cD}$ is the cone coproduct on $S{\refone \H}_*^{\ge 0}(W)$ and $[{\alex \lambda_\cD}]$ is the coproduct on $S{\refone \H}_*(W;\im c)$ determined by the continuation data $\cD$. 

The relations $\bar q \bar p = p$ and $\bar p \iota =\mathrm{Id}$ are immediate from the definition. 

We now prove the equality $\im \bar i = \ker \bar p$. 
We start from the relations $\bar p[(a,x)]=[a \ \mathrm{mod}\ \im c]$ and $\bar i [x]=[(0,x)]$. Then clearly $\bar p\bar i=0$, so that  $\im \bar i \subset \ker \bar p$. To prove equality, consider an element $[(a,x)]\in \ker \bar p$. This is represented by a cycle $(a,x)$ such that $a$ is a boundary mod $\im c$, or equivalently $(a,0)$ is a boundary. Thus $(a,x)$ is homologous to $(0,x)$ and $(0,x)$ is therefore a cycle. Hence $x\in \ker c$ and $[(a,x)]\in\im\bar i$.  

{\alex The last statement regarding the action of the Poincaré duality isomorphism is a direct consequence of Proposition~\ref{prop:PD-cone}.}
\end{proof}

We discuss the non-canonicity of the splittings in the next subsection. 

\subsection{Dependence of the splittings on choices} \label{sec:splittings_are_not_canonical}

We continue with the strongly $R$-essential Weinstein domain $W$ of dimension $2n\ge 6$ from the previous subsection. The map  
$$
P_\cD:S{\refone \H}_*^{\ge 0}(\p W)\to S{\refone \H}_*(W;\im c)
$$
depends on the choice of continuation data $\cD$ through the intermediate secondary continuation maps. To see this, fix an $R$-essential Morse function $K$, denote $\cA_*=FC_{*+n}(K)$ and $\cM_*=FC_{*+n}(-K)$, choose continuation data $\cD$, $\cD'$ for $K$ and let $\vec\boldc_{\cD,\cD'}:FC_{\refone *+n}(-K)\to FC_{\refone *+n+1}(K)$ be the chain map determined by a homotopy between $\cD$ and $\cD'$. Although $c_\cD=c_{\cD'}$, the following diagram in which the diagonal arrows are projections \emph{does not} commute in general
$$
\xymatrix{
Cone(c_{\cD})=\cA_*\oplus \cM_{*-1} \ar[drr] \ar[dd]_{\begin{pmatrix} 1 & {\alex -} \vec\boldc_{\cD,\cD'}  \\ 0 & 1 \end{pmatrix}} & & \\
& \!\!\!\!\! \!\!\!\!\! \!\!\!\!\! \!\!\!\!\! \!\!\!\!\! \!\!\!\!\! \circlearrowleft \!\!\!\!\! \!\! \raisebox{-2pt}{\mbox{\LARGE \sf X}} & \cA_*/\im c_\cD=\im c_{\cD'} \\
Cone(c_{\cD'}) = \cA_*\oplus \cM_{*-1} \ar[urr]
}
$$
Equivalently, the two maps 
$$
\xymatrix{
S{\refone \H}_*^{\ge 0,cone}(\p W) \ar@<-.5ex>[r]_-{P_{\cD'}} \ar@<.5ex>[r]^-{P_\cD} & S{\refone \H}_*(W;\im c),
}
$$ 
obtained by inverting the diagonal isomorphisms in the diagram below, are in general different. 
$$
\xymatrix
{
S{\refone \H}_{*,\cD}^{\ge 0}(\p W) \ar[drr]^\simeq  \ar[dd]_{\Phi_{\cD,\cD'}} \ar@/^10pt/[drrrr]& & & \\
& \!\!\!\!\! \!\!\!\!\! \!\!\!\!\! \!\!\!\!\! \!\!\!\!\! \!\!\!\!\! \circlearrowleft  & S{\refone \H}_*^{\ge 0,cone} \ar@<-1ex>[rr]_-{P_{\cD'}}^-\neq \ar@<1ex>[rr]^-{P_\cD} & &  S{\refone \H}_*(W;\im c) \\
S{\refone \H}_{*,\cD'}^{\ge 0}(\p W) \ar[urr]_\simeq \ar@/_10pt/[urrrr] & & &  
}
$$ 
The map $P_\cD$, and hence the splitting from Proposition~\ref{prop:splittings}, fails to be canonical precisely to the extent to which $\lambda_\cD$ also fails to be canonical. This is coherent with the fact that each map $P_\cD$ is a coalgebra map for the corresponding coproduct $\lambda_\cD$ on $S{\refone \H}_*(W;\im c)$. The previous discussion shows that the dependence on choices is governed by the secondary continuation maps, and in view of Corollary~\ref{cor:independence} and its proof we obtain

\begin{proposition}\label{prop:splittings-can}
Let $W$ be a strongly $R$-essential Weinstein domain of dimension $2n\ge 6$. The splitting from Proposition~\ref{prop:splittings} is canonical whenever $H^{n-1}(W)=0$. \qed
\end{proposition}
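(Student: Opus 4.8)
The plan is to reduce the claim to the vanishing of secondary continuation maps already established in the proof of Corollary~\ref{cor:independence}. First I would pin down which ingredient of the splitting maps $\bar p$ and $\bar i$ of Proposition~\ref{prop:splittings} actually depends on the continuation data $\cD$. Inspecting the construction, $\bar p$ is the composition $S\H_*(\p W)\to SH_*^{\ge 0}(\p W)\xrightarrow{P_\cD}SH_*(W;\im c)\xleftarrow{\ \simeq\ }\ol{S\H}_*(W)$, and every piece except $P_\cD$ is manifestly independent of $\cD$: the projection $S\H_*(\p W)\to SH_*^{\ge 0}(\p W)$ and the product and coproduct on $S\H_*(\p W)$ are, and the identification $\ol{S\H}_*(W)\simeq SH_*(W;\im c)$ is canonical by Corollary~\ref{cor:FC*Himc} and Corollary~\ref{cor:R-essential-chain-homotopy-mod-imc} together with strong $R$-essentiality. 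Dually, the only $\cD$-dependent ingredient of $\bar i$ is the dual of $P_\cD$. Hence it suffices to prove that $P_\cD=P_{\cD'}$ for any two choices $\cD,\cD'$ of continuation data.

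Next I would invoke the analysis of \S\ref{sec:splittings_are_not_canonical}. When the $R$-essential Morse function $K$ is held fixed, the two maps $P_\cD,P_{\cD'}\colon SH_*^{\ge 0,cone}(\p W)\to SH_*(W;\im c)$ differ precisely by the failure of the transition isomorphism $\Phi_{\cD,\cD'}$ to commute with the projection $Cone(c_\cD)\to FC_*(H_b)/\im c_\cD$, and $\Phi_{\cD,\cD'}$ is induced at chain level by the upper triangular matrix with off-diagonal entry the secondary continuation map $\vec\boldc_{\cD,\cD'}$ with respect to $Cone(c_\cD)=FC_*(H_b)\oplus FC_{*-1}(-K)$; so $P_\cD=P_{\cD'}$ as soon as $\vec\boldc_{\cD,\cD'}$ vanishes in homology on $\ker c$. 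In the general case, where the Morse data may also change, one interposes continuation data $\cC$ between the two Morse functions and the relevant obstructions become $\vec\boldc_{\cD,\cD',\cC}$ and $\vec\boldc_{{\cD}^{op},{\cD'}^{op},\cC^{op}}$, i.e. exactly the maps governing the dependence of the coproduct in Proposition~\ref{prop:lambdaDprimeDC}. I would then quote the proof of Corollary~\ref{cor:independence}: these secondary continuation maps act as $SH^{-*}(W)\to SH_{*+1}(W)$, factor through the energy-zero part $H_{n+*}(W;R)\to H^{n-*-1}(W;R)$, and since the (co)homology of $W$ is concentrated in degrees $0,\dots,n$ the only possibly nonzero components are $H_{n-1}(W;R)\to H^n(W;R)$ and $H_n(W;R)\to H^{n-1}(W;R)$. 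The hypothesis $H^{n-1}(W;R)=0$ kills the second outright; and by the universal coefficient theorem over the principal ideal domain $R$ it forces $\Hom_R(H_{n-1}(W;R),R)=0$, while $R$-essentiality makes $H_{n-1}(W;R)$ free, so $H_{n-1}(W;R)=0$ and the first map vanishes as well. Thus $P_\cD$ is independent of $\cD$, so $\bar p$ is canonical, and the dual argument gives the canonicity of $\bar i$.

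I expect the bookkeeping, rather than any new geometric input, to be the main obstacle: one must check carefully that $P_\cD$ (together with its dual) is genuinely the only $\cD$-dependent ingredient of the splitting, that its ambiguity is measured by exactly the secondary continuation maps of \S\ref{sec:secondary_cont} — including the passage through the auxiliary data $\cC$ when $K$ varies — and that the canonical identification $\ol{S\H}_*(W)\simeq SH_*(W;\im c)$ transports the various $\bar\lambda_\cD$ to the coproducts of \S\ref{sec:dependence_on_D}, so that the single conclusion ``the splitting is canonical'' simultaneously subsumes Corollary~\ref{cor:independence}. None of these points is deep, but each must be stated with care, since the objects involved are canonical only up to the very isomorphisms whose triviality is at stake.
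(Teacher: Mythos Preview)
Your proposal is correct and follows exactly the route the paper intends: the proposition is stated with a \qed because it is meant to be read off directly from the preceding paragraph of \S\ref{sec:splittings_are_not_canonical}, which says that the dependence of $P_\cD$ (and hence of the splitting) on $\cD$ is governed by the secondary continuation maps, together with the proof of Corollary~\ref{cor:independence}, which shows these maps vanish under the hypothesis $H^{n-1}(W;R)=0$. Your write-up simply spells out this reasoning in more detail than the paper does.
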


On the other hand, the splitting and the coproduct can be seen to be non-canonical in situations where $H^{n-1}(W)$ does not vanish. This already happens for $W=T^*S^1$, cf. Example~\ref{exple:continuation} and the explicit computations of splittings from~\cite{CHO-PD}. Although $T^*S^1$ does not satisfy the dimension condition $2n\ge 6$, similar computations can be carried out in higher dimensional tori.

\subsection{The unital infinitesimal relation from associativity}  \label{sec:unital-from-ass}

We show in this subsection how the unital infinitesimal relation in reduced symplectic homology is implied by the associativity of the product on the cone. We use an $A_2$-structure determined by an $A_2^+$-structure as in~\S\ref{sec:RFH-cone}, with product $\mu$ and coproduct $\lambda$. We work with a strongly $R$-essential Weinstein domain $W$ of dimension $2n\ge 6$. 

{\reftwo
The following theorem summarizes Propositions~\ref{prop:splittings} and~\ref{prop:splittings-can} in a simplified form.

\begin{theorem} \label{thm:splittings}
Let $W$ be a strongly $R$-essential Weinstein domain of dimension $2n\ge 6$. 
Continuation data $\cD$ induce coproducts $\bar\lambda_\cD$ and $\bar\lambda_{\cD^{op}}$ on $\ol{S\H}_*(W)$ (see~\S\ref{sec:RFH-colimit}) and a splitting of the short exact sequence~\eqref{eq:les+reduced}
\begin{equation}\label{eq:splitting-RFH}
   S\H_*(\p W) = \ol{S\H}_*(W)\oplus \ol{S\H}^{1-2n-*}(W),
\end{equation}
such that the product $\boldmu$ on $S\H_*(\p  W)$ restricts to the product $\bar\mu$ on the subring $\ol{S\H}_*(W)$, and to the cohomology product $\bar\lambda^\vee_{\cD^{op}}$ on the subring (not containing the unit) $\ol{S\H}^{1-2n-*}(W)$. 

With respect to the splitting~\eqref{eq:splitting-RFH} and the algebraically dual splitting on $S\H^{1-2n-*}(\p W)=\ol{S\H}^{1-2n-*}(W)\oplus \ol{S\H}_*(W)$, the Poincaré duality isomorphism is given in matrix form by
$\tiny \begin{pmatrix} 0 & 1 \\ 1 & -\vec \boldc_{\cD,\cD^{op}} \end{pmatrix}$, with $\vec \boldc_{\cD,\cD^{op}}$ the secondary continuation map (see~\S\ref{sec:secondary_cont}).

If $H^{n-1}(W)=0$, then the splitting and the coproducts are canonical, i.e., they do not depend on the choice of continuation data $\cD$, and the Poincaré duality isomorphism simply exchanges the factors in the splitting. \qed
\end{theorem}
}
More generally, let us denote the components of the product $\boldmu$ with respect to the splitting~\eqref{eq:splitting-RFH} by 
$$
   m^{+-}_+:\ol{S\H}_*(W)\otimes \ol{S\H}^{1-2n-*}(W)\to \ol{S\H}_*(W)
$$ 
etc, where the upper indices denote the inputs, the lower index the
output, $+$ corresponds to $\ol{S\H}_*(W)$, and $-$ to $\ol{S\H}^{1-2n-*}(W)$. Each of these maps has degree $0$. 

{\alex Given continuation data $\cD$,} the following result expresses the product on
$S\H_*(\p W)$ in terms of 
{\alex $\bar\mu$, $\bar\lambda =\bar\lambda_\cD$}, 
and the continuation bivector $\boldc=\boldc_{\cD,\cD^{op}}$. In the statement we use the following notation: given $\bar f\in \ol{S\H}^{1-2n-*}(W)$, we denote $f$ the same element with degree shifted down by $1$, i.e. $|f|=|\bar f|-1$. 
{\alex Also, to simplify the notation we write $\mu$ instead of $\bar\mu$, and $\lambda$ instead of $\bar\lambda$.}

\begin{theorem} \label{thm:m-components-RFH}
Let $W$ be a strongly $R$-essential Weinstein domain of dimension $2n\ge 6$. The components of the product $\boldmu$ on $S\H_*(\p W)$ with respect to the splitting~\eqref{eq:splitting-RFH} are, for $a\in\ol{S\H}_*(W)$ and $\bar f, \bar g\in \ol{S\H}^{1-2n-*}(W)$, given by
\begin{enumerate}
\item $m^{++}_+=\mu$;
\item $\la m^{--}_- (\bar f \otimes \bar g),a\ra \\ =  
 (-1)^{|g|+|g||f|+1}\langle g\otimes f,\lambda(a) + (-1)^{|a|}(\mu\otimes 1)(a\otimes \boldc) + (1\otimes \mu)(\boldc \otimes a)\rangle$;
\item $m^{++}_-=0$ and $m^{--}_+=0$;
\item $m^{-+}_+(\bar f,a) = (-1)^{|f|+1}\la f\otimes 1,\lambda(a)+(1\otimes\mu)(\boldc\otimes a)\ra$, and\\ 
$m^{+-}_+(b,\bar f) = (-1)^{|b|}\la \lambda(b)+(-1)^{|b|}(\mu\otimes 1)(b\otimes \boldc),1\otimes f \ra$;
\item $\la m^{-+}_-(\bar f,a),b\ra = \la f,\mu(a,b)\ra$, and \\
$\la a,m^{+-}_-(b,\bar f)\ra = (-1)^{|b|}\la\mu(a,b),f\ra$. 
\end{enumerate}
\end{theorem}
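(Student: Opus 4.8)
The plan is to read off every component of $\boldmu$ directly from the cone model of $S\H_*(\p W)$. Fix the continuation data $\cD$ of the statement, set $c=c_\cD:\cM\to\cA$, and recall from \S\ref{sec:RFH-cone} that $S\H_*(\p W)\cong H_*(Cone(c))$ with $Cone(c)_*=\cA_*\oplus\cM_{*-1}$, and that the cone product $\boldmu_{Cone}$ is assembled through the $A_2^+$-formalism of \cite[\S7]{CO-cones} out of the generating data $\mu$, $\lambda$, the continuation bivector $c_0=c$, the secondary continuation bivector $Q_0=\boldc=\boldc_{\cD,\cD^{op}}$ and the cubic vector $B$; concretely it is built from the module operations $m_L,m_R$ (partial duals of $\mu$, with no $Q_0$-correction), the secondary module operations $\tau_L,\tau_R$ (partial duals of $\lambda$, with one $Q_0$-insertion), the secondary product $\sigma$ (double dual of $\lambda$, with $Q_0$-insertions) and the operation $\beta$. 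The first step is to observe that for $2n\ge 6$ the operations $\beta$, $\beta^\vee$ and $B$ vanish identically, by the index count from the proof of Lemma~\ref{lem:lambda-imc} (they are governed by the operations $\boldbeta_\cD$ and $\boldB_\cD$, whose $0$-dimensional moduli spaces are empty once $n\ge 3$). Thus $\boldmu_{Cone}$ is built from $\mu$, $\lambda$, $c$ and $\boldc$ alone, via $m_L,m_R,\tau_L,\tau_R,\sigma$, for which \cite[\S7.2]{CO-cones} supplies closed chain-level formulas.

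Next I would record the splitting~\eqref{eq:splitting-RFH} at chain level from the proof of Proposition~\ref{prop:splittings}: a class of $\ol{S\H}_*(W)$ is represented by a cycle $a\in\cA$ with $\iota$ sending it to $[(a,0)]$; a class of $\ol{S\H}^{1-2n-*}(W)$ is represented, after the homological manipulation at the end of that proof, by a cycle $x\in\cM$ with $c(x)=0$, with $\bar i$ sending it to $[(0,x)]$; and the projections onto the two summands are $\bar p[(a,x)]=[a\bmod\im c]$ and $\pi$. The component $m^{\epsilon_1\epsilon_2}_{\epsilon_3}$ is then computed by evaluating $\boldmu_{Cone}$ on a pair of cycles of the form $(a,0)$ or $(0,x)$ and projecting the output onto its $\epsilon_3$-summand, taking into account the cone-shift convention $|f|=|\bar f|-1$ which identifies an element of $\ol{S\H}^{1-2n-*}(W)$ with the corresponding element of the $\cM$-summand.

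The five cases now follow by inspection, since the $\cA'$- and $\cM'$-components of $\boldmu_{Cone}\big((a_1,x_1),(a_2,x_2)\big)$ are $\mu(a_1,a_2)+\tau_L(x_1,a_2)+\tau_R(a_1,x_2)+\beta(x_1,x_2)$ and $m_L(a_1,x_2)+m_R(x_1,a_2)+\sigma(x_1,x_2)$. Feeding $(a_1,0),(a_2,0)$ gives $(\mu(a_1,a_2),0)$, hence (1) $m^{++}_+=\mu$ and (3a) $m^{++}_-=0$; feeding $(0,x_1),(0,x_2)$ gives $(0,\sigma(x_1,x_2))$, hence (3b) $m^{--}_+=0$ and, after expanding $\sigma$ via its $A_2^+$-formula (double dualization of $\lambda$ together with the two $Q_0$-correction terms), the pairing identity (2); feeding $(a,0),(0,x)$ gives $(\tau_R(a,x),m_L(a,x))$ and $(0,x),(a,0)$ gives $(\tau_L(x,a),m_R(x,a))$, whose $\cA'$-parts yield the formulas (4) after expanding $\tau_R,\tau_L$ (one dualization of an output of $\lambda$ and one $Q_0$-insertion), and whose $\cM'$-parts yield the pairing identities (5) after expanding $m_L,m_R$ as partial duals of $\mu$. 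The correction terms $(\mu\otimes1)(a\otimes\boldc)$ and $(1\otimes\mu)(\boldc\otimes a)$ in (2) and (4) are exactly the $Q_0$-insertions, and they are absent in (1), (3) and (5) because those components involve only $\mu$, which needs no interpolation.

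The main obstacle is sign bookkeeping rather than geometry: all the geometric input is already available, but the precise signs — e.g. $(-1)^{|g|+|g||f|+1}$ in (2) and $(-1)^{|f|+1}$, $(-1)^{|b|}$ in (4) — require tracking simultaneously the Koszul signs produced when dualizing $\mu$ and $\lambda$ at inputs and outputs, the degree shifts coming from $\cM=\cA^\vee[2n]$ and from the cone shift relating $\bar f$ and $f$, and the signs built into the $A_2^+$-relations of \cite[\S7.2]{CO-cones} (notably the sign with which $Q_0$ enters when the map $c^\vee$ arising from a dualization is interpolated back to the $c$ used in the cone). A workable way to keep this under control is to fix once and for all the evaluation pairing $\langle\,\cdot\,,\,\cdot\,\rangle:\cM\otimes\cA\to R$, rephrase each component of $\boldmu_{Cone}$ as a pairing identity between $\ol{S\H}^{1-2n-*}(W)$ and $\ol{S\H}_*(W)$ — which is precisely the form in which (2), (4) and (5) are stated — and then match constants term by term against the formulas of \cite[\S7.2]{CO-cones}.
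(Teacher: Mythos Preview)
Your proposal is correct and follows essentially the same approach as the paper's own proof. Both read off the components of $\boldmu$ from the cone model via the $A_2^+$-formalism of \cite[\S7]{CO-cones}: identify $m^{\pm\pm}_{\pm}$ with (shifted versions of) the operations $\mu,m_L,m_R,\sigma,\tau_L,\tau_R$, use the vanishing of $\beta$ for $2n\ge 6$, and then invoke the explicit formulas from \cite[\S7.2]{CO-cones} expressing those operations in terms of $\mu$, $\lambda$, and the secondary continuation bivector $\boldc$. The paper is slightly more explicit than your sketch on one point you correctly flagged as the obstacle: it tracks the shift signs systematically by introducing shifted operations $\underline{m}_L,\underline{m}_R,\underline{\tau}_L,\underline{\tau}_R,\underline{\sigma}$ via the $[i,j;k]$ convention of \cite[Appendix~A]{CO-cones}, then reads off the sign discrepancy between, e.g., $m^{+-}_+(b\otimes\bar f)=\underline{\tau}_R(b\otimes\bar f)=(-1)^{|b|}\tau_R(b,f)$, before combining with the unshifted formulas.
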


\begin{proof}
These computations rely on results from~\cite{CO-cones}, and we first recall some relevant notions. 
The splitting of $S\H_*(\p W)$ arises from its description in terms of the cone of the continuation map $c=c_\cD:\cA^\vee[2n]=FC_{*+n}(-H)\to \cA=FC_{*+n}(H)$, cf.~\S\ref{sec:RFH-cone} and~\S\ref{subsec:splittings}. The product is determined by operations $\mu$, $m_L$, $m_R$, $\sigma$, $\tau_R$, $\tau_L$ as detailed in~\S\ref{sec:RFH-cone} (under our assumptions the operation $\beta$ vanishes). These operations are determined by the $A_2^+$-structure, consisting of the product $\mu$, the coproduct $\lambda=\lambda_\cD$, and the secondary continuation bivector $\boldc=\boldc_{\cD,\cD^{op}}$, according to the formulas in~\cite[\S7.2]{CO-cones}. More precisely 
\begin{align*}
\langle a,m_L(b,f)\rangle & = \langle \mu(a,b),f\rangle, \\
\langle m_R(f,a),b\rangle & = \langle f,\mu(a,b)\rangle,\\
\langle \sigma(f,g),a\rangle & = (-1)^{(|f|+1)(|g|+1)}\langle g\otimes f,\lambda_{\cD^{op}}(a)\rangle \\
 & = (-1)^{(|f|+1)(|g|+1)} \\
 & \qquad \quad \langle g\otimes f,\bigl(\lambda + (\mu\otimes 1)(1\otimes \boldc) + (1\otimes \mu)(\boldc\otimes 1)\bigr) (a)\rangle, \\
\tau_R(b,f) & = \langle \lambda_{\cD,\cD^{op}}(b),1\otimes f\rangle \\
		  & = \langle \bigl(\lambda+ (\mu\otimes 1)(1\otimes \boldc)\bigr) (b),1\otimes f\rangle \\
\tau_L(f,a) & = (-1)^{|f|+1}\langle f\otimes 1,\lambda_{\cD^{op},\cD}(a)\rangle \\
		& = (-1)^{|f|+1}\langle f\otimes 1,\bigl(\lambda + (1\otimes\mu)(\boldc\otimes 1)\bigr)(a)\rangle.
\end{align*}
Additional signs arise from the fact that one of the factors of the cone is shifted. More precisely, following~\cite[\S2.3]{CO-cones}  we define shifted operations $\um_L$, $\um_R$, $\utau_R$, $\utau_L$, $\usigma$ by the relations 
$$
m_L=\um_L[0,1;1],\qquad m_R=\um_R[1,0;1],
$$
$$
\tau_R=\utau_R[0,1;0],\qquad \tau_L=\utau_L[1,0;0],
$$
$$
\sigma=\usigma[1,1;1]. 
$$
Here $[i,j;k]$ indicates a shift by $i$ on the first component of the source, a shift by $j$ on the second component of the source, and a shift by $k$ in the target, see~\cite[Appendix~A]{CO-cones}. We then have $m^{++}_+=\mu$, $m^{+-}_-=\um_L$, $m^{-+}_-=\um_R$, $m^{--}_-=\usigma$, $m^{+-}_+=\utau_R$, $m^{-+}_+=\utau_L$. Denoting $\bar f$, $\bar g$ the shifts of elements $f$, $g$, we find 
\begin{align*}
\langle a,m^{+-}_-(b\otimes\bar f)\rangle & = \langle a,\um_L(b,\bar f)\rangle 
 = (-1)^{|b|}\langle a,m_L(b,f)\rangle, \\
\langle m^{-+}_-(\bar f\otimes a),b\rangle & = \langle \um_R(\bar f\otimes a),b\rangle  = \langle m_R(f,a),b\rangle, \\
\langle m^{--}_-(\bar f\otimes \bar g),a\rangle & = \langle \usigma(\bar f\otimes \bar g),a \rangle 
 = (-1)^{|f|}\langle \sigma(f\otimes g),a\rangle, \\
m^{+-}_+(b\otimes \bar f) & = \utau_R(b\otimes \bar f) 
 = (-1)^{|b|} \tau_R(b,f), \\
m^{-+}_+(\bar f\otimes a) & = \utau_L(\bar f\otimes a) 
 = \tau_L(f,a).
\end{align*}
Combining these formulas with the ones for $m_L$, $m_R$, $\sigma$, $\tau_L$, $\tau_R$ yields the conclusion. 
\end{proof}

\begin{remark}
The product $\boldmu$ on $S\H_*(\p W)$ is graded commutative, i.e. $\boldmu=\boldmu\circ \tau$. From this perspective, the two formulas in Theorem~\ref{thm:m-components-RFH}(4) are equivalent to each other as a consequence of $m^{-+}_+=m^{+-}_+\circ \tau$, and so are the two formulas in Theorem~\ref{thm:m-components-RFH}(5) as a consequence of $m^{-+}_-=m^{+-}_-\circ \tau$. 
\end{remark}

\begin{proposition}\label{prop:unit-inf-from-ass}
The unital infinitesimal relation 
{\alex
$$
\bar\lambda \bar\mu= (1\otimes\bar\mu)(\bar\lambda \otimes 1) + (\bar\mu\otimes 1)(1\otimes \bar\lambda ) - (\bar\mu\otimes \bar\mu)(1\otimes \bar\lambda \bar\eta\otimes 1)
$$ }
on $\ol{S\H}_*(W)$ is implied by the associativity of $\boldmu$ on $S\H_*(\p W)$. 
\end{proposition}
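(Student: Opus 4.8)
The plan is to derive the relation from a single component of the associativity identity $\boldmu(\boldmu\otimes 1)=\boldmu(1\otimes\boldmu)$ on $S\H_*(\p W)$, read through the splitting~\eqref{eq:splitting-RFH} and the explicit formulas of Theorem~\ref{thm:m-components-RFH}. Throughout I write $A=\ol{S\H}_*(W)$, $A^\vee=\ol{S\H}^{1-2n-*}(W)$, use ``$+$''/``$-$'' for the two summands as in Theorem~\ref{thm:m-components-RFH}, and recall $m^{++}_-=0$, $m^{--}_+=0$, as well as the identity $\boldc=\boldc_{\cD,\cD^{op}}=-\boldc_{\cD^{op},\cD}=-\bar\boldlambda\bar\boldeta$, which follows from Lemma~\ref{lem:cDopDlambda} after reduction. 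Since $2n\ge 6$, the cubic vector $B$ and hence $\beta,\beta^\vee$ vanish, so the formulas of Theorem~\ref{thm:m-components-RFH} apply; strong $R$-essentiality provides the splitting.

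First I would specialize associativity to inputs of type $(+,+,-)$, say $a\otimes b\otimes\bar f$ with $a,b\in A$ and $\bar f\in A^\vee$, and collect the component of the output lying in $A$. Because $m^{++}_-=0$ we have $\boldmu(a,b)=\bar\boldmu(a,b)\in A$, and because $m^{--}_+=0$ the product of two type $-$ classes stays of type $-$; expanding both sides and keeping the $A$-valued part gives the identity in $A$
$$
m^{+-}_+\bigl(\bar\boldmu(a,b),\bar f\bigr)=\bar\boldmu\bigl(a,m^{+-}_+(b,\bar f)\bigr)+m^{+-}_+\bigl(a,m^{+-}_-(b,\bar f)\bigr).
$$
(The complementary, $A^\vee$-valued, component of this associativity instance only reproduces associativity of the $A^\vee$-module structure, i.e. of $\bar\boldmu$, and is not needed.) Next I would substitute the formulas of Theorem~\ref{thm:m-components-RFH}(1),(4),(5) and pair with an arbitrary $\bar g\in A^\vee$. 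On the left, $m^{+-}_+(\bar\boldmu(a,b),\bar f)$ paired with $\bar g$ produces a $\bar\boldlambda\bar\boldmu$-term plus a correction of the shape $(\bar\boldmu\otimes 1)\bigl(\bar\boldmu(a,b)\otimes\boldc\bigr)$; on the right, $\bar\boldmu(a,m^{+-}_+(b,\bar f))$ produces a $(\bar\boldmu\otimes 1)(1\otimes\bar\boldlambda)$-term plus a correction which, after using associativity of $\bar\boldmu$ to rewrite $\bar\boldmu(a,\bar\boldmu(b,\boldc_1))=\bar\boldmu(\bar\boldmu(a,b),\boldc_1)$, equals the same $(\bar\boldmu\otimes 1)\bigl(\bar\boldmu(a,b)\otimes\boldc\bigr)$ — so these two corrections cancel. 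Finally $m^{+-}_+(a,m^{+-}_-(b,\bar f))$ produces a $(1\otimes\bar\boldmu)(\bar\boldlambda\otimes 1)$-term together with a term of the shape $(\bar\boldmu\otimes\bar\boldmu)(1\otimes\boldc\otimes 1)$, which by $\boldc=-\bar\boldlambda\bar\boldeta$ equals $-(\bar\boldmu\otimes\bar\boldmu)(1\otimes\bar\boldlambda\bar\boldeta\otimes 1)$. Collecting the terms yields
\begin{align*}
\bigl\langle \bar\boldlambda\bar\boldmu(a\otimes b),\bar f\otimes\bar g\bigr\rangle
= \Bigl\langle \bigl[&(1\otimes\bar\boldmu)(\bar\boldlambda\otimes 1)+(\bar\boldmu\otimes 1)(1\otimes\bar\boldlambda) \\
&-(\bar\boldmu\otimes\bar\boldmu)(1\otimes\bar\boldlambda\bar\boldeta\otimes 1)\bigr](a\otimes b),\ \bar f\otimes\bar g\Bigr\rangle
\end{align*}
for all $a,b\in A$ and $\bar f,\bar g\in A^\vee$. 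Since $A$ is free of finite type and $A^\vee$ is its dual, the pairing is nondegenerate and the unital infinitesimal relation follows.

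The main obstacle is sign bookkeeping: carrying the Koszul signs through the pairings and through the shift conventions of Theorem~\ref{thm:m-components-RFH}, and verifying that the partial $\boldc$-corrections assemble exactly as claimed — in particular that the various ``half-twisted'' coproducts $\lambda_\cD$ occurring in the component formulas conspire, via Corollary~\ref{cor:lambdaDprimeD} and associativity of $\bar\boldmu$, to cancel pairwise, leaving only the single term $-(\bar\boldmu\otimes\bar\boldmu)(1\otimes\bar\boldlambda\bar\boldeta\otimes 1)$. A minor additional point is to confirm that the $(+,+,-)$ component is the productive one; the remaining components of associativity, and the analogous identities obtained from $(+,-,+)$ and $(-,+,+)$ inputs, reduce either — via graded commutativity of $\bar\boldmu$ — to the same relation, or to associativity/commutativity of $\bar\boldmu$ alone.
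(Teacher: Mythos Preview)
Your proposal is correct and follows essentially the same route as the paper: both isolate the $^{++-}_+$ component of the associativity relation for $\boldmu$ on $S\H_*(\p W)$, substitute the formulas of Theorem~\ref{thm:m-components-RFH}, observe that the $\boldc$-corrections coming from $m^{+-}_+(\bar\boldmu(a,b),\bar f)$ and from $\bar\boldmu(a,m^{+-}_+(b,\bar f))$ cancel by associativity of $\bar\boldmu$, and identify the surviving $\boldc$-term from $m^{+-}_+(a,m^{+-}_-(b,\bar f))$ with $-(\bar\boldmu\otimes\bar\boldmu)(1\otimes\bar\boldlambda\bar\boldeta\otimes 1)$ via $\boldc=-\bar\boldlambda\bar\boldeta$. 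The paper carries the Koszul signs through explicitly and records the identity as ``the unital infinitesimal relation applied to $a\otimes b$ and inserted into $1\otimes f$'', i.e.\ paired only on the second output; your extra pairing with $\bar g$ is equivalent but note that, with the conventions of Theorem~\ref{thm:m-components-RFH}(4), $f$ sits in the \emph{second} tensor slot, so the relation is detected against $g\otimes f$ rather than $f\otimes g$.
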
 
 
\begin{proof}
We examine the $^{++-}_+$ component of the associativity relation $\boldmu(1\otimes \boldmu-\boldmu\otimes 1)=0$, which reads
$$
   0 = m^{++}_+(1\otimes m^{+-}_+) + m^{+-}_+(1\otimes m^{+-}_-) 
   - m^{+-}_+(m^{++}_+\otimes 1) - m^{--}_+(m^{++}_-\otimes 1),
$$
where the first and the third summands correspond to splitting along $\ol{S\H}_*(W)$, and the second and the fourth summands to splitting along $\ol{S\H}^{1-2n-*}(W)$.
By Theorem~\ref{thm:m-components-RFH}(3) the last summand vanishes. 
Now we evaluate each term
on inputs $a,b\in \ol{S\H}_*(W)$ and $\bar f\in\ol{S\H}^{1-2n-*}(W)$.
{\alex Again, in order to simplify the notation we write $\mu$ instead of $\bar\mu$, and $\lambda$ instead of $\bar\lambda$.}

Using Theorem~\ref{thm:m-components-RFH}(1) and (4) the first term becomes
\begin{align*}
    m^{++}_+(1\otimes m^{+-}_+)&(a\otimes b\otimes \bar f)  
   = \mu(a\otimes m^{+-}_+(b\otimes\bar f)) \\
    & = \mu(a\otimes\la\lambda(b)(-1)^{|b|}+(\mu\otimes 1)(b\otimes\boldc),1\otimes f\ra) \\
    & = (-1)^{|b|+|a|} \la (\mu\otimes 1)(1\otimes\lambda)(a\otimes b),1\otimes f\ra \\
    & \qquad + \la \mu(1\otimes\mu)\otimes 1(a\otimes b\otimes \boldc),1\otimes f\ra.
\end{align*}

Using Theorem~\ref{thm:m-components-RFH}(4) and (5) the second term becomes
\begin{align*}
   m^{+-}_+(1\otimes m^{+-}_-)&(a\otimes b\otimes \bar f) 
    = m^{+-}_+(a\otimes m^{+-}_-(b\otimes\bar f)) \\
   & = \la \lambda(a)(-1)^{|a|} + (\mu\otimes 1)(a\otimes\boldc), 1\otimes m^{+-}_-(b\otimes\bar f)\ra \\
   & = (-1)^{|a|+|b|}\la (1\otimes \mu)(\lambda\otimes 1)(a\otimes b),1\otimes f\ra \\
   & \qquad + (1\otimes\mu)(\mu\otimes 1)(a\otimes\boldc\otimes b)(-1)^{|b|}.
\end{align*}

Using Theorem~\ref{thm:m-components-RFH}(1) and (4) the third term becomes
\begin{align*}
   m^{+-}_+(m^{++}_+\otimes 1)(a\otimes b\otimes \bar f) & =
   m^{+-}_+(\mu(a\otimes b)\otimes\bar f) \\
   &= \la \lambda\mu(a\otimes b)(-1)^{|a|+|b|},1\otimes f\ra \\
   & \qquad  + \la (\mu\otimes 1)(\mu(a\otimes b)\otimes \boldc),1\otimes f\ra.
\end{align*}

Associativity of $\mu$ implies that $\mu(1\otimes\mu)\otimes 1(a\otimes b\otimes \boldc)$ equals $(\mu\otimes 1)(\mu(a\otimes b)\otimes \boldc) = \mu(\mu\otimes 1)\otimes 1(a\otimes b\otimes \boldc)$. Summing up the three terms we find precisely the unital infinitesimal relation applied to $a\otimes b$ and inserted into $1\otimes f$. 
\end{proof}

{\reftwo
\begin{remark}
Similarly to the proof of Proposition~\ref{prop:unit-inf-from-ass}, the other relations of the unital infinitesimal anti-symmetric bialgebra structure on $\ol{S\H}_*(W)$ can be recovered from other components of the associativity relation for $\boldmu$: associativity of $\mu$ from the $^{+++}_+$ component (as well as $^{++-}_-$, $^{+-+}_-$ and $^{-++}_-$), coassociativity of $\lambda$ from the $^{---}_--$ component (as well as $^{+--}_+$, $^{-+-}_+$ and $^{--+}_+$), the unital infinitesimal relation from the $^{++-}_+$ component (as well as $^{-++}_+$, $^{+--}_-$ and $^{--+}_-$), and unital anti-symmetry from the $^{+-+}_+$ component (as well as $^{-+-}_-$). This provides an alternative proof of Theorem~\ref{thm:uias_reduced} building on the results in~\cite{CO-cones}.
\end{remark}
}

\section{The Lagrangian case}\label{sec:open_strings}

We consider in this section Maslov zero exact Lagrangian submanifolds $L$ with Legendrian boundary in a Liouville domain $W$. We denote $SH_*(L)$ Lagrangian symplectic homology, or wrapped Floer homology, graded by the Conley-Zehnder index of Hamiltonian chords. 

The discussion of reduced symplectic homology in the open string case can be developed similarly to the closed string case, with only minor modifications. The starting point is the observation that $SH_*(L)$ carries a unital associative product $\boldmu$ of degree $-n$ and $SH_*^{>0}(L)$ carries a coassociative coproduct $\boldlambda$ of degree $1$. Accordingly, the shifted group $S\H_*(L)=SH_{*+n}(L)$ carries a unital associative product $\boldmu$ of degree $0$ and $S\H_*^{>0}(L)=SH_{*+n}^{>0}(L)$ carries a coassociative coproduct $\boldlambda$ of degree $1-n$. The key difference compared to the case of closed strings is that $\boldmu$ is in general not commutative, and $\boldlambda$ is in general not cocommutative. Another difference is that the degree of the coproduct $\boldlambda$ on $S\H_*^{>0}(L)$ may be even (when $n$ is odd). We are seeking a common domain of definition for the product and for the coproduct. 

Just like in the closed case we have a long exact sequence~\cite[\S8.3]{CO}  
\begin{equation} \label{eq:les-Lagrangian}
\longrightarrow SH_*(L,\p L)\cong SH^{\alex n-*}(L)\stackrel{\eps_*}\longrightarrow SH_*(L)\stackrel {\iota_*}\longrightarrow SH_*(\p L) \longrightarrow
\end{equation}
where the map $\eps_*$ factors through the zero-energy sector as 
$$
\xymatrix{
SH_*(L,\p L) \ar[r]^{\eps_*} \ar[d]&  SH_*(L) \\
SH_*^{=0}(L,\p L) = H^{n-*}(L,\p L)\ar[r]&  H^{n-*}(L) \simeq SH_*^{=0}(L) \ar[u] 
}
$$
and the middle map is identified with the canonical map in singular cohomology. Since $\iota_*$ is a ring map, $\ker\iota_*=\im\eps_*\subset SH_*(L)$ is an ideal and therefore $\boldmu$ descends to $\coker \eps_*$. This motivates the following definition: 

\begin{definition}
\emph{Reduced Lagrangian symplectic homology, or reduced wrapped Floer homology}, is defined as 
$$
\ol{SH}_*(L)=\coker \eps_*.
$$
It carries an induced unital product, denoted $\boldmu$, with unit denoted $\boldeta$.
\end{definition}

In analogy with the definition of strong $R$-essential Weinstein domains, we single out the following class of Lagrangians. Following our standing convention, all homology and cohomology groups in the sequel are understood to be defined with coefficients in $R$.

\begin{definition} Let $L$ be a Lagrangian submanifold of dimension $n$. We say that $L$ is \emph{strongly $R$-essential} if the following conditions hold: 
\begin{enumerate}
\item $L$ admits a defining Morse function whose critical points have degree $\le \lfloor \frac{n}2\rfloor$ and, if $n$ is even, such that the number of critical points of index $n/2$ equals the rank of $H_{n/2}(L)$. We call such a Morse function \emph{$R$-essential}; 
\item if $n$ is even, the map $H^{n/2}(L)\to SH_{n/2}(L)$ is injective.  
\end{enumerate}  
\end{definition} 

\begin{example} \label{example:Lag-strongly-R-ess}
(i) Any Lagrangian disc is strongly $R$-essential for all $R$. In particular Lefschetz thimbles of Lefschetz fibrations, and Lagrangian cocore discs in Weinstein manifolds, are strongly $R$-essential. 

(ii) If $n$ is odd, then any Lagrangian which admits a Morse function which is increasing towards the boundary and whose critical points have degree $\le \lfloor \frac{n}2\rfloor$ is strongly $R$-essential. 
\end{example}

The first condition ensures that there is a well-defined \emph{Lagrangian symplectic homology group relative to the continuation map}, denoted $SH_*(L;\im c)$, and the second condition ensures that the canonical map $\ol{SH}_*(L)\to SH_*(L;\im c)$ is an isomorphism. Note that the case $n$ odd is significantly less constrained than the case $n$ even. 

Given an $R$-essential Morse function $K:L\to \R$ and a choice of gradient-like vector field $\xi$, by \emph{continuation data} $\cD$ we mean a homotopy of Morse functions and pseudo-gradient vector fields from $(-K,-\xi)$ to $(K,\xi)$. Given such data for a  strongly $R$-essential Lagrangian $L$ we obtain an extension of the coproduct $\boldlambda:SH_*^{>0}(L)\to SH_*^{>0}(L)^{\otimes 2}$ to a coproduct  
$$
\boldlambda_\cD:\ol{SH}_*(L)\to \ol{SH}_*(L)^{\otimes 2}. 
$$

In the proof of coassociativity for $\boldlambda_\cD$ we see a slight difference with the closed string case. 

\begin{proposition}
Let $L$ be a strongly $R$-essential Lagrangian of dimension $n\ge 2$. If $n=2$ we assume further that $L$ is diffeomorphic to a disc, and if $n=4$ we assume further that $L$ has no homology in degree $2$. Then $\boldlambda_\cD$ is coassociative. 
\end{proposition}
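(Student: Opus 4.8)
The plan is to transcribe the proof of Proposition~\ref{prop:coass_reduced} almost verbatim to the open string setting, the only genuinely new input being the index bookkeeping for the zero energy operations, which is sensitive to the fact that the Morse truncation now lives on the $n$-dimensional Lagrangian $L$ rather than on a $2n$-dimensional domain.

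First I would fix an $R$-essential Hamiltonian $H$ with Morse truncation $K$ (an $R$-essential Morse function on $L$) and set up the family of wrapped Floer problems parametrized by the same $3$-dimensional polytope {\sf The House} as in the closed string case, the source being now a disc with one positive boundary puncture of weight $1$ and three negative boundary punctures of weight $3$, degenerating along the codimension $1$ faces of the polytope exactly as before. The signed count of $0$-dimensional solutions defines an operation $\Theta:FC_*(H)\to FC_*(3H;\im c_\cD)^{\otimes 3}$, and inspection of the faces of the polytope yields, modulo the cyclic ideal $I_\cD=\im c_\cD\otimes FC_*(3H)^{\otimes 2}+FC_*(3H)\otimes\im c_\cD\otimes FC_*(3H)+FC_*(3H)^{\otimes 2}\otimes\im c_\cD$ (the contributions of the {\sf Longitudinal Walls} lying in $I_\cD$ for the same structural reason as in the closed case), the relation
$$
[\p,\Theta]=(\boldlambda_\cD\otimes 1)\boldlambda_\cD+(1\otimes\boldlambda_\cD)\boldlambda_\cD+(1\otimes 1\otimes\boldmu)(\boldB_\cD\otimes 1)-(\boldmu\otimes 1\otimes 1)(1\otimes\boldB_\cD),
$$
where $\boldmu$ is the wrapped product and $\boldB_\cD$ is obtained by dualizing the input of the one-input two-output operation $\boldbeta_\cD$.

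Next I would evaluate this identity on $c_\cD(x)$ for a cycle $x\in FC_*(-K)$ and, using the dualized chain level relations $[\p,\boldB_\cD]=\sum_{\mathrm{cyclic}}(c_\cD\otimes 1\otimes 1)\sigma_\cD$ and $\boldmu(1\otimes c_\cD)=c_\cD m_L+[\p,\tau_R]$ together with its left-handed analogue, run the same chain of equalities as in the closed case to rewrite the two $\boldB_\cD$-terms as boundaries modulo $I_\cD$, \emph{provided} the zero energy operations $\tau_R,\tau_L$ vanish on $\im c_\cD\otimes FC_*(-K)$. This is the only place where the dimension hypothesis enters: at zero energy $\tau_R(c_\cD\otimes 1)$ is computed by a count of $1$-parameter families of gradient $Y$-graphs on $L$ with two inputs which are critical points of $-K$ and one output which is a critical point of $K$, the index condition for $0$-dimensional moduli spaces being
$$
\ind_K(x)+\ind_K(y)+\ind_K(z)=2n-1
$$
(the closed string value $4n-1$ being replaced by $2\dim L-1$). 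By $R$-essentiality every critical point of $K$ has index $\le\lfloor n/2\rfloor$, so the left hand side is at most $3\lfloor n/2\rfloor$, which is strictly less than $2n-1$ unless $n=2$; hence the moduli space is empty, and $\tau_R$ (likewise $\tau_L$) vanishes at zero energy, whenever $n$ is odd or $n$ is even with $n\ge 4$. In the remaining case $n=2$ the equation reads $1+1+1=3$ and could be satisfied only if $K$ had a critical point of index $1$; but if $L$ is diffeomorphic to a disc then $H_1(L;R)=0$, so an $R$-essential Morse function on $L$ has no critical point of index $1$ and the moduli space is again empty.

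Finally, exactly as in the closed case this shows that $(\boldlambda_\cD\otimes 1)\boldlambda_\cD+(1\otimes\boldlambda_\cD)\boldlambda_\cD$ sends every cycle $c_\cD(x)$ to a boundary of $FC_*(3H;\im c_\cD)^{\otimes 3}$; passing to homology and to the limit over $R$-essential Hamiltonians $H$ extending $K$, the induced map on $\ol{SH}_*(L)$ vanishes, and since $\ol{SH}_*(L)\to SH_*(L;\im c)$ is an isomorphism by strong $R$-essentiality, we conclude that $\boldlambda_\cD$ is coassociative on $\ol{SH}_*(L)$. The main obstacle — in fact the only non-formal point — is the index count for the Lagrangian analogue of $\tau_R$: one must pin down the formula $\ind_K(x)+\ind_K(y)+\ind_K(z)=2n-1$ and notice that, in dimension $2$, the disc hypothesis is exactly what excludes the offending index $1$ critical points. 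Once this is in place the rest of the argument is a line-by-line transcription of Proposition~\ref{prop:coass_reduced}.
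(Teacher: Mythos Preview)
Your proposal is correct and follows essentially the same approach as the paper: transcribe the {\sf House} argument from Proposition~\ref{prop:coass_reduced} and redo the index count for the zero-energy operation $\tau_R$, obtaining $\ind_K(x)+\ind_K(y)+\ind_K(z)=2n-1$ with each term $\le\lfloor n/2\rfloor$, which forces $n=2$ and all indices equal to $1$, a case excluded by the disc hypothesis since then $H_1(L;R)=0$ and an $R$-essential Morse function has no index~$1$ critical points. Your write-up is slightly more explicit than the paper's (you spell out the $n=2$ exclusion and mention $\tau_L$), but the argument is the same.
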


\begin{proof}
The proof is identical to the one of Proposition~\ref{prop:coass_reduced}, except for the last argument involving indices. Given an $R$-essential Morse function $K:L\to \R$, we claim that the Lagrangian counterpart of the operation $\boldB_\cD$ from equation~\eqref{eq:coass_first_line} vanishes.  Denoting again this operation $\boldB_\cD$, the absence of inputs implies that all the outputs have energy close to zero and the map can be equivalently phrased in terms of a count of gradient $\mbox{\sf Y}$-graphs on $L$ with three outputs $a,b,x\in MC^*(K)$ in a 2-parametric family. The condition for 0-dimensional moduli spaces is now $\ind_K a + \ind_K b + \ind_K x -2n+2=0$, or $\ind_K a + \ind_K b + \ind_K x=2n-2$. By $R$-essentiality the left hand side of this equality is $\le 3\lfloor \frac{n}2\rfloor$, which is strictly less than $2n-2$ for $n=3$ and $n\ge 5$. For $n=2$ the equality can hold only if one of the outputs has index $0$ and the two others have index 1, and this case is excluded by the assumption that $L$ is a disc. For $n=4$ the equality can hold only if all three outputs have index 2, which is again excluded by $R$-essentiality and the vanishing of $H_2(L)$. This shows that, under our assumptions, there are no such rigid configurations and $\boldB_\cD$ vanishes.

As a consequence, the Lagrangian chain level relation~\eqref{eq:coass_first_line} reduces to $[\p,\Theta]=(\boldlambda_\cD\otimes  1)\boldlambda_\cD + (1\otimes \boldlambda_\cD)\boldlambda_\cD$. This proves coassociativity.
\end{proof}

The resulting algebraic structure on $\ol{S\H}_*(L)$ is similar to that in the closed string case (Theorem~\ref{thm:uias_reduced}). 

\begin{theorem} \label{thm:uias_reduced_Lag}
Let $L$ be a strongly $R$-essential Lagrangian of dimension $n\ge 2$. If $n=2$ we assume further that $L$ is diffeomorphic to a disc, and if $n=4$ we assume further that $L$ has no homology in degree $2$. Shifted reduced Lagrangian symplectic homology 
$$
(\ol{S\H}_*(L),\boldmu,\boldlambda_\cD,\boldeta)
$$ 
is a unital infinitesimal anti-symmetric bialgebra. \qed
\end{theorem}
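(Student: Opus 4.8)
The plan is to transpose the proof of Theorem~\ref{thm:uias_reduced} to the open string setting, replacing the pair-of-pants operations on closed Hamiltonian orbits by their wrapped-Floer analogues on Riemann surfaces with boundary on $L$ and punctures asymptotic to Hamiltonian chords, graded by the Conley--Zehnder index of chords. The ingredients are in place: the product $\boldmu$ on $S\H_*(L)$ is unital (with unit $\boldeta$) and associative, and it descends to $\ol{S\H}_*(L)$ because $\ker\iota_*=\im\eps_*$ is an ideal; the coproduct $\boldlambda_\cD:\ol{S\H}_*(L)\to\ol{S\H}_*(L)^{\otimes 2}$ of degree $1-n$ was constructed above using strong $R$-essentiality, which provides the identification $\ol{SH}_*(L)\cong SH_*(L;\im c)$ needed to land the coproduct inside $\ol{S\H}_*(L)^{\otimes 2}$ rather than merely $SH_*(L;\im c)^{\otimes 2}$; and coassociativity of $\boldlambda_\cD$ is the content of the Proposition immediately preceding this theorem (this is where the hypothesis that $L$ be a disc when $n=2$ is used). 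It therefore remains to establish the unital infinitesimal relation and the unital anti-symmetry relation. The crucial point, as noted in~\S\ref{sec:unital_anti_symmetry}, is that in the open case $\boldmu$ is in general not commutative and $\boldlambda_\cD$ is not cocommutative, so unital anti-symmetry does \emph{not} follow formally from the unital infinitesimal relation via Remark~\ref{rem:ccIAB}; however the moduli-theoretic proof of~\S\ref{sec:unital_anti_symmetry} uses no (co)commutativity and applies verbatim.

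For the \textsc{(unital infinitesimal relation)} I would run the argument of~\S\ref{sec:infinitesimal}: fix an $R$-essential Hamiltonian $H$ with truncation $K$ and continuation data $\cD$, consider the hexagon $\cP$ fibered over $\ol\cM_{0,4}^\circ$ carrying wrapped Floer data $(H_\tau,\beta_\tau)$ with the same combinatorial description of its seven sides, define the parametrized moduli spaces and the induced map $\Gamma:FC_*(H)^{\otimes 2}\to FC_*(3H;\im c_\cD)^{\otimes 2}$, and read off $[\p,\Gamma]=\Gamma_{\p\cP}$. The boundary contributions are identified exactly as before: the top horizontal side gives $-\boldlambda_\cD\boldmu$, the bottom vertical sides give $(1\otimes\boldmu)(\boldlambda_\cD\otimes 1)+(\boldmu\otimes 1)(1\otimes\boldlambda_\cD)$, the bottom horizontal side gives $(\boldmu\otimes\boldmu)(1\otimes\boldc_{\cD,\cD^{op}}\otimes 1)$, and the top vertical sides vanish after projection modulo $\im c_\cD$. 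Passing to the limit over $H$, using strong $R$-essentiality, and invoking the Lagrangian analogue of Lemma~\ref{lem:cDopDlambda} (proved by the same gluing argument, $\boldc_{\cD^{op},\cD}=\boldlambda_\cD\boldeta$) together with $\boldc_{\cD,\cD^{op}}=-\boldc_{\cD^{op},\cD}$ yields the relation. One must track Koszul signs with $|\boldmu|=0$ and $|\boldlambda_\cD|=1-n$ (even when $n$ is odd, odd when $n$ is even); since $|\boldmu|=0$ the signs collapse to those displayed in Definition~\ref{defi:secondary-unital}. The \textsc{(unital anti-symmetry)} relation is obtained in the same way from the octagon $\cP$ fibered over the moduli space $\ol\cM$ of genus $0$ surfaces with alternating punctures on a circle, following~\S\ref{sec:unital_anti_symmetry} line by line and again replacing $\boldc_{\cD^{op},\cD}$ by $\boldlambda_\cD\boldeta$ at the end.

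The real work, and the main obstacle, is bookkeeping of gradings rather than any new geometric idea. In the closed case the auxiliary operations that had to vanish or to land in $I_\cD$ ($\boldbeta_\cD$, $\boldB_\cD$, the longitudinal-wall operations, and $\tau_R$ restricted to $\im c_\cD$) were controlled by the fact that an $R$-essential Hamiltonian has Floer complex supported in degrees $0,\dots,n$; in the Lagrangian case the supporting range is $0,\dots,\lfloor n/2\rfloor$ for $FC_*(K)$ (and correspondingly $\lceil n/2\rceil,\dots,n$ for $FC_*(-K)$), so every index inequality must be re-derived. For instance, for the analogue of $\tau_R(c_\cD\otimes 1)$ the index condition becomes $\ind_K(x)+\ind_K(y)+\ind_K(z)=2n-1$ with each index in $\{0,\dots,\lfloor n/2\rfloor\}$, which forces $n=2$ with all indices equal to $1$, a case excluded by the disc hypothesis; the corresponding counts for the $\boldbeta_\cD$- and $\boldB_\cD$-type operations and for the longitudinal-wall operations must be checked to give the same vanishing, and here the case $n$ odd is genuinely less constrained than $n$ even. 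Alongside this one needs the routine but lengthy verification of transversality, compactness up to breaking, gluing, and coherent orientations for these Lagrangian-boundary parametrized Floer problems, and careful propagation of Koszul signs in the two moduli arguments given the parity of $|\boldlambda_\cD|=1-n$. Granting these, the three relations hold, so $(\ol{S\H}_*(L),\boldmu,\boldlambda_\cD,\boldeta)$ is a unital infinitesimal anti-symmetric bialgebra.
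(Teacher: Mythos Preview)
Your proposal is correct and matches the paper's intended argument: the paper gives no separate proof of this theorem, relying instead on the fact that the moduli-theoretic arguments of \S\ref{sec:infinitesimal} and \S\ref{sec:unital_anti_symmetry} were deliberately written so as to apply verbatim in the open string case (this is stated explicitly just after Theorem~\ref{thm:uias_reduced}), together with the Lagrangian coassociativity proposition just before the theorem. Your emphasis on the key structural point---that unital anti-symmetry must be proved directly here since $\boldmu$ and $\boldlambda_\cD$ are not (co)commutative in the open case, so Remark~\ref{rem:ccIAB} does not apply---is exactly right.

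One minor comment: your final paragraph is over-cautious. The proofs of Propositions~\ref{prop:infinitesimal_reduced} and~\ref{prop:anti-symmetry_reduced} do not themselves invoke any new index-vanishing arguments of the $\boldbeta_\cD$/$\boldB_\cD$/$\tau_R$ type; the only places where such index counts enter are Lemma~\ref{lem:lambda-imc} (for $\boldbeta_\cD$) and the coassociativity argument (for $\tau_R$), and the Lagrangian version of the latter is precisely what is handled in the proposition preceding the theorem. The hexagon and octagon arguments use only that certain boundary sides factor through $\im c_\cD$ by construction, which requires no index analysis.
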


\begin{remark}
As seen in~\S\ref{sec:odd-spheres} below, this structure is in general not involutive. 
\end{remark}

{\reftwo
All the preceding discussion carries over, with identical proofs, to the Lagrangian setting. Thus there exists a dual construction of reduced Lagrangian symplectic cohomology 
$\ol{SH}^{n-*}(L)=\ker\eps_*$, and with this definition the long exact sequence~\eqref{eq:les-Lagrangian} becomes short exact 
$$
0 \longrightarrow \ol{SH}_*(L)\stackrel {\iota_*}\longrightarrow SH_*(\p L) \longrightarrow \ol{SH}^{n-*+1}(L)\longrightarrow 0.
$$
See also Remark~\ref{rmk:dual}. We now define degree shifted groups 
$$
\ol{S\H}_*(L)=\ol{SH}_{*+n}(L),\ S\H_*(\p L)=SH_{*+n}(\p L),\ \ol{S\H}^*(L)=\ol{SH}^{*+n}(L),
$$ 
and we find the short exact sequence
\begin{equation}\label{eq:les+reduced-Lag}
0 \longrightarrow \ol{S\H}_*(L)\stackrel {\iota_*}\longrightarrow S\H_*(\p L) \longrightarrow \ol{S\H}^{1-n-*}(L)\longrightarrow 0.
\end{equation}
Statements analogous to Proposition~\ref{prop:lambdaDprimeDC} and Corollary~\ref{cor:lambdaDprimeD} hold in the Lagrangian setting, and a discussion of splittings similar to that of~\S\ref{sec:splittings} leads to the following Lagrangian analogue of Theorem~\ref{thm:splittings}. 

\begin{theorem} \label{thm:splittings-Lag}
Let $L$ be a strongly $R$-essential Lagrangian of dimension $n\ge 2$. If $n=2$ we assume further that $L$ is diffeomorphic to a disc, and if $n=4$ we assume further that $L$ has no homology in degree $2$. 
Continuation data $\cD$ induce coproducts $\bar\lambda_\cD$ and $\bar\lambda_{\cD^{op}}$ on $\ol{S\H}_*(L)$ (see~\S\ref{sec:RFH-colimit}) and a splitting of the short exact sequence~\eqref{eq:les+reduced-Lag}
\begin{equation*}\label{eq:splitting-RFH-Lag}
   S\H_*(\p L) = \ol{S\H}_*(L)\oplus \ol{S\H}^{1-n-*}(L),
\end{equation*}
such that the product $\boldmu$ on $S\H_*(\p L)$ restricts to the product $\bar\mu$ on the subring $\ol{S\H}_*(L)$, and to the cohomology product $\bar\lambda^\vee_{\cD^{op}}$ on the subring (not containing the unit) $\ol{S\H}^{1-n-*}(L)$. 

With respect to the splitting~\eqref{eq:splitting-RFH-Lag} and the algebraically dual splitting on $S\H^{1-n-*}(\p L)=\ol{S\H}^{1-n-*}(L)\oplus \ol{S\H}_*(L)$, the Poincaré duality isomorphism is given in matrix form by
$\tiny \begin{pmatrix} 0 & 1 \\ 1 & -\vec \boldc_{\cD,\cD^{op}} \end{pmatrix}$, with $\vec \boldc_{\cD,\cD^{op}}$ the secondary continuation map (see~\S\ref{sec:secondary_cont}).

If $H^{n-1}(L)=0$, then the splitting and the coproducts are canonical, i.e., they do not depend on the choice of continuation data $\cD$, and the Poincaré duality isomorphism simply exchanges the factors in the splitting. 
\qed
\end{theorem}

}

\section{Application to string topology}\label{sec:examples_reduced}

\subsection{Unital anti-symmetric infinitesimal bialgebra structure} 

Let $M$ be a closed $n$-dimensional $R$-orientable manifold with a Riemannian metric. We denote $D^*M$ the unit disc cotangent bundle and $D^*_qM$ its fiber at a basepoint $q\in M$. We also denote $\Lambda=\Lambda M$ the space of free loops and $\Omega=\Omega_qM$ its fiber at $q$ for the evaluation map, i.e., the space of loops based at $q$. We denote $\Lambda_0\subset \Lambda$ the subspace of constant loops, canonically identified with $M$. Recall the degree shifted loop homology $\H_*\Lambda=H_{*+n}\Lambda$ and its reduced version $\ol\H_*\Lambda=\H_*\Lambda/\chi(M)[q]$, where $\chi(M)$ is the Euler characteristic of $M$ and $[q]$ is the point class.

Our structural results for reduced symplectic homology of $R$-essential Weinstein domains have applications to string topology in view of the Viterbo isomorphisms~\cite{Viterbo-cotangent,AS,AS-corrigendum,AS2,SW,Ritter,Cieliebak-Latschev,Kragh,Abouzaid-cotangent,CHO-MorseFloerGH}
$$
S\H_*(D^*M)\simeq \H_*\Lambda,\qquad S\H_{*}(D^*_qM)\simeq H_*\Omega, 
$$
with their variants 
$$
S\H_*^{>0}(D^*M)\simeq \H_*(\Lambda,\Lambda_0),\qquad S\H_*^{>0}(D^*_qM)\simeq H_*(\Omega,q).
$$
These isomorphisms hold when, on the symplectic homology side, one uses the local system of coefficients on $\Lambda$, resp. $\Omega$, obtained by transgressing the second Stiefel-Whitney class~\cite{Kragh,Abouzaid-cotangent,AS-corrigendum}. The first isomorphism intertwines the unital BV-algebra structures~\cite{AS,AS-corrigendum,AS2,Ritter,Abouzaid-cotangent}, where on the loop homology side one uses the
loop product~\cite{CS}. The second isomorphism intertwines the unital algebra structures~\cite{AS-corrigendum,AS2,Ritter,Abouzaid-cotangent}, where on the based loop homology side one uses the Pontryagin product induced by the composition of loops. The third and fourth isomorphisms intertwine the coalgebra structures~\cite{CHO-MorseFloerGH}, where on the right hand side one uses the
loop coproducts~\cite{Sullivan-open-closed,Goresky-Hingston}. 

We have seen that disc cotangent bundles $D^*M$ of $R$-orientable manifolds are strongly $R$-essential Weinstein domains (Example~\ref{ex:strongly_R_essential}), and the fibers $D^*_qM$ are strongly $R$-essential exact Lagrangian submanifolds (Example~\ref{example:Lag-strongly-R-ess}). In this case we have corresponding Viterbo isomorphisms in reduced symplectic homology 
$$
\ol{S\H}_*(D^*M)\simeq \ol{\H}_*\Lambda,\qquad \ol{S\H}_*(D^*_qM)\simeq H_*\Omega.
$$
Indeed, it was shown in~\cite{Cieliebak-Frauenfelder-Oancea} that the map $\eps$ in the long exact sequence~\eqref{eq:les-intro} lives only in degree zero, where it is given by multiplication with the Euler characteristic $\chi(M)$ of $M$.
The Viterbo isomorphisms for reduced symplectic homologies intertwine the bialgebra structures~\cite{CHO-MorseFloerGH}, and in combination with the results proved in this paper this implies the following structural results for reduced loop homology and for based loop homology. 

\begin{theorem} \label{thm:main3} 
Assume $\dim M\ge 3$. The loop product {\alex $\mu$} on $\H_*\Lambda$ descends to {\alex a product $\bar\mu$ on} $\ol\H_*\Lambda$, and the loop coproduct {\alex $\lambda$} on $\H_*(\Lambda,\Lambda_0)$ extends to {\alex a coproduct $\bar\lambda$} on $\ol\H_*\Lambda$ (canonically if we have $H_1M=0$). Each such extension 
{\alex $\bar\lambda$} 
defines together with the loop product 
{\alex $\bar\mu$} 
the structure of a commutative cocommutative unital infinitesimal anti-symmetric
bialgebra on $\ol \H_*\Lambda$. In particular, the following relation holds 
{\alex 
$$
\bar\lambda\bar\mu = (\bar\mu\otimes 1)(1\otimes\bar\lambda) + (1\otimes\bar\mu)(\bar\lambda\otimes 1) - (\bar\mu\otimes\bar\mu)(1\otimes\bar\lambda\bar\eta \otimes 1),
$$ 
}
where $1$ denotes the identity map and {\alex $\bar\eta$} the unit for the product {\alex $\bar\mu$}. 
\end{theorem}

\begin{proof} This follows from Theorem~\ref{thm:uias_reduced} combined with the Viterbo isomorphisms.
\end{proof}

\begin{theorem} \label{thm:main-based} 
Assume $\dim M\ge 2$. The loop coproduct on $H_*(\Omega,q)$ extends canonically to a coproduct $\lambda$ on $H_*\Omega$. Together with the Pontryagin product $\mu$ this defines the structure of a unital infinitesimal anti-symmetric
bialgebra on $H_*\Omega$ such that $\lambda\eta=0$, where $\eta$ is the unit for the product $\mu$. In particular, the following relation holds 
$$
\lambda\mu = (\mu\otimes 1)(1\otimes\lambda) + (1\otimes\mu)(\lambda\otimes 1).
$$ 
\end{theorem}

\begin{proof} This follows from Theorem~\ref{thm:uias_reduced_Lag} combined with the Viterbo isomorphisms. That $\lambda\eta=0$ follows from the fact that the coproduct has degree $1-n$ and the unit is supported in degree $0$, so that $\lambda\eta$ has negative degree if $n\ge 2$.
\end{proof}

\begin{remark}
(a) The bialgebra structure in Theorem~\ref{thm:main-based} is in general not commutative or cocommutative. For example, the Pontryagin algebra with rational coefficients of an even-dimensional sphere is a polynomial algebra on a generator of odd degree, so it is not commutative in the graded sense (see~\cite[\S16 and~\S21]{Felix-Halperin-Thomas} for this and more general results on simply connected spaces). \\
(b) We have stated our theorems under the assumptions $n\ge 3$, respectively $n\ge 2$, for two reasons: (i) in the case of free loops, we have established Theorem~\ref{thm:uias_reduced} under the assumption $2n\ge 6$, and (ii) in the case of based loops, as seen {\alex in~\S\ref{sec:odd-spheres}}, for $n=1$ (i.e.~$M=S^1$) the coproduct does not necessarily vanish on the unit. As a matter of fact, in this last case the extension of the coproduct is not anymore canonical either.  
\end{remark}

{\reftwo
\subsection{Splitting of Rabinowitz loop homology} \label{sec:Rab-loop}

We retain the setting of the previous subsection. Denote by $S^*M=\p D^*M$ the sphere cotangent 
bundle. Following~\cite{CHO-PD}, we define the (degree shifted) {\em Rabinowitz loop homology}
$$
  \wh\H_*\Lambda := S\H_*(D^*M)
$$
with its canonical product $\boldmu$. In view of the Viterbo isomorphisms above, the middle row of the short exact sequence~\eqref{eq:les+reduced} becomes 
\begin{equation}\label{eq:les+reduced-loop}
\xymatrix
@C=15pt
{
  0 \ar[r] & \ol\H_*\Lambda \ar[r]^-{\iota} 
  & \wh\H_*\Lambda \ar[r]^-{\pi} 
  & \ol\H^{1-2n-*}\Lambda \ar[r] & 0 
}
\end{equation}
Denote $\lambda^\vee$ the loop cohomology product on $H^*(\Lambda,\Lambda_0)$, dual to the loop homology coproduct $\lambda$ on $H_*(\Lambda,\Lambda_0)$. Theorem~\ref{thm:splittings} specializes to

\begin{corollary} \label{cor:splittings-loop}
Let $M$ be a closed $R$-orientable manifold of dimension $n\ge 3$. 
The short exact sequence~\eqref{eq:les+reduced-loop} admits a splitting 
\begin{equation}\label{eq:splitting-RFH-loop}
   \wh\H_*\Lambda = \ol\H_*\Lambda \oplus \ol\H^{1-2n-*}\Lambda
\end{equation}
such that the product $\boldmu$ on $\wh\H_*\Lambda$ restricts to the loop product $\bar\mu$ on the subring $\ol\H_*\Lambda$, 
and to an extension of the loop cohomology product $\lambda^\vee$ on the subring (not containing the unit) $\ol\H^{1-2n-*}\Lambda$.

\noindent
The splitting and the extension of $\lambda^\vee$ are canonical if $H_1(M)=0$.
 \qed
\end{corollary}

Note that, while Rabinowitz loop homology was defined in symplectic terms, equation~\eqref{eq:splitting-RFH-loop} provides a purely topological description as the direct sum of reduced loop homology and cohomology. However, this description is less canonical, and the product $\boldmu$ and its associativity are harder to see in this description (see~\S\ref{thm:m-components-RFH}). 

As in the previous subsection, the results of this subsection have analogues for the based loop space. In the case $L=D^*_qM$ we have $SH_{*+n}(L)\simeq H_*\Omega$ and $SH^{*+n}(L)\simeq H^*\Omega$. The map $\eps_*:SH^{n-*}(L)\to SH_*(L)$ described at the beginning of~\S\ref{sec:open_strings} acts as $\eps_*:H^{-*}\Omega\to H_{*-n}\Omega$ and therefore vanishes for degree reasons. Recalling from~\S\ref{sec:open_strings} the notation $S\H_*(\p L)=SH_{*+n}(\p L)$, we define Rabinowitz based loop homology as\footnote{We do not use the notation $\wh \H_*\Omega$ in order to signify that it contains the subring $H_*\Omega$ endowed with the Pontrjagin product of degree $0$.} 
 $$
\wh H_*\Omega:= S\H_*(S^*_qM).
$$
By Viterbo's isomorphisms, the short exact sequence~\eqref{eq:les+reduced-Lag} specializes to
\begin{equation}\label{eq:les+reduced-based-loop}
\xymatrix
@C=30pt
{
   0 \ar[r] & H_*\Om \ar[r]^-\iota & \wh{H}_*\Om \ar[r]^-\pi & H^{1-n-*}\Om \ar[r] & 0, 
}
\end{equation}
and Theorem~\ref{thm:splittings-Lag} specializes to

\begin{corollary} \label{cor:splittings-based-loop}
Let $M$ be a closed $R$-orientable manifold of dimension $n\ge 2$. 
The short exact sequence~\eqref{eq:les+reduced-based-loop} admits a canonical splitting
\begin{equation*}
   \wh H_*\Omega = H_*\Omega \oplus H^{1-n-*}\Omega
\end{equation*}
such that the product $\boldmu$ on $\wh H_*\Omega$ restricts to the Pontrjagin product on the subring $H_*\Omega$, and to the product $\lambda^\vee$ dual to the extension $\lambda$ of the based loop coproduct on the subring (not containing the unit) $H^{1-n-*}\Omega$. \qed
\end{corollary}

}

\subsection{Odd-dimensional spheres} \label{sec:odd-spheres}

We illustrate the algebraic structures of this paper arising from loop spaces of odd-dimensional spheres $S^n$. Denoting $\Lambda S^n$ the free loop space and $\Omega S^n$ the based loop space, we have 
$$
\ol{S\H}_*(DT^*S^n)\simeq \H_*\Lambda S^n, \qquad \ol{S\H}_*(DT^*_qS^n)\simeq H_*\Omega S^n, 
$$
where $\H_*\Lambda S^n=H_{*+n}\Lambda S^n$. 
{\reftwo The following results for the free loop space were proved in~\cite[\S8.2]{CHO-MorseFloerGH}. As noted in~\cite{CHO-PD}, they imply the results for the based loop space by dropping the variable $A$ and the symmetrization. See also~\cite[\S10]{CHO-algebra}. }

{\bf The case of odd $n\ge 3$}. 

{\it Loop homology $\H_*\Lambda S^n$. }
As a ring with respect to the loop product $\boldmu$, ordinary loop homology is given by
$$
   \H_*\Lambda S^n = \Lambda[A,U],\qquad |U|=n-1,\ |A|=-n.
$$
Thus $\boldmu$ has degree $0$, it is associative, commutative, and unital with unit $\boldeta=\bold1$.  
The loop coproduct is given by
\begin{align*}
   \boldlambda(AU^k) &= \sum_{i+j=k-1\atop i,j\geq 0} AU^i\otimes AU^j,\cr
   \boldlambda(U^k) &= \sum_{i+j=k-1\atop i,j\geq 0} (AU^i\otimes U^j-U^i\otimes AU^j).
\end{align*}
The coproduct $\boldlambda$ has odd degree $1-2n$, it is (skew-)coassociative and (skew-)cocommutative, but it has no counit. 
One can verify by direct computation that $(\H_*\Lambda S^n,\boldmu,\boldlambda,\boldeta)$ is a unital infinitesimal anti-symmetric bialgebra.
Since
$$
   \boldlambda\boldeta = \boldlambda(\bold1) = 0,
$$
the {\sc (unital infinitesimal relation)} simplifies to the ``Sullivan's relation" 
$$
  \boldlambda\boldmu = (1\otimes\boldmu)(\boldlambda\otimes 1) + (\boldmu\otimes 1)(1\otimes\boldlambda).
$$

{\it Based loop homology $H_*\Om S^n$. }
As a ring with respect to the Pontrjagin product $\boldmu$, ordinary based loop homology is given by
$$
   H_*\Om S^n = \Lambda[U],\qquad |U|=n-1.
$$
Thus $\boldmu$ has degree $0$, it is associative, commutative, and unital with unit $\boldeta=\bold1$.  
The based loop coproduct is given by
\begin{align*}
   \boldlambda(U^k) &= \sum_{i+j=k-1\atop i,j\geq 0} U^i\otimes U^j.
\end{align*}
The coproduct $\boldlambda$ has even degree $1-n$, it is coassociative and cocommutative, but it has no counit. 
One can verify by direct computation that $(H_*\Om S^n,\boldmu,\boldlambda,\boldeta)$ is a unital infinitesimal anti-symmetric bialgebra. 
Since
$$
   \boldlambda\boldeta = \boldlambda(\bold1) = 0,
$$
the {\sc (unital infinitesimal relation)} simplifies to Sullivan's relation
$$
  \boldlambda\boldmu = (1\otimes\boldmu)(\boldlambda\otimes 1) + (\boldmu\otimes 1)(1\otimes\boldlambda).
$$

Note that the structure is \emph{not} involutive in the based loop case. 

\bigbreak 

{\bf The case $n=1$. }

{\it Loop homology $\H_*\Lambda S^1$. }
As a ring with respect to the loop product $\boldmu$, ordinary loop homology is given by
$$
   \H_*\Lambda S^1 = \Lambda[A,U,U^{-1}],\qquad |U|=0,\ |A|=-1.
$$
Thus $\boldmu$ has degree $0$, it is associative, commutative, and unital with unit $\boldeta=\bold1$.  

The loop coproduct in this case depends on the choice of a nowhere vanishing vector field on $S^1$ {\reftwo (cf.~Remark~\ref{rem:cont-data})}. 
Up to homotopy there are two such choices $v^\pm(x) = \pm 1$, giving rise to two coproducts 
\begin{align*}
  \boldlambda_+(AU^k) &= \begin{cases}
     \sum_{i=0}^{k}AU^i\otimes AU^{k-i} & k\geq 0, \\
     -\sum_{i=k+1}^{-1}AU^i\otimes AU^{k-i} & k<0
  \end{cases} \cr 
  \boldlambda_+(U^k) &= \begin{cases}
     \sum_{i=0}^{k}(AU^i\otimes U^{k-i} - U^i\otimes AU^{k-i}) & k\geq 0, \\
     -\sum_{i=k+1}^{-1}(AU^i\otimes U^{k-i} - U^i\otimes AU^{k-i}) & k<0
  \end{cases}  
\end{align*}
\begin{align*}
  \boldlambda_-(AU^k) &= \begin{cases}
     \sum_{i=1}^{k-1}AU^i\otimes AU^{k-i} & k > 0, \\
     -\sum_{i=k}^{0}AU^i\otimes AU^{k-i} & k\leq 0,
  \end{cases} \cr 
  \boldlambda_-(U^k) &= \begin{cases}
     \sum_{i=1}^{k-1}(AU^i\otimes U^{k-i} - U^i\otimes AU^{k-i}) & k> 0, \\
     -\sum_{i=k}^{0}(AU^i\otimes U^{k-i} - U^i\otimes AU^{k-i}) & k\leq 0.
  \end{cases}  
\end{align*}
The coproduct $\boldlambda_\pm$ has odd degree $-1$, it is (skew-)coassociative and (skew-)cocommutative, but it has no counit. 
One can verify by direct computation that $(\H_*\Lambda S^1,\boldmu,\boldlambda_\pm,\boldeta)$ is a unital infinitesimal anti-symmetric bialgebra. 
Since
$$
   \boldlambda_\pm\boldeta = \boldlambda_\pm(\bold1) = \pm(A\otimes \bold1-\bold1\otimes A),
$$
the {\sc (unital infinitesimal)} and {\sc (unital anti-symmetry)} relations contain nontrivial terms involving $\boldlambda_\pm\boldeta$.
One verifies directly the relation from Corollary~\ref{cor:lambdaDprimeD} in the form 
$$
\boldlambda_- = \boldlambda_+ + (\boldmu\otimes 1)(1\otimes\boldc) + (1\otimes\boldmu)(\boldc\otimes 1),
$$
with $\boldc=1\otimes A - A\otimes 1$ the continuation bivector (here $\boldc=\boldlambda_-(1)$). 

{\it Based loop homology $H_*\Om S^1$. }
As a ring with respect to the Pontrjagin product $\boldmu$, ordinary based loop homology of $S^1$ is given by
$$
  H_*\Om S^1 = \Lambda[U,U^{-1}],\qquad |U|=0.
$$
Thus $\boldmu$ has degree $0$, it is associative, commutative, and unital with unit $\boldeta=\bold1$.  

Again, the two choices of nowhere vanishing vector fields $v^\pm(x) = \pm 1$ give rise to two coproducts 
\begin{align*}
  \boldlambda_+(U^k) &= \begin{cases}
     \sum_{i=0}^{k}U^i\otimes U^{k-i} & k\geq 0, \\
     -\sum_{i=k+1}^{-1}U^i\otimes U^{k-i} & k<0,
  \end{cases} \cr 
  \boldlambda_-(U^k) &= \begin{cases}
     \sum_{i=1}^{k-1}U^i\otimes U^{k-i} & k > 0, \\
     -\sum_{i=k}^{0}U^i\otimes U^{k-i} & k\leq 0
  \end{cases} 
\end{align*}
taking values in the ordinary (uncompleted) tensor product.
The coproduct $\boldlambda_\pm$ has even degree $0$, it is coassociative and cocommutative, but it has no counit. 
One can verify by direct computation that $(H_*\Om S^1,\boldmu,\boldlambda_\pm,\boldeta)$ is a unital infinitesimal anti-symmetric bialgebra. 
Since
$$
   \boldlambda_\pm\boldeta = \boldlambda_\pm(\bold1) = \pm(\bold1\otimes \bold1),
$$
the {\sc (unital infinitesimal relation)} simplifies to
$$
  \boldlambda_\pm\boldmu = (1\otimes\boldmu)(\boldlambda_\pm\otimes 1) + (\boldmu\otimes 1)(1\otimes\boldlambda_\pm) \mp 1\otimes 1.
$$
Thus $(H_*\Om S^1,\boldmu,\boldlambda_+,\boldeta)$ 
and $(H_*\Om S^1,\boldmu,-\boldlambda_-,\boldeta)$ are ``unital infinitesimal bialgebras'' in the sense of Loday--Ronco~\cite{Loday-Ronco}. 
One verifies directly the relation from Corollary~\ref{cor:lambdaDprimeD}
(see also~\S\ref{sec:open_strings})
in the form 
$$
\boldlambda_- = \boldlambda_+ + (\boldmu\otimes 1)(1\otimes\boldc) + (1\otimes\boldmu)(\boldc\otimes 1),
$$
with $\boldc=-1\otimes 1$ the continuation bivector (here $\boldc=\boldlambda_-(1)$).

\appendix

\section{Grading conventions} \label{sec:grading}

Our standing convention is to grade Hamiltonian Floer homology by the Conley-Zehnder index of orbits, and Lagrangian Floer homology by the Conley-Zehnder index of chords. 

Given a preferred trivialization, 
the Conley-Zehnder index $\CZ(\gamma)$ of a Hamiltonian orbit $\gamma$ is such that the Fredholm indices of the capping operators $o_\pm(\gamma)$ defined on the sphere with one positive/negative puncture, and with asymptotic behavior at the puncture given by the linearized Hamiltonian flow along the periodic orbit $\gamma$, are given by (see~\cite[\S4.1]{Ekholm-Oancea} and references therein)
\begin{equation} \label{eq:capping-operators-orbits}
\mathrm{index}(o_-(\gamma))=n-\CZ(\gamma),\qquad \mathrm{index}(o_+(\gamma))=n+\CZ(\gamma). 
\end{equation}

Given a preferred trivialization, 
the Conley-Zehnder index $\CZ(c)$ of a Hamiltonian chord $c$ is such that the Fredholm indices of the capping operators $o_\pm(c)$ defined on the disc with one boundary puncture which is positive/negative, and with asymptotic behavior at the boundary puncture given by the linearized Hamiltonian flow along the Hamiltonian chord $c$ with endpoints on the Lagrangian, are given by (see~\cite[\S4.1]{Ekholm-Oancea} and references therein) 
\begin{equation} \label{eq:capping-operators-chords}
\mathrm{index}(o_-(c))=n-\CZ(c),\qquad \mathrm{index}(o_+(c))=\CZ(c). 
\end{equation}

The case in point is the cotangent fiber $L=D^*_qM$. This is a Maslov 0 exact Lagrangian. Moreover, there is a preferred trivialization of $TT^*M$ along Hamiltonian chords given by the complexification of the Lagrangian vertical distribution, and this determines a canonical grading on the symplectic homology and cohomology groups.

The degree of an operation determined by a punctured Riemann surface is calculated by gluing negative/positive capping operators at the positive/negative punctures. For example the half pair of pants product on $SH_*(L)$ is defined by the disc with 2 positive boundary punctures and one negative boundary puncture, so that it has degree $-n$. The half pair of pants primary coproduct on $SH_*(L)$ is defined by the disc with one positive boundary puncture and 2 negative boundary punctures, so that it has degree $0$. The half pair of pants secondary coproduct on $SH_*^{>0}(L)$ has degree one higher, i.e. $+1$, because it is defined by a parametrized Floer problem with one-dimensional parameter space. Its (algebraic) dual half pair of pants product on $SH^*_{>0}(L)$ has opposite degree, i.e. $-1$.

{\alex Given an orbit $\gamma$ of a Hamiltonian $H_t$, denote $\bar\gamma(t)=\gamma(1-t)$ the corresponding orbit for the Hamiltonian $\bar H_t=-H_{1-t}$. Similarly, given a chord $c$ for $H_t$, denote $\bar c(t)=c(1-t)$ the corresponding chord for $\bar H_t$. We have $\mathrm{index}(o_-(\gamma))= \mathrm{index}(o_+(\bar\gamma))$ and $\mathrm{index}(o_-(c))= \mathrm{index}(o_+(\bar c))$, so that equations~\eqref{eq:capping-operators-orbits} and~\eqref{eq:capping-operators-chords} imply the equalities
$$
\CZ(\bar \gamma)=-\CZ(\gamma),\qquad \CZ(\bar c)=n-\CZ(c).
$$
As a reflection of these formulas, the Poincaré duality isomorphism for a Liouville domain $W$ and for an exact Lagrangian $L\subset W$ writes~\cite{CO}
$$
SH_*(W,\p W)\simeq SH^{-*}(W), \qquad SH_*(L,\p L)\simeq SH^{n-*}(L).
$$ 
}

\bibliographystyle{abbrv}
\bibliography{000_SHpair}

\def\cprime{$'$} \def\cprime{$'$}
\begin{thebibliography}{10}

\bibitem{AS}
A.~Abbondandolo and M.~Schwarz.
\newblock On the {F}loer homology of cotangent bundles.
\newblock {\em Comm. Pure Appl. Math.}, 59(2):254--316, 2006.

\bibitem{AS2}
A.~Abbondandolo and M.~Schwarz.
\newblock Floer homology of cotangent bundles and the loop product.
\newblock {\em Geom. Topol.}, 14(3):1569--1722, 2010.

\bibitem{AS-corrigendum}
A.~Abbondandolo and M.~Schwarz.
\newblock Corrigendum: {O}n the {F}loer homology of cotangent bundles.
\newblock {\em Comm. Pure Appl. Math.}, 67(4):670--691, 2014.

\bibitem{Abouzaid-cotangent}
M.~Abouzaid.
\newblock Symplectic cohomology and {V}iterbo's theorem.
\newblock In {\em Free loop spaces in geometry and topology}, volume~24 of {\em
  IRMA Lect. Math. Theor. Phys.}, pages 271--485. Eur. Math. Soc., Z\"urich,
  2015.

\bibitem{Aguiar}
M.~Aguiar.
\newblock Infinitesimal {H}opf algebras.
\newblock In {\em New trends in {H}opf algebra theory ({L}a {F}alda, 1999)},
  volume 267 of {\em Contemp. Math.}, pages 1--29. Amer. Math. Soc.,
  Providence, RI, 2000.

\bibitem{CS}
M.~Chas and D.~Sullivan.
\newblock String topology.
\newblock arXiv:math/9911159, 1999.

\bibitem{Cieliebak-Eliashberg-book}
K.~Cieliebak and Y.~Eliashberg.
\newblock {\em From {S}tein to {W}einstein and {B}ack}, volume~59 of {\em
  American Mathematical Society Colloquium Publications}.
\newblock American Mathematical Society, Providence, RI, 2012.
\newblock Symplectic geometry of affine complex manifolds.

\bibitem{Cieliebak-Eliashberg15}
K.~Cieliebak and Y.~Eliashberg.
\newblock The topology of rationally and polynomially convex domains.
\newblock {\em Invent. Math.}, 199(1):215--238, 2015.

\bibitem{Cieliebak-Frauenfelder-Oancea}
K.~Cieliebak, U.~Frauenfelder, and A.~Oancea.
\newblock Rabinowitz {F}loer homology and symplectic homology.
\newblock {\em Ann. Sci. \'Ec. Norm. Sup\'er. (4)}, 43(6):957--1015, 2010.

\bibitem{CHO-PD}
K.~Cieliebak, N.~Hingston, and A.~Oancea.
\newblock Poincar\'e duality for loop spaces. ar{X}iv:2008.13161.

\bibitem{CHO-MorseFloerGH}
K.~Cieliebak, N.~Hingston, and A.~Oancea.
\newblock Loop coproduct in {M}orse and {F}loer homology.
\newblock {\em Journal of Fixed Point Theory and Applications}, 25(2):59, 2023.

\bibitem{Cieliebak-Latschev}
K.~Cieliebak and J.~Latschev.
\newblock The role of string topology in symplectic field theory.
\newblock In {\em New perspectives and challenges in symplectic field theory},
  volume~49 of {\em CRM Proc. Lecture Notes}, pages 113--146. Amer. Math. Soc.,
  Providence, RI, 2009.

\bibitem{CHO-algebra}
K.~Cieliebak and A.~Oancea.
\newblock Topological {F}robenius algebras. ar{X}iv:2209.02979.

\bibitem{CO}
K.~Cieliebak and A.~Oancea.
\newblock Symplectic homology and the {E}ilenberg--{S}teenrod axioms.
\newblock {\em Algebr. Geom. Topol.}, 18(4):1953--2130, 2018.

\bibitem{CO-cones}
K.~Cieliebak and A.~Oancea.
\newblock Multiplicative structures on cones and duality.
\newblock {\em J. Symplectic Geom.}, 22(5):1001--1108, 2024.

\bibitem{Deshmukh}
Y.~Deshmukh.
\newblock A homotopical description of {D}eligne-{M}umford compactifications.
\newblock ar{X}iv:2211.05168, 2022.

\bibitem{Drummond-Cole}
G.~C. Drummond-Cole.
\newblock Homotopically trivializing the circle in the framed little disks.
\newblock {\em J. Topol.}, 7(3):641--676, 2014.

\bibitem{Ekholm-Lekili}
T.~Ekholm and Y.~Lekili.
\newblock Duality between {L}agrangian and {L}egendrian invariants.
\newblock {\em Geom. Topol.}, 27(6):2049--2179, 2023.

\bibitem{Ekholm-Oancea}
T.~Ekholm and A.~Oancea.
\newblock Symplectic and contact differential graded algebras.
\newblock {\em Geom. Topol.}, 21(4):2161--2230, 2017.

\bibitem{Fauck-thesis}
A.~Fauck.
\newblock {\em {Rabinowitz-Floer homology on Brieskorn manifolds}}.
\newblock PhD thesis, {Humboldt-Universit\"at zu Berlin}, April 2016.
  ar{X}iv:1605.07892.

\bibitem{Felix-Halperin-Thomas}
Y.~F\'{e}lix, S.~Halperin, and J.-C. Thomas.
\newblock {\em Rational homotopy theory}, volume 205 of {\em Graduate Texts in
  Mathematics}.
\newblock Springer-Verlag, New York, 2001.

\bibitem{Goresky-Hingston}
M.~Goresky and N.~Hingston.
\newblock Loop products and closed geodesics.
\newblock {\em Duke Math. J.}, 150(1):117--209, 2009.

\bibitem{Gromov-filling}
M.~Gromov.
\newblock Filling {R}iemannian manifolds.
\newblock {\em J. Differential Geom.}, 18(1):1--147, 1983.

\bibitem{Hingston-Wahl}
N.~Hingston and N.~Wahl.
\newblock Product and coproduct in string topology.
\newblock {\em Ann. Sci. \'{E}c. Norm. Sup\'{e}r. (4)}, 56(5):1381--1447, 2023.

\bibitem{Kragh}
T.~Kragh.
\newblock The {V}iterbo transfer as a map of spectra.
\newblock {\em J. Symplectic Geom.}, 16(1):85--226, 2018.

\bibitem{Lazarev}
O.~Lazarev.
\newblock Simplifying {W}einstein {M}orse functions.
\newblock {\em Geom. Topol.}, 24(5):2603--2646, 2020.

\bibitem{Loday-Ronco}
J.-L. Loday and M.~Ronco.
\newblock On the structure of cofree {H}opf algebras.
\newblock {\em J. Reine Angew. Math.}, 592:123--155, 2006.

\bibitem{Mazuir-in-progress}
T.~Mazuir.
\newblock Work in progress.

\bibitem{Murphy-Siegel}
E.~Murphy and K.~Siegel.
\newblock Subflexible symplectic manifolds.
\newblock {\em Geom. Topol.}, 22(4):2367--2401, 2018.

\bibitem{Naef-Willwacher}
F.~Naef and T.~Willwacher.
\newblock String topology and configuration spaces of two points.
\newblock ar{X}iv:1911.06202, 2019.

\bibitem{Oancea-Vaintrob}
A.~Oancea and D.~Vaintrob.
\newblock The {D}eligne-{M}umford operad as a trivialization of the circle
  action.
\newblock ar{X}iv:2002.10734, 2020.

\bibitem{Poirier-Tradler}
K.~Poirier and T.~Tradler.
\newblock The combinatorics of directed planar trees.
\newblock {\em J. Combin. Theory Ser. A}, 160:31--61, 2018.

\bibitem{Ritter}
A.~F. Ritter.
\newblock Topological quantum field theory structure on symplectic cohomology.
\newblock {\em J. Topol.}, 6(2):391--489, 2013.

\bibitem{SW}
D.~Salamon and J.~Weber.
\newblock Floer homology and the heat flow.
\newblock {\em Geom. Funct. Anal.}, 16(5):1050--1138, 2006.

\bibitem{Smale}
S.~Smale.
\newblock On the structure of manifolds.
\newblock {\em Amer. J. Math.}, 84:387--399, 1962.

\bibitem{Spanier}
E.~H. Spanier.
\newblock {\em Algebraic topology}.
\newblock Springer-Verlag, New York, 1981.
\newblock Corrected reprint.

\bibitem{Sullivan-open-closed}
D.~Sullivan.
\newblock Open and closed string field theory interpreted in classical
  algebraic topology.
\newblock In {\em Topology, geometry and quantum field theory}, volume 308 of
  {\em London Math. Soc. Lecture Note Ser.}, pages 344--357. Cambridge Univ.
  Press, Cambridge, 2004.

\bibitem{stacks-project-colimit}
{The Stacks project authors}.
\newblock The {S}tacks project. {S}ection 1.4.14 ``{L}imits and colimits".
\newblock \url{https://stacks.math.columbia.edu/tag/002D}, 2022.

\bibitem{Uebele-Brieskorn}
P.~Uebele.
\newblock Symplectic homology of some {B}rieskorn manifolds.
\newblock {\em Math. Z.}, 283(1-2):243--274, 2016.

\bibitem{Ustilovsky}
I.~Ustilovsky.
\newblock Infinitely many contact structures on {$S^{4m+1}$}.
\newblock {\em Internat. Math. Res. Notices}, (14):781--791, 1999.

\bibitem{vanKoert-Brieskorn}
O.~van Koert.
\newblock Contact homology of {B}rieskorn manifolds.
\newblock {\em Forum Math.}, 20(2):317--339, 2008.

\bibitem{Venkatesh}
S.~Venkatesh.
\newblock {Rabinowitz Floer homology and mirror symmetry.}
\newblock {\em {J. Topol.}}, 11(1):144--179, 2018.

\bibitem{Viterbo-cotangent}
C.~Viterbo.
\newblock Functors and computations in {F}loer homology with
  applications.~{II}.
\newblock Pr\'epublication Orsay 98-15, 1998. Available online at
  \url{https://arxiv.org/abs/1805.01316}.

\end{thebibliography}

\end{document}